\documentclass[pdflatex,sn-mathphys-num]{sn-jnl}
\usepackage{graphicx}
\usepackage{amsmath,amssymb,amsfonts}
\usepackage{amsthm}
\usepackage{enumitem}
\usepackage{mathtools}
\usepackage{float}
\usepackage{xcolor}
\usepackage{placeins}
\usepackage{booktabs}
\usepackage{tabularx}
\usepackage{array}
\usepackage{ragged2e}
\usepackage{pifont}

\definecolor{GoodGreen}{RGB}{20,120,65}
\definecolor{BadRed}{RGB}{180,30,45}

\newcommand{\cmark}{\textcolor{GoodGreen}{\ding{51}}}
\newcommand{\xmark}{\textcolor{BadRed}{\ding{55}}}

\newcolumntype{Y}{>{\RaggedRight\arraybackslash}X}
\newcolumntype{C}[1]{>{\centering\arraybackslash}p{#1}}
\newcolumntype{L}[1]{>{\RaggedRight\arraybackslash}p{#1}}

\newsavebox{\amoSubfigureBox}
\newcounter{amosubfigure}
\renewcommand{\theamosubfigure}{\alph{amosubfigure}}
\AtBeginEnvironment{figure}{\setcounter{amosubfigure}{0}}
\newcommand{\subfloat}[2][]{%
  \stepcounter{amosubfigure}%
  \sbox{\amoSubfigureBox}{#2}%
  \begin{minipage}[b]{\wd\amoSubfigureBox}
    \centering
    \usebox{\amoSubfigureBox}\par
    \smallskip
    {\footnotesize(\theamosubfigure)~#1}
  \end{minipage}%
}

\definecolor{refblue}{rgb}{0.0, 0.4, 1.0}
\definecolor{citeblue}{rgb}{0.0, 0.1, 1.0}
\hypersetup{
    colorlinks=true,
    hypertexnames=false,
    linkcolor=refblue,
    citecolor=citeblue,
    urlcolor=refblue,
    filecolor=refblue
}

\newcommand{\prox}{\operatorname{Prox}}
\newcommand{\dom}{\operatorname{dom}}

\floatstyle{ruled}
\newfloat{algorithm}{tbp}{loa}
\floatname{algorithm}{Algorithm}
\newcommand{\KwIn}[1]{\noindent\textbf{Input.} #1\par\smallskip}
\newcommand{\For}[2]{\noindent\textbf{for} #1 \textbf{do}\par
  \begingroup\leftskip=1em #2\par\endgroup
  \noindent\textbf{end for}}

\theoremstyle{thmstyleone}
\newtheorem{theorem}{Theorem}[section]
\newtheorem{lemma}{Lemma}[section]
\newtheorem{assumption}{Assumption}[section]

\theoremstyle{thmstyletwo}
\newtheorem{remark}{Remark}[section]

\makeatletter
\patchcmd{\@maketitle}{\removelastskip\vskip24pt}{\removelastskip\vskip16pt}{}{}
\patchcmd{\@maketitle}{\removelastskip\vskip24pt}{\removelastskip\vskip16pt}{}{}
\patchcmd{\@maketitle}{\removelastskip\vskip36pt\vskip0pt}{\removelastskip\vskip18pt\vskip0pt}{}{}
\makeatother

\begin{document}

\title[Accelerated Golden Ratio Primal--Dual Algorithm]{Two Adaptive Accelerated Golden Ratio Primal--Dual Algorithms With an Application to Poisson Imaging Problem}

\author*[1,2]{\fnm{Santanu} \sur{Soe}}
\email{ma22d002@smail.iitm.ac.in}
\email{santanu.soe@student.unimelb.edu.au}

\author[1]{\fnm{V.} \sur{Vetrivel}}
\email{vetri@iitm.ac.in}

\affil*[1]{\orgdiv{Department of Mathematics}, \orgname{Indian Institute of Technology Madras}, \orgaddress{\city{Chennai}, \postcode{600036}, \country{India}}}

\affil[2]{\orgdiv{School of Mathematics and Statistics}, \orgname{The University of Melbourne}, \orgaddress{\city{Parkville}, \postcode{3010}, \state{VIC}, \country{Australia}}}

\abstract{This paper revisits the adaptive extended golden-ratio primal--dual algorithm (aEGRPDA) proposed by Soe et al. (2026) for structured convex optimisation problems involving a differentiable term that is only locally smooth. We prove that the artificial upper bound imposed on the primal step-size in aEGRPDA is redundant, since the adaptive rule itself keeps the step-sizes bounded above. As a consequence, the ergodic $\mathcal O(1/N)$ estimates for the objective residual and feasibility violation, where $N\ge1$ denotes the number of iterations, are independent of this hyperparameter. Consequently, the resulting adaptive golden-ratio primal--dual method, therefore, requires neither a step-size cap, nor a linesearch procedure, nor a known global Lipschitz constant. We establish linear convergence of the algorithm when both the primal and dual functions are strongly convex. Furthermore, we develop two accelerated variants, in addition to the local smoothness assumption: one for the case where the nonsmooth primal component is strongly convex, and another for the case where the differentiable term is globally strongly convex. For these accelerated methods, we prove an ergodic $\mathcal O(1/N^2)$ convergence rate. Preliminary numerical experiments on a Poisson imaging problem illustrate the efficiency and robustness of the proposed approaches.}

\keywords{Primal-dual methods, accelerated algorithms, golden ratio, adaptive algorithms, rate of convergence.\\
\textbf{MSC Classification:} 90C25, 65K10, 49M27, 65J10}

\maketitle


\section{Introduction}\label{sec:introduction}
Let $\mathbb{H}_1$ and $\mathbb{H}_2$ be finite-dimensional real Hilbert spaces with inner product
$\langle\cdot,\cdot\rangle$ and induced norm $\|\cdot\| = \sqrt{\langle \cdot, \cdot\rangle}$. We study the composite convex optimisation model
\begin{equation}\label{main_prob}
    \min_{x\in\mathbb{H}_1} \; f(x) + g(Kx) + h(x),
\end{equation}
where $f:\mathbb{H}_1\to(-\infty,+\infty]$ and $g:\mathbb{H}_2\to(-\infty,+\infty]$ are proper, convex, and lower semicontinuous (lsc), $K:\mathbb{H}_1\to\mathbb{H}_2$ is a linear operator, and $h:\mathbb{H}_1\to\mathbb{R}$ is convex and differentiable.
In contrast to the standard global smoothness assumption, we work with the assumption that $h$ is \emph{locally smooth}, meaning that
$\nabla h$ is locally Lipschitz continuous, i.e., for every compact set $B\subseteq \mathbb{H}_1$ there exists $L_B>0$ such that
\begin{equation*}
    \|\nabla h(x)-\nabla h(y)\| \le L_B \|x-y\|,
    \quad \forall x,y\in B.
\end{equation*}
The optimisation problem in the form \eqref{main_prob} arises in many applications, including signal processing, image denoising, and machine learning; see \cite{chambolle2011first,chen2023first,chen2016primal,combettes2012primal,esser2010general,pan2024compressive,tang2017splitting,vladarean2021first} and the references therein.

\section{Notation and preliminaries}\label{sec2}

In this section, we summarise notation and a few standard tools that will be used throughout. We denote the golden ratio by
$\varphi := \frac{1+\sqrt5}{2}$. Given a linear operator $K:\mathbb{H}_1\to\mathbb{H}_2$, its operator norm is defined as
$\|K\|:=\sup\{\|Kx\|:\ \|x\|=1\}$. Given a nonempty set $D\subseteq\mathbb{H}_1$, we write $\iota_D$ for the \emph{indicator function} of $D$, i.e.,
$\iota_D(x)=0$ if $x\in D$ and $\iota_D(x)=+\infty$ otherwise.

\medskip
Let $f:\mathbb{H}_1\to(-\infty,+\infty]$ be a proper, convex, and lower semicontinuous (lsc) function, then its \emph{effective domain} is $\dom f := \{x\in\mathbb{H}_1:\ f(x)<+\infty\}$. For $x\in\dom f$, the \emph{subdifferential} of $f$ at $x$ is given by
\[
\partial f(x):=\Big\{v\in\mathbb{H}_1:\ f(z)\ge f(x)+\langle v, z-x\rangle\ \ \forall z\in\mathbb{H}_1\Big\}.
\]
Given $\lambda>0$, the \emph{proximal operator} of $f$ is defined by
\[
\prox_{\lambda f}(x)
:=\operatorname*{argmin}_{z\in\mathbb{H}_1}
\left\{ f(z)+\frac{1}{2\lambda}\|z-x\|^2 \right\}.
\]
We now have the following useful Lemma of the proximal operator.

\medskip
\begin{lemma}\label{usefullemma3}
\cite[Theorem~6.3]{beck2017first}
Let $f:\mathbb{H}_1\to(-\infty,+\infty]$ be proper, convex, and lsc. Given $x\in\mathbb{H}_1$ and $\lambda>0$, a point
$u\in\mathbb{H}_1$ satisfies $u=\prox_{\lambda f}(x)$ if and only if
\[
\lambda\big(f(u)-f(z)\big)\le \langle u-x,\ z-u\rangle,
\qquad \forall z\in\mathbb{H}_1.
\]
\end{lemma}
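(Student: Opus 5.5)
The plan is to reduce the claim to the first-order optimality condition of the strongly convex problem defining the proximal map, and then pass between that inclusion and the stated variational inequality in both directions.

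\textbf{Existence and uniqueness.} Set $\Phi(z):=f(z)+\frac{1}{2\lambda}\|z-x\|^2$. Because $f$ is proper, convex and lsc, $\Phi$ is proper, lsc and $\frac1\lambda$-strongly convex. Hence $\Phi$ has a unique minimiser, so $\prox_{\lambda f}(x)$ is well defined and single-valued. By Fermat's rule, $u=\prox_{\lambda f}(x)$ holds if and only if $0\in\partial\Phi(u)$.

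\textbf{Optimality condition.} Since the quadratic term is finite and differentiable everywhere, the subdifferential sum rule holds with no qualification condition. This gives $\partial\Phi(u)=\partial f(u)+\frac1\lambda(u-x)$. The optimality condition is therefore the inclusion $\frac{x-u}{\lambda}\in\partial f(u)$, and it forces $u\in\dom f$.

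\textbf{Forward direction.} Assume the inclusion holds. Applying the definition of the subdifferential with $v=\frac{x-u}{\lambda}$ gives, for all $z$,
\[
f(z)\ge f(u)+\tfrac1\lambda\langle x-u,\,z-u\rangle .
\]
Multiplying by $\lambda$ and rearranging yields $\lambda(f(u)-f(z))\le\langle u-x,\,z-u\rangle$. This is exactly the stated inequality.

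\textbf{Converse direction.} Assume the inequality holds for all $z$. Taking $z\in\dom f$ (which is nonempty) shows that $f(u)<\infty$, so $u\in\dom f$. Dividing by $\lambda$, the inequality says precisely that $\frac{x-u}{\lambda}\in\partial f(u)$. By the previous step, this means $u=\prox_{\lambda f}(x)$.

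\textbf{Main obstacle.} The only delicate point is justifying the sum rule, and I will avoid it rather than invoke it. Assuming the inequality, expand
\[
\tfrac{1}{2\lambda}\|z-x\|^2=\tfrac1{2\lambda}\|u-x\|^2+\tfrac1\lambda\langle u-x,z-u\rangle+\tfrac1{2\lambda}\|z-u\|^2 .
\]
Combined with the inequality, this gives $\Phi(z)\ge\Phi(u)+\frac1{2\lambda}\|z-u\|^2$, so $u$ is the minimiser.

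Conversely, let $u$ be the minimiser, fix $z$, and set $z_t=u+t(z-u)$ for $t\in(0,1)$. Convexity of $f$ gives $f(z_t)\le(1-t)f(u)+tf(z)$. Inserting this into $\Phi(u)\le\Phi(z_t)$, dividing by $t$ and letting $t\downarrow0$ gives the inequality. This elementary route makes the argument self-contained.
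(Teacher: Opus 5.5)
Your proof is correct. The paper gives no argument of its own for this lemma and simply cites Beck's textbook (the ``second prox theorem''). The standard proof there is exactly your first line of reasoning:
\begin{itemize}
\item apply Fermat's rule to the strongly convex function $f+\frac{1}{2\lambda}\|\cdot-x\|^2$;
\item use the sum rule, which is legitimate because the quadratic is finite and continuous everywhere, to get $\frac{x-u}{\lambda}\in\partial f(u)$;
\item unpack the definition of the subgradient.
\end{itemize}

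Your fallback in the last paragraph is a genuinely more elementary route. Sufficiency comes from the exact three-point expansion, which also gives the quadratic growth bound $\Phi(z)\ge\Phi(u)+\frac{1}{2\lambda}\|z-u\|^2$ and hence uniqueness of the minimiser for free. Necessity uses only convexity of $f$ along the segment $z_t=u+t(z-u)$, followed by $t\downarrow 0$.

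What each buys:
\begin{itemize}
\item The cited route is shorter once subdifferential calculus is available. It also produces the inclusion $\frac{x-u}{\lambda}\in\partial f(u)$ as a by-product, which is the form the paper effectively uses later (e.g.\ to conclude $y_n\in\partial g(w_n)$).
\item Your elementary route is fully self-contained. It needs lower semicontinuity only for the existence of the proximal point.
\end{itemize}

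The only detail worth stating explicitly is that test points $z\notin\dom f$ are trivial in both directions once $u\in\dom f$. You already establish $u\in\dom f$, so nothing further is needed.
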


\subsection{Constrained form and Lagrangian}\label{subsec:lagrangian}
By introducing an auxiliary variable $w\in\mathbb{H}_2$, the composite problem \eqref{main_prob} can be equivalently reformulated as
\begin{equation}\label{eq:constrained_form}
    \min_{x\in\mathbb{H}_1,\; w\in\mathbb{H}_2}
    \Bigl\{ f(x)+h(x)+g(w)\;\;\text{subject to}\;\; Kx-w=0 \Bigr\}.
\end{equation}
For later use, we define the objective function
\begin{equation*}
    \Phi(x,w):=f(x)+h(x)+g(w),
\end{equation*}
and the associated Lagrangian function
\begin{equation*}
    \mathbb{L}(x,w,y)
    := \Phi(x,w) + \langle y,\, Kx-w\rangle,
    \qquad (x,w,y)\in \mathbb{H}_1\times\mathbb{H}_2\times\mathbb{H}_2,
\end{equation*}
where $y\in\mathbb{H}_2$ denotes the Lagrange multiplier corresponding to the constraint $Kx=w$. By noting that $\sup_{y\in\mathbb{H}_2}\langle y,\,Kx-w\rangle=\iota_{\{0\}}(Kx-w)$, the reformulated problem~\eqref{eq:constrained_form} can be written as
\begin{equation*}
\min_{(x,w)\in\mathbb{H}_1\times\mathbb{H}_2} \ \sup_{y\in\mathbb{H}_2} \ \mathbb{L}(x,w,y).
\end{equation*}
Furthermore, using the Legendre-Fenchel conjugate $g(w)=\sup_{y\in\mathbb{H}_2}\big\{\langle w,y\rangle-g^*(y)\big\}$, we eliminate $w$ in the Lagrangian, by following $\inf_{w\in\mathbb{H}_2}\big(g(w)-\langle y,w\rangle\big)=-g^*(y)$. Consequently, the saddle-point problem of \eqref{eq:constrained_form} is
\begin{equation}\label{eq:saddle_fenchel}
    \min_{x\in\mathbb{H}_1} \ \max_{y\in\mathbb{H}_2}
    \Big\{ f(x)+h(x)+\langle Kx,y\rangle-g^*(y)\Big\}.
\end{equation}
Again, minimizing $\mathbb{L}(x,w,y)$ over $(x,w)$ yields the dual function
\[
\inf_{x\in\mathbb{H}_1,\,w\in\mathbb{H}_2} \mathbb{L}(x,w,y)
=-(f+h)^*(-K^*y)-g^*(y),
\]
and hence the Fenchel--Rockafellar dual of \eqref{eq:constrained_form} (equivalently of \eqref{main_prob}) is
\begin{equation}\label{eq:dual}
    \max_{y\in\mathbb{H}_2} \ \Big\{-(f+h)^*(-K^*y)-g^*(y)\Big\}.
\end{equation}
We are now ready to impose the following standing assumptions.

\medskip
\begin{assumption}\label{assumption1}
The saddle-point problem \eqref{eq:saddle_fenchel} admits at least one solution. Furthermore, either both the proximal mappings of $f$ and $g$ have closed forms, or they can be computed efficiently to high accuracy.
\end{assumption}

A standard constraint qualification condition based on relative interiors guarantees strong duality and, consequently, the existence of
saddle points; see, e.g., \cite[Corollaries~28.2.2 and~28.3.1]{Rockafellar+1970}. Under Assumption~\ref{assumption1},
$(\bar x,\bar w,\bar y)$ is a saddle point of $\mathbb{L}$ if and only if $(\bar x,\bar w)$ solves \eqref{eq:constrained_form},
$\bar y$ solves the dual problem \eqref{eq:dual}, and
\begin{equation}\label{eq:saddle_ineq}
    \mathbb{L}(\bar x,\bar w,y)\le \mathbb{L}(\bar x,\bar w,\bar y)\le \mathbb{L}(x,w,\bar y),
    \qquad \forall (x,w,y)\in\mathbb{H}_1\times\mathbb{H}_2\times\mathbb{H}_2.
\end{equation}
Equivalently, saddle points are described by the KKT relations
\[
-K^*\bar y \in \partial f(\bar x)+\nabla h(\bar x),\quad
\bar y\in \partial g(\bar w),\quad
K\bar x=\bar w.
\]
Let 
\[
\mathbf{\Pi}
:=\Big\{(\bar x,\bar w,\bar y)\in\mathbb{H}_1\times\mathbb{H}_2\times\mathbb{H}_2:\ 
-K^*\bar y \in \partial f(\bar x)+\nabla h(\bar x),\ \bar y\in\partial g(\bar w),\ K\bar x=\bar w\Big\}
\]
be the set of all saddle points.  Fix any $(\bar x,\bar w,\bar y)\in\mathbf{\Pi}$. For an arbitrary $(x,w,y)\in\mathbb{H}_1\times\mathbb{H}_2\times\mathbb{H}_2$, we introduce the following residual as a measure,

\begin{equation}\label{primal_dual_gap}
    \mathbb{J}(x,w,y)
    := \mathbb{L}(x,w,y)-\mathbb{L}(\bar x,\bar w,y)
    = \Phi(x,w)-\Phi(\bar x,\bar w) + \langle y,\ Kx-w\rangle,
\end{equation}
which will be required in our subsequent convergence analysis. Moreover, by the saddle property~\eqref{eq:saddle_ineq}, we have
\begin{equation}\label{Eq:measure}
    \mathbb{J}(x,w,\bar y)=\mathbb{L}(x,w,\bar y)-\mathbb{L}(\bar x,\bar w,\bar y)\ge 0,
\qquad \forall (x,w)\in\mathbb{H}_1\times\mathbb{H}_2.
\end{equation}
This measure will be used to control both the objective residual and the constraint violation.
We emphasise that we work with such residual-type measures since the classical ``primal--dual gap function'' may be
uninformative and can even vanish at non-stationary points when the dual domain is unbounded; see
\cite{chang2022grpdarevisited,zhou2022new,chambolle2011first} for related remarks.

\medskip
The following elementary facts will be frequently used in our analysis.

\medskip
\begin{lemma}\label{UsefulLemma1}
\cite{bauschke2017correction}
Let $u,v,w\in\mathbb{H}_1$ and $\lambda\in\mathbb{R}$. Then
\begin{subequations}\label{eq:basic_identities_alt}
\begin{align}
    2\langle u-v,\ u-w\rangle
    &= \|u-v\|^2 + \|u-w\|^2 - \|v-w\|^2, \label{eq:basic_identities_alt_a}\\
    \|\lambda u + (1-\lambda)v\|^2
    &= \lambda\|u\|^2 + (1-\lambda)\|v\|^2 - \lambda(1-\lambda)\|u-v\|^2. \label{eq:basic_identities_alt_b}
\end{align}
\end{subequations}
\end{lemma}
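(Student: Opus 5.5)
Both identities follow by expanding squared norms through bilinearity and symmetry of $\langle\cdot,\cdot\rangle$, so the plan is a direct computation with no earlier results needed.

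For \eqref{eq:basic_identities_alt_a}, I will write $v-w=(u-w)-(u-v)$. Expanding gives
\[
\|v-w\|^2=\|u-w\|^2-2\langle u-v,\ u-w\rangle+\|u-v\|^2 .
\]
Rearranging this yields the claim.

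For \eqref{eq:basic_identities_alt_b}, I will expand the left-hand side:
\[
\|\lambda u+(1-\lambda)v\|^2=\lambda^2\|u\|^2+2\lambda(1-\lambda)\langle u,v\rangle+(1-\lambda)^2\|v\|^2 .
\]
I will then substitute $2\langle u,v\rangle=\|u\|^2+\|v\|^2-\|u-v\|^2$. Collecting terms, the coefficient of $\|u\|^2$ is $\lambda^2+\lambda(1-\lambda)=\lambda$, and the coefficient of $\|v\|^2$ is $(1-\lambda)^2+\lambda(1-\lambda)=1-\lambda$. The remaining term is $-\lambda(1-\lambda)\|u-v\|^2$, which is exactly the right-hand side.

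Nothing here depends on $\lambda$ lying in $[0,1]$, so the identity holds for all real $\lambda$, as stated. The only step needing care is the bookkeeping of the coefficients in the second identity; there is no genuine obstacle.
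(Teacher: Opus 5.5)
Your proof is correct: both identities follow from the bilinear expansions you give, and your coefficient bookkeeping in the second identity ($\lambda^2+\lambda(1-\lambda)=\lambda$ and $(1-\lambda)^2+\lambda(1-\lambda)=1-\lambda$) holds for every real $\lambda$. The paper gives no proof of its own and only cites Bauschke--Combettes for these standard identities; your direct computation is the usual argument for them.
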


\medskip
\begin{lemma}\label{usefullemma2}
\cite{chang2022grpdarevisited}
Let $(p_n)$ and $(q_n)$ be nonnegative sequences such that
$p_{n+1}\le p_n-q_n$ for all $n\ge 1$, then $\sum_{n=1}^\infty q_n<\infty$ and $\lim_{n\to\infty}p_n$ exists.
\end{lemma}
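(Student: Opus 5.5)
The argument is a telescoping one. Set $a_n:=p_n-p_{n+1}$, so that the hypothesis $p_{n+1}\le p_n-q_n$ reads $q_n\le a_n$ for every $n\ge1$.

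First, I will show that $(p_n)$ is nonincreasing. Since $q_n\ge0$, the hypothesis gives
\[
p_{n+1}\le p_n-q_n\le p_n .
\]
Thus $(p_n)$ is nonincreasing, and because $p_n\ge0$ it is bounded below by $0$. By the monotone convergence theorem for real sequences, $\lim_{n\to\infty}p_n$ exists; call it $p_\infty\ge0$.

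Second, for summability I sum the inequality $q_n\le p_n-p_{n+1}$ over $n=1,\dots,N$. The right-hand side telescopes:
\[
\sum_{n=1}^N q_n\le \sum_{n=1}^N (p_n-p_{n+1}) = p_1-p_{N+1}\le p_1 .
\]
The last step uses $p_{N+1}\ge0$. The partial sums of the nonnegative series $\sum q_n$ are therefore nondecreasing and bounded above by $p_1<\infty$, so the series converges. In fact $\sum_{n=1}^\infty q_n\le p_1-p_\infty$.

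There is no genuine obstacle here. The only point needing care is to use nonnegativity twice: nonnegativity of $q_n$ gives monotonicity of $(p_n)$, and nonnegativity of $p_n$ gives both the lower bound for the limit and the uniform bound on the partial sums.
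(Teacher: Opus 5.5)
Your proof is correct and complete. Nonnegativity of $q_n$ makes $(p_n)$ nonincreasing. Nonnegativity of $p_n$ then gives both the existence of the limit and the telescoped bound $\sum_{n=1}^N q_n\le p_1-p_{N+1}\le p_1$. The paper gives no proof of its own and simply quotes the lemma from \cite{chang2022grpdarevisited}, so there is nothing to compare against; your monotone-convergence and telescoping argument is the standard proof of this fact.
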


\medskip
\begin{lemma}\label{lemm: sum un and vn}
Let $0<q<1$ and let $(u_n)$, $(v_n)$ be nonnegative sequences such that $u_n\le q\,u_{n-1}+v_n$ for all $n\ge 1$.
If $\sum_{n=1}^\infty v_n<\infty$, then $\sum_{n=1}^\infty u_n<\infty$.
\end{lemma}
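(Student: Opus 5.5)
The natural route is to unroll the recursion and then interchange the order of summation. Iterating $u_n\le q\,u_{n-1}+v_n$ backwards from $n$ down to $1$ gives, for every $n\ge 1$,
\[
u_n\le q^{n}u_0+\sum_{k=1}^{n} q^{\,n-k}v_k .
\]
This is a direct induction. The case $n=1$ is the hypothesis itself. For the inductive step, substitute the bound for $u_{n-1}$ into $u_n\le q\,u_{n-1}+v_n$. Since $q>0$, multiplying the inductive bound by $q$ preserves the inequality, and all terms are nonnegative.

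Next, sum this bound over $n=1,\dots,N$ for an arbitrary $N\ge 1$. The first part contributes
\[
u_0\sum_{n=1}^N q^n\le \frac{q\,u_0}{1-q}.
\]
For the double sum, swap the order of summation, which is legitimate because every term is nonnegative:
\[
\sum_{n=1}^{N}\sum_{k=1}^{n} q^{\,n-k}v_k=\sum_{k=1}^{N} v_k\sum_{n=k}^{N}q^{\,n-k}\le \frac{1}{1-q}\sum_{k=1}^{\infty}v_k .
\]
Combining the two estimates,
\[
\sum_{n=1}^N u_n\le \frac{1}{1-q}\Big(q\,u_0+\sum_{k=1}^\infty v_k\Big)<\infty .
\]
This bound does not depend on $N$. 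Since the $u_n$ are nonnegative, the partial sums are nondecreasing and bounded above, so the series $\sum_{n\ge1}u_n$ converges.

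There is no real obstacle here. The only points needing care are that $u_0$ must be finite and nonnegative, which holds because $(u_n)$ is a nonnegative real sequence whose indexing includes $n=0$ as used in the hypothesis, and that the interchange of summations is justified by nonnegativity. One could instead sum the inequality directly: $S_N:=\sum_{n=1}^N u_n$ satisfies $S_N\le q(u_0+S_N)+\sum_{n=1}^N v_n$, hence $(1-q)S_N\le q u_0+\sum v_n$. This shorter argument gives the same conclusion and is probably the cleanest to write down.
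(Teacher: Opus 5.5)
Your proof is correct. Both of your versions work:
\begin{itemize}
\item the unrolled bound $u_n\le q^n u_0+\sum_{k=1}^n q^{n-k}v_k$, followed by interchanging the finite double sum (for finite sums the interchange needs no justification; nonnegativity is only used when you replace partial sums of $v_k$ by the full series);
\item the direct estimate $S_N\le q(u_0+S_{N-1})+\sum_{n=1}^N v_n\le q(u_0+S_N)+\sum_{n\ge1}v_n$, which gives $(1-q)S_N\le q\,u_0+\sum_{n\ge1}v_n$.
\end{itemize}

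The paper states this lemma without any proof or citation, and never invokes it later, so there is no argument from the authors to compare against. Your short summation argument is a complete justification on its own. You are also right that the only implicit point is that the hypothesis at $n=1$ requires $u_0$ to be a given finite nonnegative number.
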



\medskip
\begin{lemma}\label{min a and b}
Let $a,b>0$. Then $\min\{a,b\}\le \sqrt{ab}$.
\end{lemma}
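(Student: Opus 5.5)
Since $a,b>0$, both $\min\{a,b\}$ and $\sqrt{ab}$ are positive, and the square root is monotone on $[0,\infty)$. So it suffices to show $(\min\{a,b\})^2\le ab$ and then take square roots.

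By symmetry of both sides in $a$ and $b$, assume without loss of generality that $a\le b$, so that $\min\{a,b\}=a$. Multiplying $a\le b$ by $a>0$ gives $a^2\le ab$. Taking square roots yields
\[
\min\{a,b\}=a=\sqrt{a^2}\le\sqrt{ab}.
\]
This proves the claim.

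An equivalent one-line view is that $\min\{a,b\}$ is at most both $a$ and $b$, so it is at most their geometric mean.

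There is no real obstacle here. The only point needing care is the positivity of $a$, which is used both to preserve the inequality under multiplication and to justify taking square roots. In the later analysis I expect this lemma to be applied with $a$ and $b$ being two candidate step-size bounds (for instance $\sqrt{1+\theta}\,\lambda_{n-1}$ and a local-Lipschitz term). The product form $\sqrt{ab}$ then decouples them, which is what makes the step-size upper bound argument independent of the cap.
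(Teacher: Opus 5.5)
Your proof is correct and is the same argument the paper gives: assume $a\le b$, note $a^2\le ab$, and take square roots, with the other case handled by symmetry. On your closing remark: in the paper the lemma is applied with $a=\rho\tau_{n-1}$ and $b=c_n/\tau_{n-1}$. The $\tau_{n-1}$ cancels in the product $ab=\rho c_n$, and this cancellation is what gives the cap-free bound $\tau_n\le\sqrt{\rho c_n}$.
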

\begin{proof}
For brevity, we give a proof of this lemma. If $a\le b$, then $\min\{a,b\}=a$. Since $a^2\le ab$, we obtain $a\le\sqrt{ab}$. The case $b\le a$ is analogous.
\end{proof}

\section{Background and motivation}

Over the past decade, primal--dual algorithms have become increasingly popular \cite{jezierska2012primal,latafat2018plug,chang2021goldengrpda,malitsky2018first,chambolle2011first,komodakis2015playing},
largely because of their ability to exploit the composite structure of problems in which a nonsmooth term is coupled with a linear operator.
This advantage is typically realised through a saddle-point formulation \cite{chambolle2011first,condat2013primal,chambolle2016ergodic,vu2013splitting}, where the linear operator and the nonsmooth function are separated by introducing the Fenchel--Rockafellar conjugate \cite[Definition 13.1]{bauschke2017correction}; see Section~\ref{sec2} for further details. A number of well-known primal--dual algorithms have been developed for solving~\eqref{main_prob} when $h$ is globally smooth; see, for example~\cite{condat2013primal,vu2013splitting,zhou2022new,chen2016primal,yan2018new,salim2022dualize,malitsky2026first}. In these methods, the step-sizes typically depend on the Lipschitz constant of $\nabla h$ and the operator norm $\|K\|$ through an inequality. In practice, for problems like \eqref{main_prob}, the Lipschitz constant of $\nabla h$ is often unavailable (either hard to estimate or $h$ is locally smooth, whereas $\|K\|$ may be easily available), and even when it can be estimated, it may be extremely large, leading to conservative step-size choices and potentially slow convergence. Among the primal--dual algorithms in the literature, the method most closely related to the present work is the E-GRPDA of Zhou et al.~\cite{zhou2022new}. This algorithm uses fixed primal and dual step-sizes and is based on the golden-ratio convex combination idea introduced by Malitsky~\cite{malitsky2020golden} for \emph{mixed variational inequality problems} (MVIP). Motivated by the derivation of E-GRPDA in the fixed step-size case, Soe et al.~\cite[Algorithm~3]{Soe2026} proposed an adaptive variant of E-GRPDA~\cite{zhou2022new,chang2021goldengrpda}, called aEGRPDA. It is worth noting that although one may rewrite~\eqref{main_prob} as an MVIP problem, and apply the adaptive Golden Ratio algorithm (aGRAAL)~\cite[Algorithm~1]{malitsky2020golden} (an improved version of GRAAL), the resulting approach leads to worse error bounds and is not well suited to the composite structure like in~\eqref{main_prob}; see~\cite[Algorithm 1]{Soe2026} for further details. The aEGRPDA algorithm~\cite{Soe2026} is tailored to locally smooth $h$, incorporates an adaptive primal step-size, and exhibits favourable convergence properties; see~\cite[Section~6]{Soe2026}. We emphasise the word "adaptive", by which we mean that the primal step-size leverages the local information of the inverse of the Lipschitz constant of $\nabla h$, while still allowing the steps to remain non-monotone. We now recall the aEGRPDA algorithm. 

\begin{algorithm}[H]
\caption{The aEGRPDA for \eqref{main_prob}}\label{alg:aEGRPDA}
\KwIn{Choose $x_0\in\mathbb{H}_1$, $y_0\in\mathbb{H}_2$ and set $z_0=x_0$. Let $\tau_0>0$, $\beta>0$, $\psi\in(1,\varphi]$, $\rho=\psi^{-1}+\psi^{-2}$, $\theta_0=1$ and $\tau_{\max}>0$, where $\varphi=(1+\sqrt5)/2$ is the golden ratio.}

\For{$n=1,2,\ldots$}{
  \textbf{Step 1} (Compute)
  \begin{align*}
    z_n
    &=\frac{\psi-1}{\psi}x_{n-1}+\frac{1}{\psi}z_{n-1},\\
    x_n
    &=
    \prox_{\tau_{n-1}f}
    \left(
        z_n-\tau_{n-1}K^*y_{n-1}
        -\tau_{n-1}\nabla h(x_{n-1})
    \right). 
  \end{align*}

  \textbf{Step 2} (Update)
  \begin{align*}
    \tau_n
    =
    \min\left\{
        \rho\tau_{n-1},
        \frac{\psi\theta_{n-1}}
        {9\bigl(\bar L_n^2+\beta\psi\|K\|^2\bigr)\tau_{n-1}},
        \tau_{\max}
    \right\},
    \qquad
    \sigma_n=\beta\tau_n.
  \end{align*}

  \textbf{Step 3} (Compute)
  \begin{align*}
    w_n
    &=
    \prox_{\frac{1}{\sigma_n}g}
    \left(
        \frac{y_{n-1}}{\sigma_n}+Kx_n
    \right), \\
    y_n
    &=
    y_{n-1}
    +
    \sigma_n(Kx_n-w_n).
  \end{align*}

  \textbf{Step 4} (Update)
  \begin{equation*}
    \theta_n=\frac{\psi\tau_n}{\tau_{n-1}}.
  \end{equation*}
}
\end{algorithm}

\medskip
There are two aspects of Algorithm \ref{alg:aEGRPDA}. On one hand, the primal step-size $(\tau_n)$ can adapt to the local smoothness of $h$, recover from poor initial choices, and even increase along the iterations. On the other hand, the convergence proof~\cite{Soe2026} relies on an additional hyperparameter $\tau_{\max}$, and the resulting ergodic rate estimates take the form
\begin{equation}\label{eq:gap_function}
\left | \Phi(\tilde{x}_N,\, \tilde{w}_N)- \Phi(x^*, w^*)\right|
\leq \frac{C_1}{N},
\qquad
\left \| K\tilde{x}_N- \tilde{w}_N \right\|
\leq \frac{2C_1}{\delta N},
\end{equation}
where $(x^*,w^*,y^*)\in\mathbf{\Pi}$, $N\ge 1$, $\tilde{x}_N = \frac{1}{N} \sum_{n=1}^{N} x_n$, $\tilde{w}_N = \frac{1}{N} \sum_{n=1}^{N} w_n$, and $\delta\ge 2\|y^*\|$. Moreover, the constant $C_1$ depends linearly on $\tau_{\max}$
\begin{equation*}
    C_1 := \frac{{9C\tau_{\max}}(L^2+\beta\psi \|K\|^2)}{2\psi},
\end{equation*}
where $L$ is an (unknown) Lipschitz constant of $\nabla h$ over the compact region $B=\overline{\operatorname{conv}}\{x^*, x_0, x_1,\ldots\}$, and the constant $C>0$ arises from telescoping Fej\'er-type terms; see~\cite[Theorem~5.1]{Soe2026} for a detailed discussion. This dependence creates a practical dilemma. If $\tau_{\max}$ is chosen very large, then the theoretical bound~\eqref{eq:gap_function} becomes uninformative. Conversely, if $\tau_{\max}$ is chosen too small, the algorithm may be forced to take unnecessarily conservative steps, which can noticeably slow down convergence. The next example illustrates this phenomenon. Consider the quadratic problem
\begin{equation}\label{eq:toy_primal}
\min_{x\in\mathbb{R}^n}\; \frac{1}{2}\|Kx-b\|^2 + \frac{L}{2}\|x\|^2,
\end{equation}
where $b\in\mathbb{R}^m$ is fixed, $K\in\mathbb{R}^{m\times n}$, and $L>0$ is the global Lipschitz constant of $\nabla h$. We can rewrite~\eqref{eq:toy_primal} equivalently as
\begin{equation*}
\min_{x\in\mathbb{R}^n,\; w\in\mathbb{R}^m}\;
\Phi(x,w):= g(w)+h(x)
\quad \text{subject to}\quad Kx=w,
\end{equation*}
with $g(w)=\tfrac12\|w-b\|^2$, $h(x)=\tfrac{L}{2}\|x\|^2$ and $f\equiv 0$.
The unique minimiser of this example is $x^\ast=(K^\top K + L I)^{-1}K^\top b,$ and $w^\ast=Kx^\ast$. Take $(m,n)=(50,100)$, $L=10^{-3}$, and construct $K$ with small spectral norm. Specifically, we draw a random Gaussian matrix $A\in\mathbb{R}^{m\times n}$ and scale it as
\[
K := \frac{\kappa}{\|A\|}\,A;~~\kappa=10^{-3},
\]
so that $\|K\|\approx 10^{-3}$. Figure~\ref{fig:feas_gap_both_sizes_lasso_n_1000} compares several values of $\tau_{\max}$ and reports the feasibility and objective residuals, and the corresponding primal step-sizes. The theory suggests that $\tau_{\max}$ should be chosen on the order of $10^3$ for this instance, and from Figure~\ref{fig:feas_gap_both_sizes_lasso_n_1000} we can observe that smaller choices of $\tau_{\max}$ than the expected value, prematurely limit the step-sizes and slow down convergence. This motivates further analysis of aEGRPDA, without the need for an artificial cap.

\begin{figure}[htbp]
\centering

\subfloat[Feasibility residual]{
  \includegraphics[width=0.27\linewidth]{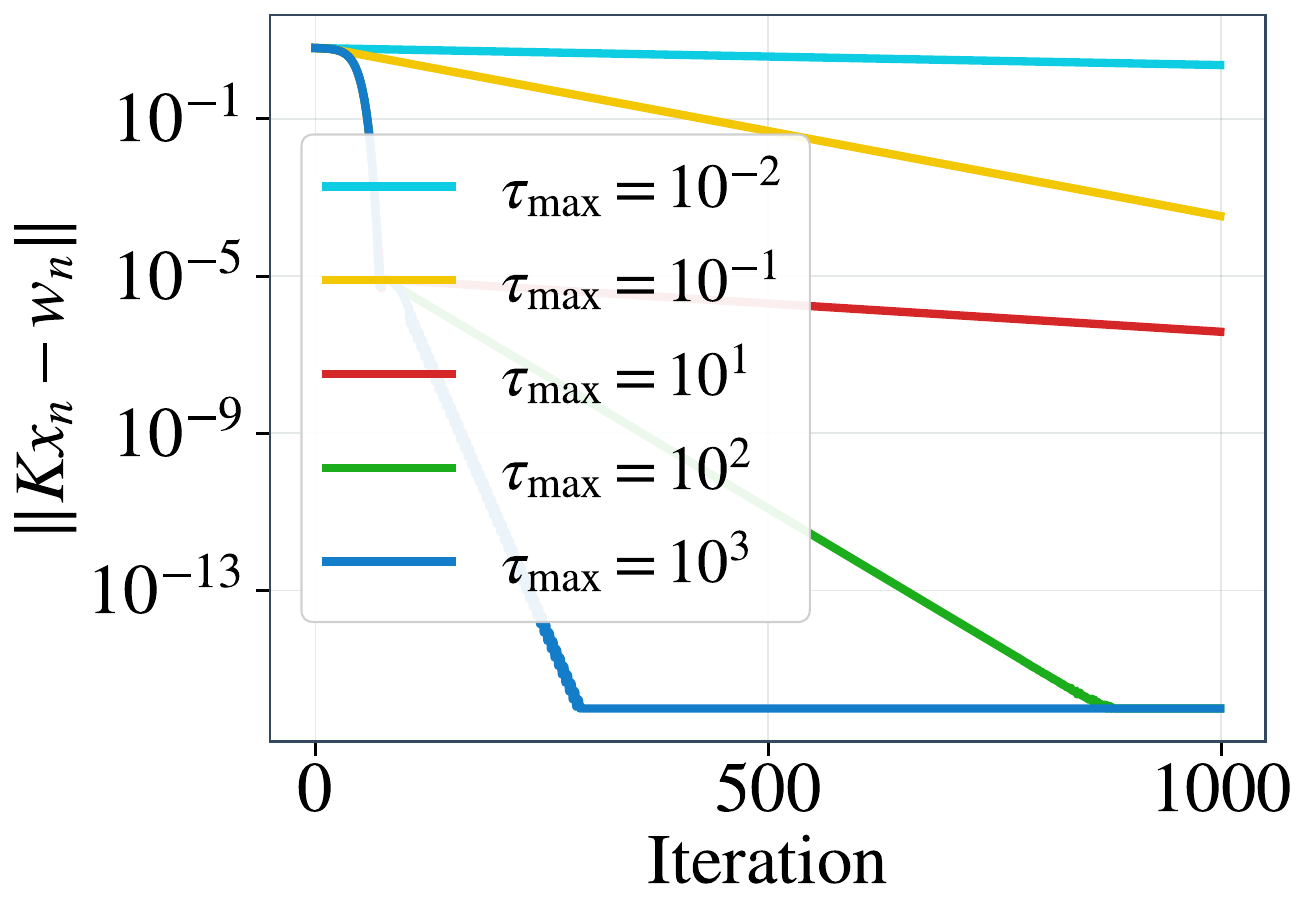}
}\hfill
\subfloat[Objective residual]{
  \includegraphics[width=0.27\linewidth]{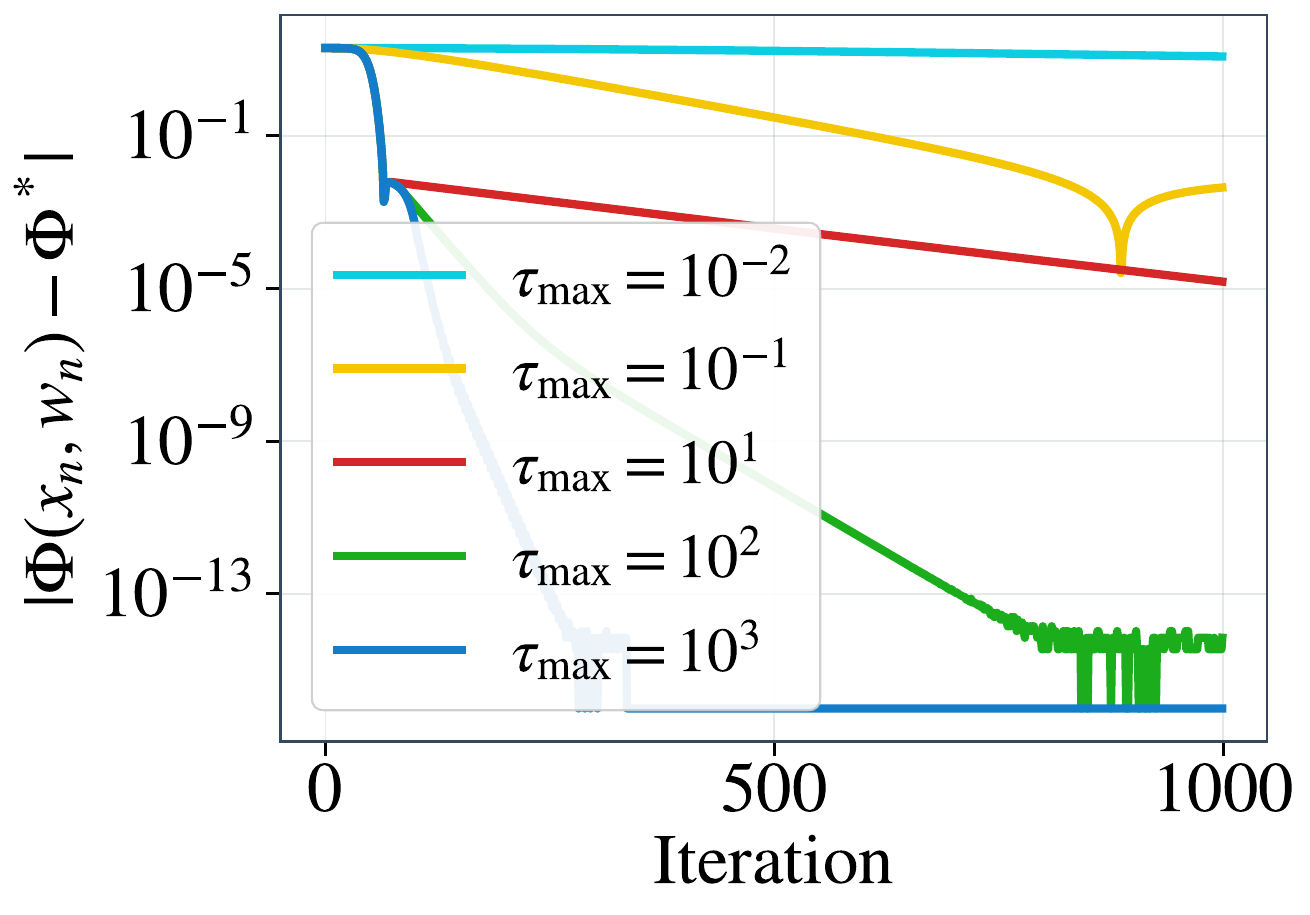}
}\hfill
\subfloat[Primal step-size]{
  \includegraphics[width=0.27\linewidth]{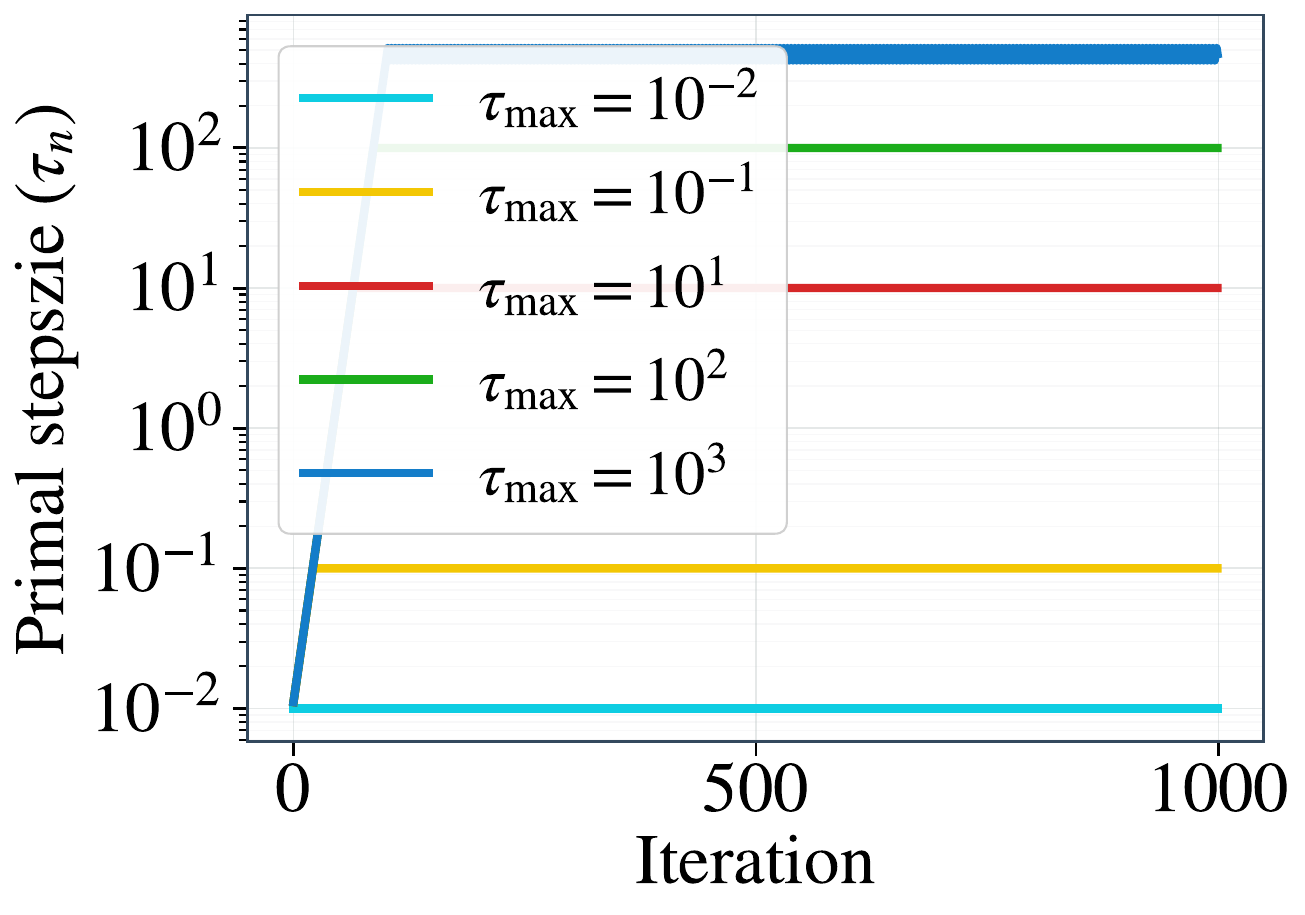}
}
\caption{Influence of $\tau_{\max}$ on feasibility and objective residuals.}
\label{fig:feas_gap_both_sizes_lasso_n_1000}
\end{figure}
Beyond the setting considered above, in many applications where $h$ is globally smooth, the Lipschitz constant $L$ of $\nabla h$ is often much larger than the operator norm $\|K\|$. Consequently, it becomes far more important to estimate $L$ locally, whereas the norm $\|K\|$ can be computed in advance. Examples include logistic regression, fused lasso, elastic net problem, and so on; see \cite{latafat2023adaptive,Soe2026,vladarean2021first} and the references therein. An interesting example where $h$ is locally smooth, arises in the Poisson imaging problems~\cite{di2020acquire}:
\begin{equation*}
    \min_{x}\; \iota_{x\ge 0}(x)+ \lambda\|\nabla x\|_{2,1}+\mathrm{KL}(Ax,y),
\end{equation*}
where $K=\nabla$, $h(\cdot)=\mathrm{KL}(A(\cdot),y)$ and $\mathrm{KL}$ denotes the Kullback--Leibler divergence \cite{bauschke2017descent}; also see Section~\ref{Num_section} for further details. It is well known that $\|K\|\approx \sqrt{8}$~\cite{chambolle2011first}, while $h$ is only locally smooth~\cite{bauschke2017descent}. Such problems underscore the need for algorithms that robustly handle local smoothness, require minimal tuning, and perform well in practice.

\medskip
Another objective of this paper is not just to revisit the analysis of aEGRPDA~\cite{Soe2026} under the local smoothness of $h$, but also to establish accelerated convergence rates when additional curvature is available. In the literature, accelerated primal--dual methods with ergodic $\mathcal{O}(1/N^2)$ rates for~\eqref{main_prob} are typically developed under the assumption that $\nabla h$ is globally Lipschitz continuous; see, for example,~\cite{chambolle2016ergodic,driggs2024practical,malitsky2017chambolle}. Related accelerated schemes for deterministic and stochastic saddle-point problems can also be found in~\cite{chen2014optimal}. More recently, Nesterov-type acceleration~\cite{nesterov2013gradient,nesterov2013introductory} has been used in~\cite{condat2026nesterov}. These results provide important acceleration mechanisms, but they are essentially confined to the globally smooth framework of $h$. This limitation is significant when a globally smooth structure for $h$ is unavailable. Therefore, a natural question is whether one can design accelerated primal--dual algorithms that use the local behaviour of $h$ rather than relying on a global smoothness constant. In particular, when strong convexity is present in one of the component functions, it is reasonable to expect that this additional curvature should be reflected in the convergence rate. We compare several primal--dual algorithms that achieve acceleration under local or global smoothness assumptions on $h$; see the Table~\ref{tab:literature-comparison}.

\begin{table}[htbp]
\centering
\caption{Comparison of related primal-dual methods under local smoothness and acceleration requirements.}
\label{tab:literature-comparison}
\renewcommand{\arraystretch}{1.18}
\setlength{\tabcolsep}{5pt}
\begin{tabularx}{\textwidth}{
>{\RaggedRight\arraybackslash}p{3.2cm}
>{\centering\arraybackslash}p{2.0cm}
>{\centering\arraybackslash}p{2.2cm}
>{\RaggedRight\arraybackslash}X
}
\toprule
\textbf{Method}
&
\textbf{Local $h$}
&
\textbf{Acceleration}
&
\textbf{Main features/limitations}
\\
\midrule

PDA-L~\cite{malitsky2018first}
&
\xmark
&
\cmark
&
Requires a line-search procedure.
\\

adaPDM~\cite{latafat2023adaptive}
&
\cmark
&
\xmark
&
Adaptive, but not accelerated in the present setting.
\\

aPDAc-L~\cite{chang2026convex}
&
\cmark
&
\cmark
&
Requires a line-search procedure.
\\

aGRAAL~\cite{malitsky2020golden}
&
\cmark
&
\xmark
&
Not suitable after a primal--dual reformulation, and gives worse convergence results.
\\

Soe--Tam--Vetrivel~\cite{Soe2026}
&
\cmark
&
\xmark
&
Allows local smoothness and non-monotone step-sizes, but gives an $\mathcal O(1/N)$ rate and uses an artificial step-size cap.
\\

\textbf{This work}
&
\cmark
&
\cmark
&
\textbf{Line-search-free accelerated rate under local smoothness.}
\\

\bottomrule
\end{tabularx}
\end{table}
A closely related contribution is the aPDAc-L algorithm, recently developed in ~\cite{chang2026convex}, which addresses~\eqref{main_prob} under local smoothness assumptions and achieves an accelerated ergodic $\mathcal{O}(1/N^2)$ rate of convergence. However, this method uses a backtracking linesearch to control the local smoothness of $h$. While linesearch is theoretically powerful, it can be less attractive computationally as it often requires careful initialisation, repeated matrix--vector evaluations, and an inner loop that continues until a suitable stopping criterion is satisfied. Therefore, our goal is to obtain acceleration without using backtracking. There are other works in the literature that are in line with adaptive primal--dual methods that avoid linesearch, such as APDA~\cite{vladarean2021first} in the case $f\equiv 0$, and adaPDM~\cite{latafat2023adaptive}. These methods reduce the dependence on global Lipschitz constants and share a similar adaptive philosophy. However, their convergence analyses rely on primal--dual gap arguments and do not yield accelerated $\mathcal{O}(1/N^2)$ guarantees under their adaptive step-size strategies. Therefore, one of the main motivations of this paper is to bridge this gap by developing a backtracking-free adaptive golden-ratio primal--dual framework that uses only local smoothness information and still achieves accelerated rates when suitable strong curvature is available.

\medskip
The structure of the paper and its main contributions are summarised as follows.

\begin{itemize}
    \item In Section~\ref{sec2}, we collect the notation and standing assumptions used throughout the paper. We also introduce the residual measures that are used later in the convergence analysis.
    
    \item In Section~\ref{sec5}, we revisit the adaptive extended golden-ratio
    primal--dual algorithm of~\cite{Soe2026} and remove the artificial upper
    bound $\tau_{\max}$ on the primal step-size. Indeed, using a simple inequality
    (Lemma~\ref{min a and b}), we prove that the adaptive step-size rule itself keeps
    the primal step-sizes bounded. We further establish global convergence and ergodic $\mathcal O(1/N)$ rates for both the objective residual and the feasibility violation. Unlike the bounds in \cite[Algorithm~3]{Soe2026}, our estimates do not depend on the artificial parameter $\tau_{\max}$.

    \item In Section~\ref{subsec:linear_convergence_results}, we establish linear convergence of Algorithm~\ref{algorithm 2} under the additional assumption that both $f$ and $g^*$ are strongly convex.
    
    \item In Section~\ref{sec:acc}, we develop two separate accelerated variants of Algorithm \ref{algorithm 2} under the
    strong convexity of $f$ and $h$. These schemes retain the
    non-monotone nature of the step-size rule and use local smoothness
    information of $h$. For the proposed accelerated algorithms (Algorithm \ref{algorithm 2 acclerated} and \ref{algorithm:hstrong_acc}), we prove improved ergodic $\mathcal O(1/N^2)$ convergence rates.

    \item Finally, in Section \ref{Num_section}, we present preliminary numerical experiments on
    the Poisson imaging problem. The results illustrate the robustness of the proposed step-size rules and the benefit of the proposed accelerated variants.
\end{itemize}

\section{Parameter free adaptive golden ratio primal-dual algorithm}\label{sec5}

In this section, we present a \emph{parameter-free} counterpart of the adaptive golden ratio algorithm~\cite{Soe2026}. We establish the global convergence of the generated iterates and the ergodic $\mathcal{O}(1/N)$ rates for the objective gap and the feasibility violation. Unless stated otherwise, we work under Assumption~\ref{assumption1} and the following one.

\begin{assumption}\label{assumption_5.1}
Suppose that $f$ and $g$ are proper, convex and lower semicontinuous, and that $h$ is convex and locally smooth.
\end{assumption}
For convenience, we introduce the local estimate
\begin{equation*}
L_n :=
\frac{\|\nabla h(x_n)-\nabla h(x_{n-1})\|}{\|x_n-x_{n-1}\|}
\quad\text{if }x_n\neq x_{n-1},\qquad n\ge 1.
\end{equation*}

\begin{algorithm}[H]
\caption{Parameter Free GRPDA (PF-GRPDA) for \eqref{main_prob}}\label{algorithm 2}
\KwIn{Choose $x_0\in\mathbb{H}_1$, $y_0\in\mathbb{H}_2$ and set $z_0=x_0$. Fix $\beta>0$, $\psi\in(1,\varphi]$, $\rho=\psi^{-1}+\psi^{-2}$. Let $(\alpha_n)_{n\ge0}\subset(\epsilon,1/3]$, where $\epsilon>0$. Choose $\theta_0=1$ and $\tau_0>0$.}

\For{$n=1,2,\ldots$}{
  \textbf{Step 1} (Compute)
  \begin{align}
    z_n
    &=\frac{\psi-1}{\psi}x_{n-1}+\frac{1}{\psi}z_{n-1}, \label{eqn:39a}\\
    x_n
    &=
    \prox_{\tau_{n-1} f}
    \bigl(
        z_n-\tau_{n-1}K^*y_{n-1}
        -\tau_{n-1}\nabla h(x_{n-1})
    \bigr). \label{eqn:39b}
  \end{align}

  \textbf{Step 2} (Update)
  \begin{align}
    \tau_n
    =
    \min\left\{
        \rho\tau_{n-1},
        \frac{\alpha_n\alpha_{n-1}\psi\theta_{n-1}}
        {(L_n^2+\beta\psi\|K\|^2)\tau_{n-1}}
    \right\},
    \qquad
    \sigma_n=\beta\tau_n . \label{eq:39c}
  \end{align}

  \textbf{Step 3} (Compute)
  \begin{align}
    w_n
    &=
    \prox_{\frac{1}{\sigma_n}g}
    \left(
        \frac{y_{n-1}}{\sigma_n}+Kx_n
    \right), \label{eq:w_n update}\\
    y_n
    &=
    y_{n-1}
    +
    \sigma_n(Kx_n-w_n). \label{eq:y_n update}
  \end{align}

  \textbf{Step 4} (Update)
  \begin{equation*}
    \theta_n=\frac{\psi\tau_n}{\tau_{n-1}}.
  \end{equation*}
}
\end{algorithm}

Before proving the convergence analysis, we record a few remarks that will be used repeatedly. 

\begin{remark}
In Algorithm~\ref{algorithm 2} we use the convention $\frac00=+\infty$. 
With this convention, if $x_n=x_{n-1}$, then the update~\eqref{eq:39c} reduces to $\tau_n=\rho\tau_{n-1}$. For a rigorous analysis, in the second term of \eqref{eq:39c}, we replace the coefficient by $\alpha_n\alpha_{n-1}$. In the original algorithm the step-size $(\tau_n)$ was computed with $\alpha=\frac{1}{9}$, see \cite[Algorithm 3]{Soe2026}.
\end{remark}

\begin{remark}\label{tau_n_bdd_above}
Unlike~\cite{Soe2026}, we do not impose any \emph{a priori} upper bound on the step-sizes. In fact, the rule~\eqref{eq:39c} already yields a uniform bound.
Let
\[
D_n \coloneqq L_n^2+\beta\psi\|K\|^2,
\qquad
c_n \coloneqq \frac{\psi\alpha_n\alpha_{n-1}\theta_{n-1}}{D_n}.
\]
Since $D_n\ge \beta\psi\|K\|^2$ for all $n$, the update~\eqref{eq:39c} can be written as
\begin{equation}\label{eq:tau_min_form}
\tau_n=\min\left\{\rho\tau_{n-1},~\frac{c_n}{\tau_{n-1}}\right\}.
\end{equation}
By applying Lemma~\ref{min a and b} to~\eqref{eq:tau_min_form} gives
\[
\tau_n \le \sqrt{\Big(\rho\tau_{n-1}\Big)\Big(\frac{c_n}{\tau_{n-1}}\Big)}
      =  \sqrt{\rho\,c_n}.
\]
Moreover, since $\theta_n\le 1+\frac{1}{\psi}$ for all $n$ and $D_n\ge \beta\psi\|K\|^2$, we have
\[
c_n=\frac{\psi\alpha_n\alpha_{n-1}\theta_{n-1}}{D_n}
\le \frac{\psi+1}{9\beta\psi\|K\|^2}
=:\,c_{\max}.
\]
Consequently,
\[
\tau_n \le U:=\sqrt{\rho\,c_{\max}}
\qquad \forall n\ge 1.
\]
\end{remark}

\begin{remark}\label{remark_5.2}
From the step-size rule~\eqref{eq:39c}, one always has
\[
\tau_n \le \frac{\psi\alpha_n\alpha_{n-1}\theta_{n-1}}{L_n^2+\beta\psi\|K\|^2}\,\frac{1}{\tau_{n-1}}.
\]
Using $\theta_n=\frac{\psi\tau_n}{\tau_{n-1}}$ and $\frac{\psi\theta_{n-1}}{\tau_{n-1}}=\frac{\theta_n\theta_{n-1}}{\tau_n}$, the above inequality can be rewritten as
\[
\tau_n \le \frac{\alpha_n\alpha_{n-1}\theta_n\theta_{n-1}}{L_n^2+\beta\psi\|K\|^2}\,\frac{1}{\tau_n}.
\]
In particular, we obtain the useful bounds
\[
\tau_nL_n \le\sqrt{\alpha_n\alpha_{n-1}\theta_n\theta_{n-1}}
\quad\text{and}\quad
\tau_n\|K\|\le\sqrt{\frac{\alpha_n\alpha_{n-1}\theta_n\theta_{n-1}}{\beta\psi}}.
\]
\end{remark}

\begin{lemma}\label{lemma_5.0.1}
Under Assumptions~\ref{assumption1} and \ref{assumption_5.1}, let $\{(z_n,x_n,w_n,y_n,\tau_n)\}$ be generated by Algorithm~\ref{algorithm 2}. Let $(x^*,w^*,y^*)\in\mathbf{\mathbf{\Pi}}$ be a saddle point of $\mathbb{L}$.
Then, for any $y\in\mathbb{H}_2$ and $n\ge 1$, we have
\begin{multline}\label{adap_4}
2\tau_n\mathbb{J}(x_n,w_n,y)+ \frac{\psi}{\psi-1}\|x^*-z_{n+2}\|^2 + \frac{1}{\beta}\|y-y_{n}\|^2+\alpha_n\theta_n\|x_n-x_{n+1}\|^2 \\
\le
\frac{\psi}{\psi-1}\|x^*-z_{n+1}\|^2 +\frac{1}{\beta}\|y-y_{n-1}\|^2 + \alpha_{n-1}\theta_{n-1}\|x_n-x_{n-1}\|^2- {\theta_n}\|x_n-z_{n+1}\|^2.
\end{multline}
\end{lemma}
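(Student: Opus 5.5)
The plan is the standard golden-ratio one-step argument. I combine the two proximal inequalities for $x_{n+1}$ and $x_n$, add the dual proximal inequality for $w_n$, and absorb the two cross terms with Remark~\ref{remark_5.2} and Young's inequality. Throughout I use the identity $x_n-z_n=\psi(x_n-z_{n+1})$, which follows from \eqref{eqn:39a} at index $n+1$.

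\textbf{Primal inequalities.} First apply Lemma~\ref{usefullemma3} to $x_{n+1}=\prox_{\tau_n f}(z_{n+1}-\tau_nK^*y_n-\tau_n\nabla h(x_n))$ with test point $x^*$:
\[
\tau_n\bigl(f(x_{n+1})-f(x^*)\bigr)\le\langle x_{n+1}-z_{n+1}+\tau_nK^*y_n+\tau_n\nabla h(x_n),\,x^*-x_{n+1}\rangle .
\]
Next apply it to $x_n=\prox_{\tau_{n-1}f}(\cdots)$ with test point $x_{n+1}$, and multiply by $\tau_n/\tau_{n-1}$. Since $x_n-z_n=\psi(x_n-z_{n+1})$ and $\theta_n=\psi\tau_n/\tau_{n-1}$, the multiplied inequality reads
\[
\tau_n\bigl(f(x_n)-f(x_{n+1})\bigr)\le\langle\theta_n(x_n-z_{n+1})+\tau_nK^*y_{n-1}+\tau_n\nabla h(x_{n-1}),\,x_{n+1}-x_n\rangle .
\]
Adding the two cancels $f(x_{n+1})$ and leaves $\tau_n(f(x_n)-f(x^*))$ on the left.

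For $h$, I use convexity, $\tau_n(h(x_n)-h(x^*))\le\tau_n\langle\nabla h(x_n),x_n-x^*\rangle$. The gradient terms then combine, leaving only the cross term $\tau_n\langle\nabla h(x_n)-\nabla h(x_{n-1}),x_{n+1}-x_n\rangle$.

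\textbf{Dual part.} Lemma~\ref{usefullemma3} applied to \eqref{eq:w_n update} with test point $w^*$ gives $g(w_n)-g(w^*)\le\langle y_n,w_n-w^*\rangle$, because $y_{n-1}+\sigma_n(Kx_n-w_n)=y_n$. Multiply this by $\tau_n$. Then use \eqref{eq:y_n update} in the form $\tau_n(Kx_n-w_n)=\tfrac1\beta(y_n-y_{n-1})$. Combining with the $K$-terms from the primal step, and using $Kx^*=w^*$, assembles $\tau_n\mathbb{J}(x_n,w_n,y)$ on the left. The right-hand side then contains:
\begin{itemize}
\item $\tfrac1\beta\langle y_n-y_{n-1},y-y_n\rangle$, which \eqref{eq:basic_identities_alt_a} turns into $\tfrac1{2\beta}(\|y-y_{n-1}\|^2-\|y-y_n\|^2-\|y_n-y_{n-1}\|^2)$;
\item the coupling term $\tau_n\langle K(x_{n+1}-x_n),\,y_{n-1}-y_n\rangle$.
\end{itemize}

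\textbf{Primal quadratic terms.} The terms $\langle x_{n+1}-z_{n+1},x^*-x_{n+1}\rangle$ and $\theta_n\langle x_n-z_{n+1},x_{n+1}-x_n\rangle$ are expanded with \eqref{eq:basic_identities_alt_a}. This produces $-\theta_n\|x_n-z_{n+1}\|^2$, $\|x^*-z_{n+1}\|^2-\|x^*-x_{n+1}\|^2$, and some $\|x_{n+1}-z_{n+1}\|^2$ and $\|x_{n+1}-x_n\|^2$ terms.

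To remove $\|x^*-x_{n+1}\|^2$, I use \eqref{eq:basic_identities_alt_b} on $z_{n+2}=\tfrac{\psi-1}{\psi}x_{n+1}+\tfrac1\psi z_{n+1}$. This gives
\[
\|x^*-x_{n+1}\|^2=\tfrac{\psi}{\psi-1}\|x^*-z_{n+2}\|^2-\tfrac{1}{\psi-1}\|x^*-z_{n+1}\|^2+\tfrac1\psi\|x_{n+1}-z_{n+1}\|^2 .
\]
The $\|x^*-z\|^2$ terms thus combine into $\tfrac{\psi}{\psi-1}(\|x^*-z_{n+1}\|^2-\|x^*-z_{n+2}\|^2)$. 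After multiplying everything by $2$, this matches \eqref{adap_4}. The leftover $\|x_{n+1}-z_{n+1}\|^2$ coefficients have the right sign because $\psi\le\varphi$, and the choice $\rho=\psi^{-1}+\psi^{-2}$ (hence $\theta_n\le1+1/\psi$) is what keeps these leftovers nonpositive.

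\textbf{Cross terms (main obstacle).} Two cross terms must be absorbed.
\begin{itemize}
\item For the gradient term, Remark~\ref{remark_5.2} gives $\tau_nL_n\le\sqrt{\alpha_n\alpha_{n-1}\theta_n\theta_{n-1}}$. Young's inequality then bounds $2\tau_nL_n\|x_n-x_{n-1}\|\|x_{n+1}-x_n\|$ by $\alpha_{n-1}\theta_{n-1}\|x_n-x_{n-1}\|^2+\alpha_n\theta_n\|x_{n+1}-x_n\|^2$. These are exactly the shifted Lyapunov terms in \eqref{adap_4}.
\item The $K$-coupling term $2\tau_n\|K\|\|x_{n+1}-x_n\|\|y_n-y_{n-1}\|$ is the harder one. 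The $\|y_n-y_{n-1}\|^2$ part is paid for by the negative $\tfrac1\beta\|y_n-y_{n-1}\|^2$ from the dual identity, using $\tau_n\|K\|\le\sqrt{\alpha_n\alpha_{n-1}\theta_n\theta_{n-1}/(\beta\psi)}$. The $\|x_{n+1}-x_n\|^2$ part must come from the primal negative terms.
\end{itemize}
I expect the bookkeeping in this last step to be the delicate part. Any extra $\alpha_n\theta_n\|x_{n+1}-x_n\|^2$ contributions have to be covered by the negative $\|x_{n+1}-x_n\|^2$ coefficient produced by the expansions, namely $\theta_n$ minus the golden-ratio leftovers. This is where $\alpha_n\le1/3$ should be used. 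If the split does not close directly, I would first try replacing $\|x_{n+1}-x_n\|^2$ with a convex combination of $\|x_{n+1}-z_{n+1}\|^2$ and $\|x_n-z_{n+1}\|^2$, spending a fraction of the $\theta_n\|x_n-z_{n+1}\|^2$ term if needed.
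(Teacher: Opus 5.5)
Your proposal is correct and follows essentially the same route as the paper's proof. It uses the same two primal prox inequalities (the second scaled by $\tau_n/\tau_{n-1}$ via $x_n-z_n=\psi(x_n-z_{n+1})$), the dual prox inequality together with the $y$-update identity, convexity of $h$, the identities of Lemma~\ref{UsefulLemma1}, and Remark~\ref{remark_5.2} with Young's inequality for the two cross terms.

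The bookkeeping you flagged closes directly, so no fallback is needed. Expanding $2\theta_n\langle x_n-z_{n+1},x_{n+1}-x_n\rangle$ gives exactly $-\theta_n\|x_{n+1}-x_n\|^2$; the golden-ratio leftovers only affect the $\|x_{n+1}-z_{n+1}\|^2$ coefficient, which is $\theta_n-1-\frac1\psi\le0$. The two $\alpha_n\theta_n$ charges therefore leave $-(1-2\alpha_n)\theta_n\le-\alpha_n\theta_n$, and the $\|y_n-y_{n-1}\|^2$ charge only needs $1-\alpha_{n-1}\theta_{n-1}/\psi>0$.
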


\begin{proof}
Starting from~\eqref{eqn:39b}, \eqref{eq:w_n update}, \eqref{eq:y_n update} and Lemma~\ref{usefullemma3}, we obtain
\begin{equation}\label{3.2.1}
\begin{aligned}
\tau_{n}\big(f(x_{n+1})-f(x^*)\big)
&\le \left\langle x_{n+1}-z_{n+1}+\tau_n K^*y_n+\tau_n\nabla h(x_n),\,x^*-x_{n+1}\right\rangle,\\
g(w_n)-g(w^*) &\le \langle y_n, \,w_n-\,w^*\rangle.
\end{aligned}
\end{equation}
Next, using the identity $x_n-z_n=\psi(x_n-z_{n+1})$, we similarly have
\begin{equation}\label{3.2.3}
\begin{aligned}
\tau_{n-1}\big(f(x_n)-f(x_{n+1})\big)
&\le \left\langle \psi(x_n-z_{n+1})+\tau_{n-1}K^*y_{n-1}+\tau_{n-1}\nabla h(x_{n-1}),\,x_{n+1}-x_n\right\rangle .
\end{aligned}
\end{equation}
Multiplying the second inequality in~\eqref{3.2.1} by $\tau_n$, multiplying~\eqref{3.2.3} by $\frac{\tau_n}{\tau_{n-1}}$, and adding the result to the first inequality in~\eqref{3.2.1} yields
\begin{multline}\label{3.2.5}
\tau_n\left(f(x_n)-f(x^*)+g(w_n)-g(w^*)\right)
\le \langle x_{n+1}-z_{n+1}, \,x^*-x_{n+1}\rangle\\
+ \theta_n\langle x_n-z_{n+1},\,x_{n+1}-x_n\rangle
+ \tau_n\langle K^*y_{n-1}, \,x_{n+1}-x_n\rangle
+ \tau_n\langle K^*y_n,\,x^*-x_{n+1}\rangle\\
+ \tau_n\langle y_n,\,w_n-w^*\rangle
+ \tau_n\langle \nabla h(x_{n-1}),\,x_{n+1}-x_n\rangle
+ \tau_n\langle \nabla h(x_n),\,x^*-x_{n+1}\rangle.
\end{multline}
Using the feasibility condition $Kx^*=w^*$, we have
\begin{align}\label{eq:Ky-collect}
&\langle K^*y_{n-1},\,x_{n+1}-x_n\rangle
  + \langle K^*y_n,\,x^*-x_{n+1}\rangle
  + \langle y_n,\,w_n-w^*\rangle \nonumber\\
&= \langle y_{n-1}-y_n,\,K(x_{n+1}-x_n)\rangle
   + \langle y_n,\,-Kx_n+w_n\rangle .
\end{align}
Likewise, the gradient terms satisfy
\begin{align}\label{eq:grad-collect}
&\langle \nabla h(x_{n-1}),\,x_{n+1}-x_n\rangle
  + \langle \nabla h(x_n),\,x^*-x_{n+1}\rangle \nonumber\\
&= \langle \nabla h(x_n)-\nabla h(x_{n-1}),\,x_n-x_{n+1}\rangle
   + \langle \nabla h(x_n),\,x^*-x_n\rangle .
\end{align}
Substituting~\eqref{eq:Ky-collect}--\eqref{eq:grad-collect} into~\eqref{3.2.5} gives
\begin{multline}\label{3.2.5_new}
\tau_n\!\left(f(x_n)-f(x^*)+g(w_n)-g(w^*)\right)
\le \langle x_{n+1}-z_{n+1},\,x^*-x_{n+1}\rangle\\
+ \theta_n\langle x_n-z_{n+1},\,x_{n+1}-x_n\rangle
+ \tau_n\langle y_n,\,-Kx_n+w_n\rangle
+ \tau_n\langle y_{n-1}-y_n,\,K(x_{n+1}-x_n)\rangle\\
+ \tau_n\langle \nabla h(x_n)-\nabla h(x_{n-1}),\,x_n-x_{n+1}\rangle
+ \tau_n\langle \nabla h(x_n),\,x^*-x_n\rangle.
\end{multline}
From the $y$-update~\eqref{eq:y_n update} with $\sigma_n=\beta\tau_n$, we also have the identity
\begin{equation}\label{eq:y-id}
\tau_n\langle y,\,Kx_n-w_n\rangle=\frac{1}{\beta}\langle y-y_n,\,y_n-y_{n-1}\rangle +\tau_n\langle y_n ,\, Kx_n-w_n\rangle.
\end{equation}
Adding~\eqref{eq:y-id} to~\eqref{3.2.5_new} yields
\begin{multline}\label{lemma4.1.0}
\tau_n\!\left(f(x_n)-f(x^*)+\langle y, Kx_n-w_n\rangle+g(w_n)-g(w^*)\right)
\le \langle x_{n+1}-z_{n+1},\,x^*-x_{n+1}\rangle\\
+ \theta_n\langle x_n-z_{n+1},\,x_{n+1}-x_n\rangle
+ \frac{1}{\beta}\langle y-y_n,\,y_n-y_{n-1}\rangle
+ \tau_n\langle y_n-y_{n-1},\, K(x_{n+1}-x_n)\rangle\\
+ \tau_n\langle \nabla h(x_n)-\nabla h(x_{n-1}),\,x_n-x_{n+1}\rangle
+ \tau_n\langle \nabla h(x_n),\,x^*-x_n\rangle.
\end{multline}
By the convexity of $h$, we have
\[
\tau_n\langle \nabla h(x_n),\,x^*-x_n\rangle\le \tau_n\bigl(h(x^*)-h(x_n)\bigr).
\]
Combining this estimate with~\eqref{lemma4.1.0} and recalling the definition of $\mathbb{J}$ in~\eqref{primal_dual_gap}, we obtain
\begin{multline}\label{lemma4.1_.1}
\tau_n\,\mathbb{J}(x_n,w_n,y)
\le \langle x_{n+1}-z_{n+1},\,x^*-x_{n+1}\rangle
+ \theta_n\langle x_n-z_{n+1},\,x_{n+1}-x_n\rangle\\
+ \frac{1}{\beta}\langle y-y_n, \, y_n-y_{n-1}\rangle
+ \tau_n\langle y_n-y_{n-1},\, K(x_{n+1}-x_n)\rangle\\
+ \tau_n\langle \nabla h(x_n)-\nabla h(x_{n-1}),\,x_n-x_{n+1}\rangle .
\end{multline}
Applying~Lemma \ref{UsefulLemma1}\eqref{eq:basic_identities_alt_a} to the first three terms on the right-hand side of~\eqref{lemma4.1_.1} gives
\begin{multline}\label{lemma4.1_2}
\tau_n\mathbb{J}(x_n,w_n, y)+  \frac{1}{2}\|x^*-x_{n+1}\|^2 + \frac{1}{2\beta}\|y-y_{n}\|^2
\le \frac{1}{2}\|x^*-z_{n+1}\|^2+ \frac{1}{2\beta}\|y-y_{n-1}\|^2 \\ 
-\frac{1}{2}\|x_{n+1}-z_{n+1}\|^2
-\frac{\theta_n}{2}\|x_n-z_{n+1}\|^2-\frac{\theta_n}{2}\|x_n-x_{n+1}\|^2
+ \frac{\theta_n}{2}\|x_{n+1}-z_{n+1}\|^2
-\frac{1}{2\beta}\|y_n-y_{n-1}\|^2 \\ 
+ \tau_n\langle y_n-y_{n-1},\, K(x_{n+1}-x_n)\rangle
+ \tau_n\langle \nabla h(x_n)-\nabla h(x_{n-1}),\, x_n-x_{n+1}\rangle.
\end{multline}
We now control the last two inner products. By Cauchy--Schwarz and Remark~\ref{remark_5.2}, we have
\begin{equation}\label{adap_2}
\begin{aligned}
2\tau_n\langle \nabla h(x_n)-\nabla h(x_{n-1}), x_n-x_{n+1}\rangle
&\le 2\tau_n\|\nabla h(x_n)-\nabla h(x_{n-1})\|\,\|x_n-x_{n+1}\| \\
&\le 2\tau_nL_n\|x_n-x_{n-1}\|\,\|x_n-x_{n+1}\|\\
&\le 2\sqrt{\alpha_n\alpha_{n-1}\theta_n\theta_{n-1}}\|x_n-x_{n-1}\|\,\|x_n-x_{n+1}\|\\
&\le \alpha_n\theta_n\|x_n-x_{n+1}\|^2+\alpha_{n-1}\theta_{n-1}\|x_n-x_{n-1}\|^2.
\end{aligned}
\end{equation}
Similarly, using Remark~\ref{remark_5.2} again gives
\begin{equation}\label{adap_3}
\begin{aligned}
2\tau_n\langle y_n-y_{n-1},\, K(x_{n+1}-x_n)\rangle
&\le 2\tau_n\|Kx_n-Kx_{n+1}\|\,\|y_n-y_{n-1}\| \\
&\le 2\tau_n\|K\|\,\|x_n-x_{n+1}\|\,\|y_n-y_{n-1}\|\\
&\le2 \sqrt{\frac{\alpha_n\alpha_{n-1}\theta_n\theta_{n-1}}{\beta\psi}}\|x_n-x_{n+1}\|\,\|y_n-y_{n-1}\|\\
&\le \alpha_n\theta_n\|x_n-x_{n+1}\|^2+\frac{\alpha_{n-1}\theta_{n-1}}{\beta\psi}\|y_n-y_{n-1}\|^2.
\end{aligned}
\end{equation}
From~\eqref{eqn:39a}, we have
\begin{equation}\label{derived_equality2}
\|x^*-x_{n+1}\|^2= \frac{\psi}{\psi-1}\|x^*-z_{n+2}\|^2 - \frac{1}{\psi-1}\|x^*-z_{n+1}\|^2+ \frac{1}{\psi}\|x_{n+1}-z_{n+1}\|^2.
\end{equation}
Substituting~\eqref{adap_2} and~\eqref{adap_3} into~\eqref{lemma4.1_2}, and then using~\eqref{derived_equality2}, we obtain
\begin{multline}\label{adap_5}
2\tau_n\mathbb{J}(x_n,w_n,y)+ \frac{\psi}{\psi-1}\|x^*-z_{n+2}\|^2 + \frac{1}{\beta}\|y-y_{n}\|^2
\le \frac{\psi}{\psi-1}\|x^*-z_{n+1}\|^2\\
+\frac{1}{\beta}\|y-y_{n-1}\|^2 
+ \left(\theta_n-1-\frac{1}{\psi}\right)\|x_{n+1}-z_{n+1}\|^2 
- {\theta_n}\|x_n-z_{n+1}\|^2 
+ \alpha_{n-1}\theta_{n-1}\|x_n-x_{n-1}\|^2\\
-\theta_n(1-2\alpha_n)\|x_n-x_{n+1}\|^2
-\frac{1}{\beta}\left(1-\frac{\alpha_{n-1}\theta_{n-1}}{\psi}\right)\|y_n-y_{n-1}\|^2.
\end{multline}
Finally, since $\tau_n\le \rho\tau_{n-1}$ for all $n$, we have $\theta_n=\dfrac{\psi\tau_n}{\tau_{n-1}}\le \psi\rho\le 1+\frac1\psi$.
Moreover, following $\epsilon<\alpha_n\le \frac13$~$\forall n$, we have
\begin{equation*}
1-\frac{\alpha_{n-1}\theta_{n-1}}{\psi}
\ge 1-\alpha_{n-1}\left(\frac{1}{\psi}+\frac{1}{\psi^2}\right)
\ge 1-\frac{1}{3}\left(\frac{1}{\psi}+\frac{1}{\psi^2}\right)
>0,
\quad \forall\psi\in(1,\varphi].
\end{equation*}
Together with $1-2\alpha_n\ge\alpha_n$, these observations reduce~\eqref{adap_5} to~\eqref{adap_4}, completing the proof of the lemma.
\end{proof}

\begin{lemma}\label{prop_1}
Let $(x^*,w^*,y^*)\in\mathbf{\Pi}$, and let
${(x_n,z_n,\tau_n,\theta_n)}$ be the sequence generated by
Algorithm~\ref{algorithm 2}. Suppose that $h$ is locally smooth. Then the
sequences $(x_n)$ and $(z_n)$ are bounded. Moreover, $(\tau_n)$ and $(\theta_n)$ are bounded away from zero.
\end{lemma}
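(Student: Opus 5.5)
The plan is to turn Lemma~\ref{lemma_5.0.1} into a Fej\'er-type monotonicity statement, read off boundedness of the iterates, and then convert the resulting bound on the local Lipschitz estimates into lower bounds on the step-sizes, using the uniform upper bound $U$ of Remark~\ref{tau_n_bdd_above}.

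\emph{Boundedness.} Apply Lemma~\ref{lemma_5.0.1} with $y=y^*$. By \eqref{Eq:measure}, $\mathbb{J}(x_n,w_n,y^*)\ge 0$, so the term $2\tau_n\mathbb{J}(x_n,w_n,y^*)$ can be dropped. Define
\[
a_n:=\frac{\psi}{\psi-1}\|x^*-z_{n+1}\|^2+\frac{1}{\beta}\|y^*-y_{n-1}\|^2+\alpha_{n-1}\theta_{n-1}\|x_n-x_{n-1}\|^2\ \ge 0 .
\]
Then \eqref{adap_4} reads $a_{n+1}\le a_n-\theta_n\|x_n-z_{n+1}\|^2$. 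Lemma~\ref{usefullemma2} shows that $(a_n)$ converges, so it is bounded by $a_1$. Hence $(z_n)$ and $(y_n)$ are bounded. Inverting \eqref{eqn:39a} gives
\[
x_n=\frac{\psi z_{n+1}-z_n}{\psi-1},
\]
so $(x_n)$ is bounded as well.

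\emph{Bounding the local estimates.} Let $B$ be the closed convex hull of $\{x_0,x_1,\dots\}$. It is compact because we are in finite dimensions. Local smoothness of $h$ then gives $L>0$ with $L_n\le L$ for all $n$. Consequently
\[
D_n\le D:=L^2+\beta\psi\|K\|^2 .
\]

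\emph{Step-sizes bounded away from zero.} This is the main point, and the obstacle is that the second term in \eqref{eq:39c} contains $1/\tau_{n-1}$ together with $\theta_{n-1}$, which themselves depend on earlier steps. The trick is to substitute $\theta_{n-1}=\psi\tau_{n-1}/\tau_{n-2}$ for $n\ge2$. The $\tau_{n-1}$ cancels, and the second term becomes
\[
\frac{\alpha_n\alpha_{n-1}\psi^2}{D_n\tau_{n-2}}\ \ge\ \frac{\epsilon^2\psi^2}{D\,\max\{U,\tau_0\}}=:c>0,
\]
where we use $\alpha_n>\epsilon$, $D_n\le D$, and $\tau_{n-2}\le \max\{U,\tau_0\}$ from Remark~\ref{tau_n_bdd_above}.

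Since $\psi\in(1,\varphi]$, we have $\rho=\psi^{-1}+\psi^{-2}\ge 1$. Therefore
\[
\tau_n\ge\min\{\tau_{n-1},c\}\quad\text{for } n\ge 2 .
\]
For $n=1$, since $\theta_0=1$, directly $\tau_1\ge\min\{\rho\tau_0,\ \epsilon^2\psi/(D\tau_0)\}>0$. Induction then yields $\tau_n\ge \underline\tau:=\min\{\tau_1,c\}>0$ for all $n\ge 1$.

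\emph{Consequence for $\theta_n$.} Finally,
\[
\theta_n=\frac{\psi\tau_n}{\tau_{n-1}}\ \ge\ \frac{\psi\,\underline\tau}{\max\{U,\tau_0\}}>0,
\]
so $(\theta_n)$ is bounded away from zero as well.
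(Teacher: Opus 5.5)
Your proof is correct and follows the paper's route: the Fej\'er inequality from Lemma~\ref{lemma_5.0.1} with $y=y^*$ gives boundedness, a compact hull gives $L\ge L_n$, and the cap $U$ from Remark~\ref{tau_n_bdd_above} then bounds $\tau_n$ below. The one difference is that you prove this lower bound directly, via $\theta_{n-1}=\psi\tau_{n-1}/\tau_{n-2}$ and induction, instead of citing \cite[Lemma~4.2]{tam2023bregman}; your constant $\min\{\tau_1,c\}$ (with $\max\{U,\tau_0\}$ inside $c$) is in fact more careful than the paper's stated $\underline\tau=\epsilon^2\psi^2/\bigl(U(L^2+\beta\psi\|K\|^2)\bigr)$, which fails at $n=1$ when $\tau_0$ is small, since $\tau_1\le\rho\tau_0$.
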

\begin{proof}
Since $(x^*, w^*, y^*)\in\mathbf{\Pi}$, we have $2\tau_n\mathbb{J}(x_n,w_n,y^*)\ge 0$ for all $n$. 
Therefore, applying~\eqref{adap_4} with $y=y^*$ and using Lemma~\ref{usefullemma2}, we deduce that $(z_n)$ is bounded. 
Then~\eqref{eqn:39a} implies that $(x_n)$ is also bounded. Let $B=\overline{\operatorname{Conv}}\{x^*,x_0,x_1,\ldots\}$ denote the closed convex hull of the iterates together with $x^*$. 
Then $B$ is closed and bounded, hence compact. Therefore, by local Lipschitz continuity of $\nabla h$, there exists $L>0$ such that $h$ is $L$-smooth on $B$, and in particular $L\ge L_n$ for all $n$. Hence combining Remark~\ref{tau_n_bdd_above} with~\cite[Lemma~4.2]{tam2023bregman}, we obtain the explicit lower bounds
\[
\tau_n\ge \underline \tau := \frac{\epsilon^2\psi^2}{U(L^2+\beta\psi\|K\|^2)}
\quad\text{and}\quad
\theta_n\ge \frac{\epsilon^2\psi^3}{U^2(L^2+\beta\psi \|K\|^2)},
\qquad \forall n.
\]
\end{proof}

\begin{theorem}\label{theorem:1}
Under Assumptions~\ref{assumption1} and \ref{assumption_5.1}, let $\{(z_n,x_n,w_n,y_n,\tau_n)\}$ be generated by Algorithm~\ref{algorithm 2}, and let $(x^*, w^*, y^*)\in\mathbf{\Pi}$. Then both sequences $\{(x_n,y_n)\}$ and $\{(z_n,y_n)\}$ converge to a solution of~\eqref{eq:saddle_fenchel}.
\end{theorem}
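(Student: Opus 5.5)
We use the standard Fejér-type scheme built on Lemma~\ref{lemma_5.0.1}. Define the energy
\[
a_n(y):=\frac{\psi}{\psi-1}\|x^*-z_{n+1}\|^2+\frac{1}{\beta}\|y-y_{n-1}\|^2+\alpha_{n-1}\theta_{n-1}\|x_n-x_{n-1}\|^2 .
\]
Taking $y=y^*$ in \eqref{adap_4} and using $\mathbb{J}(x_n,w_n,y^*)\ge0$ from \eqref{Eq:measure}, we get
\[
a_{n+1}(y^*)\le a_n(y^*)-\theta_n\|x_n-z_{n+1}\|^2-2\tau_n\mathbb{J}(x_n,w_n,y^*).
\]
Lemma~\ref{usefullemma2} then gives three things: $\lim a_n(y^*)$ exists, $\sum_n\theta_n\|x_n-z_{n+1}\|^2<\infty$, and $\sum_n\tau_n\mathbb{J}(x_n,w_n,y^*)<\infty$.

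Next we extract the vanishing quantities. Since $\theta_n\ge\underline\theta>0$ by Lemma~\ref{prop_1}, we have $\|x_n-z_{n+1}\|\to0$, and since $x_n-z_n=\psi(x_n-z_{n+1})$, also $\|x_n-z_n\|\to0$. By \eqref{eqn:39a}, $z_{n+1}-z_n=\frac{\psi-1}{\psi}(x_n-z_n)\to0$, so $x_{n+1}-x_n\to0$. For the dual residual we retain the discarded negative term $-\frac1\beta(1-\alpha_{n-1}\theta_{n-1}/\psi)\|y_n-y_{n-1}\|^2$ from \eqref{adap_5}; its coefficient is uniformly positive, so $\|y_n-y_{n-1}\|\to0$. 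Because $y_n-y_{n-1}=\sigma_n(Kx_n-w_n)$ and $\sigma_n=\beta\tau_n\ge\beta\underline\tau$, we conclude $Kx_n-w_n\to0$. Boundedness of $(y_n)$ follows from the bound on $a_n(y^*)$, and $(x_n),(z_n)$ are bounded by Lemma~\ref{prop_1}.

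For cluster points, the step-sizes lie in $[\underline\tau,U]$ by Remark~\ref{tau_n_bdd_above} and Lemma~\ref{prop_1}, and $\theta_n$ is bounded, so we may pass to a subsequence with $(x_{n_k},z_{n_k},y_{n_k},\tau_{n_k})\to(\hat x,\hat x,\hat y,\hat\tau)$, $\hat\tau>0$. We pass to the limit in the prox characterisation of Lemma~\ref{usefullemma3} for $x_{n_k+1}$, namely
\[
\tau_{n_k}(f(x_{n_k+1})-f(x))\le\langle x_{n_k+1}-z_{n_k+1}+\tau_{n_k}K^*y_{n_k}+\tau_{n_k}\nabla h(x_{n_k}),\,x-x_{n_k+1}\rangle ,
\]
using lsc of $f$, continuity of $\nabla h$, and the vanishing differences above. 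This yields $-K^*\hat y-\nabla h(\hat x)\in\partial f(\hat x)$. Similarly, $w_{n_k}\to K\hat x$, and the inequality $g(w_n)-g(w)\le\langle y_n,w_n-w\rangle$ (with lsc of $g$) gives $\hat y\in\partial g(K\hat x)$. Hence $(\hat x,K\hat x,\hat y)\in\mathbf\Pi$.

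To show the whole sequence converges, we apply the first step with $(x^*,w^*,y^*)$ replaced by the cluster point $(\hat x,K\hat x,\hat y)$; this is legitimate since Lemma~\ref{lemma_5.0.1} holds for any saddle point. Then $\lim_n a_n(\hat y)$, with $x^*=\hat x$, exists. Along the subsequence, $z_{n_k+1}\to\hat x$, $y_{n_k-1}\to\hat y$ (consecutive differences vanish), and $\alpha\theta\|x_n-x_{n-1}\|^2\to0$, so the limit is $0$. Therefore $z_n\to\hat x$ and $y_n\to\hat y$ along the full sequence, and $x_n\to\hat x$ because $\|x_n-z_n\|\to0$.

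The main obstacle is this last step. The Lyapunov function couples $x^*$ and $y$, and we must check that replacing $(x^*,y^*)$ by the cluster point preserves $\mathbb{J}(\cdot,\cdot,\hat y)\ge0$. It does, via \eqref{Eq:measure}, because \eqref{primal_dual_gap} is defined relative to the chosen saddle point. We also need the extra term $\alpha_{n-1}\theta_{n-1}\|x_n-x_{n-1}\|^2$ to vanish, which holds since $\theta_n$ is bounded and $x_n-x_{n-1}\to0$.
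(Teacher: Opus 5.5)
Your proof is correct and follows essentially the same route as the paper. Both arguments use the Fej\'er-type energy $d_n(y)$ from Lemma~\ref{lemma_5.0.1}, the vanishing of $x_n-z_n$ and $x_n-x_{n-1}$, identification of a cluster point as a saddle point by passing to the limit in the proximal inequalities, and then re-centring the energy at that cluster point. Keeping the term $\frac{1}{\beta}\bigl(1-\frac{\alpha_{n-1}\theta_{n-1}}{\psi}\bigr)\|y_n-y_{n-1}\|^2$ from \eqref{adap_5} is a real gain in rigour. The paper's proof needs $y_n-y_{n-1}\to0$, both in the limit of its $g^*$-inequality and to get $d_{n_k}(\widetilde y)\to0$ (which involves $y_{n_k-1}$), but never derives it.
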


\begin{proof}
Since $(x^*,w^*,y^*)$ is a saddle point of $\mathbb{L}$, we have $2\tau_n\mathbb{J}(x_n,w_n,y^*)\ge 0$ for all $n\ge 1$.
Thus~\eqref{adap_4} can be rewritten in the form
\[
d_{n+1}(y)\le d_n(y)-q_n,\qquad \forall n\ge 1,
\]
where
\begin{equation}\label{Theo4.1_1}
\begin{aligned}
d_n(y) &\coloneqq \frac{\psi}{\psi-1}\|x^*-z_{n+1}\|^2+\frac{1}{\beta}\|y-y_{n-1}\|^2+\alpha_{n-1}\theta_{n-1}\|x_n-x_{n-1}\|^2,\\
q_n &\coloneqq \theta_n\|x_n-z_{n+1}\|^2.
\end{aligned}
\end{equation}
By Lemma~\ref{usefullemma2}, $\lim_{n\to\infty}d_n(y)$ exists and is finite, and moreover $\lim_{n\to\infty}q_n= 0$.
Using Lemma~\ref{prop_1}, we therefore obtain
\begin{equation}\label{Theorem4.1_2}
\lim_{n\to\infty}\|x_n-z_{n+1}\|^2
=\lim_{n\to\infty}\frac{1}{\psi^2}\|x_n-z_n\|^2
=0.
\end{equation}
Combining~\eqref{Theorem4.1_2} with Lemma~\ref{prop_1} and the triangle inequality yields
\[
\lim_{n\to\infty}\|x_n-x_{n-1}\|^2=0.
\]
Since $\lim_{n\to\infty}d_n(y)$ exists and is finite, and~\eqref{Theorem4.1_2} holds, it follows that the sequences $\{x_n\}$, $\{y_n\}$, and $\{z_n\}$ are bounded. Let $(\widetilde x,\widetilde y)$ be any cluster point of $\{(x_n,y_n)\}$, and let $\{(x_{n_k},y_{n_k})\}$ be a subsequence converging to $(\widetilde x,\widetilde y)$, that is $\lim_{k\to\infty}x_{n_k}= \widetilde x$ and $\lim_{k\to\infty}y_{n_k}= \widetilde y$. From~\eqref{Theorem4.1_2} we also have $z_{n_k}\to \widetilde x$.
Using~\eqref{eqn:39b}, \eqref{eq:w_n update}, \eqref{eq:y_n update}, \eqref{3.2.1}, \eqref{3.2.3}, Lemma~\ref{usefullemma3} and Lemma~\ref{prop_1}, we obtain that for all $(x,y)\in\mathbb{H}_1\times\mathbb{H}_2$,
\begin{equation*}
\begin{aligned}
\langle x_{n_k+1}-z_{n_k+1}+\tau_{n_k}K^*y_{n_k}+\tau_{n_k}\nabla h(x_{n_k}),\,x-x_{n_k+1}\rangle 
&\ge \tau_{n_k-1}\bigl(f(x_{n_k+1})-f(x)\bigr),\\
\left\langle \frac{1}{\beta}(y_{n_k}-y_{n_k-1})-\tau_{n_k}Kx_{n_k},\,y-y_{n_k}\right\rangle
&\ge \tau_{n_k}\bigl(g^*(y_{n_k})-g^*(y)\bigr).
\end{aligned}
\end{equation*}
Since $f$ and $g^*$ are lower semicontinuous, letting $k\to\infty$ gives
\begin{equation*}
\langle K^*\widetilde y+\nabla h(\widetilde x),\, x-\widetilde x \rangle\ge f(\widetilde x)-f(x),
\qquad
-\langle K\widetilde x, \,y-\widetilde y \rangle\ge g^*(\widetilde y)-g^*(y).
\end{equation*}
These inequalities show that $(\widetilde x,\widetilde y)$ solves~\eqref{eq:saddle_fenchel}. Finally, since Lemma~\ref{lemma_5.0.1} holds for any saddle point in $\mathbf{\Pi}$, we may set $(\bar x,\bar y)=(\widetilde x,\widetilde y)$ in~\eqref{Theo4.1_1} to conclude that $\lim_{k\to\infty}d_{n_k}(\widetilde y)=0$.
Because $\lim_{n\to\infty}d_n(\widetilde y)$ exists, this implies $\lim_{n\to\infty}d_n(\widetilde y)=0$, hence $z_n\to \widetilde x$ and $y_n\to \widetilde y$. Now using~\eqref{Theorem4.1_2} once more yields $x_n\to \widetilde x$.
This completes the proof.
\end{proof}

\subsection{Sublinear rate of convergence}

We next demonstrate the ergodic sublinear rate of Algorithm~\ref{algorithm 2} in terms of the objective residual and feasibility violation. For $N\ge 1$, define the ergodic averages
\begin{equation}\label{x_ntilde and w_ntilde}
\widetilde{x}_N\coloneqq \frac{1}{N}\sum_{n=1}^{N} x_n,
\qquad
\widetilde{w}_N\coloneqq \frac{1}{N}\sum_{n=1}^{N} w_n.
\end{equation}

\begin{theorem}[Sublinear rate of convergence]\label{sublinear rate of convergence 1}\leavevmode\par
Under Assumptions~\ref{assumption1}, \ref{assumption_5.1}, let $\{(z_n,x_n,w_n,y_n,\tau_n)\}$ be the sequence generated by Algorithm~\ref{algorithm 2}, and let $(\bar x,\bar w,\bar y)\in\mathbf{\Pi}$.
Then there exists a constant $P_1>0$ such that
\[
\bigl|\Phi(\widetilde{x}_N,\widetilde{w}_N)-\Phi(\bar x,\bar w)\bigr|\le \frac{P_1}{N},
\qquad
\|K\widetilde{x}_N-\widetilde{w}_N\|\le \frac{2P_1}{bN},
\]
for all $N\ge 1$, where $b>0$ satisfies $b\ge 2\|\bar y\|$.
\end{theorem}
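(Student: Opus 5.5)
We will derive the rate from Lemma~\ref{lemma_5.0.1}. The inequality \eqref{adap_4} is weighted by $\tau_n$, while the averages \eqref{x_ntilde and w_ntilde} are uniform. We handle this with the lower bound $\tau_n\ge\underline\tau$ from Lemma~\ref{prop_1}, applied after using nonnegativity. Fix $y\in\mathbb{H}_2$. Drop the nonpositive term $-\theta_n\|x_n-z_{n+1}\|^2$ in \eqref{adap_4} and sum over $n=1,\dots,N$. The quantities $d_n(y)$ of \eqref{Theo4.1_1} telescope, which gives
\[
2\sum_{n=1}^N \tau_n\,\mathbb{J}(x_n,w_n,y)\le d_1(y)=\frac{\psi}{\psi-1}\|\bar x-z_2\|^2+\frac1\beta\|y-y_0\|^2+\alpha_0\theta_0\|x_1-x_0\|^2 .
\]

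We cannot divide directly by $\sum\tau_n$ and use convexity, because the averages are unweighted and $\mathbb{J}(x_n,w_n,y)$ can be negative for general $y$. The plan is therefore to restrict to $y$ for which each term is nonnegative.

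\textbf{Step 1: Lower estimate.} Take $y=\bar y$. By \eqref{Eq:measure}, $\mathbb{J}(x_n,w_n,\bar y)\ge0$, so $\tau_n\mathbb{J}\ge\underline\tau\,\mathbb{J}$. The function $\mathbb{J}(\cdot,\cdot,\bar y)$ is convex, so Jensen's inequality gives
\[
0\le \mathbb{J}(\widetilde x_N,\widetilde w_N,\bar y)\le \frac{d_1(\bar y)}{2\underline\tau N}.
\]
The left inequality already yields the lower bound $\Phi(\widetilde x_N,\widetilde w_N)-\Phi(\bar x,\bar w)\ge -\|\bar y\|\,\|K\widetilde x_N-\widetilde w_N\|$.

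\textbf{Step 2: Feasibility.} Choose $y=\bar y+b\,\frac{K\widetilde x_N-\widetilde w_N}{\|K\widetilde x_N-\widetilde w_N\|}$, which is well defined when the residual is nonzero. For this $y$,
\[
\mathbb{J}(x_n,w_n,y)=\mathbb{J}(x_n,w_n,\bar y)+b\langle e,\,Kx_n-w_n\rangle ,
\]
where $e$ denotes the unit vector. The second term is linear, so averaging it needs no sign condition. We use the bound $\tau_n\le U$ (Remark~\ref{tau_n_bdd_above}) and $\tau_n\ge\underline\tau$.

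The main obstacle is the mixed weighting $\sum\tau_n\langle e,Kx_n-w_n\rangle$ versus $N\langle e,K\widetilde x_N-\widetilde w_N\rangle$. A clean way around it is to use the identity $\tau_n(Kx_n-w_n)=\frac1\beta(y_n-y_{n-1})$ from \eqref{eq:y_n update}. Then the unweighted sum is
\[
\sum_{n=1}^N (Kx_n-w_n)=\frac1\beta\sum_{n=1}^N \frac{y_n-y_{n-1}}{\tau_n}.
\]
Abel summation expresses this through the $y_n$, which are bounded (Theorem~\ref{theorem:1}), together with $\sum_n|1/\tau_n-1/\tau_{n-1}|$. That variation is not obviously bounded.

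So I would first try the following simpler route. Apply the dropped-term summation with general $y$, but only after dividing each inequality \eqref{adap_4} by $\tau_n$, which turns it into an unweighted inequality. This costs factors $1/\tau_n\le 1/\underline\tau$ on the differences $\frac{1}{\beta}(\|y-y_{n-1}\|^2-\|y-y_n\|^2)$. Expanding, each difference equals $\frac1\beta(\|y_{n-1}\|^2-\|y_n\|^2)+\frac2\beta\langle y,y_n-y_{n-1}\rangle$, and the $y$-linear part is exactly $2\tau_n\langle y,Kx_n-w_n\rangle$, which cancels. What remains is bounded by a constant independent of $y$, because all iterates are bounded. Hence $N\,\mathbb{J}(\widetilde x_N,\widetilde w_N,y)\le C_0$ for all $y$ in the ball $\|y-\bar y\|\le b$.

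\textbf{Step 3: Conclusion.} With the directional choice of $y$, and since $b\ge2\|\bar y\|$, we get
\[
\Phi(\widetilde x_N,\widetilde w_N)-\Phi(\bar x,\bar w)+(b-\|\bar y\|)\|K\widetilde x_N-\widetilde w_N\|\le \frac{P_1}{N}.
\]
Combining this with Step 1 gives $\tfrac b2\|K\widetilde x_N-\widetilde w_N\|\le P_1/N$, i.e. the feasibility bound $\frac{2P_1}{bN}$. It also gives both sides of the objective bound. Set $P_1$ to the maximum of the resulting constants, all built from $\underline\tau$, $U$, $\beta$, $\psi$ and bounds on the iterates, none involving $\tau_{\max}$.
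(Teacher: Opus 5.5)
Your Step~1 is correct, and you identified the real obstacle, but the ``simpler route'' in Step~2 does not get around it, so the feasibility bound is not established. Write $r_N:=K\widetilde x_N-\widetilde w_N$. Since $y_n-y_{n-1}=\beta\tau_n(Kx_n-w_n)$, one has $[d_n(y)-d_{n+1}(y)]-[d_n(\bar y)-d_{n+1}(\bar y)]=2\tau_n\langle y-\bar y,Kx_n-w_n\rangle$. This is exactly $2\tau_n\mathbb J(x_n,w_n,y)-2\tau_n\mathbb J(x_n,w_n,\bar y)$. Hence \eqref{adap_4} for an arbitrary $y$ is equivalent to the same inequality with $y=\bar y$. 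The cancellation you describe therefore removes precisely the term $\langle y,Kx_n-w_n\rangle$ that carries the feasibility information.

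After dividing by $\tau_n$ you are left with one of two things:
\begin{itemize}
\item $2\sum_{n\le N}(\Phi(x_n,w_n)-\Phi(\bar x,\bar w))\le\sum_{n\le N}\tau_n^{-1}\bigl(d_n(0)-d_{n+1}(0)-q_n\bigr)$. Here the summands have no sign, so with the varying weights $\tau_n^{-1}$ nothing telescopes. Boundedness of the iterates then gives only $\mathcal O(N)$, and in any case only the objective, not $\mathbb J(\cdot,\cdot,y)$, appears on the left.
\item Expanding around $\bar y$ instead, $2N\,\mathbb J(\widetilde x_N,\widetilde w_N,y)\le d_1(\bar y)/\underline\tau+2N\langle y-\bar y,r_N\rangle$. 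This is just Step~1 again.
\end{itemize}
Neither yields $N\mathbb J(\widetilde x_N,\widetilde w_N,y)\le C_0$ uniformly on a ball. For uniform averages you genuinely have to control $\sum_{n\le N}(Kx_n-w_n)=\beta^{-1}\sum_{n\le N}\tau_n^{-1}(y_n-y_{n-1})$. That needs something like $\sum_n|\tau_n^{-1}-\tau_{n-1}^{-1}|<\infty$, and the non-monotone rule \eqref{eq:39c} supplies no such bound.

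You cannot repair this by following the paper either. Its proof replaces $\tau_n$ by the lower bound $\underline\tau$ (called $\Delta$ there) in $2\tau_n\mathbb J(x_n,w_n,y)\le d_n(y)-d_{n+1}(y)$ for all $\|y\|\le b$. That step is legitimate only when $\mathbb J(x_n,w_n,y)\ge0$, which is exactly the sign condition you rightly refused to assume; it holds for $y=\bar y$ but not in general.

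What does go through is the $\tau_n$-weighted version. Set $T_N:=\sum_{n\le N}\tau_n\ge N\underline\tau$, $\widehat x_N:=T_N^{-1}\sum_{n\le N}\tau_nx_n$ and $\widehat w_N:=T_N^{-1}\sum_{n\le N}\tau_nw_n$. Telescoping and Jensen give $2T_N\mathbb J(\widehat x_N,\widehat w_N,y)\le d_1(y)$ for every $y$, with no sign condition. Taking the supremum over the ball $\|y\|\le b$, as the paper does, then yields the stated $\mathcal O(1/N)$ bounds for $(\widehat x_N,\widehat w_N)$ with $P_1=\sup_{\|y\|\le b}d_1(y)/(2\underline\tau)$. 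This is the same device the paper uses, with weights $\beta_n\tau_n$, in Section~\ref{sec:acc}.

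Separately, fix a slip in Step~3. From $\Phi(\widetilde x_N,\widetilde w_N)-\Phi(\bar x,\bar w)+(b-\|\bar y\|)\|r_N\|\le P_1/N$ and Step~1 you only get $(b-2\|\bar y\|)\|r_N\|\le P_1/N$, which is vacuous when $b=2\|\bar y\|$. Instead keep $\langle\bar y,r_N\rangle$ and use $\mathbb J(\cdot,\cdot,\bar y)\ge0$ directly, which gives $b\|r_N\|\le P_1/N$.
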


\begin{proof}
Fix any $y\in\mathbb{H}_2$. From~\eqref{Theo4.1_1} and~\eqref{adap_4}, we have
\begin{equation*}
2\tau_n\mathbb{J}(x_n,w_n,y)\le d_n(y)-d_{n+1}(y),
\qquad \forall n.
\end{equation*}
By Lemma~\ref{prop_1}, there exists $\Delta>0$ such that $\tau_n\ge \Delta$ for all $n$, where $\Delta=\frac{\epsilon^2\psi^2}{U(L^2+\beta\psi\|K\|^2)}$. Hence
\[
2\Delta\,\mathbb{J}(x_n,w_n,y)\le d_n(y)-d_{n+1}(y),
\qquad \forall n.
\]
Summing from $n=1$ to $N$ yields
\begin{equation*}
\begin{aligned}
2\Delta\sum_{n=1}^{N}\mathbb{J}(x_n,w_n,y)
&\le d_1(y)-d_{N+1}(y)
\le d_1(y)\\
&= \frac{\psi}{\psi-1}\,\|\bar x-z_{2}\|^2
  + \frac{1}{\beta}\,\|y-y_0\|^2
  + \alpha_0\theta_0\,\|x_1-x_0\|^2 .
\end{aligned}
\end{equation*}
Using the definition of $(\widetilde x_N,\widetilde w_N)$ in~\eqref{x_ntilde and w_ntilde} and the joint convexity of $\mathbb{J}(\cdot,\cdot,y)$ in $(x,w)$ for fixed $y$, we obtain
\begin{equation}\label{Roc_3}
\begin{aligned}
\mathbb{J}(\widetilde{x}_N,\widetilde{w}_N,y)
&= \Phi(\widetilde{x}_N,\, \widetilde{w}_N)+ \langle y, K\widetilde{x}_N-\widetilde{w}_N \rangle - \Phi(\bar x,\, \bar w) \\ 
&\le \frac{1}{N}\sum_{n=1}^{N} \mathbb{J}(x_n,w_n,y)
\le \frac{d_1(y)}{2\Delta N}.
\end{aligned}
\end{equation}
Let $S_3\coloneqq \frac{\psi}{\psi-1}\|\bar x-z_2\|^2+\frac{1}{\beta}(b+\|y_0\|)^2+\alpha_0\theta_0\|x_1-x_0\|^2$, so that $d_1(y)\le S_3$ whenever $\|y\|\le b$. Taking the maximum of~\eqref{Roc_3} over $\|y\|\le b$ gives
\begin{equation}\label{Roc_4_new}
\Phi(\widetilde{x}_N,\, \widetilde{w}_N)+ b\|K\widetilde{x}_N-\widetilde{w}_N \| - \Phi(\bar x,\, \bar w)\le\frac{P_1}{N},
\end{equation}
where $P_1=\frac{S_3}{2\Delta}$. Since $(\bar x,\bar w,\bar y)$ is a saddle point of $\mathbb{L}$, we have $\mathbb{L}(\bar x,\bar w,\bar y)\le \mathbb{L}(\widetilde x_N,\widetilde w_N,\bar y)$. Using $K\bar x=\bar w$ and $\|\bar y\|\le\frac{b}{2}$, we obtain
\begin{equation}\label{Roc_5_new}
\Phi(\bar x,\, \bar w)-\Phi(\widetilde{x}_N,\,\widetilde{w}_N)
\le \langle \bar y,\, K\widetilde{x}_N-\widetilde{w}_N \rangle
\le \frac{b}{2}\|K\widetilde{x}_N-\widetilde{w}_N \|.
\end{equation}
Combining~\eqref{Roc_4_new} and~\eqref{Roc_5_new} yields
\[
b\|K\widetilde{x}_N-\widetilde{w}_N\|
\le \frac{b}{2}\|K\widetilde{x}_N-\widetilde{w}_N\|+\frac{P_1}{N},
\]
hence $\|K\widetilde{x}_N-\widetilde{w}_N\|\le \frac{2P_1}{bN}$. Substituting this estimate into~\eqref{Roc_4_new} together with~\eqref{Roc_5_new} gives
\[
\bigl|\Phi(\widetilde{x}_N,\, \widetilde{w}_N)- \Phi(\bar x,\,\bar w)\bigr|
\le \frac{P_1}{N}.
\]
\end{proof}

\section{Linear convergence results}
\label{subsec:linear_convergence_results}

In this section, we establish linear convergence results for PF-GRPDA
(Algorithm~\ref{algorithm 2}). Throughout, we take $\sigma_n=\beta\tau_n,~\rho=\psi^{-1}+\psi^{-2}.$ Let $\psi_0\approx1.32472$ be the unique real root of
$\psi^3-\psi-1=0$. We assume $\psi\in(\psi_0,\varphi]$, then $\psi>\rho$, and this strict inequality is the key ingredient used below to obtain a contraction.

\medskip
Let $(x^*,w^*,y^*)\in\mathbf{\Pi}$ be a saddle point. Recall that $w^*=Kx^*$, and
that the residual introduced in \eqref{primal_dual_gap} is
\[
    \mathbb J(x,w,y)
    =
    \Phi(x,w)-\Phi(x^*,w^*)
    +
    \langle y,Kx-w\rangle .
\]
By the saddle-point property $\mathbb J(x,w,y^*)\ge0$, $\forall (x,w)\in\mathbb H_1\times\mathbb H_2.$

\medskip
We first consider the case where both nonsmooth component functions in \eqref{main_prob} provide curvature, that is 
$f$ is $\mu_f$-strongly convex and $g^*$ is
$\mu_{g^*}$-strongly convex. Recall that $f$ is called $\mu$- strongly convex if \begin{equation}\label{defn: strng f}
    f(u)\ge f(v)+\langle \xi, u-v\rangle + \frac{\mu}{2}\|u-v\|^{2}
    \quad \forall u,v \in\mathbb{H}_1,\ \forall \xi \in\partial f(v).
\end{equation}

\begin{assumption}\label{ass:linear_sc}
Suppose that 
\begin{enumerate}[label=\textup{(\roman*)}]
    \item $h:\mathbb H_1\to\mathbb R$ is convex and locally smooth;
    \item $f$ is $\mu_f$-strongly convex and $g^*$ is $\mu_{g^*}$-strongly convex.
\end{enumerate}
\end{assumption}

\begin{lemma}\label{lemma:linear_step-size_bounds}
Let Assumptions \ref{assumption1} and \ref{ass:linear_sc} hold, and let
$\{(z_n,x_n,w_n,y_n,\tau_n)\}$ be generated by
Algorithm~\ref{algorithm 2}. Then there exist constants $\underline \tau, \overline{\tau}>0$ such that $\underline\tau\le\tau_n\le\overline\tau$. Consequently, $(\theta_n)$ is also bounded below and above by positive constants. 
\end{lemma}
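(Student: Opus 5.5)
The plan is to recognise that the lemma does not use strong convexity in any essential way: Assumption~\ref{ass:linear_sc} implies Assumption~\ref{assumption_5.1}, since a strongly convex proper lsc function is in particular proper, convex and lsc, and $g$ is proper convex lsc because $g^*$ is. So everything established for Algorithm~\ref{algorithm 2} in Section~\ref{sec5} is available, and the two-sided bound on $(\tau_n)$ will follow from Remark~\ref{tau_n_bdd_above} and Lemma~\ref{prop_1}.

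\textbf{Upper bound.} I will set $\overline\tau:=\max\{\tau_0,U\}$, with $U=\sqrt{\rho c_{\max}}$ from Remark~\ref{tau_n_bdd_above}. That remark applies Lemma~\ref{min a and b} to the min-form \eqref{eq:tau_min_form}. It uses only $\theta_{n-1}\le 1+\frac1\psi$, $\alpha_n\alpha_{n-1}\le\frac19$ and $D_n\ge\beta\psi\|K\|^2$, so it needs no curvature. One point to check is that $K\neq0$. If $K=0$, I would instead bound $c_n$ through $L_n^2$, using the local Lipschitz constant $L$ on the compact hull and the lower bound on $L_n$ obtained below. Alternatively, I would simply note that $K\neq0$ is implicitly assumed throughout, since the remark already divides by $\|K\|^2$.

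\textbf{Lower bound.} This is the step I expect to be the main obstacle, because it requires boundedness of the iterates so that a single Lipschitz constant $L\ge L_n$ exists. I will take it from Lemma~\ref{prop_1}. Lemma~\ref{lemma_5.0.1} with $y=y^*$, combined with $\mathbb J(x_n,w_n,y^*)\ge0$ and Lemma~\ref{usefullemma2}, gives boundedness of $(z_n)$, and hence of $(x_n)$. On $B=\overline{\operatorname{conv}}\{x^*,x_0,x_1,\dots\}$ we then have $L_n\le L$.

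To make the induction self-contained, I will argue as follows. If $\tau_n$ is attained by the second branch, then
\[
\tau_n=\frac{\alpha_n\alpha_{n-1}\psi\theta_{n-1}}{D_n\tau_{n-1}}=\frac{\alpha_n\alpha_{n-1}\psi^2}{D_n\tau_{n-2}}\ge\frac{\epsilon^2\psi^2}{(L^2+\beta\psi\|K\|^2)\,\overline\tau}.
\]
If instead $\tau_n$ is attained by the first branch, then $\tau_n=\rho\tau_{n-1}$. If $\rho\ge1$ this does not decrease $\tau_n$. If $\rho<1$, the growth and shrink steps need to be combined, which is exactly \cite[Lemma~4.2]{tam2023bregman}. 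I would therefore take $\underline\tau$ as the minimum of $\tau_0$, $\tau_1$ and the constant in Lemma~\ref{prop_1}, adjusted by a factor $\rho$ if necessary.

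\textbf{Bounds on $\theta_n$.} These follow directly. We have $\theta_n=\psi\tau_n/\tau_{n-1}\le\psi\rho\le 1+\frac1\psi$ and $\theta_n\ge\psi\underline\tau/\overline\tau>0$.
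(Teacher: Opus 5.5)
Your proposal is correct and follows the same route as the paper, which simply invokes Remark~\ref{tau_n_bdd_above} for the upper bound and Lemma~\ref{prop_1} (bounded iterates plus local Lipschitz continuity of $\nabla h$) for the lower bound. Your explicit two-case induction, with $\overline\tau=\max\{\tau_0,U\}$ and $\tau_0,\tau_1$ included in the minimum, is slightly more careful than the constant recorded in Lemma~\ref{prop_1}, and two small points should be tidied up: the case $\rho<1$ never arises, since $\psi\le\varphi$ gives $\rho=\psi^{-1}+\psi^{-2}\ge1$, so no appeal to \cite{tam2023bregman} is needed; and your first fallback for $K=0$ is unavailable, because nothing gives a lower bound on $L_n$ (with $K=0$, $h\equiv0$ and $\psi<\varphi$ one gets $\tau_n=\rho^n\tau_0\to\infty$), so the bound genuinely relies on $K\neq0$, as the paper implicitly assumes.
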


\begin{proof}
By Remark~\ref{tau_n_bdd_above}, Lemma~\ref{lemma_5.0.1}, and
Lemma~\ref{prop_1}, the step-sizes generated by Algorithm~\ref{algorithm 2}
satisfy $0<\underline\tau\le\tau_n\le\overline\tau<+\infty$. Since $\theta_n=\frac{\psi\tau_n}{\tau_{n-1}}$, it follows that $\frac{\psi\underline\tau}{\overline\tau}\le\theta_n\le\frac{\psi\overline\tau}{\underline\tau}$.
\end{proof}

\begin{lemma}\label{lemma:linear_one_step}
Under Assumptions~\ref{assumption1} and \ref{ass:linear_sc}, let
$\{(z_n,x_n,w_n,y_n,\tau_n)\}$ be generated by Algorithm~\ref{algorithm 2}. Suppose that $(x^*, w^*, y^*)\in\mathbf{\Pi}$. Then
\begin{align}
2\tau_n\mathbb J(x_n,w_n,y^*)&+ \frac{\psi(1+\mu_f\tau_n)}{\psi-1}
    \|z_{n+2}-x^*\|^2
+
\left(
    \frac1\beta+\mu_{g^*}\tau_n
\right)
    \|y_n-y^*\|^2
\nonumber\\
&\quad
+
\bigl(\alpha_n\theta_n+\mu_f\tau_n\bigr)
    \|x_{n+1}-x_n\|^2
+
\theta_n\|x_n-z_{n+1}\|^2
\nonumber\\
&\le
\frac{\psi+\mu_f\tau_n}{\psi-1}
    \|z_{n+1}-x^*\|^2
+
\frac1\beta
    \|y_{n-1}-y^*\|^2
+
\alpha_{n-1}\theta_{n-1}
    \|x_n-x_{n-1}\|^2 .
\label{eq:linear_one_step_J}
\end{align}
\end{lemma}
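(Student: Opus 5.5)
The plan is to repeat the proof of Lemma~\ref{lemma_5.0.1} with $y=y^*$, keeping the extra quadratic terms that strong convexity of $f$ and $g^*$ adds to the proximal inequalities. I then track how these terms pass through the subsequent identities.

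\textbf{Step 1 (strengthened prox inequalities).} If $f$ is $\mu_f$-strongly convex, the characterisation in Lemma~\ref{usefullemma3} strengthens to
\[
\lambda\bigl(f(u)-f(z)\bigr)\le\langle u-x,z-u\rangle-\tfrac{\lambda\mu_f}{2}\|z-u\|^2 .
\]
This follows because $u=\prox_{\lambda f}(x)$ gives $(x-u)/\lambda\in\partial f(u)$, and then \eqref{defn: strng f} applies.

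\begin{itemize}
\item In the first inequality of \eqref{3.2.1} ($u=x_{n+1}$, $z=x^*$, $\lambda=\tau_n$) this adds $-\tfrac{\tau_n\mu_f}{2}\|x^*-x_{n+1}\|^2$ on the right.
\item In \eqref{3.2.3} ($u=x_n$, $z=x_{n+1}$, $\lambda=\tau_{n-1}$), after multiplying by $\tau_n/\tau_{n-1}$, it adds $-\tfrac{\tau_n\mu_f}{2}\|x_{n+1}-x_n\|^2$.
\item For the dual part, I use the Moreau decomposition to rewrite \eqref{eq:w_n update}--\eqref{eq:y_n update} as $y_n=\prox_{\sigma_n g^*}(y_{n-1}+\sigma_nKx_n)$. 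Strong convexity of $g^*$ then yields, for $y=y^*$,
\[
\tau_n\bigl(g^*(y_n)-g^*(y^*)\bigr)\le\Bigl\langle \tfrac1\beta(y_n-y_{n-1})-\tau_nKx_n,\;y^*-y_n\Bigr\rangle-\tfrac{\tau_n\mu_{g^*}}{2}\|y_n-y^*\|^2 .
\]
\end{itemize}

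This dual form replaces the combination of the $g$-inequality with \eqref{eq:y-id}. I expect this Fenchel-side step to be the main technical point. Concretely, I need to check that, using $g(w_n)+g^*(y_n)=\langle y_n,w_n\rangle$ and $g(w^*)+g^*(y^*)=\langle y^*,w^*\rangle$ (with $y^*\in\partial g(w^*)$), the quantity $\tau_n\mathbb J(x_n,w_n,y^*)$ is recovered exactly as in \eqref{lemma4.1_.1}. The only change should be the additional $-\tfrac{\tau_n\mu_{g^*}}{2}\|y_n-y^*\|^2$ term.

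\textbf{Step 2 (carry through the identities).} I follow \eqref{3.2.5}--\eqref{lemma4.1_2} verbatim and multiply by $2$. Compared with the proof of Lemma~\ref{lemma_5.0.1}, this produces three extra terms on the left:
\[
\mu_f\tau_n\|x^*-x_{n+1}\|^2,\qquad \mu_f\tau_n\|x_{n+1}-x_n\|^2,\qquad \mu_{g^*}\tau_n\|y_n-y^*\|^2 .
\]
I bound the cross terms by \eqref{adap_2} and \eqref{adap_3} exactly as before; these use only Remark~\ref{remark_5.2}.

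I then substitute \eqref{derived_equality2} for the full coefficient $(1+\mu_f\tau_n)\|x^*-x_{n+1}\|^2$. This gives:
\begin{itemize}
\item $\tfrac{\psi(1+\mu_f\tau_n)}{\psi-1}\|z_{n+2}-x^*\|^2$ on the left;
\item on the right, a $z_{n+1}$ coefficient of $1+\tfrac{1+\mu_f\tau_n}{\psi-1}=\tfrac{\psi+\mu_f\tau_n}{\psi-1}$, which is exactly the one claimed;
\item a coefficient of $\|x_{n+1}-z_{n+1}\|^2$ equal to $\theta_n-1-\tfrac{1+\mu_f\tau_n}{\psi}\le\theta_n-1-\tfrac1\psi\le0$, since $\theta_n\le\psi\rho=1+\tfrac1\psi$.
\end{itemize}

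\textbf{Step 3 (final bookkeeping).} I drop the nonpositive $\|x_{n+1}-z_{n+1}\|^2$ term. I also drop the $\|y_n-y_{n-1}\|^2$ term, which is nonpositive because $\alpha_{n-1}\theta_{n-1}/\psi<1$. Using $1-2\alpha_n\ge\alpha_n$, the $\|x_n-x_{n+1}\|^2$ coefficient on the left becomes at least $\alpha_n\theta_n+\mu_f\tau_n$.

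Unlike in Lemma~\ref{lemma_5.0.1}, I move $\theta_n\|x_n-z_{n+1}\|^2$ to the left-hand side instead of keeping it as a negative term on the right. The result is \eqref{eq:linear_one_step_J}.
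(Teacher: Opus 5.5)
Your primal bookkeeping is correct and matches the paper. This covers the two extra $\mu_f\tau_n$ terms, substituting \eqref{derived_equality2} with weight $1+\mu_f\tau_n$, the sign of the $\|x_{n+1}-z_{n+1}\|^2$ coefficient, and the final rearrangement. The gap is at the dual step you flagged, and the check you planned there fails.

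Since $\frac1\beta(y_n-y_{n-1})-\tau_nKx_n=-\tau_nw_n$, your $g^*$-prox inequality is strong convexity of $g^*$ at the base point $y_n$ (subgradient $w_n$), tested at $y^*$:
\[
g^*(y^*)\ge g^*(y_n)+\langle w_n,y^*-y_n\rangle+\tfrac{\mu_{g^*}}{2}\|y_n-y^*\|^2 .
\]
After the two Fenchel--Young equalities this reads $g(w_n)-g(w^*)\ge\langle y^*,w_n-w^*\rangle+\tfrac{\mu_{g^*}}{2}\|y_n-y^*\|^2$. That is a \emph{lower} bound on $g(w_n)-g(w^*)$. But $\mathbb J(x_n,w_n,y^*)$ contains $+\bigl(g(w_n)-g(w^*)\bigr)$, equivalently $-\bigl(g^*(y_n)-g^*(y^*)\bigr)$, so it needs an upper bound.

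If you carry out your plan, the quantity on the left is not $\tau_n\mathbb J(x_n,w_n,y^*)$. It is $\tau_n$ times the Lagrangian gap of \eqref{eq:saddle_fenchel}, namely $f(x_n)+h(x_n)+\langle Kx_n,y^*\rangle-g^*(y^*)-\bigl(f(x^*)+h(x^*)+\langle Kx^*,y_n\rangle-g^*(y_n)\bigr)$. This differs from $\mathbb J(x_n,w_n,y^*)$ by
\[
\bigl[g^*(y^*)-g^*(y_n)-\langle w_n,y^*-y_n\rangle\bigr]-\bigl[g^*(y_n)-g^*(y^*)-\langle w^*,y_n-y^*\rangle\bigr],
\]
a difference of two Bregman distances that vanishes for quadratic $g^*$ but not in general. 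The inequality you would obtain still suffices for Lemma~\ref{lemma:linear_contraction}, since that gap is nonnegative. However, it is not the stated lemma, and it does not give the $\mathbb J$-estimate in Theorem~\ref{thm:linear_pf_grpda}.

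The repair is to use strong convexity of $g^*$ at the other base point. Take $y^*$ with $w^*\in\partial g^*(y^*)$ (from $y^*\in\partial g(w^*)$), tested at $y_n$:
\[
g^*(y_n)\ge g^*(y^*)+\langle w^*,y_n-y^*\rangle+\tfrac{\mu_{g^*}}{2}\|y_n-y^*\|^2 .
\]
Combine this with $g(w_n)+g^*(y_n)=\langle y_n,w_n\rangle$, valid because $y_n\in\partial g(w_n)$ by \eqref{eq:w_n update}--\eqref{eq:y_n update}, and with $g(w^*)+g^*(y^*)=\langle y^*,w^*\rangle$. This gives $g(w_n)-g(w^*)\le\langle y_n,w_n-w^*\rangle-\tfrac{\mu_{g^*}}{2}\|y_n-y^*\|^2$, which is the second line of \eqref{3.2.1} with an extra quadratic term. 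You then keep \eqref{eq:y-id} exactly as in Lemma~\ref{lemma_5.0.1}, and your Steps~2--3 go through unchanged. This is the paper's argument.
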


\begin{proof}
Since $f$ is $\mu_f$-strongly convex, by \eqref{eqn:39b} and \eqref{defn: strng f}, we have
\begin{align*}
&
\left\langle
    x_{n+1}-z_{n+1}
    +
    \tau_nK^*y_n
    +
    \tau_n\nabla h(x_n),
    x^*-x_{n+1}
\right\rangle
\nonumber\\
&\qquad
\ge
\tau_n\bigl(f(x_{n+1})-f(x^*)\bigr)
+
\frac{\mu_f\tau_n}{2}\|x_{n+1}-x^*\|^2.
\end{align*}
Similarly, we obtain
\begin{align*}
&
\left\langle
    \theta_n (x_n-z_{n+1})
    +
    \tau_{n}K^*y_{n-1}
    +
    \tau_{n}\nabla h(x_{n-1}),
    x_{n+1}-x_n
\right\rangle
\nonumber\\
&\qquad
\ge
\tau_{n}\bigl(f(x_n)-f(x_{n+1})\bigr)
+
\frac{\mu_f\tau_{n}}{2}\|x_{n+1}-x_n\|^2 .
\end{align*}
From the $w_n$-update \eqref{eq:w_n update}, we get
\[
    y_{n-1}+\sigma_nKx_n-\sigma_n w_n\in\partial g(w_n).
\]
Further, following \eqref{eq:y_n update}, we have $y_n\in\partial g(w_n).$ Since $g$ is proper, convex and lsc, $w_n\in\partial g^*(y_n).$ Similarly, at the saddle point, we have $y^*\in\partial g(w^*)$, hence $w^*\in\partial g^*(y^*).$ Therefore, using the strong convexity of $g^*$, we obtain
\[
    g^*(y_n)
    \ge
    g^*(y^*)
    +
    \langle w^*,y_n-y^*\rangle
    +
    \frac{\mu_{g^*}}{2}\|y_n-y^*\|^2.
\]
By the Fenchel--Young inequality \cite[Proposition 16.10]{bauschke2017correction}
\[
    g^*(y_n)=\langle y_n,w_n\rangle-g(w_n)~~\text{and}~~
    g^*(y^*)=\langle y^*,w^*\rangle-g(w^*).
\]
Substituting these identities into the previous inequality yields
\[
    \langle y_n,w_n\rangle-g(w_n)
    \ge
    \langle y^*,w^*\rangle-g(w^*)
    +
    \langle w^*,y_n-y^*\rangle
    +
    \frac{\mu_{g^*}}{2}\|y_n-y^*\|^2.
\]
This is equivalent to
\begin{equation*}
    \langle y_n,w_n-w^*\rangle
    \ge
    g(w_n)-g(w^*)
    +
    \frac{\mu_{g^*}}{2}\|y_n-y^*\|^2.
\end{equation*}
 From this point, by an analogous argument as in Lemma \ref{lemma_5.0.1}, we obtain \eqref{eq:linear_one_step_J}.
\end{proof}

\medskip
Define
\begin{equation*}
    \mathcal E_n
    :=
    \frac{\psi+\mu_f\tau_n}{\psi-1}\|z_{n+1}-x^*\|^2
    +
    \frac1\beta\|y_{n-1}-y^*\|^2
    +
    \alpha_{n-1}\theta_{n-1}\|x_n-x_{n-1}\|^2 .
\end{equation*}

\begin{lemma}\label{lemma:linear_contraction}
There exists $\eta>1$ such that
\begin{equation}\label{lemma: descent}
        \eta\mathcal E_{n+1}\le\mathcal E_n,
    \qquad
    \forall n\ge1.
\end{equation}

\end{lemma}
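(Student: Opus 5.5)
The plan is to read the contraction directly off the one-step inequality~\eqref{eq:linear_one_step_J} of Lemma~\ref{lemma:linear_one_step}. We match its left-hand side, term by term, against a multiple of $\mathcal E_{n+1}$, and its right-hand side against $\mathcal E_n$. The right-hand side of~\eqref{eq:linear_one_step_J} is exactly $\mathcal E_n$: the coefficient of $\|z_{n+1}-x^*\|^2$ there is $\frac{\psi+\mu_f\tau_n}{\psi-1}$, which is the one built into the definition of $\mathcal E_n$. On the left-hand side we discard two nonnegative terms. The first is $2\tau_n\mathbb J(x_n,w_n,y^*)$, which is nonnegative by the saddle-point property~\eqref{Eq:measure}. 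The second is $\theta_n\|x_n-z_{n+1}\|^2$. What remains is
\[
\frac{\psi(1+\mu_f\tau_n)}{\psi-1}\|z_{n+2}-x^*\|^2+\Bigl(\frac1\beta+\mu_{g^*}\tau_n\Bigr)\|y_n-y^*\|^2+\bigl(\alpha_n\theta_n+\mu_f\tau_n\bigr)\|x_{n+1}-x_n\|^2 .
\]
We compare this with
\[
\mathcal E_{n+1}=\frac{\psi+\mu_f\tau_{n+1}}{\psi-1}\|z_{n+2}-x^*\|^2+\frac1\beta\|y_n-y^*\|^2+\alpha_n\theta_n\|x_{n+1}-x_n\|^2 .
\]
It therefore suffices to find $\eta>1$, independent of $n$, that bounds each of the three coefficient ratios from below.

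The $y$- and $x$-ratios are easy once we use the uniform step-size bounds of Lemma~\ref{lemma:linear_step-size_bounds}.
\begin{itemize}
\item The $y$-ratio equals $1+\beta\mu_{g^*}\tau_n$, which is at least $1+\beta\mu_{g^*}\underline\tau$.
\item The $x$-ratio equals $1+\mu_f\tau_n/(\alpha_n\theta_n)$. Using $\alpha_n\le\frac13$ and $\theta_n\le\psi\rho\le 1+\frac1\psi$, as in the proof of Lemma~\ref{lemma_5.0.1}, this ratio is at least $1+\frac{3\mu_f\underline\tau}{1+1/\psi}$.
\end{itemize}

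The main obstacle is the $z$-coefficient, because $\mathcal E_{n+1}$ carries $\tau_{n+1}$ while the one-step inequality carries $\tau_n$. The step-size rule~\eqref{eq:39c} handles this, since it gives $\tau_{n+1}\le\rho\tau_n$. Hence the $z$-ratio satisfies
\[
\frac{\psi(1+\mu_f\tau_n)}{\psi+\mu_f\tau_{n+1}}\ \ge\ \frac{\psi+\psi\mu_f\tau_n}{\psi+\rho\mu_f\tau_n}.
\]
Because $\psi\in(\psi_0,\varphi]$ forces $\psi>\rho$, the map $t\mapsto\frac{\psi+\psi\mu_f t}{\psi+\rho\mu_f t}$ is strictly increasing on $t\ge0$, and it equals $1$ at $t=0$. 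Evaluating at $t=\underline\tau$ therefore gives a constant strictly larger than $1$. This is exactly where the restriction $\psi>\psi_0$ enters. For $\psi\le\psi_0$ we would have $\rho\ge\psi$, and the $z$-part would offer no contraction.

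Finally, we set
\[
\eta:=\min\left\{\frac{\psi+\psi\mu_f\underline\tau}{\psi+\rho\mu_f\underline\tau},\ 1+\beta\mu_{g^*}\underline\tau,\ 1+\frac{3\mu_f\underline\tau}{1+1/\psi}\right\}>1 .
\]
Combining the three coefficient bounds with the dropped nonnegative terms yields $\eta\,\mathcal E_{n+1}\le\mathcal E_n$ for all $n\ge1$, which is~\eqref{lemma: descent}.
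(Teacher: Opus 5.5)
Your proposal is correct and follows the paper's proof. Both arguments drop $2\tau_n\mathbb J(x_n,w_n,y^*)$ and $\theta_n\|x_n-z_{n+1}\|^2$ from the one-step inequality, handle the $z$-coefficient via $\tau_{n+1}\le\rho\tau_n$ and $\psi>\rho$, bound the $y$- and $x$-ratios using $\underline\tau$, and take $\eta$ as the minimum of the three constants. Your constants differ only cosmetically and are slightly sharper: monotonicity in $t$ and the bound $\theta_n\le 1+\frac1\psi$ let you avoid $\overline\tau$ and $\overline\theta$, which the paper's $\eta_1$ and $\eta_3$ use.
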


\begin{proof}
Since $\tau_{n+1}\le\rho\tau_n$, we have
\[
    \frac{\psi(1+\mu_f\tau_n)}{\psi+\mu_f\tau_{n+1}}
    \ge
    1+
    \frac{(\psi-\rho)\mu_f\tau_n}{\psi+\rho\mu_f\tau_n}.
\]
By Lemma~\ref{lemma:linear_step-size_bounds}, define the constant
\[
    \eta_1
    :=
    1+
    \frac{(\psi-\rho)\mu_f\underline\tau}
         {\psi+\rho\mu_f\overline\tau}.
\]
Since $\psi>\rho$ for any $\psi\in(\psi_0, \varphi)$, we have $\eta_1>1$ and
\[
    \frac{\psi(1+\mu_f\tau_n)}{\psi-1}
    \ge
    \eta_1\frac{\psi+\mu_f\tau_{n+1}}{\psi-1}.
\]
Also
\[
    \frac1\beta+\mu_{g^*}\tau_n
    \ge
    \eta_2\frac1\beta,
\]
where $\eta_2:=1+\beta\mu_{g^*}\underline\tau$. Finally, recalling that $\epsilon\le\alpha_n\le1/3$ and $\theta_n\le\overline\theta:=\frac{\psi\overline\tau}{\underline\tau}$, we derive
\[
    \alpha_n\theta_n+\mu_f\tau_n
    \ge
    \eta_3\alpha_n\theta_n,
\]
where $\eta_3:=1+\frac{3\mu_f\underline\tau}{\overline\theta}$. Take $\eta:=\min\{\eta_1,\eta_2,\eta_3\}.$ Then $\eta>1$. Combining these coefficient estimates with Lemma~\ref{lemma:linear_one_step}
and dropping the nonnegative terms
$2\tau_n\mathbb J(x_n,w_n,y^*)$ and
$\theta_n\|x_n-z_{n+1}\|^2$, we obtain \eqref{lemma: descent}.
\end{proof}

\begin{theorem}\label{thm:linear_pf_grpda}
Let Assumptions~\ref{assumption1} and \ref{ass:linear_sc} hold, and let $\{(z_n,x_n,w_n,y_n,\tau_n)\}$ be generated by Algorithm~\ref{algorithm 2}. Let $(x^*,w^*,y^*)$ be the unique saddle point of $\mathbb L$. Then there exists a constant
$C_0>0$ and $q\in(0,1)$ such that
\begin{equation}\label{eq:linear_rate_main}
    \|z_{n+1}-x^*\|^2
    +
    \|y_{n-1}-y^*\|^2
    +
    \|x_n-x_{n-1}\|^2
    \le
    C_0q^n \quad \forall n\ge 1.
\end{equation}
Furthermore, there exist constants $C_1,C_2>0$ such that for all $n\ge 1$
\[
    \|x_n-x^*\|^2
    \le
    C_1q^n  ~~\text{and}~~ \mathbb J(x_n,w_n,y^*)\le C_2q^n.
\]
\end{theorem}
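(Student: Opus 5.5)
Iterating Lemma~\ref{lemma:linear_contraction} gives $\mathcal E_n\le \eta^{-(n-1)}\mathcal E_1$ for all $n\ge1$. Set $q:=\eta^{-1}\in(0,1)$, so that $\mathcal E_n\le \eta\,\mathcal E_1\, q^n$. Next I bound the three quantities in \eqref{eq:linear_rate_main} by $\mathcal E_n$. We have $\frac{\psi+\mu_f\tau_n}{\psi-1}\ge \frac{\psi}{\psi-1}\ge 1$, and the coefficient of $\|y_{n-1}-y^*\|^2$ is $1/\beta$. By Lemma~\ref{lemma:linear_step-size_bounds} and $\alpha_{n-1}>\epsilon$, we also have $\alpha_{n-1}\theta_{n-1}\ge \epsilon\psi\underline\tau/\overline\tau=:\underline\theta\epsilon$. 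Hence
\[
\|z_{n+1}-x^*\|^2+\|y_{n-1}-y^*\|^2+\|x_n-x_{n-1}\|^2\le \max\Bigl\{1,\beta,\tfrac{1}{\epsilon\underline\theta}\Bigr\}\mathcal E_n ,
\]
and \eqref{eq:linear_rate_main} follows with $C_0:=\eta\,\mathcal E_1\max\{1,\beta,1/(\epsilon\underline\theta)\}$.

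For $\|x_n-x^*\|^2$, I use \eqref{eqn:39a} in the form $x_n=\frac{\psi}{\psi-1}z_{n+1}-\frac{1}{\psi-1}z_n$, which gives
\[
x_n-x^*=\tfrac{\psi}{\psi-1}(z_{n+1}-x^*)-\tfrac{1}{\psi-1}(z_n-x^*).
\]
Therefore $\|x_n-x^*\|^2\le \frac{2\psi^2}{(\psi-1)^2}\|z_{n+1}-x^*\|^2+\frac{2}{(\psi-1)^2}\|z_n-x^*\|^2$. The first term is at most $C_0q^n$. For the second, \eqref{eq:linear_rate_main} at index $n-1$ (valid for $n\ge2$) gives $\|z_n-x^*\|^2\le C_0q^{n-1}=C_0q^{-1}q^n$. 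The case $n=1$ involves $z_1$, which is a fixed quantity, so it is absorbed by enlarging the constant. This yields $C_1$.

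For the residual $\mathbb J(x_n,w_n,y^*)$, I return to Lemma~\ref{lemma:linear_one_step}. Its left-hand side contains $2\tau_n\mathbb J(x_n,w_n,y^*)$ plus nonnegative terms. Its right-hand side is at most $\mathcal E_n$, because $\frac{\psi+\mu_f\tau_n}{\psi-1}$ is exactly the coefficient appearing in $\mathcal E_n$. Hence $2\underline\tau\,\mathbb J(x_n,w_n,y^*)\le \mathcal E_n\le \eta\mathcal E_1 q^n$, and we may take $C_2:=\eta\mathcal E_1/(2\underline\tau)$.

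The only delicate point is the uniform positive lower bound on $\alpha_{n-1}\theta_{n-1}$. This bound is needed to extract $\|x_n-x_{n-1}\|^2$ from $\mathcal E_n$, and it relies on Lemma~\ref{lemma:linear_step-size_bounds}, which in turn uses boundedness of the iterates and local smoothness to bound $L_n$. All other steps are direct consequences of the contraction.
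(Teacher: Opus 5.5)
Your proof is correct and follows the paper's argument essentially step for step: you iterate Lemma~\ref{lemma:linear_contraction}, extract the three terms from $\mathcal E_n$ using uniform lower bounds on its coefficients, invert \eqref{eqn:39a} as $x_n=\frac{\psi}{\psi-1}z_{n+1}-\frac{1}{\psi-1}z_n$, and read $2\tau_n\mathbb J(x_n,w_n,y^*)\le\mathcal E_n$ off Lemma~\ref{lemma:linear_one_step}. Your separate handling of $n=1$ in the bound for $\|x_n-x^*\|^2$ is slightly more careful than the paper, whose $C_1$ tacitly needs $\|z_1-x^*\|^2\le C_0$; the same care is needed for $\theta_0=1$, so take $\underline\theta:=\min\{1,\psi\underline\tau/\overline\tau\}$ so that the lower bound on $\alpha_{n-1}\theta_{n-1}$ also covers $n=1$.
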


\begin{proof}
By Lemma~\ref{lemma:linear_contraction}, there exists $\eta>1$ such that
\[
    \mathcal E_{n+1}\le \frac{1}{\eta}\mathcal E_n .
\]
Set $q:=\eta^{-1}\in(0,1)$. Then
\[
    \mathcal E_n\le q^{n-1}\mathcal E_1
    =
    \frac{\mathcal E_1}{q}q^n,
    \quad \forall n\ge1.
\]
By the definition of $\mathcal E_n$, we have
\[
    \mathcal E_n
    \ge
    p_0
    \left(
        \|z_{n+1}-x^*\|^2
        +
        \|y_{n-1}-y^*\|^2
        +
        \|x_n-x_{n-1}\|^2
    \right),
\]
where
\[
    p_0:=
    \min\left\{
        \frac{\psi}{\psi-1},\,
        \frac1\beta,\,
        \epsilon\underline\theta
    \right\}>0.
\]
Therefore,
\[
    \|z_{n+1}-x^*\|^2
    +
    \|y_{n-1}-y^*\|^2
    +
    \|x_n-x_{n-1}\|^2
    \le
    C_0q^n,
    \qquad \forall n\ge1,
\]
where $C_0:=\frac{\mathcal E_1}{q\,p_0}.$ This proves \eqref{eq:linear_rate_main}. Next, from $z_{n+1}=\frac{\psi-1}{\psi}x_n+\frac1\psi z_n,$ we have $x_n
    =
    \frac{\psi}{\psi-1}z_{n+1}
    -
    \frac1{\psi-1}z_n.$ Hence, for $n\ge1$
\[
\begin{aligned}
    \|x_n-x^*\|^2
    &\le
    \frac{2\psi^2}{(\psi-1)^2}\|z_{n+1}-x^*\|^2
    +
    \frac{2}{(\psi-1)^2}\|z_n-x^*\|^2  \\
    &\le C_1 q^n,
\end{aligned}
\]
where
\[
    C_1:=
        \left(
            \frac{2\psi^2}{(\psi-1)^2}
            +
            \frac{2}{q(\psi-1)^2}
        \right)C_0.
\]
Finally, Lemma~\ref{lemma:linear_one_step} gives
\[
    2\tau_n\mathbb J(x_n,w_n,y^*)\le \mathcal E_n.
\]
Since $\tau_n\ge\underline\tau>0$, we have
\[
    \mathbb J(x_n,w_n,y^*)
    \le
    \frac{\mathcal E_n}{2\underline\tau}
    \le
    C_2q^n,
\]
where $C_2:=\frac{\mathcal E_1}{2q\underline\tau}.$ This completes the proof.
\end{proof}

\section{Accelerated adaptive GRPDA}\label{sec:acc}

In this section, we develop an accelerated variant of Algorithm~\ref{algorithm 2}
under the additional assumption that $f$ is strongly convex. Such acceleration
is standard in primal--dual methods when $h$ is globally smooth, since curvature
in one block of the saddle formulation can be exploited to obtain an improved
ergodic $\mathcal{O}(1/N^{2})$ rate; e.g., see
\cite{chambolle2011first,chang2021goldengrpda,chang2022goldenlinesearch}.
However, we show that the same type of acceleration can be obtained without assuming
a global Lipschitz constant for $\nabla h$. More precisely, we prove an accelerated
ergodic rate when $f$ is strongly convex and $h$ is only locally smooth.

\medskip
\begin{assumption}\label{assump:strong f}
The function $f$ is $\mu$-strongly convex for some $\mu>0$.
\end{assumption}

We now state the proposed algorithm.

\medskip
\begin{algorithm}[H]
\caption{Parameter-Free Accelerated GRPDA (PF-AGRPDA)}
\label{algorithm 2 acclerated}
\KwIn{Choose $x_0\in\mathbb H_1$, $y_0\in\mathbb H_2$, and set $z_0=x_0$. Let $\psi_0\approx1.32472$ be the unique real root of $\psi^3-\psi-1=0$. Choose $\beta_0>0$, $\psi\in(\psi_0,\varphi)$, $0<\alpha\le 1/3$. Let $\rho=\psi^{-1}+\psi^{-2}$, $\theta_0=1$, $C=\psi\|K\|^2$, $\Theta=\max\{\theta_0,\psi\rho\}$, and $U=\sqrt{\frac{\rho\psi\alpha^2\Theta}{C\beta_0}},$ and choose $0<\tau_0\le U$.}

\For{$n=1,2,\ldots$}{
  \textbf{Step 1} (Compute)
  \begin{align}
    z_n
    &=
    \frac{\psi-1}{\psi}x_{n-1}
    +
    \frac{1}{\psi}z_{n-1},\nonumber\\
    x_n
    &=
    \prox_{\tau_{n-1}f}
    \left(
        z_n-\tau_{n-1}K^*y_{n-1}
        -\tau_{n-1}\nabla h(x_{n-1})
    \right).\nonumber
  \end{align}

  \textbf{Step 2} (Update)
  \begin{align}
    \zeta_n
    &=
    \frac{\psi-\rho}{\psi+\rho\mu\tau_{n-1}}, \label{step-size: 1}\\
    \beta_n
    &=
    \beta_{n-1}\bigl(1+\mu\zeta_n\tau_{n-1}\bigr), \label{stepszie: 2}\\
    \tau_n
    &=
    \min\left\{
        \rho\tau_{n-1},
        \frac{\psi\alpha^2\theta_{n-1}}
        {(L_n^2+\beta_n\psi\|K\|^2)\tau_{n-1}}
    \right\},
    \qquad
    \sigma_n=\beta_n\tau_n . \label{step-size: 3}
  \end{align}

  \textbf{Step 3} (Compute)
  \begin{align}
    w_n
    &=
    \prox_{\frac{1}{\sigma_n}g}
    \left(
        \frac{y_{n-1}}{\sigma_n}+Kx_n
    \right), \nonumber\\
    y_n
    &=
    y_{n-1}
    +
    \sigma_n(Kx_n-w_n).\nonumber
  \end{align}

  \textbf{Step 4} (Update)
  \begin{equation*}
    \theta_n=\frac{\psi\tau_n}{\tau_{n-1}}.
  \end{equation*}
}
\end{algorithm}

Some remarks on Algorithm~\ref{algorithm 2 acclerated} are in order.

\begin{remark}\label{rem:psi0_zeta}
As also observed in \cite{chang2021goldengrpda}, $\psi_0\approx 1.32472$ is the unique real root of $\psi^3-\psi-1=~0$.
Given $\psi\in(\psi_0,\varphi)$, one has $\rho=\psi^{-1}+\psi^{-2}$ and $\psi-\rho>0$. Thus $\zeta_n>0$ for all $n$
by~\eqref{step-size: 1}.
\end{remark}

\begin{remark}\label{rem:mu0_reduction}
If $\mu=0$ (so that $f$ is convex), then \eqref{stepszie: 2} gives $\beta_n\equiv \beta_0$ and
\eqref{step-size: 3} simplifies to the step-size rule \eqref{eq:39c}. Consequently, Algorithm~\ref{algorithm 2 acclerated} reduces to Algorithm~\ref{algorithm 2}.
\end{remark}

\begin{lemma}\label{lemma:acc_beta_tau_zeta_1}
Let $\{(\tau_n,\zeta_n,\beta_n)\}$ be generated by
Algorithm~\ref{algorithm 2 acclerated}. Then the following assertions hold.
\begin{enumerate}[label=\textup{(\roman*)}]
    \item\label{it:acc_zeta_beta}
    $(\beta_n)$ is strictly increasing, and $      \underline{\zeta}\le \zeta_n<1
        \quad \forall n\ge1,$ where $        \underline{\zeta}:=
        \frac{\psi-\rho}{\psi+\rho\mu U}.$

    \item\label{it:acc_tau_upper}
    $\tau_n\le U$ for all $n\ge1$. Moreover, $\tau_n^2\beta_n\le
        \widehat B:=
        \frac{\alpha^2\Theta^2}{C}, \forall n\ge 1$.
\end{enumerate}
\end{lemma}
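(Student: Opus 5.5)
The plan is to prove part \ref{it:acc_tau_upper} first, since the lower bound on $\zeta_n$ in part \ref{it:acc_zeta_beta} needs $\tau_{n-1}\le U$. Positivity and monotonicity of $(\beta_n)$ can be handled jointly with the step-size bounds by a simple induction on $n$.

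\textbf{Basic facts.} First I record that $\tau_n>0$, $\zeta_n>0$ and $\beta_n>0$ for all $n$. Positivity of $\tau_n$ is immediate from \eqref{step-size: 3} by induction, using the convention $\frac00=+\infty$ when $x_n=x_{n-1}$. Positivity of $\zeta_n$ is Remark~\ref{rem:psi0_zeta}. Consequently $1+\mu\zeta_n\tau_{n-1}>1$, so \eqref{stepszie: 2} shows that $(\beta_n)$ is strictly increasing, with $\beta_n>\beta_0$ for all $n\ge1$.

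Next I bound $\theta_n$. We have $\theta_0=1$, and for $n\ge1$ the rule $\tau_n\le\rho\tau_{n-1}$ gives $\theta_n=\psi\tau_n/\tau_{n-1}\le\psi\rho$. Hence $\theta_n\le\Theta$ for all $n\ge0$.

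\textbf{Part (ii).} I follow Remark~\ref{tau_n_bdd_above}. Write $\tau_n=\min\{\rho\tau_{n-1},c_n/\tau_{n-1}\}$ with
\[
c_n:=\frac{\psi\alpha^2\theta_{n-1}}{L_n^2+\beta_n C}.
\]
Lemma~\ref{min a and b} then gives $\tau_n\le\sqrt{\rho c_n}$. Using $L_n^2\ge0$, $\theta_{n-1}\le\Theta$ and $\beta_n\ge\beta_0$, we get $c_n\le\psi\alpha^2\Theta/(C\beta_0)$. Therefore $\tau_n\le U$ for every $n\ge1$, and the case $n=0$ holds by the choice $\tau_0\le U$.

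For the product bound I repeat the manipulation of Remark~\ref{remark_5.2}. Since $\psi\theta_{n-1}/\tau_{n-1}=\theta_n\theta_{n-1}/\tau_n$, the second term in \eqref{step-size: 3} gives
\[
\tau_n\le\frac{\alpha^2\theta_n\theta_{n-1}}{(L_n^2+\beta_nC)\,\tau_n}.
\]
This yields $\tau_n^2\beta_nC\le\alpha^2\theta_n\theta_{n-1}\le\alpha^2\Theta^2$, which is the claimed bound $\tau_n^2\beta_n\le\widehat B$.

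\textbf{Part (i).} The bound $\zeta_n<1$ is equivalent to $\psi-\rho<\psi+\rho\mu\tau_{n-1}$, which holds because $\rho\mu\tau_{n-1}>0$ (if $\mu>0$; when $\mu=0$ it still holds since $\rho>0$). The map $t\mapsto(\psi-\rho)/(\psi+\rho\mu t)$ is nonincreasing on $[0,\infty)$. Since $\tau_{n-1}\le U$ for all $n\ge1$ by part (ii) together with $\tau_0\le U$, we conclude $\zeta_n\ge\underline\zeta$.

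The only potentially delicate point is the algebraic rewriting behind $\tau_n^2\beta_n\le\widehat B$, specifically checking that the identity $\psi\theta_{n-1}/\tau_{n-1}=\theta_n\theta_{n-1}/\tau_n$ applies with the $n$-dependent $\beta_n$. It does, because $\beta_n$ enters only through the denominator. Everything else is routine.
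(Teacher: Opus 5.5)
Your proposal is correct and follows the paper's proof essentially step for step: Lemma~\ref{min a and b} applied to \eqref{step-size: 3}, together with $\theta_{n-1}\le\Theta$ and $\beta_n\ge\beta_0$, gives $\tau_n\le U$; the identity $\psi\theta_{n-1}/\tau_{n-1}=\theta_n\theta_{n-1}/\tau_n$ gives $\tau_n^2\beta_n\le\widehat B$; and $\tau_{n-1}\le U$ (including $\tau_0\le U$, which you rightly make explicit where the paper leaves it implicit) gives $\zeta_n\ge\underline\zeta$. One caveat you share with the paper: both upper bounds use $\tau_n\le c_n/\tau_{n-1}$ with finite $c_n$, so when $x_n=x_{n-1}$ you should set $L_n:=0$ rather than invoke $\frac00=+\infty$ for positivity. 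If that convention made the second branch infinite, then $\tau_n=\rho\tau_{n-1}$; for iterates started at a saddle point this gives $\tau_n=\rho^n\tau_0\to\infty$, since $\rho>1$ for $\psi<\varphi$, and the bound $\tau_n\le U$ would fail.
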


\begin{proof}
\noindent
\textbf{\ref{it:acc_zeta_beta} and \ref{it:acc_tau_upper}.}
From the update of $\beta_n$ and the facts $\mu>0$, $\tau_n>0$, and
$\zeta_{n+1}>0$, it follows that the sequence $(\beta_n)$ is strictly increasing. Also,
$\tau_n\le\rho\tau_{n-1}$ gives
$\theta_n\le\Theta$ for all $n\ge0$. From the step-size rule \eqref{step-size: 3} and Lemma~\ref{min a and b}, we have
\[
    \tau_n
    \le
    \sqrt{
        \frac{\rho\psi\alpha^2\theta_{n-1}}{L_n^2+C\beta_n}
    }
    \le
    \sqrt{
        \frac{\rho\psi\alpha^2\Theta}{C\beta_0}
    }
    =U.
\]
Next, the second term in \eqref{step-size: 3} gives
\[
    \tau_n
    \le
    \frac{\psi\alpha^2\theta_{n-1}}
    {(L_n^2+C\beta_n)\tau_{n-1}}.
\]
Using $\frac{\psi\theta_{n-1}}{\tau_{n-1}} =
\frac{\theta_n\theta_{n-1}}{\tau_n},$ we obtain
\[
    \tau_n^2
    \le
    \frac{\alpha^2\theta_n\theta_{n-1}}{L_n^2+C\beta_n}.
\]
Since $\theta_n,\theta_{n-1}\le\Theta$, it follows that
\[
    \tau_n^2\beta_n
    \le
    \frac{\alpha^2\Theta^2}{C}
    =
    \widehat B
    \quad \forall n\ge1.
\]
Since $\tau_{n-1}\le U$ for every $n\ge1$, we get
\[
    \zeta_n
    =
    \frac{\psi-\rho}{\psi+\rho\mu\tau_{n-1}}
    \ge
    \frac{\psi-\rho}{\psi+\rho\mu U}
    =
    \underline{\zeta}.
\]
Finally, $\zeta_n<1$ is immediate from
$\psi-\rho<\psi+\rho\mu\tau_{n-1}$. This proves
\ref{it:acc_zeta_beta} and \ref{it:acc_tau_upper}.
\end{proof}


\begin{lemma}\label{acclerated: lem:1}
 Under Assumptions \ref{assumption1}, \ref{assumption_5.1} and \ref{assump:strong f}, let
$\{(z_n,x_n,w_n,y_n,\tau_n)\}$ be the sequence generated by
Algorithm \ref{algorithm 2 acclerated}. Let
$(x^*,w^*,y^*)\in\mathbf{\Pi}$ be any saddle point of $\mathbb{L}$.
Then, for any $y\in\mathbb{H}_2$ and for any $n\geq 1$, we have
\begin{multline}\label{acclerated lem eq:1}
     2\tau_n\mathbb{J}(x_n,w_n, y)
     +  \frac{\psi(1+\mu\tau_n)}{\psi-1}
        \lVert x^*-z_{n+2} \rVert^2
     + \dfrac{1}{\beta_n}\lVert y-y_{n} \rVert^2
     +(\alpha\theta_n+\mu\tau_n)\lVert x_n-x_{n+1} \rVert^2       \\
     \leq
     \frac{\psi+\mu\tau_n}{\psi-1}
        \lVert x^*-z_{n+1} \rVert^2
     + \dfrac{1}{\beta_n}\lVert y-y_{n-1} \rVert^2
     +\alpha\theta_{n-1}\|x_n-x_{n-1}\|^2 .
 \end{multline}
\end{lemma}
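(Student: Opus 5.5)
The plan is to rerun the proof of Lemma~\ref{lemma_5.0.1} line by line, with three changes: the $f$-inequalities pick up quadratic terms from the $\mu$-strong convexity of $f$, the dual step is $\sigma_n=\beta_n\tau_n$ instead of $\beta\tau_n$, and the fixed coefficient $\alpha$ replaces $\alpha_n\alpha_{n-1}$.

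\textbf{Step 1: strengthened prox inequalities.} Since $f$ is $\mu$-strongly convex, the prox characterisation (Lemma~\ref{usefullemma3} combined with \eqref{defn: strng f}) at $x_{n+1}$, tested against $x^*$, gains the term $\frac{\mu\tau_n}{2}\|x_{n+1}-x^*\|^2$. The same characterisation at $x_n$, tested against $x_{n+1}$, gains $\frac{\mu\tau_{n-1}}{2}\|x_{n+1}-x_n\|^2$. After multiplying the second inequality by $\tau_n/\tau_{n-1}$, exactly as in \eqref{3.2.5}, this becomes $\frac{\mu\tau_n}{2}\|x_{n+1}-x_n\|^2$. The $g$-inequality is unchanged. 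The identity \eqref{eq:y-id} becomes
\[
\tau_n\langle y,Kx_n-w_n\rangle=\tfrac{1}{\beta_n}\langle y-y_n,y_n-y_{n-1}\rangle+\tau_n\langle y_n,Kx_n-w_n\rangle.
\]
The regrouping in \eqref{eq:Ky-collect}--\eqref{eq:grad-collect} and the convexity bound on $h$ then give the analogue of \eqref{lemma4.1_.1}. We multiply it by $2$ and apply \eqref{eq:basic_identities_alt_a}. This gives \eqref{lemma4.1_2} with $\beta$ replaced by $\beta_n$, plus $(1+\mu\tau_n)\|x^*-x_{n+1}\|^2$ and $\mu\tau_n\|x_n-x_{n+1}\|^2$ on the left.

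\textbf{Step 2: the two cross terms.} We use the bound from Lemma~\ref{lemma:acc_beta_tau_zeta_1}, taken before replacing $\theta$ by $\Theta$:
\[
\tau_n^2\le\frac{\alpha^2\theta_n\theta_{n-1}}{L_n^2+\beta_n\psi\|K\|^2}.
\]
It yields $\tau_nL_n\le\alpha\sqrt{\theta_n\theta_{n-1}}$ and $\tau_n\|K\|\le\alpha\sqrt{\theta_n\theta_{n-1}/(\beta_n\psi)}$. Young's inequality, as in \eqref{adap_2}--\eqref{adap_3}, then bounds the gradient term by
\[
\alpha\theta_n\|x_n-x_{n+1}\|^2+\alpha\theta_{n-1}\|x_n-x_{n-1}\|^2,
\]
and the $K$-term by
\[
\alpha\theta_n\|x_n-x_{n+1}\|^2+\frac{\alpha\theta_{n-1}}{\beta_n\psi}\|y_n-y_{n-1}\|^2.
\]

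\textbf{Step 3: $z$-bookkeeping (the main delicate point).} We multiply \eqref{derived_equality2} by $(1+\mu\tau_n)$. On the left this produces $\frac{\psi(1+\mu\tau_n)}{\psi-1}\|x^*-z_{n+2}\|^2$. On the right the $z_{n+1}$-terms combine as
\[
1+\frac{1+\mu\tau_n}{\psi-1}=\frac{\psi+\mu\tau_n}{\psi-1},
\]
which is exactly the coefficient in \eqref{acclerated lem eq:1}. What remains to check is that the coefficient of $\|x_{n+1}-z_{n+1}\|^2$ is nonpositive. It equals $\theta_n-1-\frac{1+\mu\tau_n}{\psi}$, and this is at most $\theta_n-1-\frac1\psi\le0$ because $\theta_n\le\psi\rho=1+\frac1\psi$.

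\textbf{Step 4: remaining signs.} The coefficient of $\|x_n-x_{n+1}\|^2$ becomes $\theta_n(1-2\alpha)+\mu\tau_n\ge\alpha\theta_n+\mu\tau_n$, since $\alpha\le1/3$. The $\|y_n-y_{n-1}\|^2$ term has coefficient $-\frac1{\beta_n}\bigl(1-\frac{\alpha\theta_{n-1}}{\psi}\bigr)\le0$, by the same estimate as in Lemma~\ref{lemma_5.0.1}. The term $-\theta_n\|x_n-z_{n+1}\|^2$ is nonpositive. Dropping these three nonpositive terms gives \eqref{acclerated lem eq:1}.

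I expect no real obstacle. The only care needed is in Step 3, tracking that the extra $\mu\tau_n$ enters with weight $\frac{1}{\psi-1}$ on the right but $\frac{\psi}{\psi-1}$ on the left.
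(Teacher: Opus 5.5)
Your proposal is correct and follows essentially the same route as the paper. It strengthens the two prox inequalities via $\mu$-strong convexity and reruns the bookkeeping of Lemma~\ref{lemma_5.0.1} with $\beta_n$ in place of $\beta$. It bounds the cross terms using $\tau_n^2\le \alpha^2\theta_n\theta_{n-1}/(L_n^2+\beta_n\psi\|K\|^2)$ and multiplies \eqref{derived_equality2} by $(1+\mu\tau_n)$ to produce the coefficients $\frac{\psi(1+\mu\tau_n)}{\psi-1}$ and $\frac{\psi+\mu\tau_n}{\psi-1}$. It then discards the nonpositive terms using exactly the paper's sign checks: $\theta_n\le 1+\frac1\psi$, $1-2\alpha\ge\alpha$, and $1-\frac{\alpha\theta_{n-1}}{\psi}>0$.
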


\begin{proof}
First observe that, by the optimality condition of the $x_{n+1}$-update, we have
\[
    \frac{z_{n+1}-x_{n+1}}{\tau_n}
    -K^*y_n-\nabla h(x_n)
    \in\partial f(x_{n+1}).
\]
Using the definition of strong convexity \eqref{defn: strng f} of $f$, we obtain, for every $u\in\mathbb H_1$,
\begin{equation*}
\begin{aligned}
\tau_n\big(f(x_{n+1})-f(u)\big)
&\le
\left\langle
x_{n+1}-z_{n+1}
+\tau_nK^*y_n
+\tau_n\nabla h(x_n),
u-x_{n+1}
\right\rangle                                      \\
&\quad
-\frac{\mu\tau_n}{2}\|x_{n+1}-u\|^2 .
\end{aligned}
\end{equation*}
Similarly, applying the fact
$x_n-z_n=\psi(x_n-z_{n+1})$, we again have
\begin{equation*}
\begin{aligned}
\tau_n\big(f(x_n)-f(x_{n+1})\big)
&\le
\left\langle
\theta_n(x_n-z_{n+1})
+\tau_nK^*y_{n-1}
+\tau_n\nabla h(x_{n-1}),
x_{n+1}-x_n
\right\rangle                                      \\
&\quad
-\frac{\mu\tau_n}{2}\|x_{n+1}-x_n\|^2 .
\end{aligned}
\end{equation*}
By applying analogous arguments as in Lemma \ref{lemma_5.0.1}, we obtain
\begin{multline}\label{strng f: eq 1}
     \tau_n\mathbb{J}(x_n,w_n, y)
     +  \dfrac{1}{2}\lVert x^*-x_{n+1} \rVert^2
     + \dfrac{1}{2\beta_n}\lVert y-y_{n} \rVert^2  
     \leq
     \dfrac{1}{2}\lVert x^*-z_{n+1} \rVert^2
     + \dfrac{1}{2\beta_n}\lVert y-y_{n-1} \rVert^2
     \\
     -\dfrac{1}{2}\lVert x_{n+1}-z_{n+1} \rVert^2  
     - \dfrac{\theta_n}{2}\lVert x_n-z_{n+1} \rVert^2
     -\dfrac{\theta_n}{2}\lVert x_n-x_{n+1} \rVert^2
     + \dfrac{\theta_n}{2}\lVert x_{n+1}-z_{n+1} \rVert^2          \\
     -\dfrac{1}{2\beta_n}\lVert y_n-y_{n-1} \rVert^2
     -\frac{\mu\tau_n}{2}\|x_{n+1}-x_n\|^2
     -\frac{\mu\tau_n}{2}\|x_{n+1}-x^*\|^2                         \\
     +\tau_n\langle y_n-y_{n-1},\, K(x_{n+1}-x_n)\rangle           
     + \tau_n\langle \nabla h(x_n)-\nabla h(x_{n-1}),
        x_n-x_{n+1}\rangle .
 \end{multline}
Simplifying \eqref{strng f: eq 1}, we get
\begin{multline}\label{strng f: eq 2}
     2\tau_n\mathbb{J}(x_n,w_n, y)
     +  (1+\mu\tau_n)\lVert x^*-x_{n+1} \rVert^2
     + \dfrac{1}{\beta_n}\lVert y-y_{n} \rVert^2                  
     \leq
     \lVert x^*-z_{n+1} \rVert^2\\
     +\dfrac{1}{\beta_n}\lVert y-y_{n-1} \rVert^2
     +(\theta_n-1)\lVert x_{n+1}-z_{n+1} \rVert^2                 
     - \theta_n\lVert x_n-z_{n+1} \rVert^2
     -(\theta_n+\mu\tau_n)\lVert x_n-x_{n+1} \rVert^2\\
     -\dfrac{1}{\beta_n}\lVert y_n-y_{n-1} \rVert^2               
     +2\tau_n\langle y_n-y_{n-1},\, K(x_{n+1}-x_n)\rangle          
     +2\tau_n\langle \nabla h(x_n)-\nabla h(x_{n-1}),
        x_n-x_{n+1}\rangle .
 \end{multline}
By combining \eqref{derived_equality2} with \eqref{strng f: eq 2}, we obtain
\begin{multline}\label{strng f: eq 3}
     2\tau_n\mathbb{J}(x_n,w_n, y)
     +  \frac{\psi(1+\mu\tau_n)}{\psi-1}
        \lVert x^*-z_{n+2} \rVert^2 \\
     {}+ \dfrac{1}{\beta_n}\lVert y-y_{n} \rVert^2                  
     \leq
     \frac{\psi+\mu\tau_n}{\psi-1}
        \lVert x^*-z_{n+1} \rVert^2\\
     + \dfrac{1}{\beta_n}\lVert y-y_{n-1} \rVert^2                
     +\left(\theta_n-1-\frac{1+\mu\tau_n}{\psi}\right)
        \lVert x_{n+1}-z_{n+1} \rVert^2
     - \theta_n\lVert x_n-z_{n+1} \rVert^2                       \\
     -(\theta_n+\mu\tau_n)\lVert x_n-x_{n+1} \rVert^2
     -\dfrac{1}{\beta_n}\lVert y_n-y_{n-1} \rVert^2               
     +2\tau_n\langle y_n-y_{n-1},\, K(x_{n+1}-x_n)\rangle          \\
     +2\tau_n\langle \nabla h(x_n)-\nabla h(x_{n-1}),
        x_n-x_{n+1}\rangle .
 \end{multline}
Following \eqref{adap_2}, the last term in \eqref{strng f: eq 3} can be estimated as
\begin{equation}\label{strng f: eq 4}
\begin{aligned}
2\tau_n\langle \nabla h(x_n)-\nabla h(x_{n-1}),
x_n-x_{n+1}\rangle
&\leq
\alpha\theta_n\lVert x_n-x_{n+1}\rVert^2
+\alpha\theta_{n-1}\lVert x_n-x_{n-1}\rVert^2 .
\end{aligned}
\end{equation}
Similarly, using Remark~\ref{remark_5.2}, \eqref{adap_3}, and
\eqref{step-size: 3}, we have
\begin{equation}\label{strng f: eq 5}
\begin{aligned}
2\tau_n\langle y_n-y_{n-1},\, K(x_{n+1}-x_n)\rangle
&\leq
\alpha\theta_n\lVert x_n-x_{n+1} \rVert^2
+\dfrac{\alpha\theta_{n-1}}{\beta_n\psi}
    \lVert y_n-y_{n-1} \rVert^2 .
\end{aligned}
\end{equation}
Altogether, \eqref{strng f: eq 3}, \eqref{strng f: eq 4}, and
\eqref{strng f: eq 5} yield
\begin{align*}
&2\tau_n\mathbb{J}(x_n,w_n,y)
 +\frac{\psi(1+\mu\tau_n)}{\psi-1}\lVert x^*-z_{n+2}\rVert^2
 +\frac{1}{\beta_n}\lVert y-y_n\rVert^2 \\
&\quad\leq
 \frac{\psi+\mu\tau_n}{\psi-1}\lVert x^*-z_{n+1}\rVert^2
 +\frac{1}{\beta_n}\lVert y-y_{n-1}\rVert^2 \\
&\qquad
 +\left(\theta_n-1-\frac{1+\mu\tau_n}{\psi}\right)
  \lVert x_{n+1}-z_{n+1}\rVert^2
 -\theta_n\lVert x_n-z_{n+1}\rVert^2 \\
&\qquad
 -(\theta_n(1-2\alpha)+\mu\tau_n)\lVert x_n-x_{n+1}\rVert^2
 +\alpha\theta_{n-1}\lVert x_n-x_{n-1}\rVert^2 \\
&\qquad
 -\frac{1}{\beta_n}\left(1-\frac{\alpha\theta_{n-1}}{\psi}\right)
  \lVert y_n-y_{n-1}\rVert^2 .
\end{align*}
From \eqref{step-size: 3}, we have $\theta_n-1-\frac{1+\mu\tau_n}{\psi}\leq \theta_n-1-\frac{1}{\psi}\leq 0$ and $1-\frac{\alpha\theta_{n-1}}{\psi}>0$ for all $n$. Furthermore, note that $1-2\alpha\ge \alpha$. Hence, using these facts yields Lemma \ref{acclerated: lem:1}.
\end{proof}


\begin{lemma}\label{lem:acc_weighted_fejer}
Under Assumptions~\ref{assumption1}, \ref{assumption_5.1}  and~\ref{assump:strong f}, let the sequence $\{(z_n,x_n,w_n,y_n,\tau_n)\}$ be generated by
Algorithm~\ref{algorithm 2 acclerated}, and fix a saddle point
$(x^*,w^*,y^*)\in\mathbf\Pi$. For $y\in\mathbb H_2$, and for all $n\ge 1$, define
\[
    E_n(y):=
    \frac{\psi+\mu\tau_n}{2(\psi-1)}\|x^*-z_{n+1}\|^2
    +\frac1{2\beta_n}\|y-y_{n-1}\|^2
    +\frac{\alpha\theta_{n-1}}2\|x_n-x_{n-1}\|^2.
\]
Then, for all $y\in\mathbb H_2$, we have
\begin{equation}\label{eq:acc_weighted_descent}
    \beta_n\tau_n\mathbb J(x_n,w_n,y)
    +\beta_{n+1}E_{n+1}(y)
    \le
    \beta_nE_n(y), ~~n\ge 1.
\end{equation}
Consequently, the sequences $(z_n)$, $(x_n)$ and $(y_n)$ are bounded.
\end{lemma}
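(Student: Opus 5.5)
The plan is to multiply the one-step inequality \eqref{acclerated lem eq:1} of Lemma~\ref{acclerated: lem:1} by $\beta_n/2$ and compare its left-hand side, coefficient by coefficient, with $\beta_{n+1}E_{n+1}(y)$.

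\textbf{Step 1 (rescaling).} After multiplying \eqref{acclerated lem eq:1} by $\beta_n/2$, the right-hand side is exactly $\beta_nE_n(y)$. The left-hand side becomes
\[
\beta_n\tau_n\mathbb J(x_n,w_n,y)+\frac{\beta_n\psi(1+\mu\tau_n)}{2(\psi-1)}\|x^*-z_{n+2}\|^2+\frac12\|y-y_n\|^2+\frac{\beta_n(\alpha\theta_n+\mu\tau_n)}{2}\|x_n-x_{n+1}\|^2 .
\]
On the other hand, $\beta_{n+1}E_{n+1}(y)$ consists of three terms. The first is $\frac{\beta_{n+1}(\psi+\mu\tau_{n+1})}{2(\psi-1)}\|x^*-z_{n+2}\|^2$. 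The second is $\frac12\|y-y_n\|^2$, which matches exactly. The third is $\frac{\beta_{n+1}\alpha\theta_n}{2}\|x_{n+1}-x_n\|^2$. Hence \eqref{eq:acc_weighted_descent} follows once the following two scalar inequalities are established:
\[
\text{(a)}\ \ \beta_{n+1}(\psi+\mu\tau_{n+1})\le\beta_n\psi(1+\mu\tau_n),\qquad
\text{(b)}\ \ \beta_{n+1}\alpha\theta_n\le\beta_n(\alpha\theta_n+\mu\tau_n).
\]

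\textbf{Step 2 (inequality (a)).} This is the crux, and it is exactly what the choice of $\zeta_{n+1}$ in \eqref{step-size: 1} is designed to deliver. By \eqref{stepszie: 2}, $\beta_{n+1}=\beta_n(1+\mu\zeta_{n+1}\tau_n)$, and by \eqref{step-size: 3}, $\tau_{n+1}\le\rho\tau_n$. It therefore suffices to show
\[
(1+\mu\zeta_{n+1}\tau_n)(\psi+\rho\mu\tau_n)\le\psi(1+\mu\tau_n).
\]
Substituting $\zeta_{n+1}=(\psi-\rho)/(\psi+\rho\mu\tau_n)$, the left side equals $\psi+\rho\mu\tau_n+\mu\tau_n(\psi-\rho)=\psi+\psi\mu\tau_n$. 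So (a) in fact holds with equality in this bound.

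\textbf{Step 3 (inequality (b)).} Using $\beta_{n+1}-\beta_n=\beta_n\mu\zeta_{n+1}\tau_n$, inequality (b) reduces to $\zeta_{n+1}\alpha\theta_n\le1$. This holds for three reasons: $\zeta_{n+1}<1$ by Lemma~\ref{lemma:acc_beta_tau_zeta_1}; $\alpha\le1/3$ by the input of Algorithm~\ref{algorithm 2 acclerated}; and $\theta_n\le\psi\rho=1+1/\psi<2$ because $\tau_n\le\rho\tau_{n-1}$. Combining Steps 1--3 gives \eqref{eq:acc_weighted_descent}.

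\textbf{Step 4 (boundedness).} Take $y=y^*$. Since $\mathbb J(x_n,w_n,y^*)\ge0$ by \eqref{Eq:measure}, the sequence $\beta_nE_n(y^*)$ is nonincreasing, so $\beta_nE_n(y^*)\le\beta_1E_1(y^*)$ for all $n$. Two consequences follow.
\begin{itemize}
\item The dual term contributes $\frac12\|y^*-y_{n-1}\|^2\le\beta_1E_1(y^*)$, which bounds $(y_n)$.
\item Since $(\beta_n)$ is increasing, $\beta_n\ge\beta_0$. Hence $\frac{\beta_0\psi}{2(\psi-1)}\|x^*-z_{n+1}\|^2\le\beta_1E_1(y^*)$, which bounds $(z_n)$.
\end{itemize}
Finally, $(x_n)$ is bounded because $x_n=\frac{\psi}{\psi-1}z_{n+1}-\frac1{\psi-1}z_n$ by the $z$-update.
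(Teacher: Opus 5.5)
Your proposal is correct and follows the paper's proof essentially step for step. You multiply \eqref{acclerated lem eq:1} by $\beta_n/2$ and verify the $\|x^*-z_{n+2}\|^2$ coefficient via $\tau_{n+1}\le\rho\tau_n$ and the identity $(1+\mu\zeta_{n+1}\tau_n)(\psi+\rho\mu\tau_n)=\psi(1+\mu\tau_n)$; you verify the $\|x_{n+1}-x_n\|^2$ coefficient via $\alpha\theta_n\zeta_{n+1}\le 1$; then you take $y=y^*$ for boundedness, and your explicit formula $x_n=\frac{\psi}{\psi-1}z_{n+1}-\frac{1}{\psi-1}z_n$ just spells out the paper's appeal to the definition of $(z_n)$.
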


\begin{proof}
To telescope \eqref{acclerated lem eq:1} in Lemma~\ref{acclerated: lem:1}, we proceed as follows. Since $\tau_{n+1}\le\rho\tau_n$ and
\[
    \frac{\beta_{n+1}}{\beta_n}
    =1+\mu\zeta_{n+1}\tau_n
    =1+\frac{\mu\tau_n(\psi-\rho)}{\psi+\rho\mu\tau_n},
\]
we get
\begin{equation}\label{eq:acc_coeff_z}
\begin{aligned}
    \frac{\beta_{n+1}}{\beta_n}(\psi+\mu\tau_{n+1})
    &\le
    \left(1+\frac{\mu\tau_n(\psi-\rho)}{\psi+\rho\mu\tau_n}\right)
    (\psi+\rho\mu\tau_n)                  =\psi(1+\mu\tau_n).
\end{aligned}
\end{equation}
Next, notice that
\begin{equation}\label{eq:acc_coeff_dx}
\begin{aligned}
  \beta_n(\alpha\theta_n+\mu\tau_n)-\beta_{n+1}\alpha\theta_n
    &=\beta_n\mu\tau_n\bigl(1-\alpha\theta_n\zeta_{n+1}\bigr)\ge0,
\end{aligned}
\end{equation}
where the last inequality follows from
$\theta_n\le\psi\rho<2$, $\zeta_{n+1}<1$, and $0<\alpha\le\frac{1}{3}$. Now multiplying~\eqref{acclerated lem eq:1} by $\frac{\beta_n}{2}$ and using
\eqref{eq:acc_coeff_z} and \eqref{eq:acc_coeff_dx} gives
\eqref{eq:acc_weighted_descent}.

\medskip
Take $y=y^*$, then by \eqref{Eq:measure}, 
$\mathbb J(x_n,w_n,y^*)\ge0$. Furthermore, from \eqref{eq:acc_weighted_descent}, we obtain
\[
    \beta_nE_n(y^*)\le \beta_1E_1(y^*)
    \qquad \forall n\ge1.
\]
In particular
\[
    \beta_n\frac{\psi+\mu\tau_n}{2(\psi-1)}\|x^*-z_{n+1}\|^2
    \le \beta_1E_1(y^*).
\]
Since $\beta_n\ge\beta_0>0$ and $\psi+\mu\tau_n\ge\psi$, it implies that the sequence
$(z_n)$ is bounded. Therefore, by the definition of $(z_n)$, the boundedness $(x_n)$ follows. Finally
\[
    \frac12\|y^*-y_{n-1}\|^2
    \le \beta_nE_n(y^*)
    \le \beta_1E_1(y^*)
\]
shows that $(y_n)$ is bounded.
\end{proof}


\begin{remark}\label{eq:acc_bdd_x_n}
Since $(x_n)$ is bounded (by Lemma~\ref{lem:acc_weighted_fejer}) and $h$ is locally smooth, there exist a compact set $B\subset\mathbb H_1$ containing all the iterates $x_n$, and $x^*$, and a constant $L>0$ such that
\[
    \|\nabla h(u)-\nabla h(v)\|
    \le L\|u-v\|
    \quad \forall u,v\in B.
\]
By the definition of $L_n$, this yields $L_n\le L$ for all $n\ge1$.
\end{remark}

\begin{lemma}[Lower step-size bound and quadratic growth of $\beta_n$]\label{lemma:acc_beta_tau_zeta}
Under Assumptions~\ref{assumption1}, \ref{assumption_5.1}  and~\ref{assump:strong f}, let $\{\tau_n\}$ and $\{\beta_n\}$ be generated by
Algorithm~\ref{algorithm 2 acclerated}. Let $L>0$ be the constant from
Remark~\ref{eq:acc_bdd_x_n}. Then the following assertions hold.
\begin{enumerate}[label=\textup{(\roman*)}]
    \item\label{it:acc_tau_lower}
    There exists $\bar M>0$ such that
    \[
        \tau_n\ge \frac{\bar M}{\sqrt{L^2+C\beta_n}}
        \qquad \forall n\ge1.
    \]

    \item\label{it:acc_beta_quad}
    Let $\kappa:=\mu\underline{\zeta} \bar M$. Then there exists
    $N_0\in\mathbb N$ and $c_0>0$ such that
    \[
        \beta_n\ge c_0n^2
        \qquad \forall n\ge N_0,
    \]
    where one may take $c_0:=\frac{\kappa^2}{512C}$.
\end{enumerate}
\end{lemma}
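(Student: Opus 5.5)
The plan is to prove \ref{it:acc_tau_lower} by tracking the scaled quantity $s_n:=\tau_n\sqrt{L^2+C\beta_n}$ and showing it can never fall below a fixed positive threshold. Part \ref{it:acc_beta_quad} then follows by converting the lower bound on $\tau_n$ into an additive recursion for $\sqrt{\beta_n}$.

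\textbf{Auxiliary bounds.} Throughout, I use $L_n\le L$ from Remark~\ref{eq:acc_bdd_x_n}. I also use the two upper bounds of Lemma~\ref{lemma:acc_beta_tau_zeta_1}\ref{it:acc_tau_upper}, namely $\tau_n\le U$ and $\tau_n^2\beta_n\le\widehat B$. Together they give
\[
s_n\le LU+\sqrt{C\widehat B}=:S \qquad\text{for all } n.
\]
Next, the ratio of consecutive $\beta$'s is bounded. Indeed, $\beta_n/\beta_{n-1}=1+\mu\zeta_n\tau_{n-1}\le 1+\mu U$ because $\zeta_n<1$, so
\[
L^2+C\beta_n\le(1+\mu U)^2\bigl(L^2+C\beta_{n-2}\bigr).
\]

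\textbf{Proof of (i).} I split according to which branch of the minimum in \eqref{step-size: 3} is active.

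In the second branch, I substitute $\theta_{n-1}=\psi\tau_{n-1}/\tau_{n-2}$, so that
\[
\tau_n=\frac{\psi^2\alpha^2}{(L_n^2+C\beta_n)\,\tau_{n-2}}.
\]
Using $\tau_{n-2}\le S/\sqrt{L^2+C\beta_{n-2}}$ and the ratio bound, this gives
\[
s_n\ge \frac{\psi^2\alpha^2}{S(1+\mu U)}=:c'.
\]
For $n=1$ this branch instead involves $\theta_0=1$ directly, which gives an explicit bound of the same kind.

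In the first branch, $\tau_n=\rho\tau_{n-1}$. Because $\psi<\varphi$ we have $\rho>1$, and $\beta_n\ge\beta_{n-1}$, so $s_n\ge\rho s_{n-1}>s_{n-1}$.

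Combining the two cases, by induction $s_n\ge\bar M:=\min\{s_1,c'\}$, which is \ref{it:acc_tau_lower}. I expect the main obstacle here to be the second-branch estimate. Bounding $\theta_{n-1}$ from below directly would be circular, and the fix is to rewrite the branch in terms of $\tau_{n-2}$ and use the bounded ratio $\beta_n/\beta_{n-2}$.

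\textbf{Proof of (ii).} From \eqref{stepszie: 2}, $\zeta_{n+1}\ge\underline\zeta$ and part \ref{it:acc_tau_lower},
\[
\beta_{n+1}\ge \beta_n+\kappa\,\frac{\beta_n}{\sqrt{L^2+C\beta_n}}.
\]
I then split into two regimes.

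While $\beta_n<L^2/C$, the fraction is at least $\beta_n/(\sqrt2 L)$. So $\beta_n$ grows geometrically, and there is a finite $N_1$ with $\beta_{N_1}\ge L^2/C$.

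Afterwards $\sqrt{L^2+C\beta_n}\le\sqrt{2C\beta_n}$, so $\beta_{n+1}-\beta_n\ge\kappa\sqrt{\beta_n}/\sqrt{2C}$. Then
\[
\sqrt{\beta_{n+1}}-\sqrt{\beta_n}=\frac{\beta_{n+1}-\beta_n}{\sqrt{\beta_{n+1}}+\sqrt{\beta_n}}\ge\frac{\kappa}{\sqrt{2C}\,\bigl(1+\sqrt{1+\mu U}\bigr)}.
\]
Alternatively, a cruder argument comparing $\sqrt{\beta_{n+1}}\le2\sqrt{\beta_n}$ after the threshold gives an increment of order $\kappa/\sqrt C$.

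Summing these increments gives $\sqrt{\beta_n}\ge c\,(n-N_1)$. Hence $\beta_n\ge c_0n^2$ for all $n\ge N_0:=2N_1$ (say), after absorbing constants. Tracking the numerical losses in the factors of $2$ should reproduce $c_0=\kappa^2/(512C)$.
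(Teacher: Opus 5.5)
Your proof of (i) is essentially the paper's. The paper also rewrites the second branch as $\tau_{n+1}=\psi^2\alpha^2/\bigl((L_{n+1}^2+C\beta_{n+1})\tau_{n-1}\bigr)$. It then bounds $\tau_{n-1}^2(L^2+C\beta_{n+1})$ using $\beta_{n+1}\le(1+\mu U)^2\beta_{n-1}$ and $\tau_{n-1}^2\beta_{n-1}\le\widehat B$.

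The only difference is the start of the induction. Your estimate at $n=2$ involves $\tau_0$, so you need $s_0\le S$. Lemma~\ref{lemma:acc_beta_tau_zeta_1} only covers $n\ge1$. The bound still holds, because $\tau_0\le U$ gives $\tau_0^2\beta_0\le\rho\psi\alpha^2\Theta/C\le\widehat B$. The paper avoids this check by putting both $s_1$ and $s_2$ into the minimum that defines $\bar M$.

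For (ii) you take a genuinely different route, and it is valid for the existence of some $c_0>0$. The paper works with $s_n=\sqrt{L^2+C\beta_n}$ and derives $s_{n+1}^2\ge s_n^2+\kappa(s_n-L^2/s_n)$. Once $s_n\ge\max\{\sqrt2L,\kappa/16\}$, it uses $(s_n+\kappa/8)^2\le s_n^2+\tfrac{\kappa}{2}s_n$. This gives $s_{n+1}\ge s_n+\kappa/8$ without any upper bound on $\beta_{n+1}/\beta_n$, and the constant $\kappa^2/(512C)$ comes from this step.

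Your two-regime argument on $\sqrt{\beta_n}$ is transparent, but it relies on the ratio bound, and that brings $\mu U$ into the constant. Your main route certifies only $\beta_n\ge\frac{\kappa^2}{8C(1+\sqrt{1+\mu U})^2}\,n^2$ for $n\ge 2N_1$. This is weaker than $\kappa^2/(512C)$ whenever $\mu U>48$, so tracking factors of $2$ cannot recover the stated constant along this route.

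Your cruder alternative does give the constant, with increment at least $\kappa/(3\sqrt{2C})$ and hence $\beta_n\ge\frac{\kappa^2}{72C}n^2$. However, $\beta_{n+1}\le4\beta_n$ requires $\mu\tau_n\le3$. From $\tau_n^2\beta_n\le\widehat B$ this holds only once $\beta_n\ge\mu^2\widehat B/9$. That is a second threshold, not merely $\beta_n\ge L^2/C$.

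The cleanest fix is to use the paper's completing-the-square step in your variables, with no ratio bound. Set $a:=\kappa/\sqrt{2C}$, so that $\beta_{n+1}\ge\beta_n+a\sqrt{\beta_n}$ once $\beta_n\ge L^2/C$. Whenever also $\sqrt{\beta_n}\ge a/8$, you have $(\sqrt{\beta_n}+a/4)^2\le\beta_n+a\sqrt{\beta_n}$, so $\sqrt{\beta_{n+1}}\ge\sqrt{\beta_n}+a/4$. Summing and taking $n\ge2N$ gives $\beta_n\ge\frac{\kappa^2}{128C}n^2\ge\frac{\kappa^2}{512C}n^2$.
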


\begin{proof}
We first prove \ref{it:acc_tau_lower}. By Lemma~\ref{lemma:acc_beta_tau_zeta_1},
$\tau_n\le U$, $\tau_n^2\beta_n\le \widehat B$ for $n\ge1$, and
$\zeta_n<1$. Hence
\[
    \beta_{n+1}
    =\beta_n(1+\mu\zeta_{n+1}\tau_n)
    \le (1+\mu U)\beta_n
    \qquad \forall n\ge0.
\]
Thus, for every $n\ge2$,
\[
    \beta_{n+1}\le(1+\mu U)^2\beta_{n-1}.
\]
Combining this with Lemma~\ref{lemma:acc_beta_tau_zeta_1}\ref{it:acc_tau_upper}, we obtain
\[
    \tau_{n-1}^2\beta_{n+1}
    \le
    (1+\mu U)^2\tau_{n-1}^2\beta_{n-1}
    \le
    (1+\mu U)^2\widehat B,
    \quad n\ge2.
\]
Consequently
\begin{equation}\label{eq:acc_aux_bound_for_tau_lower}
    \tau_{n-1}^2(L^2+C\beta_{n+1})
    \le
    U^2L^2+C(1+\mu U)^2\widehat B,
    \quad n\ge2.
\end{equation}
Define
\[
    \bar M:=
    \min\left\{
        \tau_1\sqrt{L^2+C\beta_1},
        \tau_2\sqrt{L^2+C\beta_2},
        \frac{\psi^2\alpha^2}
        {\sqrt{U^2L^2+C(1+\mu U)^2\widehat B}}
    \right\}.
\]
Then $\bar M>0$. We claim that
\[
    \tau_n\ge\frac{\bar M}{\sqrt{L^2+C\beta_n}}
    \quad \forall n\ge1.
\]
This holds for $n=1,2$ by the definition of $\bar M$. Suppose it holds for some
$n\ge2$. We now show that it is true for $n+1$. If the first branch in \eqref{step-size: 3} is the minimum, then
\[
    \tau_{n+1}=\rho\tau_n
    \ge
    \frac{\rho \bar M}{\sqrt{L^2+C\beta_n}}
    \ge
    \frac{\bar M}{\sqrt{L^2+C\beta_{n+1}}},
\]
since $\rho>1$ and $(\beta_n)$ is increasing. And if the second branch is chosen,
then
\[
\begin{aligned}
    \tau_{n+1}
    &=
    \frac{\psi\alpha^2\theta_n}
    {(L_{n+1}^2+C\beta_{n+1})\tau_n}
    =
    \frac{\psi^2\alpha^2}
    {(L_{n+1}^2+C\beta_{n+1})\tau_{n-1}}.
\end{aligned}
\]
Since $L_{n+1}\le L~\forall n$, using \eqref{eq:acc_aux_bound_for_tau_lower} gives
\[
\begin{aligned}
    \tau_{n+1}
    &\ge
    \frac{\psi^2\alpha^2}
    {(L^2+C\beta_{n+1})\tau_{n-1}}                                      \\
    &\ge
    \frac{\psi^2\alpha^2}
    {\sqrt{U^2L^2+C(1+\mu U)^2\widehat B}}
    \frac1{\sqrt{L^2+C\beta_{n+1}}}                                      \\
    &\ge
    \frac{\bar M}{\sqrt{L^2+C\beta_{n+1}}}.
\end{aligned}
\]
Thus \ref{it:acc_tau_lower} follows by induction.

\medskip
We now prove \ref{it:acc_beta_quad}. From Lemma~\ref{lemma:acc_beta_tau_zeta_1} and
Lemma \ref{lemma:acc_beta_tau_zeta}\ref{it:acc_tau_lower}, observe that
\[
    \mu\zeta_{n+1}\tau_n
    \ge
    \frac{\mu\underline{\zeta} \bar M}{\sqrt{L^2+C\beta_n}}
    =
    \frac{\kappa}{\sqrt{L^2+C\beta_n}}.
\]
Therefore,
\begin{equation}\label{eq:acc_beta_recursion}
    \beta_{n+1}
    \ge
    \beta_n\left(1+\frac{\kappa}{\sqrt{L^2+C\beta_n}}\right).
\end{equation}
This recursion implies that $(\beta_n)$ is unbounded. Indeed, if
$\beta_n\le B$ for all $n$, then \eqref{eq:acc_beta_recursion} gives
\[
    \beta_{n+1}
    \ge
    \left(1+\frac{\kappa}{\sqrt{L^2+CB}}\right)\beta_n,
\]
which contradicts boundedness. Let $s_n:=\sqrt{L^2+C\beta_n}.$ Then $(s_n)$ is increasing and unbounded, and \eqref{eq:acc_beta_recursion} gives
\[
\begin{aligned}
    s_{n+1}^2
    &=L^2+C\beta_{n+1}                                      \\
    &\ge
    L^2+C\beta_n
    +\frac{C\kappa\beta_n}{\sqrt{L^2+C\beta_n}}                  \\
    &=
    s_n^2+\kappa\left(s_n-\frac{L^2}{s_n}\right).
\end{aligned}
\]
Since $(s_n)$ is unbounded, we can find some $n_0\in\mathbb N$ such that $\forall n\ge n_0$
\[
    s_n\ge\max\left\{\sqrt2L,\frac{\kappa}{16}\right\}.
\]
Then,
\[
    s_n-\frac{L^2}{s_n}\ge\frac{s_n}{2},~~\forall n\ge n_0
\]
and hence
\[
    s_{n+1}^2\ge s_n^2+\frac{\kappa}{2}s_n~~\forall n\ge n_0.
\]
Since $s_n\ge\frac{\kappa}{16}$
\[
    \left(s_n+\frac{\kappa}{8}\right)^2
    =s_n^2+\frac{\kappa}{4}s_n+\frac{\kappa^2}{64}
    \le
    s_n^2+\frac{\kappa}{2}s_n.
\]
Thus
\[
    s_{n+1}\ge s_n+\frac{\kappa}{8}
    \qquad \forall n\ge n_0.
\]
Consequently,
\[
    s_n\ge s_{n_0}+\frac{\kappa}{8}(n-n_0)
    \qquad \forall n\ge n_0.
\]
Therefore,
\[
    \beta_n
    =\frac{s_n^2-L^2}{C}
    \ge
    \frac1C\left[
        \left(s_{n_0}+\frac{\kappa}{8}(n-n_0)\right)^2-L^2
    \right].
\]
Let
\[
    N_0:=\max\left\{2n_0,\left\lceil\frac{23L}{\kappa}\right\rceil\right\}.
\]
For every $n\ge N_0$, we have $n-n_0\ge \frac{n}{2}$ and
$L^2\le \frac{\kappa^2 n^2}{512}$. Hence
\[
\begin{aligned}
    \left(s_{n_0}+\frac{\kappa}{8}(n-n_0)\right)^2-L^2
    &\ge
    s_{n_0}^2+\frac{\kappa}{8}s_{n_0}n
    +\frac{\kappa^2}{256}n^2-\frac{\kappa^2}{512}n^2                         \\
    &=
    s_{n_0}^2+\frac{\kappa}{8}s_{n_0}n+\frac{\kappa^2}{512}n^2.
\end{aligned}
\]
Thus
\[
    \beta_n
    \ge
    \frac{s_{n_0}^2}{C}
    +\frac{\kappa s_{n_0}}{8C}n
    +\frac{\kappa^2}{512C}n^2
    \qquad \forall n\ge N_0.
\]
In particular, we have
\[
    \beta_n\ge c_0n^2,
    \quad \forall n\ge N_0,
\]
where $ c_0:=\frac{\kappa^2}{512C}$.
Therefore, the proof is complete.
\end{proof}


\begin{lemma}\label{lem:beta_tau_linear}
Let $\{(\beta_n,\zeta_n,\tau_n)\}$ be generated by
Algorithm~\ref{algorithm 2 acclerated}. Then there exist constants
$c_1>0$ and $N_1\in\mathbb N$ such that
\[
    \beta_n\tau_n\ge c_1 n
    \qquad \forall n\ge N_1.
\]
More precisely, one can take
\[
    c_1:=\bar M\sqrt{\frac{c_0}{2C}}
    ~~\text{and}~~
    N_1:=
    \max\left\{
        N_0,
        \left\lceil\frac{L}{\sqrt{c_0C}}\right\rceil
    \right\},
\]
where $N_0$ and $c_0$ are given in
Lemma~\ref{lemma:acc_beta_tau_zeta}.
\end{lemma}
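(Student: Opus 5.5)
The plan is to combine the two parts of Lemma~\ref{lemma:acc_beta_tau_zeta}: the lower step-size bound in \ref{it:acc_tau_lower} and the quadratic growth of $\beta_n$ in \ref{it:acc_beta_quad}. By \ref{it:acc_tau_lower},
\[
    \beta_n\tau_n\ge \frac{\bar M\beta_n}{\sqrt{L^2+C\beta_n}},
\]
and the function $t\mapsto t/\sqrt{L^2+Ct}$ is increasing on $(0,\infty)$, since its derivative is $(L^2+Ct/2)/(L^2+Ct)^{3/2}>0$. So it suffices to bound the denominator by a multiple of $\beta_n$ once $\beta_n$ is large compared with $L^2/C$. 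Equivalently, I would show $\beta_n/\sqrt{L^2+C\beta_n}\ge\sqrt{\beta_n/(2C)}$ whenever $C\beta_n\ge L^2$.

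The key step is the elementary comparison: if $C\beta_n\ge L^2$, then $L^2+C\beta_n\le 2C\beta_n$. Hence
\[
    \beta_n\tau_n\ge \frac{\bar M\beta_n}{\sqrt{2C\beta_n}}=\bar M\sqrt{\frac{\beta_n}{2C}}.
\]
For $n\ge N_0$, Lemma~\ref{lemma:acc_beta_tau_zeta}\ref{it:acc_beta_quad} gives $\beta_n\ge c_0n^2$. Substituting yields $\beta_n\tau_n\ge \bar M\sqrt{c_0/(2C)}\,n=c_1n$, which is the claimed constant.

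It remains to guarantee the condition $C\beta_n\ge L^2$. This is where the second entry in $N_1$ enters. For $n\ge N_1\ge\lceil L/\sqrt{c_0C}\rceil$ we have $n^2c_0C\ge L^2$. Since also $n\ge N_0$, we get $C\beta_n\ge Cc_0n^2\ge L^2$. Both requirements are then met for all $n\ge N_1$, which finishes the argument.

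There is no substantial obstacle; the only point needing care is the order of the two steps. I will first apply $\beta_n\ge c_0n^2$ to verify $C\beta_n\ge L^2$, then use it again inside $\sqrt{\beta_n}$. The monotonicity of $t\mapsto t/\sqrt{L^2+Ct}$ is not strictly needed, but it makes clear why a lower bound on $\beta_n$ transfers to a lower bound on $\beta_n\tau_n$.
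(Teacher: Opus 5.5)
Your proposal is correct and matches the paper's proof essentially step for step. You lower-bound $\tau_n$ via part (i), use $n\ge N_1$ to get $C\beta_n\ge Cc_0n^2\ge L^2$ (hence $L^2+C\beta_n\le 2C\beta_n$), and then substitute $\beta_n\ge c_0n^2$ into $\bar M\sqrt{\beta_n/(2C)}$; the monotonicity remark on $t\mapsto t/\sqrt{L^2+Ct}$ is harmless but unnecessary, as you note.
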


\begin{proof}
By Lemma~\ref{lemma:acc_beta_tau_zeta}\ref{it:acc_tau_lower},
\[
    \tau_n\ge\frac{\bar M}{\sqrt{L^2+C\beta_n}}
    \qquad \forall n\ge1.
\]
Thus
\[
    \beta_n\tau_n
    \ge
    \frac{\bar M\beta_n}{\sqrt{L^2+C\beta_n}}
    \qquad \forall n\ge1.
\]
Let $n\ge N_1$. Since $n\ge N_0$,
Lemma~\ref{lemma:acc_beta_tau_zeta}\ref{it:acc_beta_quad} gives
$\beta_n\ge c_0n^2$. Moreover, by the definition of $N_1$, we have
\[
    L^2\le c_0Cn^2\le C\beta_n.
\]
Therefore $L^2+C\beta_n\le2C\beta_n$, and hence
\[
\begin{aligned}
    \beta_n\tau_n
    &\ge
    \frac{\bar M\beta_n}{\sqrt{2C\beta_n}}
    =\bar M\sqrt{\frac{\beta_n}{2C}}
    \ge
    \bar M\sqrt{\frac{c_0}{2C}}\,n
    =c_1n.
\end{aligned}
\]
\end{proof}

\medskip
Given $N\ge1$, define
\begin{equation*}
    Q_N:=\sum_{n=1}^N\beta_n\tau_n,
    \qquad
    \widetilde x_N:=\frac1{Q_N}\sum_{n=1}^N\beta_n\tau_nx_n,
    \qquad
    \widetilde w_N:=\frac1{Q_N}\sum_{n=1}^N\beta_n\tau_nw_n.
\end{equation*}

\begin{theorem}\label{Accelerated sublinear rate 2}
Under Assumptions~\ref{assumption1}, \ref{assumption_5.1} and~\ref{assump:strong f}, let
$\{(z_n,x_n,w_n,y_n,\tau_n)\}$ be generated by
Algorithm~\ref{algorithm 2 acclerated}, and let
$(\bar x,\bar w,\bar y)\in\mathbf\Pi$. Then there exist constants
$c_1>0$ and $P_4>0$ such that, for every $N\ge1$,
\begin{equation}\label{eq:rate_final_thm}
    \bigl|\Phi(\widetilde x_N,\widetilde w_N)-\Phi(\bar x,\bar w)\bigr|
    \le
    \frac{P_4}{c_1(N^2+N)},
    \qquad
    \|K\widetilde x_N-\widetilde w_N\|
    \le
    \frac{P_4}{b\,c_1(N^2+N)},
\end{equation}
where $b>0$ is any constant satisfying $b\ge2\|\bar y\|$.
\end{theorem}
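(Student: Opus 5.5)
The argument mirrors the proof of Theorem~\ref{sublinear rate of convergence 1}, with the uniform lower bound $\tau_n\ge\Delta$ replaced by the weights $\beta_n\tau_n$ and the linear growth from Lemma~\ref{lem:beta_tau_linear}.

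\textbf{Telescoping.} Fix $y\in\mathbb H_2$ and sum the weighted descent inequality \eqref{eq:acc_weighted_descent} of Lemma~\ref{lem:acc_weighted_fejer} (with $x^*=\bar x$) over $n=1,\dots,N$. This gives
\[
\sum_{n=1}^N\beta_n\tau_n\mathbb J(x_n,w_n,y)\le \beta_1E_1(y)-\beta_{N+1}E_{N+1}(y)\le \beta_1E_1(y).
\]
For fixed $y$, the map $(x,w)\mapsto\mathbb J(x,w,y)$ is jointly convex, since $\Phi$ is convex and the coupling term is linear. Jensen's inequality with weights $\beta_n\tau_n/Q_N$ then yields
\[
\mathbb J(\widetilde x_N,\widetilde w_N,y)\le \frac{\beta_1E_1(y)}{Q_N}.
\]

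\textbf{Uniform bound over a ball.} Take $\|y\|\le b$. Then $\beta_1E_1(y)\le S$, where
\[
S:=\beta_1\Big[\tfrac{\psi+\mu\tau_1}{2(\psi-1)}\|\bar x-z_2\|^2+\tfrac{1}{2\beta_1}(b+\|y_0\|)^2+\tfrac{\alpha\theta_0}{2}\|x_1-x_0\|^2\Big].
\]
Maximising over the ball gives
\[
\Phi(\widetilde x_N,\widetilde w_N)-\Phi(\bar x,\bar w)+b\|K\widetilde x_N-\widetilde w_N\|\le S/Q_N.
\]
The saddle inequality together with $\|\bar y\|\le b/2$ gives, exactly as in \eqref{Roc_5_new},
\[
\Phi(\bar x,\bar w)-\Phi(\widetilde x_N,\widetilde w_N)\le \tfrac b2\|K\widetilde x_N-\widetilde w_N\|.
\]
Combining the two bounds gives $\|K\widetilde x_N-\widetilde w_N\|\le 2S/(bQ_N)$ and $|\Phi(\widetilde x_N,\widetilde w_N)-\Phi(\bar x,\bar w)|\le S/Q_N$.

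\textbf{Lower bound on $Q_N$ (main obstacle).} It remains to show $Q_N\ge c\,(N^2+N)$ for every $N\ge1$, not only for large $N$. Lemma~\ref{lem:beta_tau_linear} gives $\beta_n\tau_n\ge c_1n$ only for $n\ge N_1$, so it must be extended to all $n$.

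By Lemma~\ref{lemma:acc_beta_tau_zeta}\ref{it:acc_tau_lower} and monotonicity of $\beta_n$, we have $\beta_n\tau_n>0$ for every $n$. Set
\[
c_1':=\min\Big\{c_1,\ \min_{1\le n<N_1}\frac{\beta_n\tau_n}{n}\Big\}>0.
\]
This is a finite minimum of positive numbers, so $\beta_n\tau_n\ge c_1'n$ for all $n\ge1$. Hence
\[
Q_N\ge c_1'\sum_{n=1}^N n=\frac{c_1'}{2}(N^2+N).
\]
Renaming $c_1'$ as $c_1$ and setting $P_4:=4S$ gives both bounds in \eqref{eq:rate_final_thm} with room to spare: $S/Q_N\le P_4/(c_1(N^2+N))$ and $2S/(bQ_N)\le P_4/(bc_1(N^2+N))$.

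The delicate points are the constants. First, $c_1$ depends on $L$ from Remark~\ref{eq:acc_bdd_x_n}, which holds because the iterates are bounded by Lemma~\ref{lem:acc_weighted_fejer}. Second, the early-index constant $c_1'$ must stay positive, which is guaranteed because the minimum is over finitely many indices.
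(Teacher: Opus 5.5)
Your proof is correct and follows the paper's argument: telescope \eqref{eq:acc_weighted_descent}, apply Jensen with weights $\beta_n\tau_n/Q_N$, maximise over $\|y\|\le b$, and combine with the saddle inequality to get bounds of order $1/Q_N$, then invoke Lemma~\ref{lem:beta_tau_linear}. The only difference is how the finitely many early indices are handled. You shrink $c_1$ so that $\beta_n\tau_n\ge c_1 n$ holds for all $n\ge1$, which gives $Q_N\ge\frac{c_1}{2}(N^2+N)$ for every $N$ and $P_4=4S$. The paper instead keeps the $c_1$ of Lemma~\ref{lem:beta_tau_linear}, proves $Q_N\ge\frac{c_1}{4}(N^2+N)$ only for $N\ge 2N_1$, and then enlarges $P_4$ using the actual residual values for $N<2N_1$; your variant is slightly tidier because $P_4$ does not depend on those residuals.
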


\begin{proof}
Apply Lemma~\ref{lem:acc_weighted_fejer} with $x^*=\bar x$ and arbitrary
$y\in\mathbb H_2$. Summing \eqref{eq:acc_weighted_descent} from $n=1$ to $N$
gives
\begin{equation*}
    \sum_{n=1}^N\beta_n\tau_n\mathbb J(x_n,w_n,y)
    \le
    \beta_1E_1(y)-\beta_{N+1}E_{N+1}(y)
    \le
    \beta_1E_1(y).
\end{equation*}
Since $\mathbb J(\cdot,\cdot,y)$ is convex in $(x,w)$, the definition of
$(\widetilde x_N,\widetilde w_N)$ gives
\begin{equation}\label{eq:acc_ergodic_J}
    \mathbb J(\widetilde x_N,\widetilde w_N,y)
    \le
    \frac1{Q_N}\sum_{n=1}^N\beta_n\tau_n\mathbb J(x_n,w_n,y)
    \le
    \frac{\beta_1E_1(y)}{Q_N}.
\end{equation}
Fix $b>0$ with $b\ge2\|\bar y\|$, and set
\[
    P_3:=
    \frac{\beta_1(\psi+\mu\tau_1)}{2(\psi-1)}\|\bar x-z_2\|^2
    +\frac12(b+\|y_0\|)^2
    +\frac{\alpha\beta_1\theta_0}{2}\|x_1-x_0\|^2 .
\]
For every $y$ with $\|y\|\le b$, we have $\beta_1E_1(y)\le P_3$. Taking the
supremum of \eqref{eq:acc_ergodic_J} over $\|y\|\le b$ yields
\begin{equation}\label{eq:acc_gap_upper_ball}
    \Phi(\widetilde x_N,\widetilde w_N)-\Phi(\bar x,\bar w)
    +b\|K\widetilde x_N-\widetilde w_N\|
    \le
    \frac{P_3}{Q_N}.
\end{equation}
Since $(\bar x,\bar w,\bar y)$ is a saddle point and $K\bar x=\bar w$
\begin{equation}\label{eq:acc_gap_lower_saddle}
    \Phi(\bar x,\bar w)-\Phi(\widetilde x_N,\widetilde w_N)
    \le
    \langle \bar y,K\widetilde x_N-\widetilde w_N\rangle
    \le
    \frac b2\|K\widetilde x_N-\widetilde w_N\|.
\end{equation}
Combining \eqref{eq:acc_gap_upper_ball} and \eqref{eq:acc_gap_lower_saddle} gives
\begin{equation}\label{eq:acc_gap_feas_over_Q}
    \|K\widetilde x_N-\widetilde w_N\|
    \le
    \frac{2P_3}{bQ_N},
    \qquad
    \bigl|\Phi(\widetilde x_N,\widetilde w_N)-\Phi(\bar x,\bar w)\bigr|
    \le
    \frac{P_3}{Q_N}.
\end{equation}
By Lemma~\ref{lem:beta_tau_linear}, there exist $c_1>0$ and
$N_1\in\mathbb N$ such that $\beta_n\tau_n\ge c_1n$ for all $n\ge N_1$.
Hence, for $N\ge N_1$,
\[
    Q_N
    \ge
    c_1\sum_{n=N_1}^N n
    =
    \frac{c_1}{2}\bigl(N(N+1)-(N_1-1)N_1\bigr).
\]
In particular, for every $N\ge2N_1$
\begin{equation}\label{eq:acc_Q_lower}
    Q_N\ge\frac{c_1}{4}(N^2+N).
\end{equation}
Combining \eqref{eq:acc_gap_feas_over_Q} and \eqref{eq:acc_Q_lower} gives, for
all $N\ge2N_1$
\[
    \bigl|\Phi(\widetilde x_N,\widetilde w_N)-\Phi(\bar x,\bar w)\bigr|
    \le
    \frac{4P_3}{c_1(N^2+N)},
\]
and
\[
    \|K\widetilde x_N-\widetilde w_N\|
    \le
    \frac{8P_3}{b\,c_1(N^2+N)}.
\]
Finally, enlarge the constant to handle the finite set
$\{1,2,\ldots,2N_1-1\}$. Take
\[
\begin{aligned}
    P_4:=\max\Bigl\{&
    8P_3,
    \max_{1\le N\le2N_1-1}
    c_1(N^2+N)
    \bigl|\Phi(\widetilde x_N,\widetilde w_N)-\Phi(\bar x,\bar w)\bigr|,
    \\
    &\max_{1\le N\le2N_1-1}
    b\,c_1(N^2+N)\|K\widetilde x_N-\widetilde w_N\|
    \Bigr\}.
\end{aligned}
\]
Then \eqref{eq:rate_final_thm} holds for every $N\ge1$.
\end{proof}

\subsection[Strongly convex differentiable term]{When $h$ is strongly convex}\label{subsec:hstrong_acc}

In this subsection, we show that PF-GRPDA (Algorithm \ref{algorithm 2}) also
admits an accelerated ergodic rate when the locally smooth term is strongly convex.  This case is slightly different from the acceleration obtained under the strong convexity of $f$.  Indeed, the strong convexity of $f$ produces
terms involving $x_{n+1}$, while the strong convexity of $h$ gives a curvature
term at $x_n$.  Thus, the proof has to use the golden-ratio relation between
$x_n$, $z_{n+1}$ and $z_{n+2}$ in a more effective way.  The result below is
stated for a globally strongly convex and locally smooth $h$. However, when $h$ is globally smooth, the local Lipschitz constant appearing in the proof can simply be replaced by the
global Lipschitz constant of $\nabla h$.

\begin{assumption}\label{assump:hstrong_acc}
Suppose that $h$ is globally $\mu_h$-strongly convex for some $\mu_h>0$, that is,
\begin{equation}\label{eq:hstrong_global}
    h(v)\ge h(u)+\langle \nabla h(u),v-u\rangle
    +\frac{\mu_h}{2}\|v-u\|^2,
    \qquad \forall u,v\in\mathbb H_1 .
\end{equation}
\end{assumption}

\begin{algorithm}[H]
\caption{Accelerated PF-GRPDA when $h$ is strongly convex}
\label{algorithm:hstrong_acc}
\KwIn{Choose $x_0\in\mathbb H_1$, $y_0\in\mathbb H_2$ and set $z_0=x_0$. Choose $\psi\in(1,\varphi)$, $\rho=\psi^{-1}+\psi^{-2}$, $0<\alpha< \frac{1}{3}$, $\beta_0>0$, and $\theta_0=1$.  Take  $C:=\psi\|K\|^2,$  $\Theta:=\max\{\theta_0,\psi\rho\}$, and $U:=\sqrt{\frac{\rho\psi\alpha^2\Theta}{C\beta_0}}$. Choose $0<\tau_0\le U$, and $\gamma>0$ such that $0\le\gamma\le \bar\gamma,$ where
\begin{equation}\label{eq:hstrong_gamma_bar}
    \bar\gamma:=
    \min\left\{
    \frac{\mu_h(\psi-1)}{3\psi},\,
    \frac{\psi^2(\psi-1)}{3U},\,
    \frac{(1-3\alpha)\psi}
    {U\left(\frac{3(\psi-1)}{\psi}+\alpha\Theta\right)}
    \right\}.
\end{equation}}

\For{$n=1,2,\ldots$}{
\textbf{Step 1} (Compute)
\begin{align}
    z_n
    &=\frac{\psi-1}{\psi}x_{n-1}+\frac1\psi z_{n-1},\nonumber\\
    x_n
    &=\prox_{\tau_{n-1}f}
    \left(z_n-\tau_{n-1}K^*y_{n-1}
    -\tau_{n-1}\nabla h(x_{n-1})\right).\nonumber
\end{align}

\textbf{Step 2} (Update)
\begin{align}
\beta_n&=\beta_{n-1}(1+\gamma\tau_{n-1}). \label{eq:hstrong_beta_update}\\
    \tau_n
    &=\min\left\{
    \rho\tau_{n-1},
    \frac{\psi\alpha^2\theta_{n-1}}
    {(L_n^2+\beta_n\psi\|K\|^2)\tau_{n-1}}
    \right\},
    \qquad
    \sigma_n=\beta_n\tau_n . \label{eq:hstrong_tau_update}
\end{align}

\textbf{Step 3} (Compute)
\begin{align}
    w_n
    &=\prox_{\frac1{\sigma_n}g}
    \left(\frac{y_{n-1}}{\sigma_n}+Kx_n\right),\nonumber\\
    y_n
    &=y_{n-1}+\sigma_n(Kx_n-w_n).\nonumber
\end{align}

\textbf{Step 4} (Update)
\begin{align*}
    \theta_n=\frac{\psi\tau_n}{\tau_{n-1}}.
\end{align*}
}
\end{algorithm}

 Some comments regarding Algorithm \ref{algorithm:hstrong_acc} are in order.
\begin{remark}
    Note that the additional update~\eqref{eq:hstrong_beta_update} is the main difference from
Algorithm~\ref{algorithm 2}.  It is also different from PF-AGRPDA, because here the growth of $(\beta_n)$ is driven
by a fixed curvature parameter $\gamma$, chosen below the threshold
\eqref{eq:hstrong_gamma_bar}. In contrast to Algorithm~\ref{algorithm 2 acclerated}, the restriction $\psi>\psi_0$ is not needed in the present case. It is enough to take $\psi\in(1,\varphi)$.
\end{remark}
\begin{remark}
     It is important to observe that, in the absence of strong convexity of $h$, that is, when $\mu_h=0$, we have $\bar\gamma=0$ and consequently $\gamma=0$. As a result, the update rule \eqref{eq:hstrong_beta_update} becomes $\beta_n=\beta_0$ for all $n$. Hence, the Algorithm \ref{algorithm:hstrong_acc} simplifies to Algorithm~\ref{algorithm 2}. Otherwise, in all cases, $\gamma>0$.
\end{remark}

\begin{lemma}\label{lem:hstrong_basic_bounds}
Under Assumptions~\ref{assumption1}, \ref{assumption_5.1} and \ref{assump:hstrong_acc}, let
$\{(z_n,x_n,w_n,y_n,\tau_n,\beta_n)\}$ be generated by
Algorithm~\ref{algorithm:hstrong_acc}.  Then the following holds.
\begin{enumerate}[label=\textup{(\roman*)}]
    \item For all $n\ge 1$, $\tau_n\le U$ and  $\theta_n\le\Theta$. Furthermore, we can prove that 
\begin{equation*}
        \tau_n^2\beta_n\le \widehat B:=\frac{\alpha^2\Theta^2}{C}~~\forall n\ge 1.
    \end{equation*}
\end{enumerate}
\end{lemma}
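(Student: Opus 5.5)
The plan is to mirror the proof of Lemma~\ref{lemma:acc_beta_tau_zeta_1}\ref{it:acc_tau_upper}. The only property of $(\beta_n)$ that argument uses is that $\beta_n\ge\beta_0$ for every $n$. Here this holds because $\gamma\ge0$ and $\tau_{n-1}>0$, so \eqref{eq:hstrong_beta_update} makes $(\beta_n)$ nondecreasing.

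\textbf{Bound on $\theta_n$.} The first branch of \eqref{eq:hstrong_tau_update} always gives $\tau_n\le\rho\tau_{n-1}$. Hence $\theta_n=\psi\tau_n/\tau_{n-1}\le\psi\rho$ for $n\ge1$. Together with $\theta_0=1$, this yields $\theta_n\le\Theta=\max\{\theta_0,\psi\rho\}$ for all $n\ge0$, in particular for $n\ge1$.

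\textbf{Bound on $\tau_n$.} I write the update as $\tau_n=\min\{\rho\tau_{n-1},\,c_n/\tau_{n-1}\}$ with
\[
c_n:=\frac{\psi\alpha^2\theta_{n-1}}{L_n^2+C\beta_n},
\]
using the convention $\frac00=+\infty$ when $x_n=x_{n-1}$; in that case $\tau_n=\rho\tau_{n-1}$ and $c_n$ is still finite as defined. Lemma~\ref{min a and b} then gives $\tau_n\le\sqrt{\rho c_n}$. Using $L_n^2\ge0$, $\beta_n\ge\beta_0$ and $\theta_{n-1}\le\Theta$, I get
\[
\tau_n\le\sqrt{\frac{\rho\psi\alpha^2\Theta}{C\beta_0}}=U .
\]
The case where the first branch is active is also covered, since the bound $\min\{a,b\}\le\sqrt{ab}$ holds regardless of which term attains the minimum.

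\textbf{Bound on $\tau_n^2\beta_n$.} From the second branch, $\tau_n\le \psi\alpha^2\theta_{n-1}/\bigl((L_n^2+C\beta_n)\tau_{n-1}\bigr)$. Substituting the identity $\psi\theta_{n-1}/\tau_{n-1}=\theta_n\theta_{n-1}/\tau_n$, which follows from $\theta_n=\psi\tau_n/\tau_{n-1}$, gives
\[
\tau_n^2\le\frac{\alpha^2\theta_n\theta_{n-1}}{L_n^2+C\beta_n}\le\frac{\alpha^2\Theta^2}{C\beta_n}.
\]
Multiplying through by $\beta_n$ yields $\tau_n^2\beta_n\le\widehat B$.

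\textbf{Main obstacle.} There is no genuine difficulty. The only points needing care are the indexing in the identity $\psi\theta_{n-1}/\tau_{n-1}=\theta_n\theta_{n-1}/\tau_n$ and the degenerate case $x_n=x_{n-1}$. The latter is harmless: the second bound only needs the inequality $\tau_n\le c_n/\tau_{n-1}$, which holds trivially there since the second term is $+\infty$.
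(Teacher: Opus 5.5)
Your main line is the paper's own argument. The paper proves this lemma only by pointing to Lemma~\ref{lemma:acc_beta_tau_zeta_1} and Remark~\ref{tau_n_bdd_above}, and you correctly note that the only fact needed about $(\beta_n)$ is $\beta_n\ge\beta_0$, which here follows from $\gamma\ge0$. Your bounds $\theta_n\le\Theta$, $\tau_n\le\sqrt{\rho c_n}\le U$ and $\tau_n^2\le\alpha^2\theta_n\theta_{n-1}/(L_n^2+C\beta_n)$ are all correct whenever the second branch of \eqref{eq:hstrong_tau_update} is a finite number.

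The step that does not hold is your treatment of $x_n=x_{n-1}$. You cannot have it both ways. Suppose, as you say, the convention makes the second branch $+\infty$, so that $\tau_n=\rho\tau_{n-1}$. Then $\tau_n\le c_n/\tau_{n-1}$ with a \emph{finite} $c_n$ is exactly what is lost; it does not ``hold trivially''. What holds trivially is only $\tau_n\le+\infty$, and from that neither Lemma~\ref{min a and b} nor the $\tau_n^2\beta_n$ estimate gives anything.

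Under that reading the lemma is in fact false. Take $(x_0,y_0)$ to be a saddle point. Then $z_1=x_0$, and $x_1=x_0$ by the prox characterization, since $-K^*y_0-\nabla h(x_0)\in\partial f(x_0)$. Next $w_1=Kx_0$ because $y_0\in\partial g(Kx_0)$, and so $y_1=y_0$. Inductively the iterates are stationary and every step is degenerate. Hence $\tau_n=\rho^n\tau_0\to\infty$, since $\rho>1$ for $\psi<\varphi$, which violates both $\tau_n\le U$ and $\tau_n^2\beta_n\le\widehat B$.

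The fix is to set $L_n:=0$ when $x_n=x_{n-1}$, since both the numerator and denominator of $L_n$ vanish. The second branch is then the finite quantity $\psi\alpha^2\theta_{n-1}/(C\beta_n\tau_{n-1})$. This is the reading implicitly used in Remark~\ref{tau_n_bdd_above} and Lemma~\ref{lemma:acc_beta_tau_zeta_1}, where $L_n$ enters as a finite number, even though the paper's stated $\frac00=+\infty$ convention suggests otherwise. With it, your argument goes through verbatim and needs no special case.
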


\begin{proof}
The proof follows from similar arguments as in Lemma \ref{lemma:acc_beta_tau_zeta_1} and Remark \ref{tau_n_bdd_above}.
\end{proof}

\begin{lemma}\label{lem:hstrong_one_step}
Under Assumptions~\ref{assumption1}, \ref{assumption_5.1} and
\ref{assump:hstrong_acc}, let the sequence $\{(z_n,x_n,w_n,y_n,\tau_n)\}$ be generated by Algorithm~\ref{algorithm:hstrong_acc}.  Let
$(x^*,w^*,y^*)\in\mathbf\Pi$ be a saddle point of $\mathbb{L}$.  Then, for every
$y\in\mathbb H_2$ and every $n\ge1$, one has
\begin{multline}\label{eq:hstrong_one_step}
 2\tau_n\mathbb J(x_n,w_n,y)
 +\frac{\psi}{\psi-1}\|x^*-z_{n+2}\|^2
 +\frac1{\beta_n}\|y-y_n\|^2
 +\alpha\theta_n\|x_{n+1}-x_n\|^2                                      \\
 +\mu_h\tau_n\|x_n-x^*\|^2
 +\theta_n\|x_n-z_{n+1}\|^2
 +(1-3\alpha)\theta_n\|x_{n+1}-x_n\|^2                                  \\
 \le \frac{\psi}{\psi-1}\|x^*-z_{n+1}\|^2
 +\frac1{\beta_n}\|y-y_{n-1}\|^2
 +\alpha\theta_{n-1}\|x_n-x_{n-1}\|^2 .
\end{multline}
\end{lemma}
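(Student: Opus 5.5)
Apply the one-step argument of Lemma~\ref{lemma_5.0.1} with two changes: the constant dual weight $1/\beta$ becomes $1/\beta_n$, and the convexity of $h$ is replaced by the strong convexity \eqref{eq:hstrong_global}. The prox inequalities \eqref{3.2.1}--\eqref{3.2.3} carry over verbatim with $\sigma_n=\beta_n\tau_n$. The rearrangements \eqref{eq:Ky-collect}--\eqref{eq:grad-collect} and the dual identity \eqref{eq:y-id} (now with $1/\beta_n$) lead to the analogue of \eqref{lemma4.1.0}. The only new ingredient is the term $\tau_n\langle\nabla h(x_n),x^*-x_n\rangle$. Applying \eqref{eq:hstrong_global} with $u=x_n$, $v=x^*$ bounds it by
\[
\tau_n\bigl(h(x^*)-h(x_n)\bigr)-\frac{\mu_h\tau_n}{2}\|x_n-x^*\|^2 .
\]
After multiplying by $2$, this produces the extra term $\mu_h\tau_n\|x_n-x^*\|^2$ on the left-hand side, which is exactly the curvature term in \eqref{eq:hstrong_one_step}.

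Next, expand the three inner products by Lemma~\ref{UsefulLemma1}\eqref{eq:basic_identities_alt_a} as in \eqref{lemma4.1_2}. Then bound the gradient and coupling cross terms as in \eqref{adap_2} and \eqref{adap_3}. Lemma~\ref{lem:hstrong_basic_bounds} and the rule \eqref{eq:hstrong_tau_update} give the analogue of Remark~\ref{remark_5.2}: $\tau_nL_n\le\alpha\sqrt{\theta_n\theta_{n-1}}$ and $\tau_n\|K\|\le\alpha\sqrt{\theta_n\theta_{n-1}/(\beta_n\psi)}$. Hence the gradient term is bounded by $\alpha\theta_n\|x_n-x_{n+1}\|^2+\alpha\theta_{n-1}\|x_n-x_{n-1}\|^2$. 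The coupling term is bounded by $\alpha\theta_n\|x_n-x_{n+1}\|^2+\frac{\alpha\theta_{n-1}}{\beta_n\psi}\|y_n-y_{n-1}\|^2$.

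Substituting \eqref{derived_equality2} then gives the analogue of \eqref{adap_5}, which contains three residual terms:
\begin{itemize}
\item the term $(\theta_n-1-\frac1\psi)\|x_{n+1}-z_{n+1}\|^2\le 0$, which we drop;
\item the term $-\frac1{\beta_n}(1-\frac{\alpha\theta_{n-1}}{\psi})\|y_n-y_{n-1}\|^2\le0$, which we drop;
\item the term $-\theta_n(1-2\alpha)\|x_n-x_{n+1}\|^2$.
\end{itemize}
Here $\theta_n\le\psi\rho=1+\frac1\psi$ and $\alpha<\frac13$.

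The statement keeps, rather than drops, two groups of terms. The first is $\theta_n\|x_n-z_{n+1}\|^2$, which is simply moved to the left-hand side. The second comes from writing $\theta_n(1-2\alpha)=\alpha\theta_n+(1-3\alpha)\theta_n$. Both pieces are then placed on the left-hand side, giving $\alpha\theta_n\|x_{n+1}-x_n\|^2+(1-3\alpha)\theta_n\|x_{n+1}-x_n\|^2$. Finally the dropped nonpositive terms are removed. Since $1-3\alpha>0$, every coefficient is nonnegative and \eqref{eq:hstrong_one_step} follows.

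The main obstacle is bookkeeping rather than conceptual. One must check that replacing $\beta$ by the time-varying $\beta_n$ is harmless within a single step: only $\beta_n$ appears in step $n$, so no comparison between $\beta_n$ and $\beta_{n-1}$ is needed here. One must also check that the constants in \eqref{adap_3} still close with $\beta_n$ in the denominator, which holds because the step-size rule uses $\beta_n\psi\|K\|^2$. I would write out the modified \eqref{adap_5} carefully to confirm that the coefficient of $\|x_n-x_{n+1}\|^2$ is exactly $\theta_n(1-2\alpha)$, with no $\mu_h$ contamination, since the curvature enters only at $x_n$.
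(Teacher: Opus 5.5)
Your proposal is correct and follows the paper's proof. It reruns the one-step argument of Lemma~\ref{lemma_5.0.1} with $\sigma_n=\beta_n\tau_n$, uses \eqref{eq:hstrong_global} to gain $\mu_h\tau_n\|x_n-x^*\|^2$, keeps $\theta_n\|x_n-z_{n+1}\|^2$ on the left, and splits $(1-2\alpha)\theta_n=\alpha\theta_n+(1-3\alpha)\theta_n$; your extra bookkeeping (the $\beta_n$-analogue of Remark~\ref{remark_5.2} and the sign checks on the dropped terms) is what the paper leaves implicit.
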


\begin{proof}
The proof is the same as that of the basic one-step inequality for Algorithm~\ref{algorithm 2},
except that the convexity estimate for $h$ is replaced by its strong convexity estimate. Indeed, \eqref{eq:hstrong_global} gives
\begin{equation}\label{eq:hstrong_convexity_insert}
    h(x_n)-h(x^*)
    \le
    \langle\nabla h(x_n),x_n-x^*\rangle
    -\frac{\mu_h}{2}\|x_n-x^*\|^2 .
\end{equation}
Using \eqref{eq:hstrong_convexity_insert} in the Lemma \ref{lemma_5.0.1} yields
\begin{multline*}
    2\tau_n\mathbb J(x_n,w_n,y)
    +\mu_h\tau_n\|x_n-x^*\|^2
    +\frac{\psi}{\psi-1}\|z_{n+2}-x^*\|^2 +\frac{1}{\beta_n}\|y_n-y\|^2
    +\theta_n\|x_n-z_{n+1}\|^2 \\
    {}+(1-2\alpha)\theta_n\|x_{n+1}-x_n\|^2
    \le
    \frac{\psi}{\psi-1}\|z_{n+1}-x^*\|^2
    +\frac{1}{\beta_n}\|y_{n-1}-y\|^2
    +\alpha\theta_{n-1}\|x_n-x_{n-1}\|^2 .
\end{multline*}
Finally, since $(1-2\alpha)\theta_n
    =\alpha\theta_n+(1-3\alpha)\theta_n,$ we obtain  \eqref{eq:hstrong_one_step}.
\end{proof}

\begin{lemma}\label{lem:hstrong_weighted_fejer}
Under the assumptions of Lemma~\ref{lem:hstrong_one_step}, for
$n\ge1$ and $y\in\mathbb H_2$, define
\begin{equation}\label{eq:hstrong_energy}
    E_n(y):=
    \frac{\psi}{2(\psi-1)}\|x^*-z_{n+1}\|^2
    +\frac1{2\beta_n}\|y-y_{n-1}\|^2
    +\frac{\alpha\theta_{n-1}}2\|x_n-x_{n-1}\|^2 .
\end{equation}
Then, for every $y\in\mathbb H_2$ and every $n\ge1$,
\begin{equation}\label{eq:hstrong_weighted_descent}
    \beta_n\tau_n\mathbb J(x_n,w_n,y)+\beta_{n+1}E_{n+1}(y)
    \le \beta_nE_n(y).
\end{equation}
Consequently, the sequences $(z_n)$, $(x_n)$ and $(y_n)$ are bounded.
\end{lemma}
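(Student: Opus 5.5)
The proof multiplies the one-step estimate of Lemma~\ref{lem:hstrong_one_step} by $\beta_n/2$ and shows that the extra curvature terms absorb the growth factor $\beta_{n+1}/\beta_n=1+\gamma\tau_n$ coming from~\eqref{eq:hstrong_beta_update}. Each of the three thresholds in~\eqref{eq:hstrong_gamma_bar} is used for exactly one of these absorptions.

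\textbf{Comparing the two sides.} The $y$-part needs no work, because
\[
\frac{\beta_{n+1}}{2\beta_{n+1}}\|y-y_n\|^2=\frac{\beta_n}{2\beta_n}\|y-y_n\|^2 .
\]
The right-hand side of~\eqref{eq:hstrong_one_step} times $\beta_n/2$ is exactly $\beta_nE_n(y)$. Therefore~\eqref{eq:hstrong_weighted_descent} follows once we have
\begin{multline*}
\gamma\tau_n\frac{\psi}{\psi-1}\|x^*-z_{n+2}\|^2+\gamma\tau_n\alpha\theta_n\|x_{n+1}-x_n\|^2\\
\le \mu_h\tau_n\|x_n-x^*\|^2+\theta_n\|x_n-z_{n+1}\|^2+(1-3\alpha)\theta_n\|x_{n+1}-x_n\|^2 .
\end{multline*}
Here the left side is what remains of $\beta_{n+1}E_{n+1}(y)$, divided by $\beta_n/2$, after subtracting the matching terms of~\eqref{eq:hstrong_one_step}.

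\textbf{Main obstacle: controlling $\|x^*-z_{n+2}\|^2$.} I would decompose $z_{n+2}$ via the golden-ratio relation $z_{n+2}=\frac{\psi-1}{\psi}x_{n+1}+\frac1\psi z_{n+1}$:
\[
z_{n+2}-x^*=(x_n-x^*)+\tfrac{\psi-1}{\psi}(x_{n+1}-x_n)+\tfrac1\psi(z_{n+1}-x_n).
\]
Applying $\|a+b+c\|^2\le 3(\|a\|^2+\|b\|^2+\|c\|^2)$ gives
\[
\|x^*-z_{n+2}\|^2\le 3\|x_n-x^*\|^2+\tfrac{3(\psi-1)^2}{\psi^2}\|x_{n+1}-x_n\|^2+\tfrac{3}{\psi^2}\|z_{n+1}-x_n\|^2 .
\]
I then match the three resulting terms coefficient by coefficient.
\begin{itemize}
\item \emph{The $\|x_n-x^*\|^2$ term.} We need $3\gamma\psi/(\psi-1)\le\mu_h$, which is the first bound in $\bar\gamma$.
\item \emph{The $\|z_{n+1}-x_n\|^2$ term.} We need $3\gamma\tau_n/(\psi(\psi-1))\le\theta_n$. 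Since $\theta_n=\psi\tau_n/\tau_{n-1}\ge\psi\tau_n/U$ by Lemma~\ref{lem:hstrong_basic_bounds}, the second bound in $\bar\gamma$ suffices.
\item \emph{The $\|x_{n+1}-x_n\|^2$ term.} We need $\gamma\tau_n\bigl(3(\psi-1)/\psi+\alpha\theta_n\bigr)\le(1-3\alpha)\theta_n$. Using $\tau_n/\theta_n=\tau_{n-1}/\psi\le U/\psi$ and $\theta_n\le\Theta$ from Lemma~\ref{lem:hstrong_basic_bounds}, the third bound in $\bar\gamma$ suffices.
\end{itemize}
Combining the three absorptions yields~\eqref{eq:hstrong_weighted_descent}.

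\textbf{Boundedness.} Take $y=y^*$, so that $\mathbb J(x_n,w_n,y^*)\ge0$ by~\eqref{Eq:measure}. Then~\eqref{eq:hstrong_weighted_descent} gives $\beta_nE_n(y^*)\le\beta_1E_1(y^*)$ for all $n$. Since $\beta_n\ge\beta_0>0$ (the sequence is nondecreasing), $(z_n)$ is bounded. Hence $(x_n)$ is bounded through $x_n=\frac{\psi}{\psi-1}z_{n+1}-\frac{1}{\psi-1}z_n$. Finally, $\frac12\|y^*-y_{n-1}\|^2\le\beta_nE_n(y^*)\le\beta_1E_1(y^*)$ shows that $(y_n)$ is bounded, exactly as in Lemma~\ref{lem:acc_weighted_fejer}.
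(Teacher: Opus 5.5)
Your proof is correct and follows the paper's argument essentially step for step. You multiply the one-step inequality by $\beta_n/2$ and isolate the surplus $\frac{\beta_n\gamma\tau_n}{2}\bigl(\frac{\psi}{\psi-1}\|x^*-z_{n+2}\|^2+\alpha\theta_n\|x_{n+1}-x_n\|^2\bigr)$ in $\beta_{n+1}E_{n+1}(y)$; you then expand $z_{n+2}-x^*$ through the golden-ratio update with the three-term bound, absorb each piece using one threshold in $\bar\gamma$ together with $\tau_n/\theta_n=\tau_{n-1}/\psi\le U/\psi$ and $\theta_n\le\Theta$, and conclude boundedness via $y=y^*$ and $\beta_n\ge\beta_0$, exactly as the paper does (the only nitpick is that $\tau_0\le U$ comes from the initialisation rather than from Lemma~\ref{lem:hstrong_basic_bounds}).
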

\begin{proof}
Multiplying \eqref{eq:hstrong_one_step} by $\frac{\beta_n}{2}$, we get
\begin{align}
&\beta_n\tau_n\mathbb J(x_n,w_n,y)
+\frac{\beta_n\psi}{2(\psi-1)}\|x^*-z_{n+2}\|^2
+\frac12\|y-y_n\|^2
+\frac{\alpha\beta_n\theta_n}{2}\|x_{n+1}-x_n\|^2                  \notag\\
&\quad
+\frac{\beta_n}{2}\Big(
\mu_h\tau_n\|x_n-x^*\|^2
+\theta_n\|x_n-z_{n+1}\|^2
+(1-3\alpha)\theta_n\|x_{n+1}-x_n\|^2
\Big)
\le \beta_nE_n(y).                                                \label{eq:hstrong_exact_before_absorb}
\end{align}
On the other hand, using \eqref{eq:hstrong_beta_update}, we have
\begin{align}
\beta_{n+1}E_{n+1}(y)
&=\frac{\beta_n\psi}{2(\psi-1)}\|x^*-z_{n+2}\|^2
+\frac12\|y-y_n\|^2
+\frac{\alpha\beta_n\theta_n}{2}\|x_{n+1}-x_n\|^2                   \notag\\
&\quad
+\frac{\beta_n\gamma\tau_n}{2}
\left(
 \frac{\psi}{\psi-1}\|x^*-z_{n+2}\|^2
 +\alpha\theta_n\|x_{n+1}-x_n\|^2
\right).                                                         \label{eq:hstrong_energy_decomp}
\end{align}
To control the last two terms in
\eqref{eq:hstrong_energy_decomp}, we have to use \eqref{eqn:39a}. Observe that
\[
    z_{n+2}-x^*
    =x_n-x^*+\frac{\psi-1}{\psi}(x_{n+1}-x_n)
    -\frac1\psi(x_n-z_{n+1}).
\]
Therefore,
\begin{align*}
\|x^*-z_{n+2}\|^2
&\le
3\|x_n-x^*\|^2
+3\left(\frac{\psi-1}{\psi}\right)^2\|x_{n+1}-x_n\|^2
+\frac3{\psi^2}\|x_n-z_{n+1}\|^2.
\end{align*}
Hence, it follows that
\begin{align}
\frac{\psi}{\psi-1}\|x^*-z_{n+2}\|^2
+\alpha\theta_n\|x_{n+1}-x_n\|^2       
&\le
\frac{3\psi}{\psi-1}\|x_n-x^*\|^2
+\frac{3}{\psi(\psi-1)}\|x_n-z_{n+1}\|^2                       \notag\\
&\quad+
\left(
3\left(\frac{\psi-1}{\psi}\right)+\alpha\theta_n
\right)\|x_{n+1}-x_n\|^2 .                                     \label{eq:hstrong_absorb_expand}
\end{align}
Since $\frac{\tau_n}{\theta_n}=\frac{\tau_{n-1}}{\psi}
    \le \frac{U}{\psi}, ~~\forall n\ge 1$, the three bounds in \eqref{eq:hstrong_gamma_bar} yield
\begin{align*}
\frac{3\psi}{\psi-1}\gamma\tau_n\|x_n-x^*\|^2
&\le \mu_h\tau_n\|x_n-x^*\|^2,\\
\frac{3}{\psi(\psi-1)}\gamma\tau_n\|x_n-z_{n+1}\|^2
&\le \theta_n\|x_n-z_{n+1}\|^2,\\
\gamma\tau_n
\left(
3\left(\frac{\psi-1}{\psi}\right)+\alpha\theta_n
\right)\|x_{n+1}-x_n\|^2
&\le (1-3\alpha)\theta_n\|x_{n+1}-x_n\|^2.
\end{align*}
Combining these estimates with \eqref{eq:hstrong_absorb_expand}, we obtain
\begin{align}
&\gamma\tau_n
\left(\frac{\psi}{\psi-1}\|x^*-z_{n+2}\|^2
 +\alpha\theta_n\|x_{n+1}-x_n\|^2
\right)                                                        \notag\\
&\le
\mu_h\tau_n\|x_n-x^*\|^2
+\theta_n\|x_n-z_{n+1}\|^2
+(1-3\alpha)\theta_n\|x_{n+1}-x_n\|^2 .                         \label{eq:hstrong_absorb_final}
\end{align}
Now \eqref{eq:hstrong_weighted_descent} follows from
\eqref{eq:hstrong_exact_before_absorb}, \eqref{eq:hstrong_energy_decomp} and
\eqref{eq:hstrong_absorb_final}. Taking $y=y^*$ in \eqref{eq:hstrong_weighted_descent} and using
\eqref{Eq:measure}, we obtain
\[
    \beta_{n+1}E_{n+1}(y^*)
    \le \beta_1E_1(y^*).
\]
Since $\beta_n\ge\beta_0>0$, it follows that $(z_n)$ is bounded, and following \eqref{eqn:39a}, gives the boundedness of $(x_n)$.  Finally,
\[
    \frac12\|y^*-y_n\|^2
    \le \beta_{n+1}E_{n+1}(y^*)
    \le \beta_1E_1(y^*)
\]
shows that $(y_n)$ is bounded.
\end{proof}

\begin{theorem}\label{thm:hstrong_acc_rate}
Under Assumptions~\ref{assumption1}, \ref{assumption_5.1} and \ref{assump:hstrong_acc}, let
$\{(z_n,x_n,w_n,y_n,\tau_n,\beta_n)\}$ be generated by
Algorithm~\ref{algorithm:hstrong_acc}, and let
$(\bar x,\bar w,\bar y)\in\mathbf\Pi$.  Then there exist constants
$c_1>0$ and $P_4>0$ such that, for every $N\ge1$,
\begin{equation*}
    \left|
    \Phi(\widetilde x_N,\widetilde w_N)-\Phi(\bar x,\bar w)
    \right|
    \le
    \frac{P_4}{c_1(N^2+N)},
    \qquad
    \|K\widetilde x_N-\widetilde w_N\|
    \le
    \frac{P_4}{b\,c_1(N^2+N)},
\end{equation*}
where $b>0$ is any constant satisfying $b\ge2\|\bar y\|$.
\end{theorem}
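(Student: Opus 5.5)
The plan mirrors the proof of Theorem~\ref{Accelerated sublinear rate 2}, with the weighted ergodic averages $Q_N$, $\widetilde x_N$, $\widetilde w_N$ defined using the $\beta_n$ of Algorithm~\ref{algorithm:hstrong_acc}.

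\textbf{Step 1 (telescoping and the ball argument).} Apply Lemma~\ref{lem:hstrong_weighted_fejer} with $x^*=\bar x$ and sum \eqref{eq:hstrong_weighted_descent} from $n=1$ to $N$. This gives $\sum_{n=1}^N\beta_n\tau_n\mathbb J(x_n,w_n,y)\le\beta_1E_1(y)$. By joint convexity of $\mathbb J(\cdot,\cdot,y)$, we get $\mathbb J(\widetilde x_N,\widetilde w_N,y)\le \beta_1E_1(y)/Q_N$. Bound $\beta_1E_1(y)\le P_3$ uniformly over $\|y\|\le b$, with
\[
P_3:=\frac{\beta_1\psi}{2(\psi-1)}\|\bar x-z_2\|^2+\frac12(b+\|y_0\|)^2+\frac{\alpha\beta_1\theta_0}{2}\|x_1-x_0\|^2 .
\]
Then take the supremum over the ball. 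The saddle inequality $\Phi(\bar x,\bar w)-\Phi(\widetilde x_N,\widetilde w_N)\le \frac b2\|K\widetilde x_N-\widetilde w_N\|$ yields, exactly as in \eqref{eq:acc_gap_feas_over_Q}, a bound of $P_3/Q_N$ on the objective residual and $2P_3/(bQ_N)$ on the feasibility violation. It therefore remains to show $Q_N\gtrsim N^2$.

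\textbf{Step 2 (lower bound on $\tau_n$).} By Lemma~\ref{lem:hstrong_weighted_fejer} the iterates are bounded, so as in Remark~\ref{eq:acc_bdd_x_n} there is $L$ with $L_n\le L$. Lemma~\ref{lem:hstrong_basic_bounds} gives $\tau_n\le U$ and $\tau_n^2\beta_n\le\widehat B$. Also, $\beta_{n+1}=\beta_n(1+\gamma\tau_n)\le(1+\gamma U)\beta_n$. With these facts the induction of Lemma~\ref{lemma:acc_beta_tau_zeta}\ref{it:acc_tau_lower} goes through verbatim, with $\mu$ replaced by $\gamma$ and $\zeta_n$ by $1$. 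It yields $\tau_n\ge \bar M/\sqrt{L^2+C\beta_n}$, where
\[
\bar M=\min\Bigl\{\tau_1\sqrt{L^2+C\beta_1},\ \tau_2\sqrt{L^2+C\beta_2},\ \psi^2\alpha^2\big/\sqrt{U^2L^2+C(1+\gamma U)^2\widehat B}\Bigr\}.
\]
The only structural facts used there are $\rho>1$, the monotonicity of $\beta_n$, and the second branch of the step-size rule, all of which still hold.

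\textbf{Step 3 (quadratic growth of $\beta_n$ and linear growth of $\beta_n\tau_n$).} From Step~2,
\[
\beta_{n+1}\ge\beta_n\Bigl(1+\frac{\kappa}{\sqrt{L^2+C\beta_n}}\Bigr),\qquad \kappa:=\gamma\bar M>0 .
\]
This is precisely recursion \eqref{eq:acc_beta_recursion}. The argument via $s_n=\sqrt{L^2+C\beta_n}$ then gives $s_{n+1}\ge s_n+\kappa/8$ eventually, hence $\beta_n\ge c_0n^2$ for $n\ge N_0$. Next, Lemma~\ref{lem:beta_tau_linear}'s computation gives $\beta_n\tau_n\ge c_1 n$ for $n\ge N_1$. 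Summing, $Q_N\ge \frac{c_1}{4}(N^2+N)$ for $N\ge 2N_1$, and the finitely many $N<2N_1$ are absorbed by enlarging the constant to $P_4$ as in the proof of Theorem~\ref{Accelerated sublinear rate 2}.

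\textbf{Main obstacle.} The only step needing real care is Step~2. Here $\beta_n$ grows through $\gamma\tau_{n-1}$ rather than through $\mu\zeta_n\tau_{n-1}$, so one must check that the bound $\tau_{n-1}^2\beta_{n+1}\le(1+\gamma U)^2\widehat B$ remains valid. It should, since it uses only $\tau_n\le U$ and $\tau_n^2\beta_n\le\widehat B$. One must also use $\gamma>0$, which holds since $\mu_h>0$, so that $\kappa>0$.
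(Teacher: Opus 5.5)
Your proposal is correct and is essentially the paper's proof: the paper only points to Remark~\ref{eq:acc_bdd_x_n}, Lemmas~\ref{lemma:acc_beta_tau_zeta} and~\ref{lem:beta_tau_linear}, and Theorem~\ref{Accelerated sublinear rate 2}, and omits the details, which you supply. You use the right substitution $\mu\zeta_{n+1}\tau_n\mapsto\gamma\tau_n$ (so $1+\mu U$ becomes $1+\gamma U$ and $\kappa=\gamma\bar M$), and the facts you flag as needed all hold for Algorithm~\ref{algorithm:hstrong_acc}: $\rho>1$ from $\psi<\varphi$, monotonicity of $\beta_n$, $\tau_n\le U$, $\tau_n^2\beta_n\le\widehat B$, and $\gamma>0$.
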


\begin{proof}
The proof follows an analogous argument as in Remark \ref{eq:acc_bdd_x_n}, Lemma \ref{lemma:acc_beta_tau_zeta}, Lemma \ref{lem:beta_tau_linear}  and Theorem~\ref{Accelerated sublinear rate 2}. Because of its simplicity, we omit the details.
\end{proof}

\section{Numerical results}\label{Num_section}

In this section, we illustrate the behaviour of the proposed primal--dual schemes on a Poisson inverse imaging problem.  We compare Algorithm~\ref{algorithm 2} (PF-GRPDA), its accelerated variant Algorithm~\ref{algorithm 2 acclerated} (PF-AGRPDA), and the linesearch method aPDAc-L~\cite{chang2026convex}.  We also include aGRAAL~\cite{malitsky2020golden} and adaPDM~\cite{latafat2023adaptive} as representative adaptive first-order competitors.

When the true solution is not available, the plotted objective gaps are computed with respect to a common reference value
\[
    F^*:=\inf\{F(x_n):\ (x_n) \text{ is generated by any method in the reference runs}\}.
\]
For the methods that generate the split variables $(x_n,w_n)$, namely PF-GRPDA, PF-GRPDA-ad$\beta$ (PFGRPDA with adaptive $\beta_n$), PF-AGRPDA and aPDAc-L, we also report the objective residual $|\Phi(x_n,w_n)-\Phi^*|$ and the feasibility violation $\|Kx_n-w_n\|$.  For aGRAAL and adaPDM, whose implemented iterates are not analysed here in the same proximal-split form, we restrict the comparison to the primal objective gap ($F(x_n)-F^*$), PSNR, step-sizes, and CPU time.  This distinction is important; one would be comparing residuals that are not defined in the same variables. The numerical performance of primal--dual methods is sensitive not only to the primal and dual step-sizes $\tau_n$ and $\sigma_n$, but also to the ratio $\beta_n=\sigma_n/\tau_n$; see, for example, \cite{chang2022grpdarevisited,sun2015convergent,chambolle2011first,malitsky2018first}.  To test whether this ratio can be tuned automatically, we also run a heuristic adaptive-$\beta_n$ version of PF-GRPDA.  Given the residual ratio $r_n=\mathbf{pinf}_n/\mathbf{dinf}_n$, we update

\begin{equation*}
    \beta_{n+1}=  \begin{cases}
  0.5\beta_n, & \text{if } r_n\leq 0.5,\\
  \beta_n, & \text{if } r_n\in(0.5,1.2),\\
  1.2\beta_n, & \text{if } r_n\geq 1.2,
\end{cases}
\end{equation*}
where
\begin{align*}
    \mathbf{pinf}_n
    &=\|Kx_n-w_n\|_1,\\
    \mathbf{dinf}_n
    &=\operatorname{dist}_1\big(-K^*y_n-\nabla h(x_n),\partial f(x_n)\big)+\operatorname{dist}_1\big(y_n,\partial g(w_n)\big).
\end{align*}
Here $\operatorname{dist}_1(u,S')$ is the distance between a set $S'$ and the vector $u$ measured by $\ell_1$-norm. Again, from~\eqref{eq:w_n update}, we have 
\begin{equation*}
    \frac{y_{n-1}}{\sigma_n} + Kx_n - w_n\in \partial\Big( \frac{g}{\sigma_n}\Big)(w_n)\implies y_{n-1}+\sigma_n(Kx_n-w_n)\in\partial g(w_n).
\end{equation*}
Now combining this with \eqref{eq:y_n update} gives $y_n\in\partial g(w_n)$. Thus $\operatorname{dist}(y_n,\partial g(w_n))\equiv0$. We emphasise that this adaptive-$\beta_n$ rule is used only as a heuristic.  The convergence results proved in this paper do not cover this additional update of $\beta_n$.


\subsection{Poisson inverse problem}\label{subsec:poisson}

Let $x\in\mathbb{R}^n$ represent the unknown image and let
$y\in\mathbb{R}^m$ denote the measured data. We model the observations using
Poisson statistics. Specifically, the components $y_j$ are independent
realisations of random variables $Y_j$ satisfying
\[
    Y_j\sim\operatorname{Poisson}\big((Ax+b)_j\big),
    \qquad j=1,\ldots,m,
\]
or, equivalently,
\[
    \mathbb{P}(Y_j=y_j)
    =
    \frac{\exp\!\big(-(Ax+b)_j\big)(Ax+b)_j^{\,y_j}}{y_j!},
    \qquad j=1,\ldots,m,
\]
where $A\in\mathbb{R}^{m\times n}$ is the observation matrix and
$b\in\mathbb{R}^m$ represents the background noise.  Our objective is
to reconstruct a nonnegative image $x$ from the resulting Poisson data. We examine three formulations of this inverse problem. \textbf{Setting~1} considers the convex KL--TV reconstruction model. In \textbf{Setting~2}, a quadratic Tikhonov term is
added to the primal function, making this component strongly convex and
allowing us to investigate the practical effect of the acceleration used by
PF-AGRPDA. \textbf{Setting~3} retains the same strongly convex reconstruction objective
as \textbf{Setting~2} but applies a different splitting: the quadratic term is included
in the differentiable component $h$. This formulation is used to evaluate
the accelerated method in Algorithm~\ref{algorithm:hstrong_acc}. 

The same Poisson sampling process and initialisation are used throughout the
experiments. For each setting, both motion and defocus blur have been used as the observation mechanism. For the
motion-blur experiments, we use a kernel with angle $30^\circ$ and length
$3$ in \textbf{Setting~1}, increasing the length to $5$ in \textbf{Settings~2} and~\textbf{3}. For
the defocus experiments, the kernel radius is $1.25$ in \textbf{Setting~1} and $2.5$
in \textbf{Settings~2} and~\textbf{3}. These two operators introduce different forms of
ill-conditioning. Six standard greyscale images with different spatial dimensions were used in these experiments. The \texttt{cameraman} image has a resolution of
$256\times256$ pixels, while \texttt{columbia} has a resolution of
$480\times480$ pixels. The \texttt{boats} and \texttt{goldhill} images both
have a resolution of $720\times576$ pixels. The \texttt{clock} image and the \texttt{chemical plant} image each have a
resolution of $256\times256$ pixels. All six images were used at their
original resolutions, without resizing or altering their aspect
ratios. The \texttt{cameraman}, \texttt{columbia}, \texttt{boats}, and
\texttt{goldhill} images were obtained from
\href{https://www.dip.ee.uct.ac.za/imageproc/stdimages/greyscale/}
{https://www.dip.ee.uct.ac.za/imageproc/stdimages/greyscale/}, whereas the
\texttt{clock} and \texttt{chemical plant} images were obtained from 
\href{https://sipi.usc.edu/database/database.php?volume=misc}
{https://sipi.usc.edu/database/database.php?volume=misc}. In what follows, we describe each setting below.

\subsubsection{Setting 1}
By the maximum-likelihood principle, the first reconstruction model \cite{di2020acquire} is

\begin{equation*}
    \min_{x\ge 0} F(x):=\mathrm{KL}(Ax,y)+\lambda\|\nabla x\|_{2,1},
\end{equation*}

where

\begin{equation*}
    \mathrm{KL}(s,y)=\sum_{i=1}^{m}\left[y_i\log\!\left(\frac{y_i}{s_i}\right)+s_i-y_i\right]\qquad (s>0),
\end{equation*}
and $\|\nabla x\|_{2,1}$ denotes the isotropic total-variation seminorm.  This is a special case of~\eqref{main_prob} with

\[
    f(x)=\iota_{\{x\ge 0\}}(x),\qquad
    g(w)=\lambda\|w\|_{2,1},\qquad
    K=\nabla,\qquad
    h(x)=\mathrm{KL}(Ax,y).
\]

\begin{figure}[htbp]
\centering
\subfloat[PF-GRPDA-ad$\beta$]{
  \includegraphics[width=0.20\linewidth]{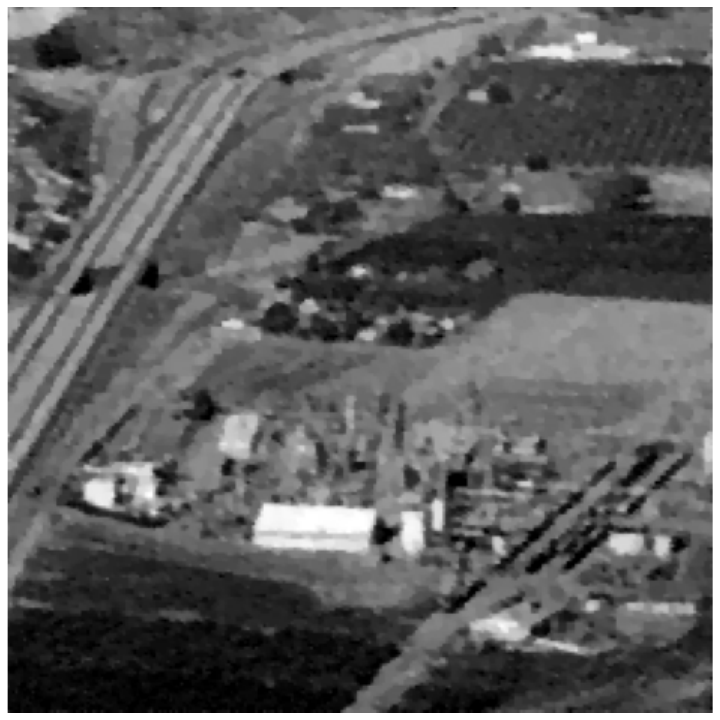}
}\hfill
\subfloat[PF-GRPDA]{
  \includegraphics[width=0.20\linewidth]{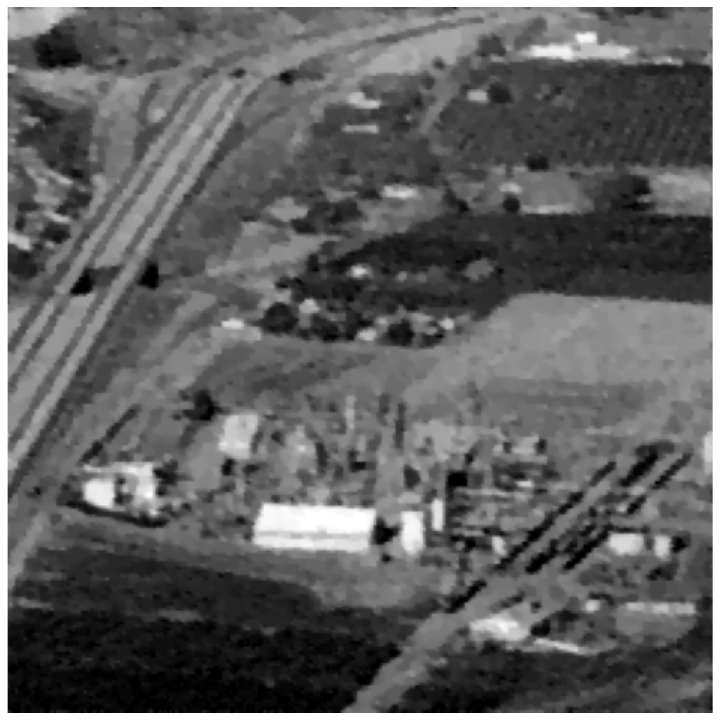}
}\hfill
\subfloat[aGRAAL]{
  \includegraphics[width=0.20\linewidth]{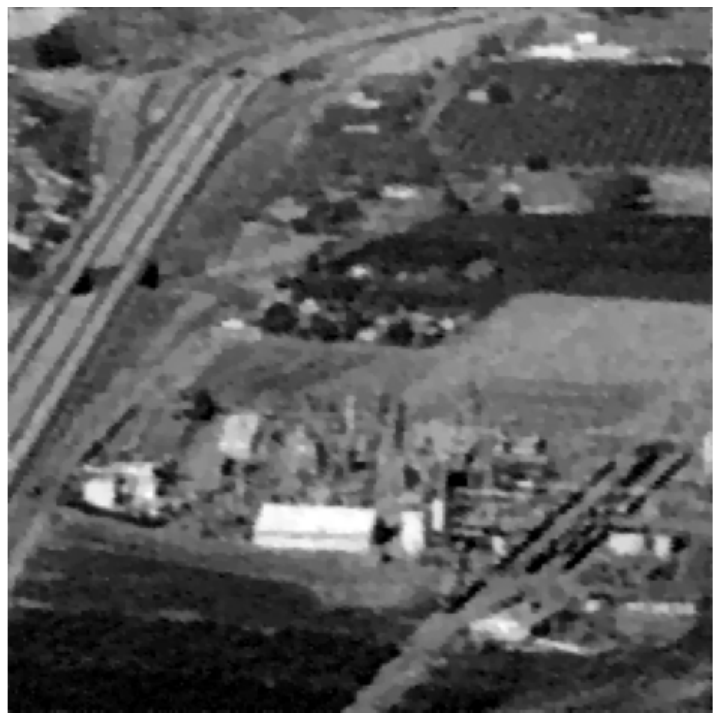}
}\hfill
\subfloat[adaPDM]{
  \includegraphics[width=0.20\linewidth]{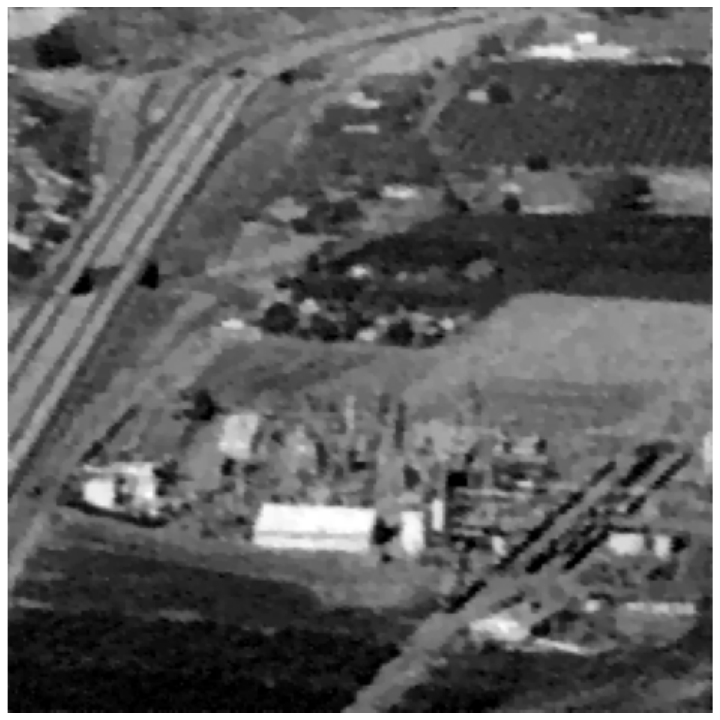}
}

\caption{Image reconstruction results for \textbf{Setting~1} using the \texttt{Chemical plant} image degraded by motion blur}
\label{fig:setting_1_motion_blur_recovery_results}
\end{figure}
\begin{figure}[htbp]
\centering
\subfloat[Feasibility residual]{
  \includegraphics[width=0.27\linewidth]{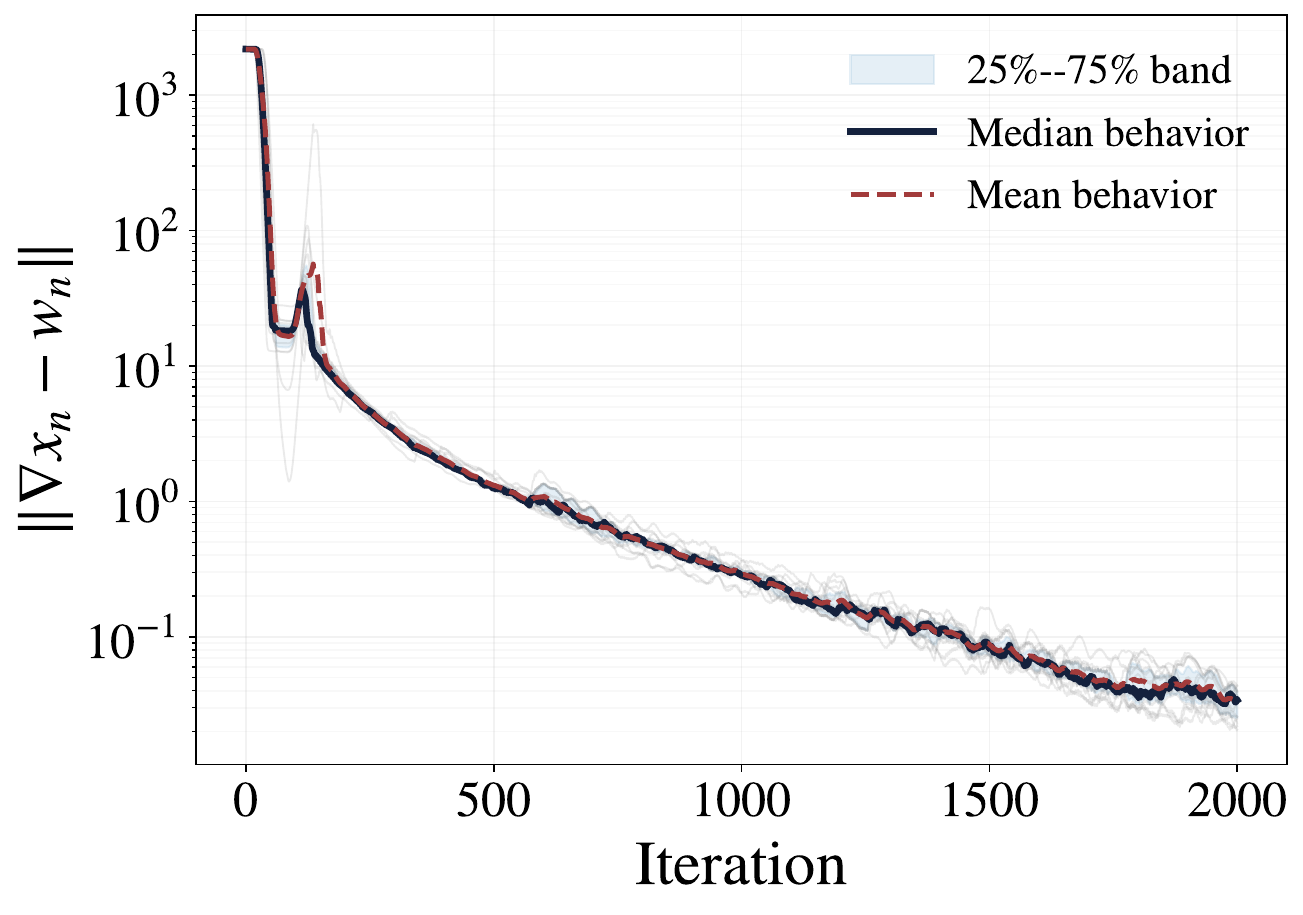}
}\hfill
\subfloat[Objective residual]{
  \includegraphics[width=0.27\linewidth]{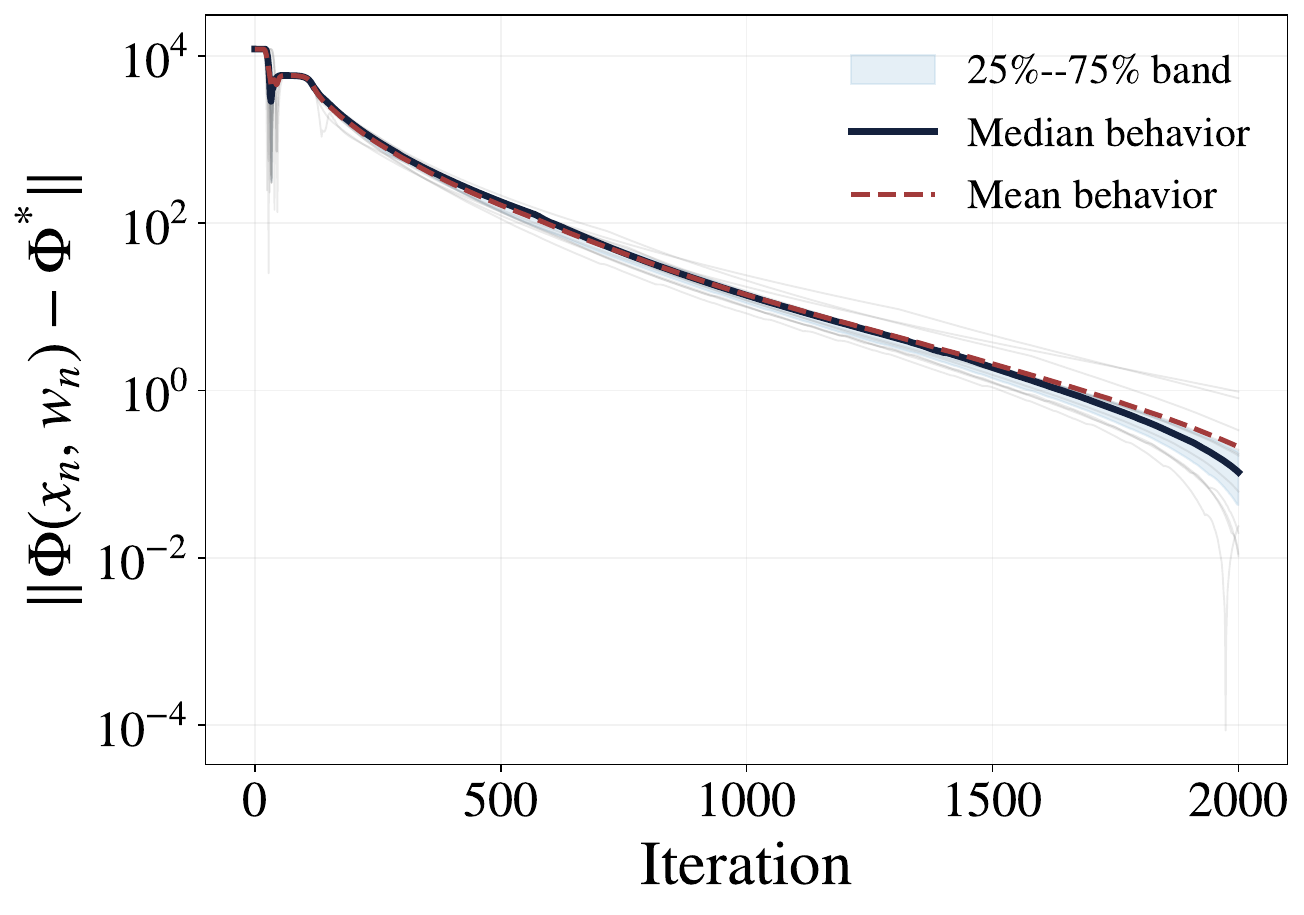}
}\hfill
\subfloat[Residual ratio $r_n$]{
  \includegraphics[width=0.27\linewidth]{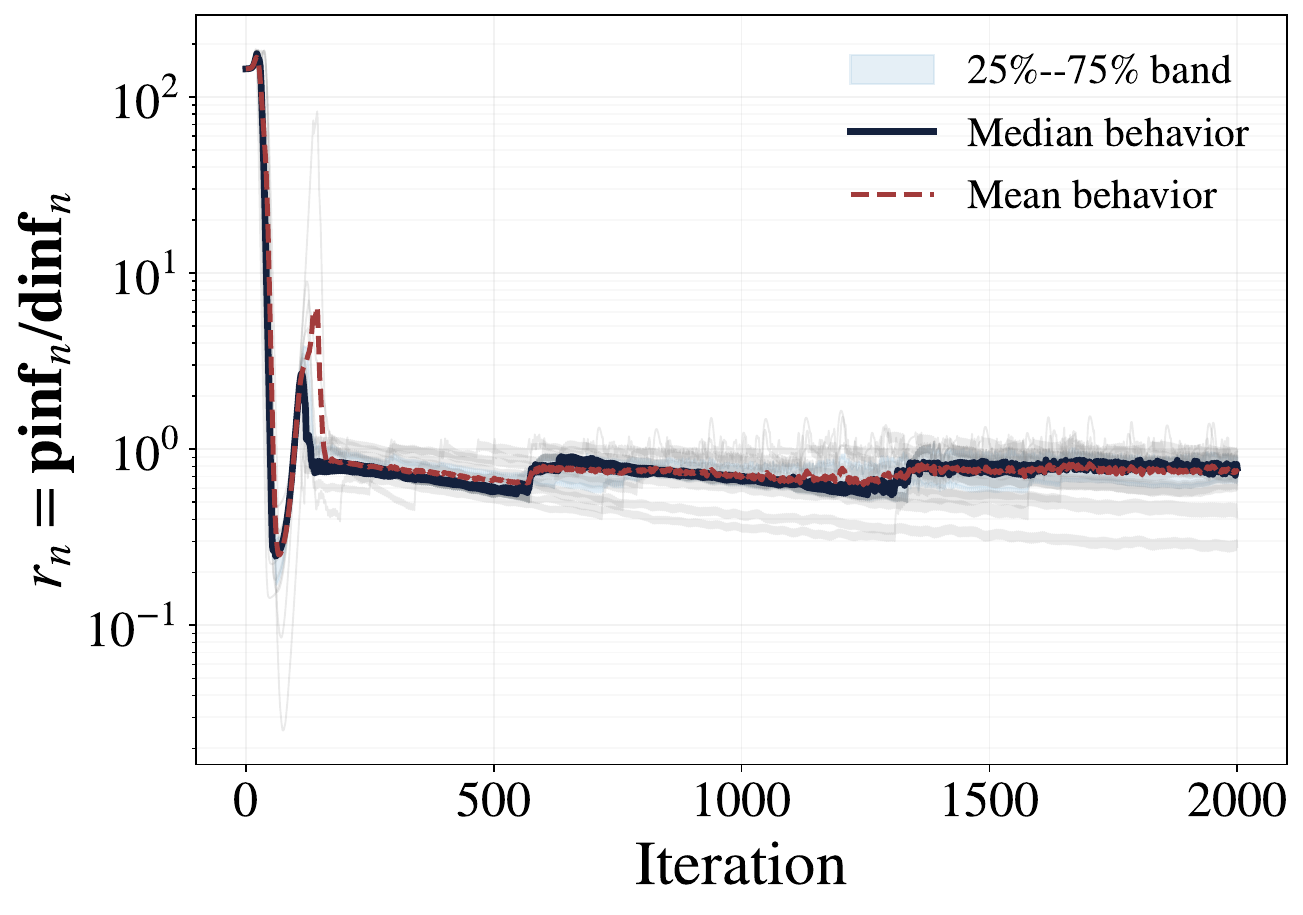}
}

\caption{Several heuristic adaptive-$\beta_n$ for PFGRPDA in \textbf{Setting~1} with motion blur. Thin faded curves show individual heuristic choices, bold solid and dashed curves show the median and mean trends, and the shaded band indicates the interquartile range.}
\label{fig:adaptive-beta-three}
\end{figure}
When the background noise is zero (i.e., $b=0$), which is generally achievable, the gradient $\nabla h$ is locally Lipschitz \cite[Section~5]{bauschke2017descent}.  We use the photon scaling factor \texttt{counts\_scale}$=150$, the regularisation parameter $\lambda=0.04$, and $2000$ iterations for the reported runs.  The convolution operator $A$ is implemented by FFT.  The initialisation is $x_0=[A^\top y]_+$, and $y_0=0$.  The discrete gradient operator has an operator norm $\|K\|\approx\sqrt 8$.  To avoid division by zero, KL evaluations and gradients are stabilised by replacing denominators with at least $10^{-10}$. To test the reconstruction quality of the images, we measure the PSNR

\[
\operatorname{PSNR}(x,x_{\rm true})=
10\log_{10}\!\left(\frac{n}{\|x-x_{\rm true}\|^2}\right).
\]

\begin{figure}[htbp!]
\centering
\subfloat[Primal objective gap]{
  \includegraphics[width=0.27\linewidth]{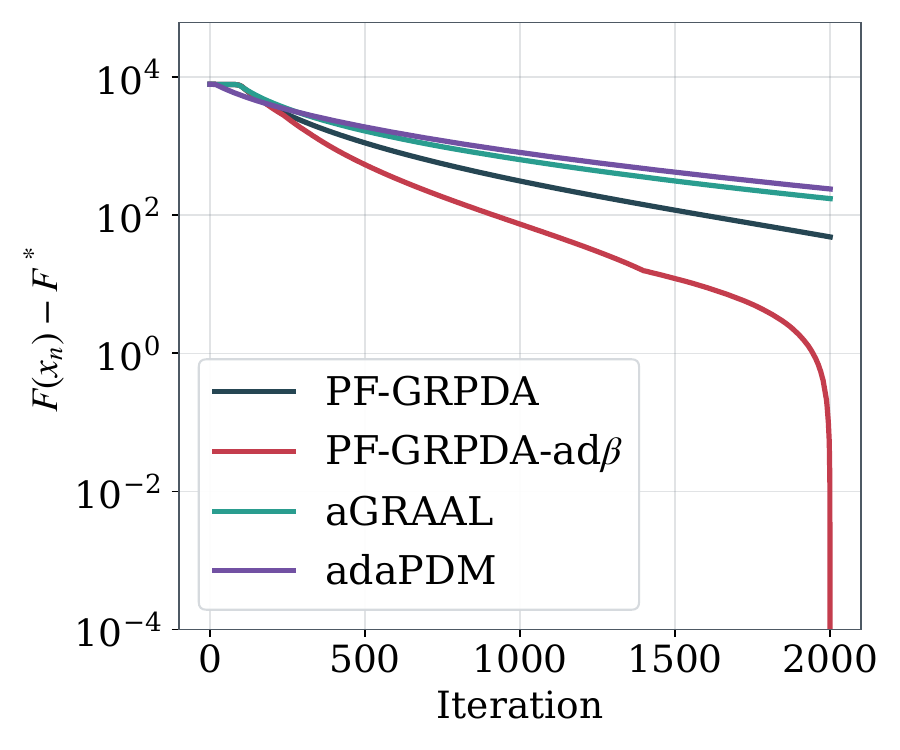}
}\hfill
\subfloat[Objective residual]{
  \includegraphics[width=0.27\linewidth]{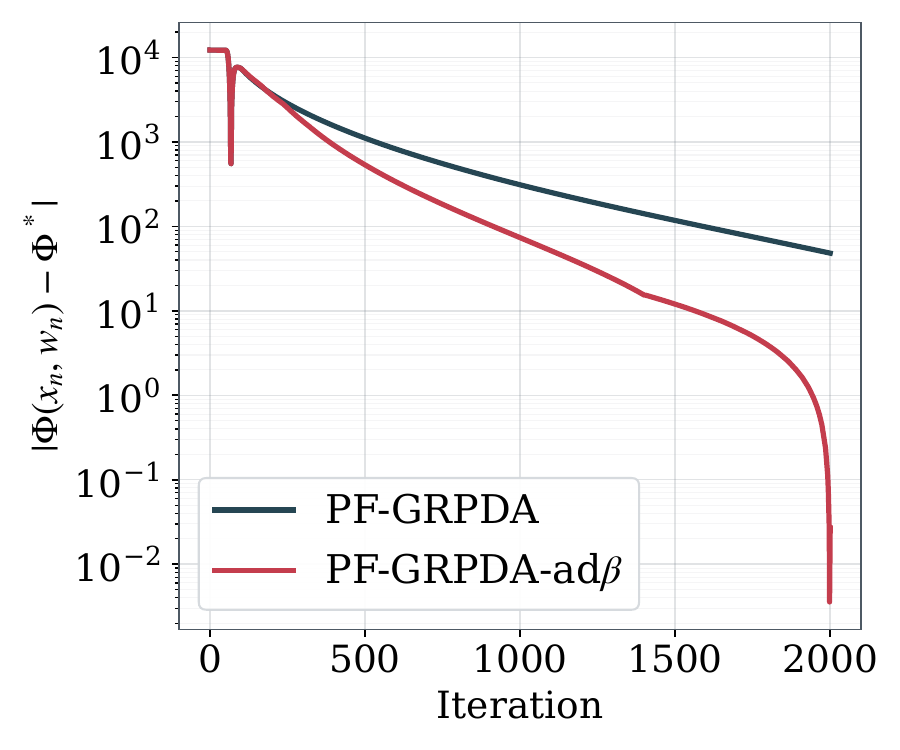}
}\hfill
\subfloat[Feasibility residual]{
  \includegraphics[width=0.27\linewidth]{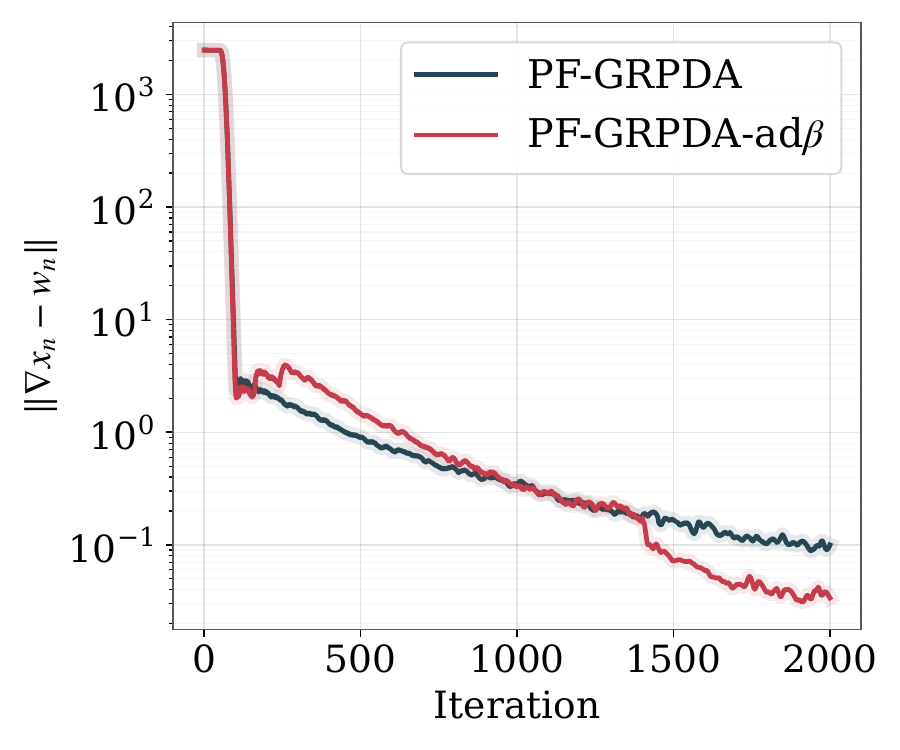}
}

\vspace{0.6em}
\subfloat[PSNR]{
  \includegraphics[width=0.27\linewidth]{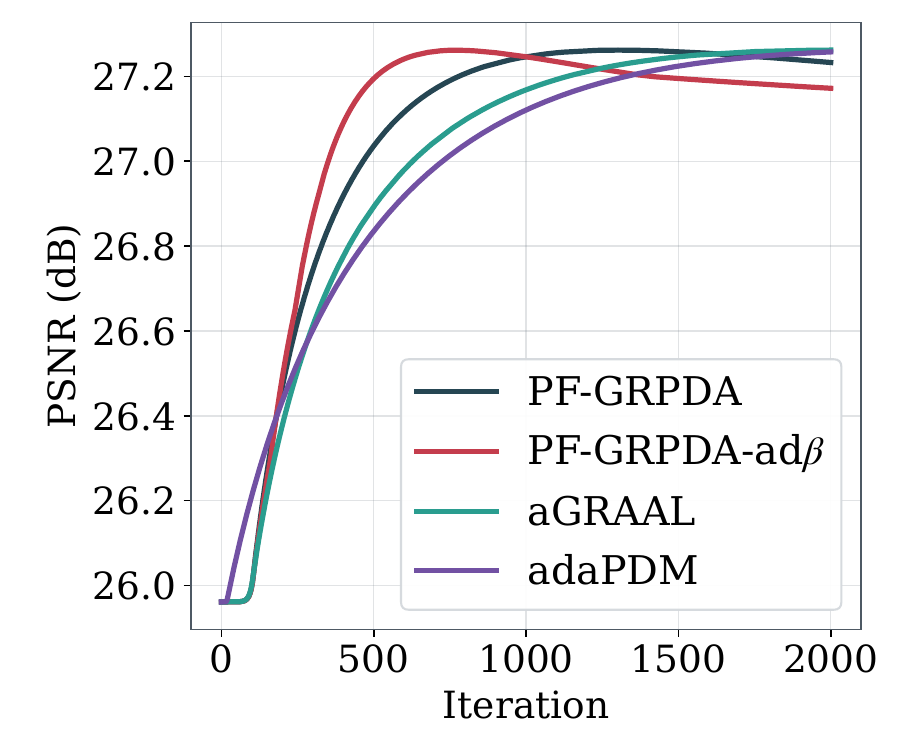}
}\hfill
\subfloat[Primal step-size $(\tau_n)$]{
  \includegraphics[width=0.27\linewidth]{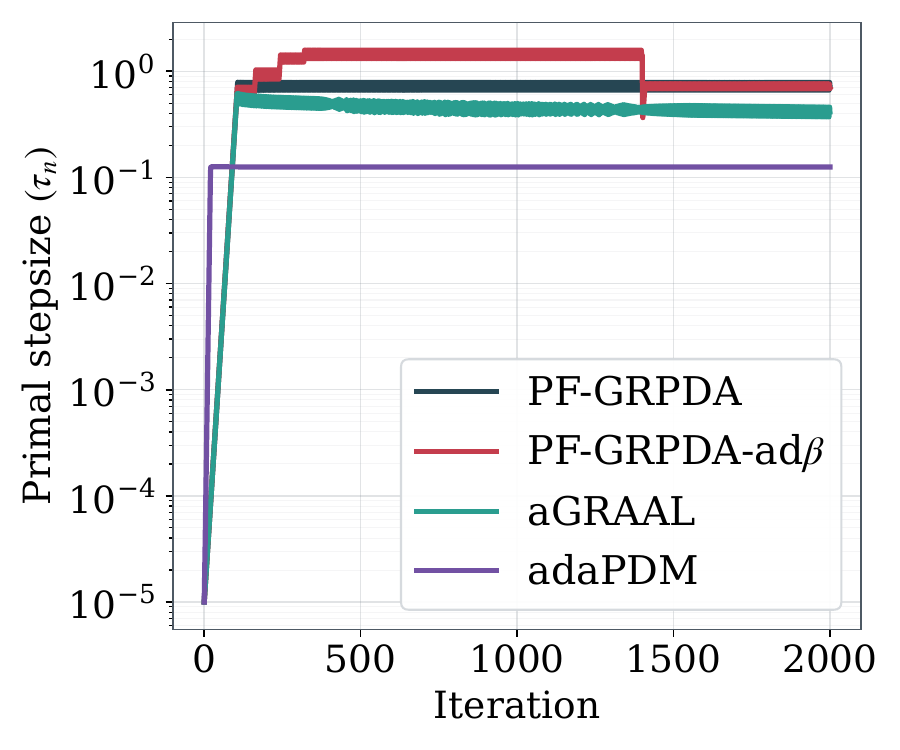}
}\hfill
\subfloat[PF-GRPDA selected $(\tau_n)$]{
  \includegraphics[width=0.28\linewidth]{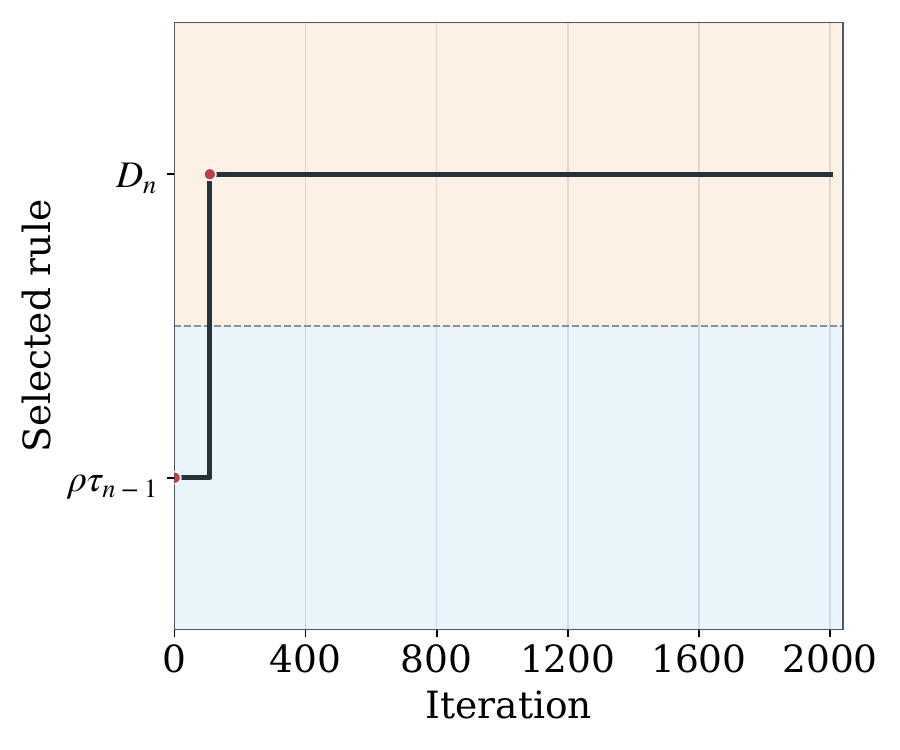}
}

\caption{Convergence results for \textbf{Setting~1} using the \texttt{Chemical plant} image degraded by motion blur}
\label{fig:setting_1_motion_blur_convergence_results}
\end{figure}
To better understand the behaviour of the adaptive step-size rule \eqref{eq:39c}, we also record which branch in the update of $(\tau_n)$ is selected during the PF-GRPDA run.  In the plot, the branch $(\rho\tau_{n-1})$ represents the growth step, while the second term in \eqref{eq:39c}, which we call $D_n$, corresponds to the local bound imposed by the curvature estimate. This result indicates whether the method primarily increases the step-size or is constrained by local smoothness information.  It is included only to illustrate the adaptive rule's internal behaviour.

We first consider the motion-blur instance.  Figure~\ref{fig:adaptive-beta-three}
shows the behaviour of the adaptive-$\beta_n$ heuristic.  The median curves
indicate that the residual-balancing rule can reduce both the feasibility violation and the objective residual compared with fixed choices of
$\beta_n$.  At the same time, the spread of the faded individual curves indicates that this update is not uniformly stabilising across all parameter choices. The reconstructions in Figure~\ref{fig:setting_1_motion_blur_recovery_results} show that all
methods remove the Poisson noise and recover the main geometric features of the true image.  However, from the convergence plots in Figure~\ref{fig:setting_1_motion_blur_convergence_results}, it can be seen that during the initial iterations, the objective gaps and PSNR curves of the methods are close; however, after this phase, the adaptive-$\beta_n$ variant (PF-GRPDA-ad $\beta$) begins to separate in the feasibility and objective gap plot, and gives noticeably smaller residuals.  This is consistent with the purpose that it can
improve the relative scaling of the primal and dual updates. It can also be observed that after $\approx 1400$ iterations, the PSNR values of the algorithms have essentially
saturated except PF-GRPDA-ad $\beta$, where PSNR have decreased when reaching $2000$ iterations. This explains that the heuristic adaptive $\beta_n$ does not guarantee better image reconstruction always, but it is a great alternative to have a faster version of PF-GRPDA.

We next repeat the convex KL--TV experiment to reconstruct the images degraded by the \emph{defocus-blur}.  The residuals decay of the adaptive-$\beta_n$ rule is shown in Figure~\ref{fig:adaptive-beta_new}, the reconstructed images are displayed in Figure~\ref{fig:setting_1_defocus_blur_recovery_results}, and the convergence results are reported in Figure~\ref{fig:setting_1_defocus_blur_convergence_results}.  Compared with motion blur, the defocus blur is less directional, but the main numerical behaviour is similar.  The adaptive-$\beta_n$ version improves the PSNR values; however, feasibility and objective residuals remain comparable to PF-GRPDA and aGRAAL and adaPDM algorithms.

\subsubsection{Setting 2}

In the second setting, we add the quadratic term and consider

\begin{equation}\label{eq:poisson_strng_f}
    \min_{x\ge 0} F(x):=
    \frac{\mu}{2}\|x\|^2+\mathrm{KL}(Ax,y)+\lambda\|\nabla x\|_{2,1},
\end{equation}
where $\mu>0$.  This corresponds to~\eqref{main_prob} with

\[
    f(x)=\iota_{\{x\ge 0\}}(x)+\frac{\mu}{2}\|x\|^2,
    \qquad
    g(w)=\lambda\|w\|_{2,1},
    \qquad
    K=\nabla,
    \qquad
    h(x)=\mathrm{KL}(Ax,y).
\]
The aim of this experiment is not simply to add another regularisation, but rather to check whether methods that exploit strong convexity, especially PF-AGRPDA and aPDAc-L, benefit from the additional curvature in practice.  The data generation, blur operators, image size, Poisson sampling, and initialisation are kept exactly the same as in \textbf{\textbf{Setting~1}}.  In this experiment, we set $\mu=0.01$.
\begin{figure}[htbp]
\centering
\subfloat[Feasibility residual]{
  \includegraphics[width=0.27\linewidth]{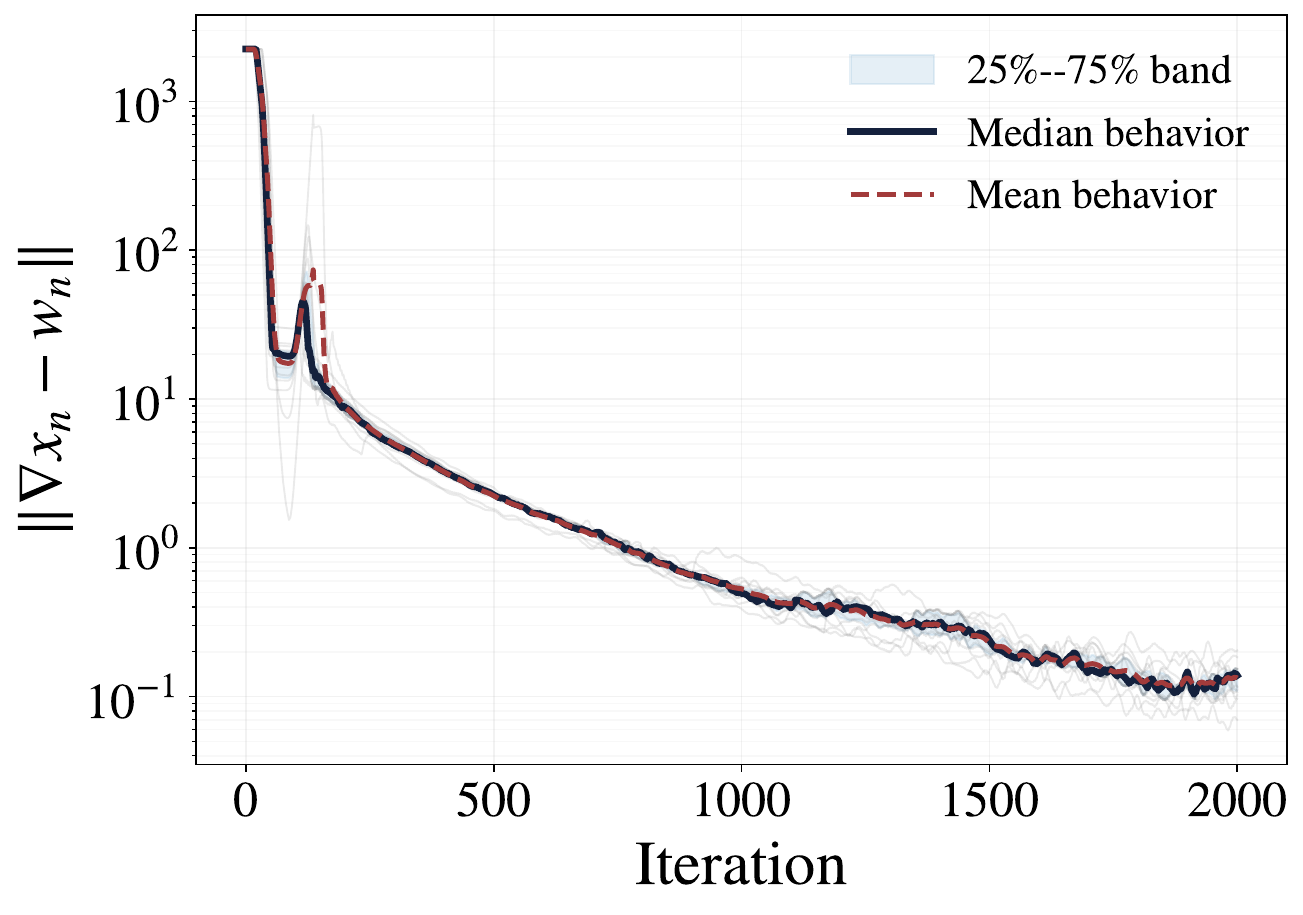}
}\hfill
\subfloat[Objective residual]{
  \includegraphics[width=0.27\linewidth]{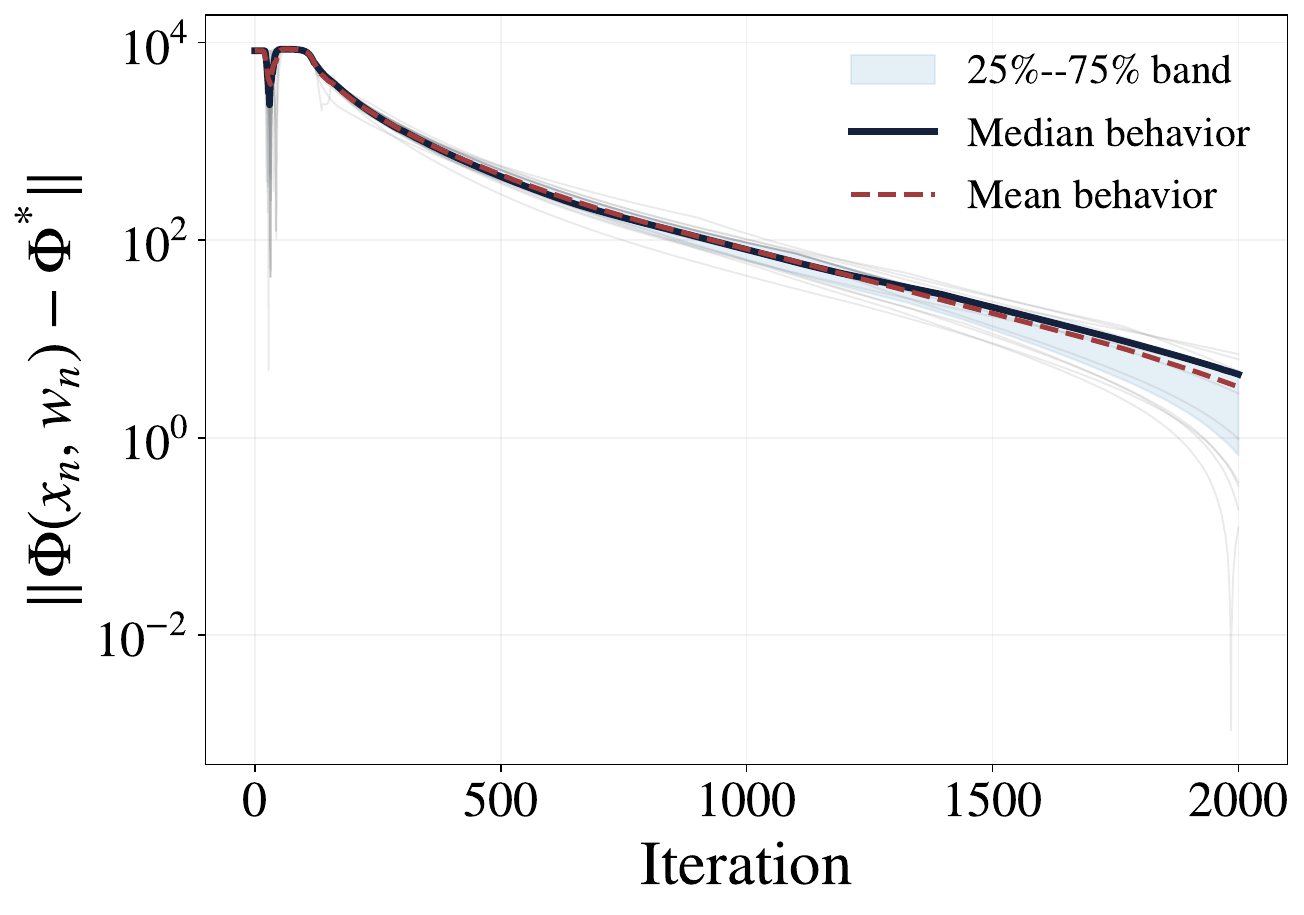}
}\hfill
\subfloat[Residual ratio $r_n$]{
  \includegraphics[width=0.27\linewidth]{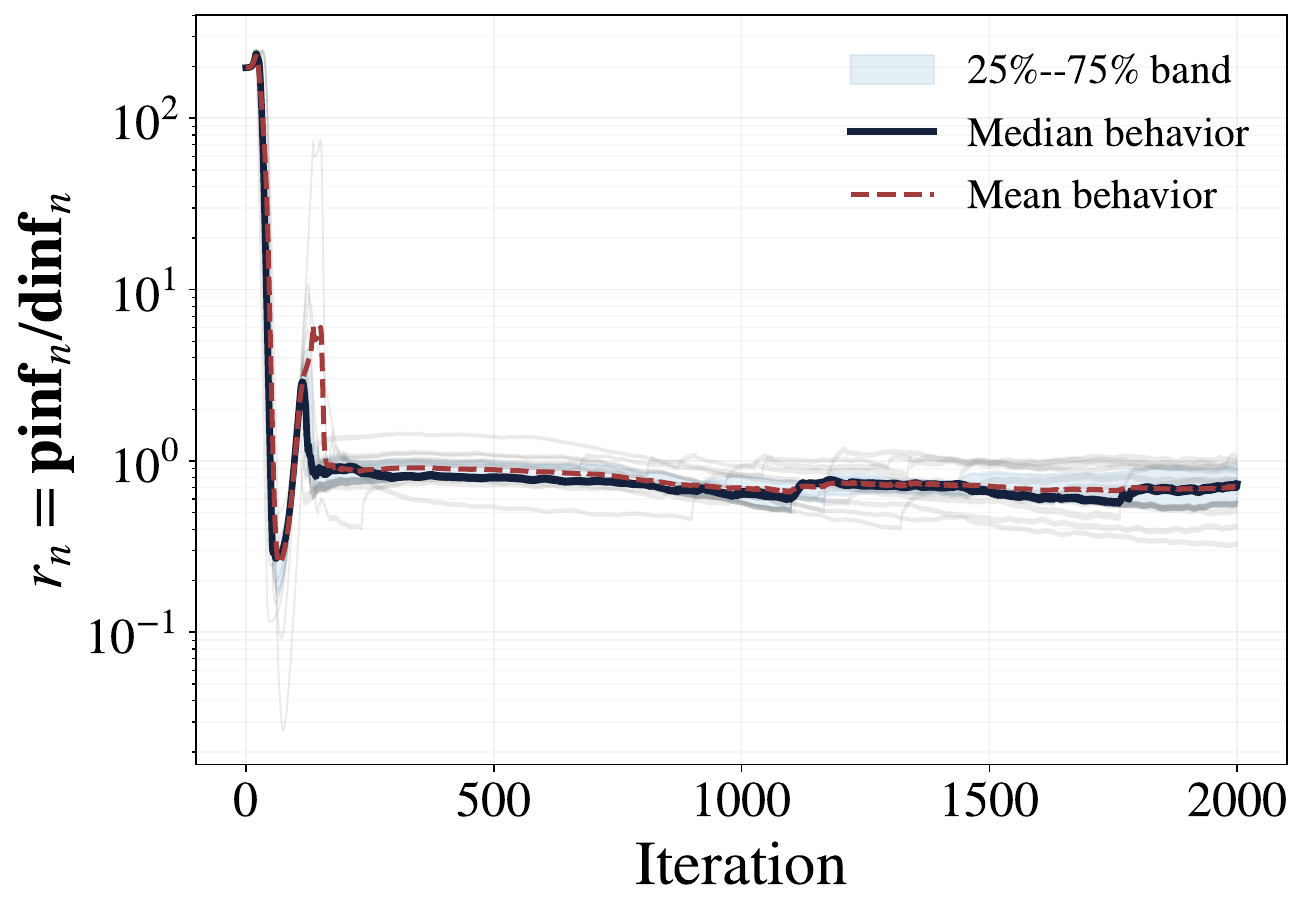}
}

\caption{Several heuristic adaptive-$\beta_n$ for PFGRPDA in \textbf{Setting~1} with defocus blur. Thin faded curves show individual heuristic choices, bold solid and dashed curves show the median and mean trends, and the shaded band indicates the interquartile range.}
\label{fig:adaptive-beta_new}
\end{figure}

\begin{figure}[htbp]
\centering
\subfloat[PF-GRPDA-ad$\beta$]{
  \includegraphics[width=0.20\linewidth]{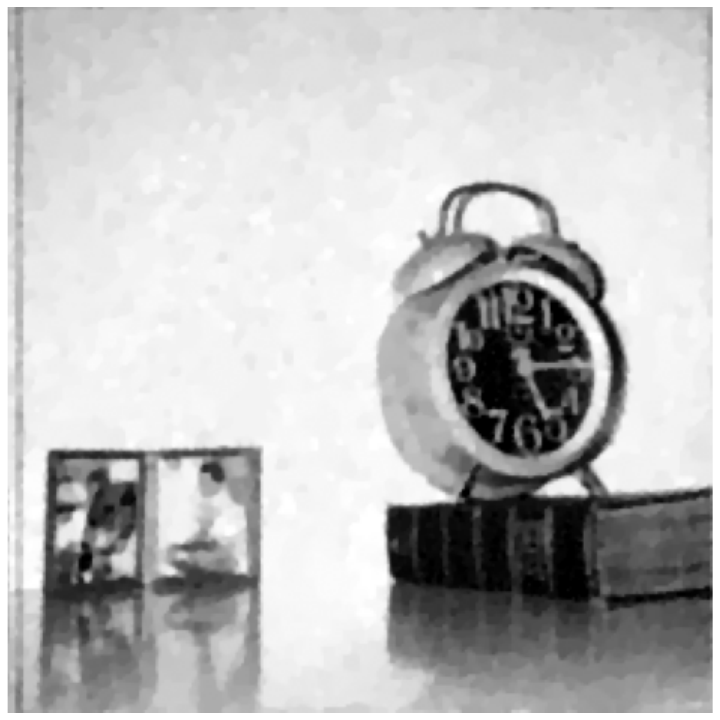}
}\hfill
\subfloat[PF-GRPDA]{
  \includegraphics[width=0.20\linewidth]{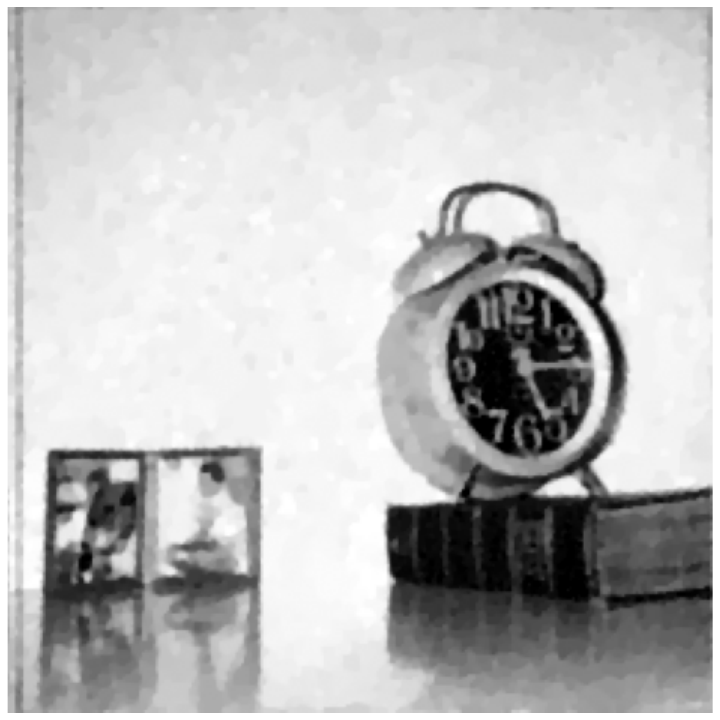}
}\hfill
\subfloat[aGRAAL]{
  \includegraphics[width=0.20\linewidth]{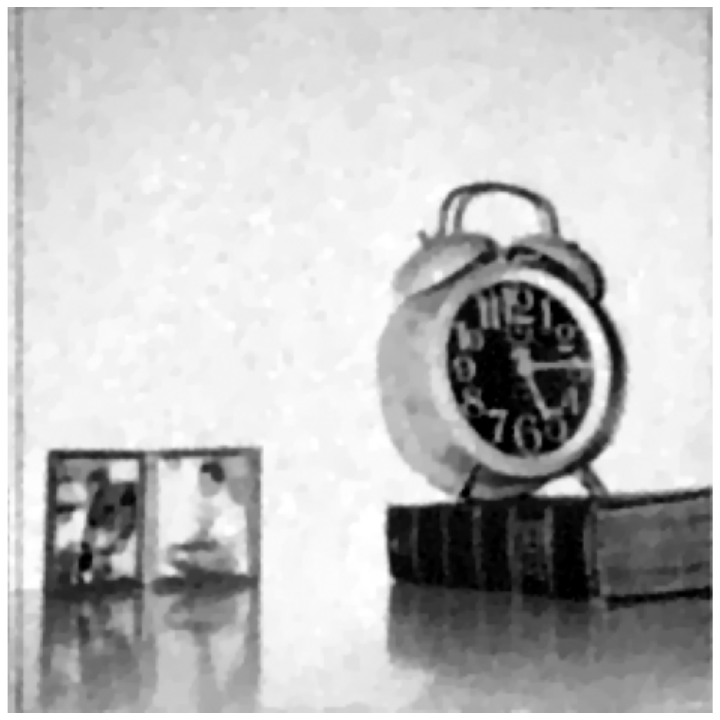}
}\hfill
\subfloat[adaPDM]{
  \includegraphics[width=0.20\linewidth]{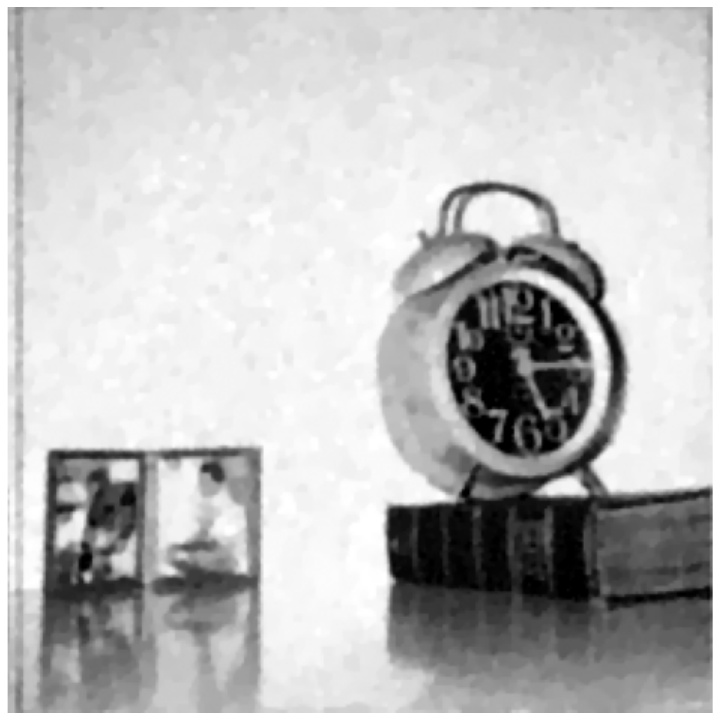}
}

\caption{Image reconstruction results for \textbf{Setting~1} using the \texttt{Clock} image
degraded by defocus blur.}
\label{fig:setting_1_defocus_blur_recovery_results}
\end{figure}
\begin{figure}[htbp!]
\centering
\subfloat[Primal objective gap]{
  \includegraphics[width=0.27\linewidth]{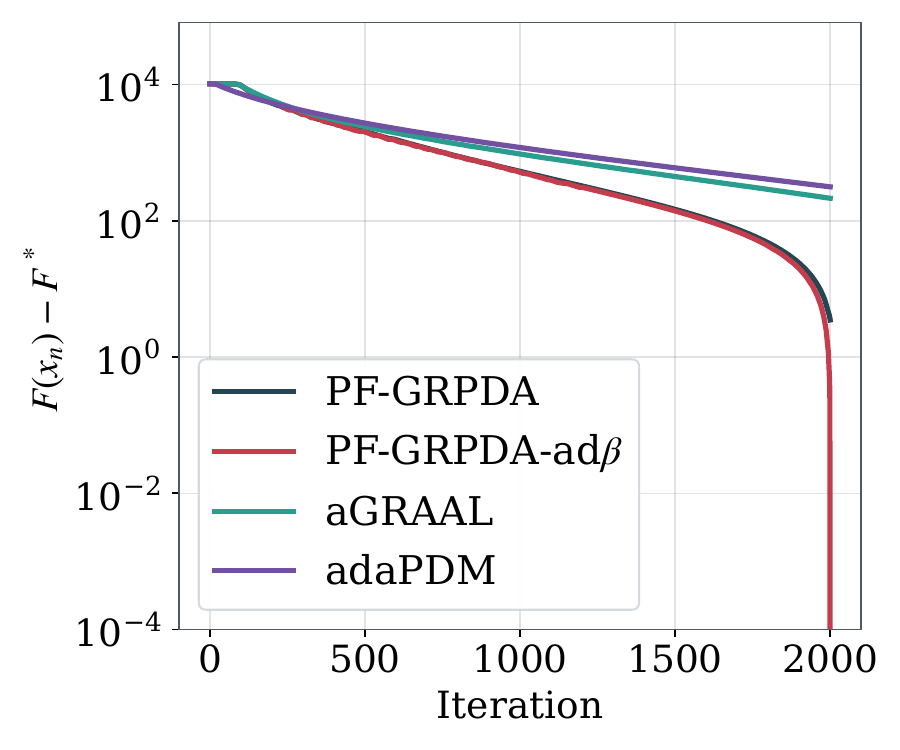}
}\hfill
\subfloat[Objective residual]{
  \includegraphics[width=0.27\linewidth]{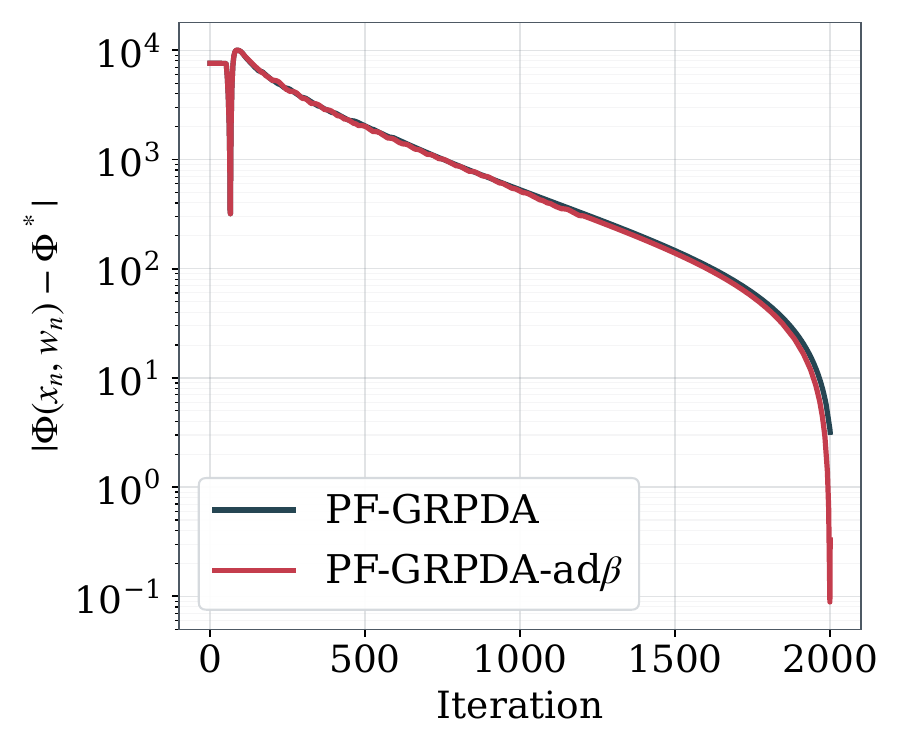}
}\hfill
\subfloat[Feasibility residual]{
  \includegraphics[width=0.27\linewidth]{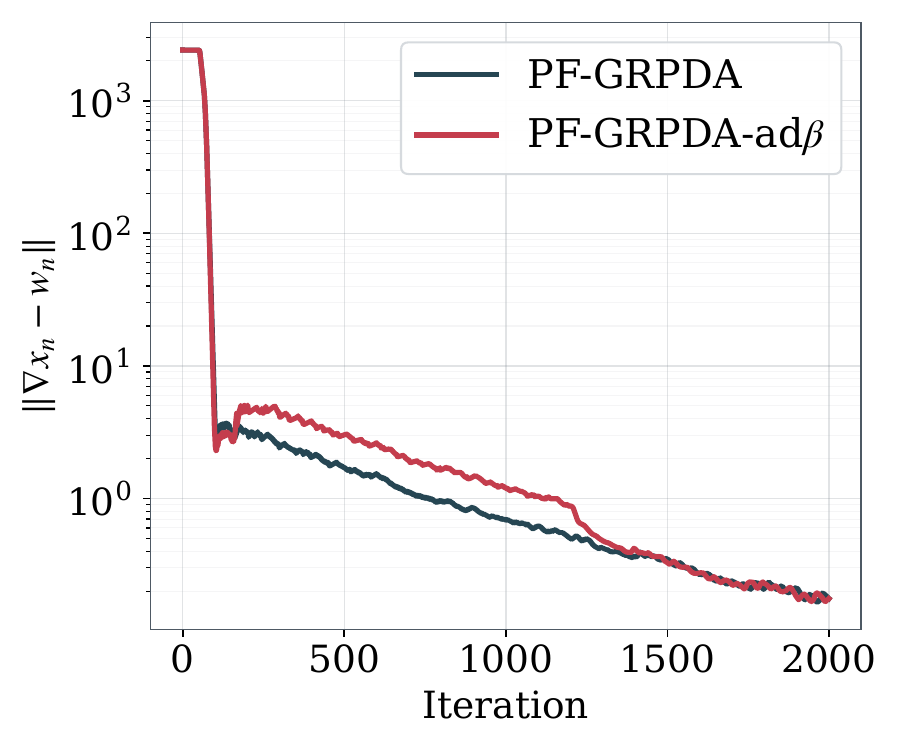}
}

\vspace{0.6em}
\subfloat[PSNR]{
  \includegraphics[width=0.27\linewidth]{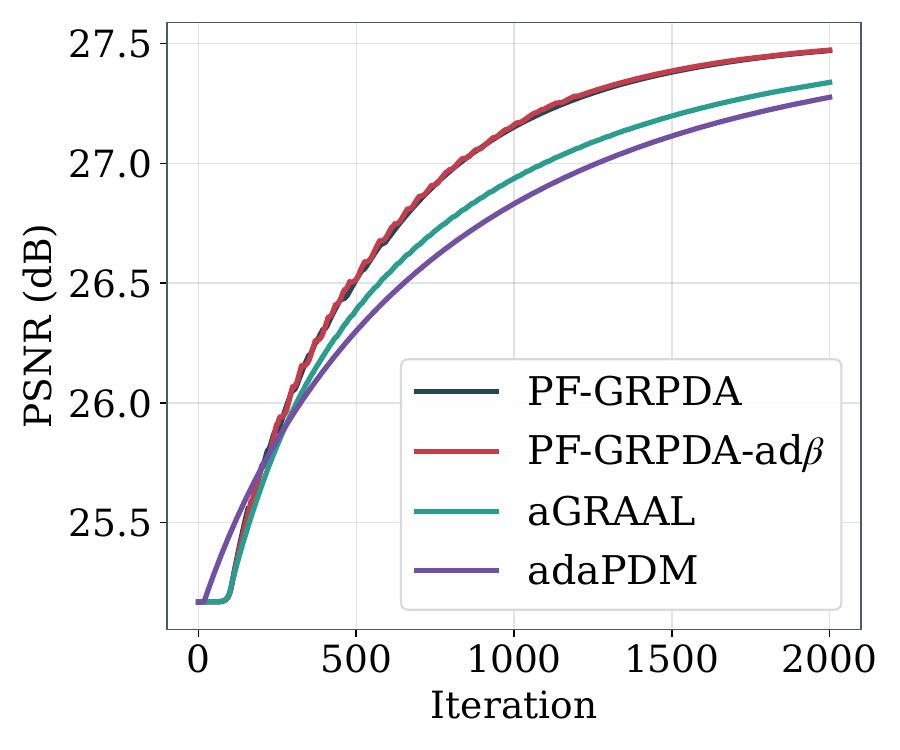}
}\hfill
\subfloat[Primal step-size $(\tau_n)$]{
  \includegraphics[width=0.27\linewidth]{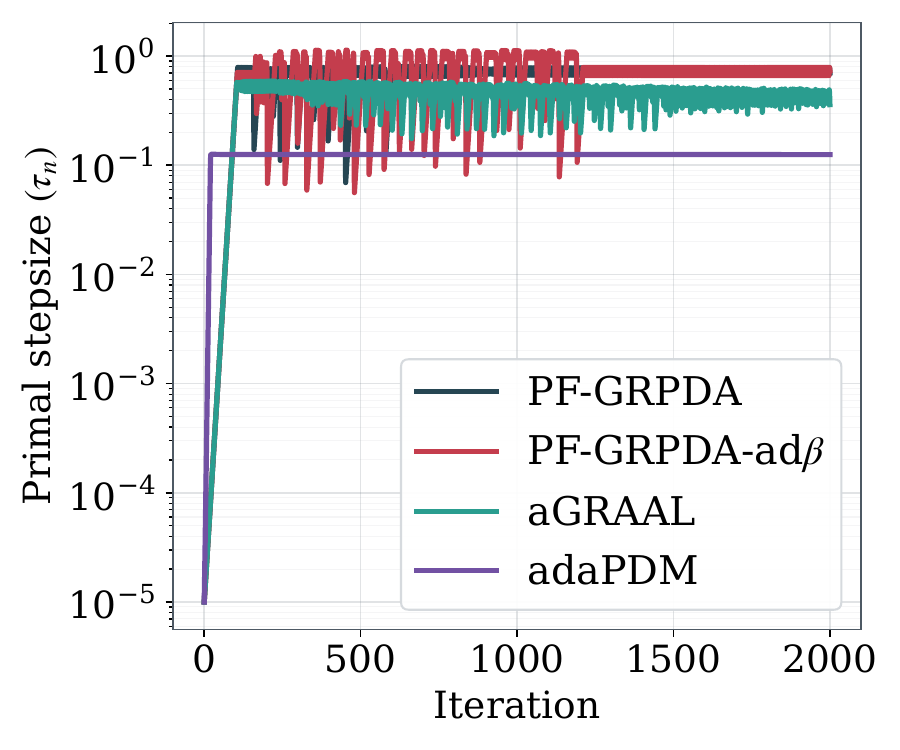}
}\hfill
\subfloat[PF-GRPDA selected $(\tau_n)$]{
  \includegraphics[width=0.27\linewidth]{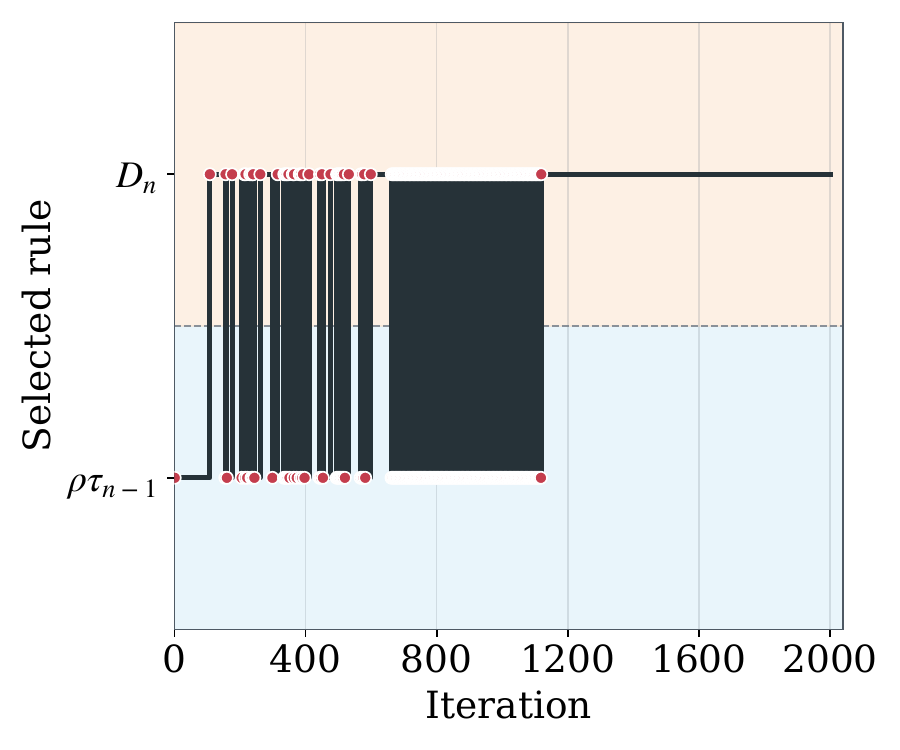}
}

\caption{Convergence results for \textbf{Setting~1} using the \texttt{Clock} image degraded by defocus blur}
\label{fig:setting_1_defocus_blur_convergence_results}
\end{figure}
Figures~\ref{fig:recovery-setting2} and~\ref{fig:metrics-setting2_1} report the results for the strongly convex motion-blur case.  The reconstruction obtained by PF-AGRPDA is visually close to that of PF-GRPDA.  However, the convergence plots in Figure \ref{fig:metrics-setting2_1} show a clear advantage of PF-AGRPDA in terms of feasibility and objective residual.  This is consistent with the role of the acceleration as it is not expected to drastically change the final visual reconstruction, but rather to use the added strong convexity to improve the decay of the residuals. 

\medskip
\noindent
For the defocus-blur case, Figures~\ref{fig:recovery-setting2_1} and
\ref{fig:metrics-setting2} show the same overall pattern. PF-AGRPDA reduces the feasibility residual substantially faster than
PF-GRPDA and PF-GRPDA-ad$\beta$, while maintaining comparable PSNR.  Among all methods, aPDAc-L is very competitive in the residual plots.  This is
not contradictory to the purpose of PF-AGRPDA, because aPDAc-L uses a
backtracking linesearch.  The relevant conclusion is that PF-AGRPDA obtains a
clear improvement over its non-accelerated counterpart while preserving the
same reconstruction quality and avoiding linesearch.

\begin{figure}[htbp]
\centering
\subfloat[True image]{
  \includegraphics[width=0.20\linewidth]{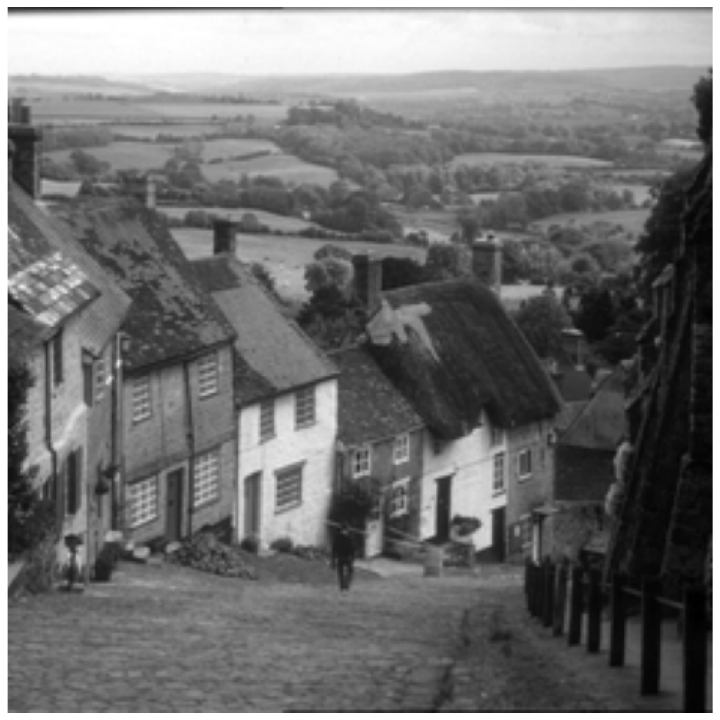}
}\hfill
\subfloat[Noisy data]{
  \includegraphics[width=0.20\linewidth]{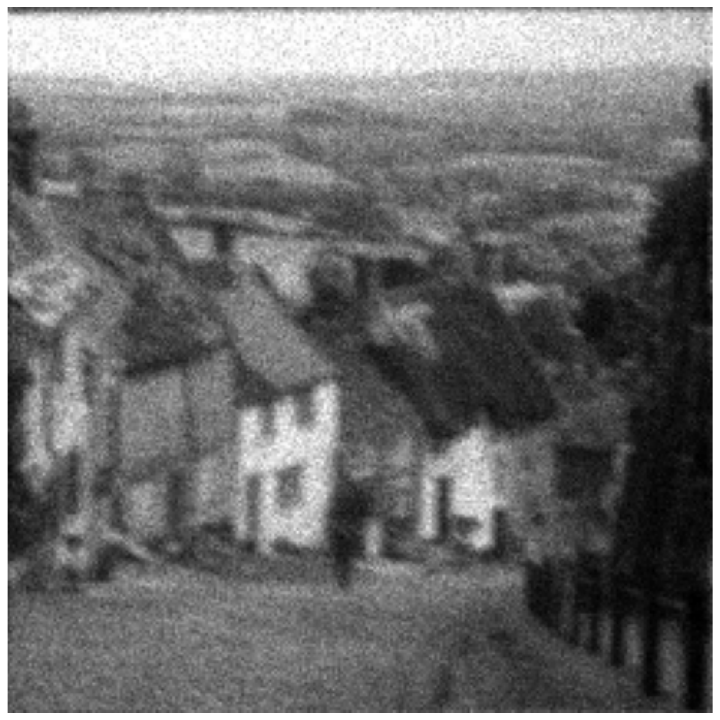}
}\hfill
\subfloat[PF-GRPDA]{
  \includegraphics[width=0.20\linewidth]{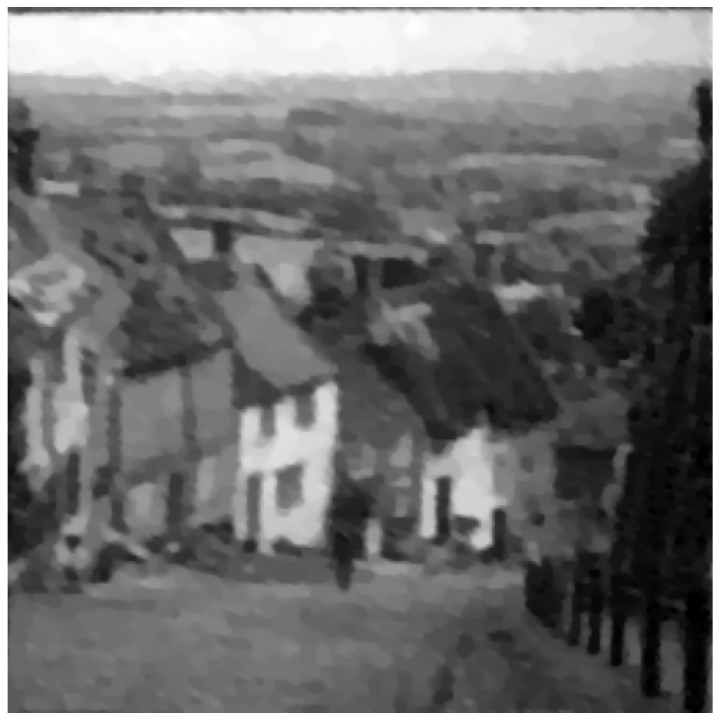}
}\hfill
\subfloat[PF-GRPDA-ad$\beta$]{
  \includegraphics[width=0.20\linewidth]{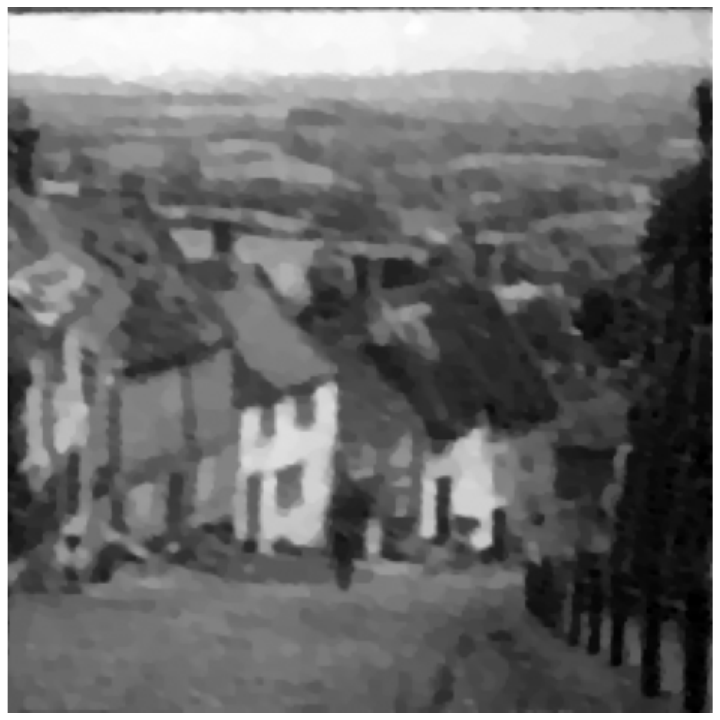}
}

\vspace{0.6em}
\subfloat[PF-AGRPDA]{
  \includegraphics[width=0.20\linewidth]{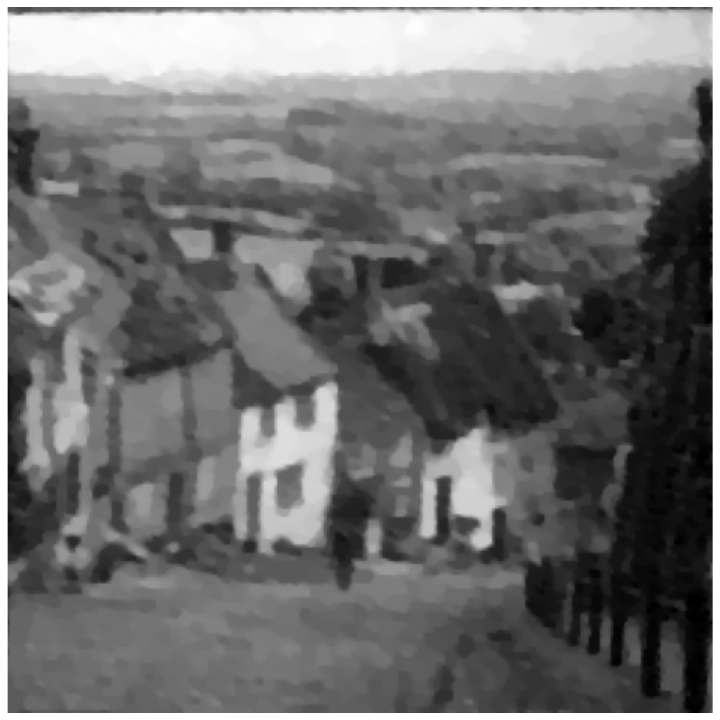}
}\hfill
\subfloat[adaPDM]{
  \includegraphics[width=0.20\linewidth]{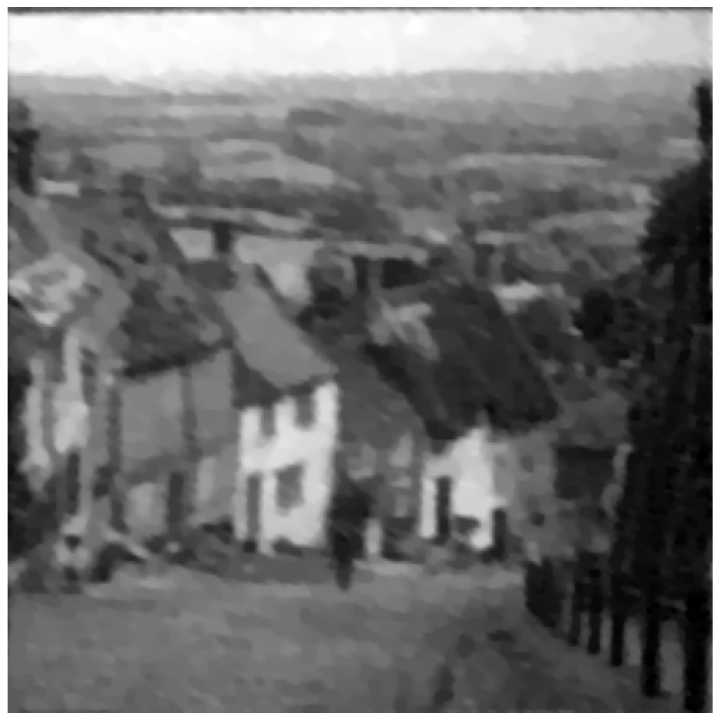}
}\hfill
\subfloat[aGRAAL]{
  \includegraphics[width=0.20\linewidth]{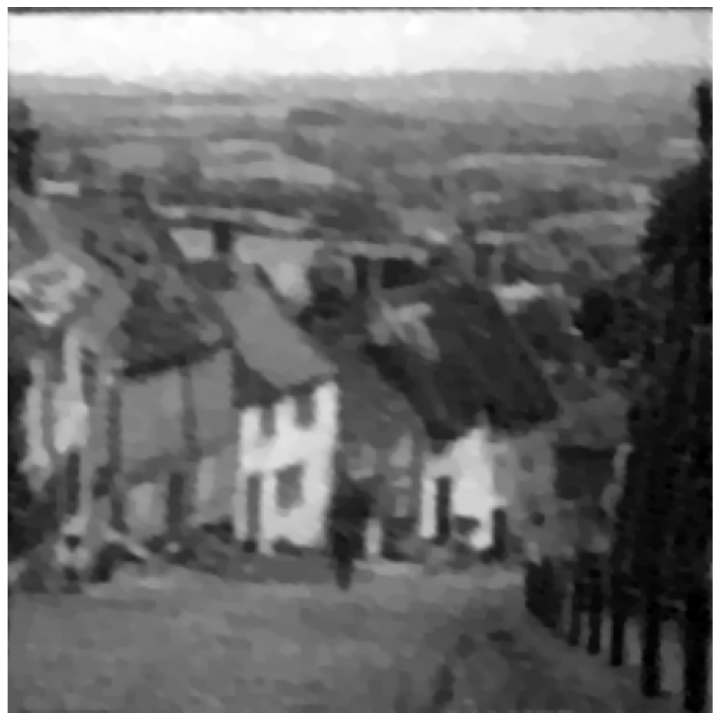}
}\hfill
\subfloat[aPDAc-L]{
  \includegraphics[width=0.20\linewidth]{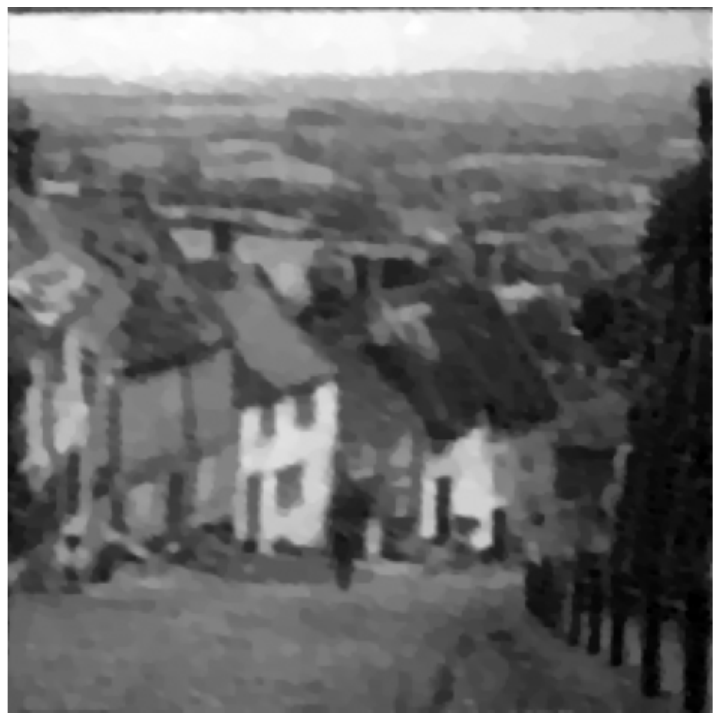}
}

\caption{Image reconstruction results for \textbf{Setting~2} using the \texttt{Goldhill} image degraded by motion blur.}
\label{fig:recovery-setting2}
\end{figure}

\begin{figure}[htbp]
\centering
\subfloat[Primal objective gap]{
  \includegraphics[width=0.27\linewidth]{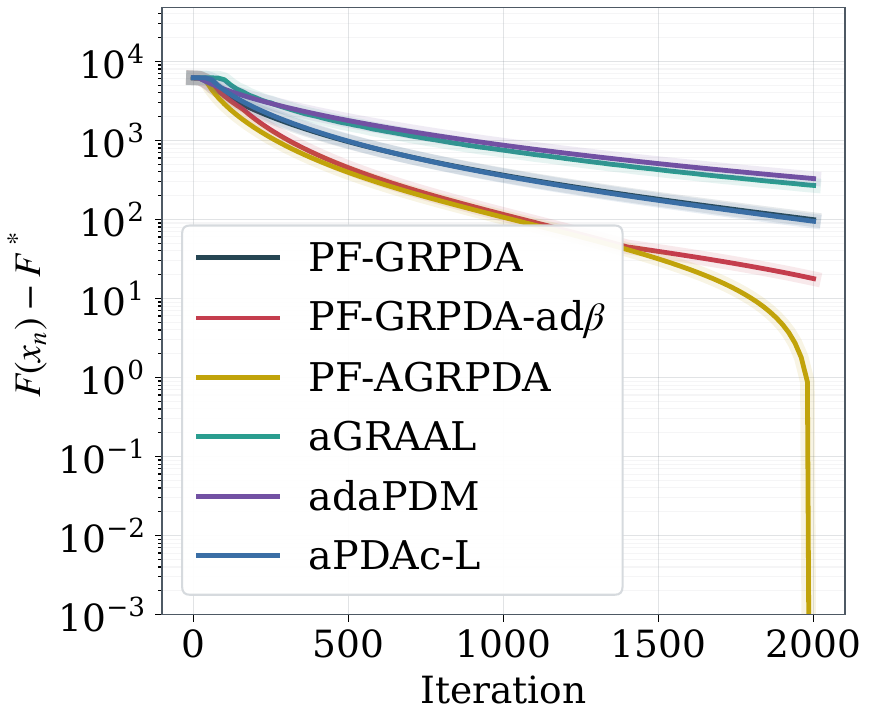}
}\hfill
\subfloat[Objective residual]{
  \includegraphics[width=0.27\linewidth]{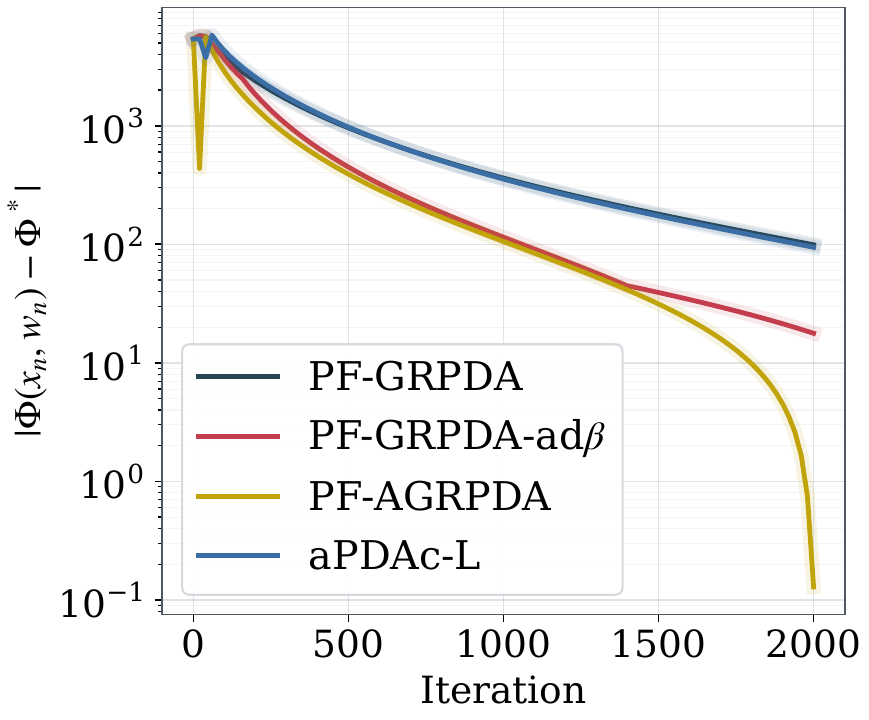}
}\hfill
\subfloat[Feasibility residual]{
  \includegraphics[width=0.27\linewidth]{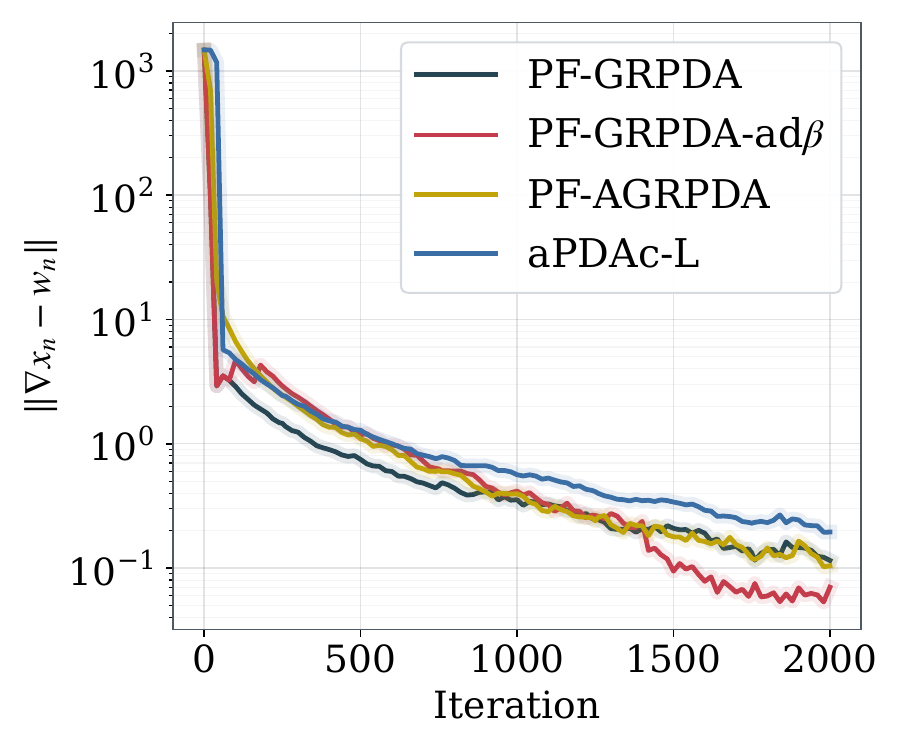}
}

\vspace{0.8em}
\subfloat[PSNR]{
  \includegraphics[width=0.27\linewidth]{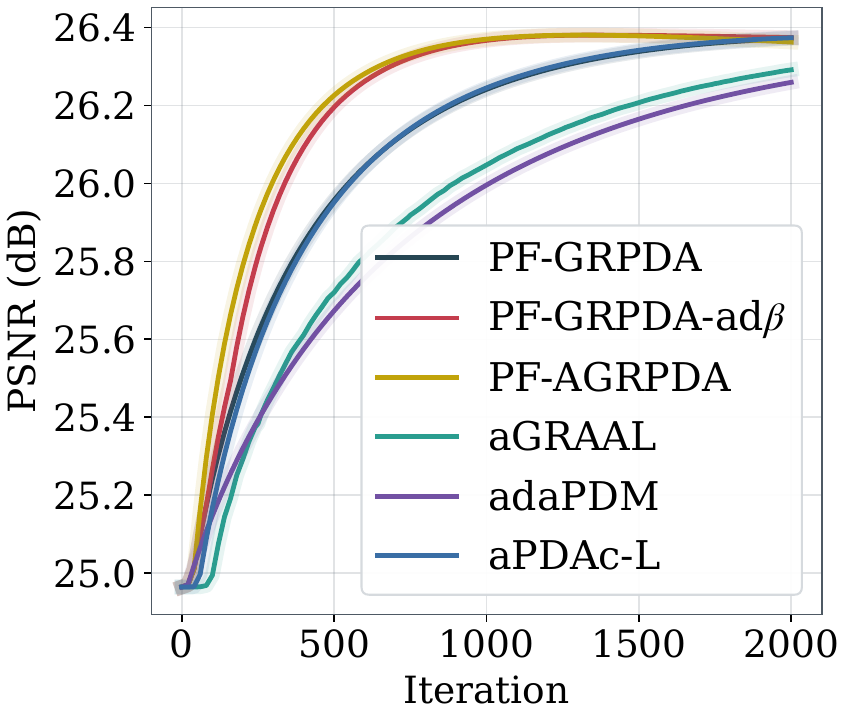}
}\hfill
\subfloat[Primal step-size $(\tau_n)$]{
  \includegraphics[width=0.27\linewidth]{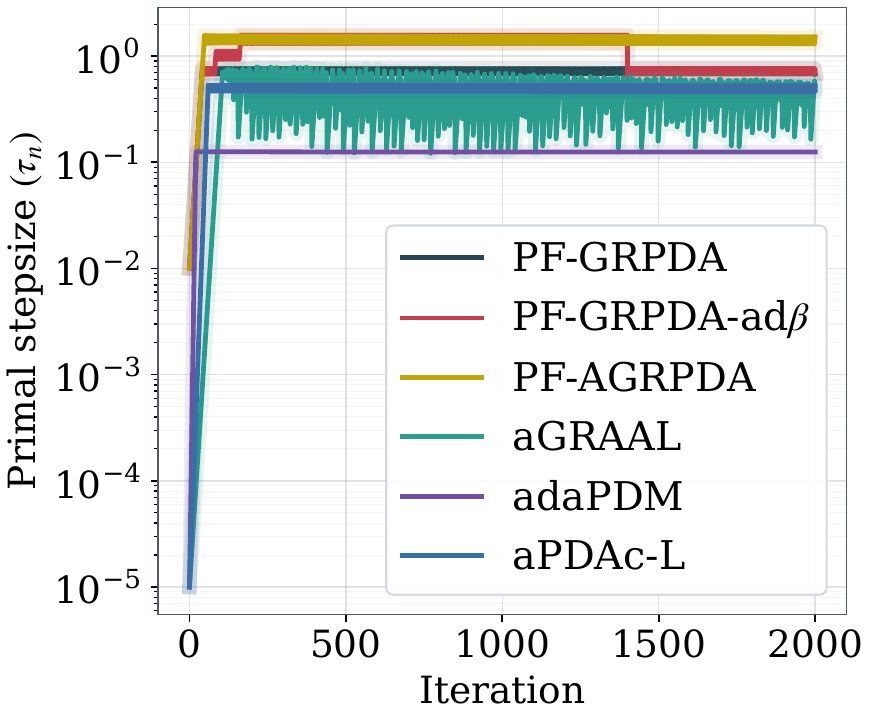}
}\hfill
\subfloat[PF-AGRPDA selected ($\tau_n$)]{
  \includegraphics[width=0.27\linewidth]{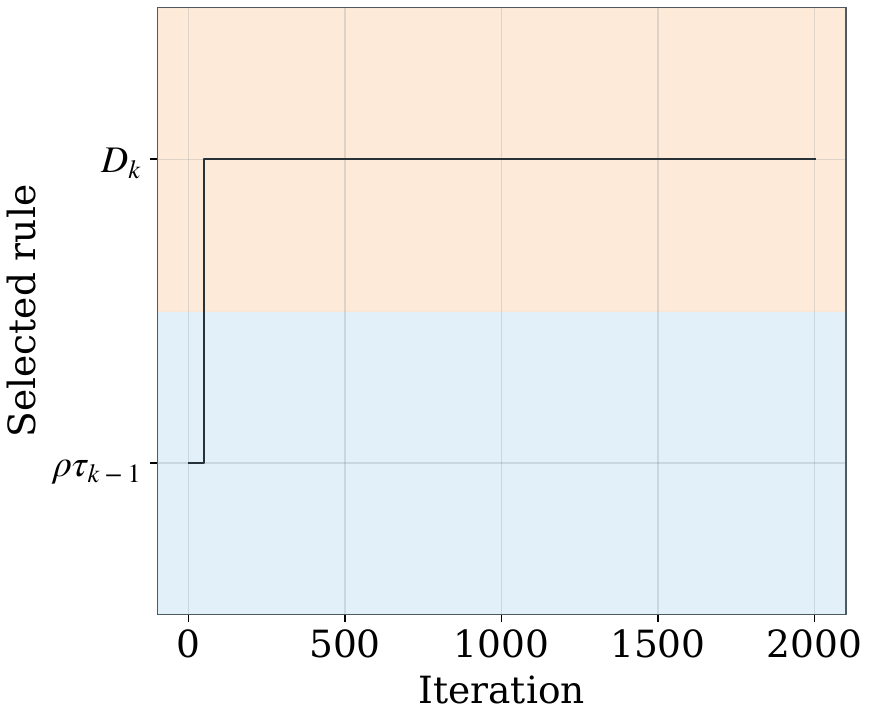}
}

\caption{Convergence results for \textbf{Setting~2} using the \texttt{Goldhill} image degraded by motion blur}
\label{fig:metrics-setting2_1}
\end{figure}

\begin{figure}[htbp]
\centering
\subfloat[True image]{
  \includegraphics[width=0.20\linewidth]{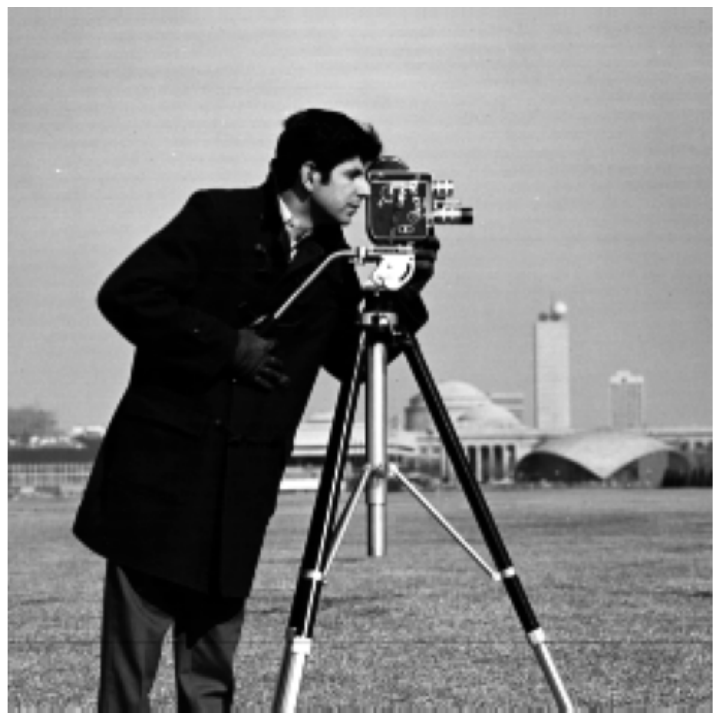}
}\hfill
\subfloat[Noisy data]{
  \includegraphics[width=0.20\linewidth]{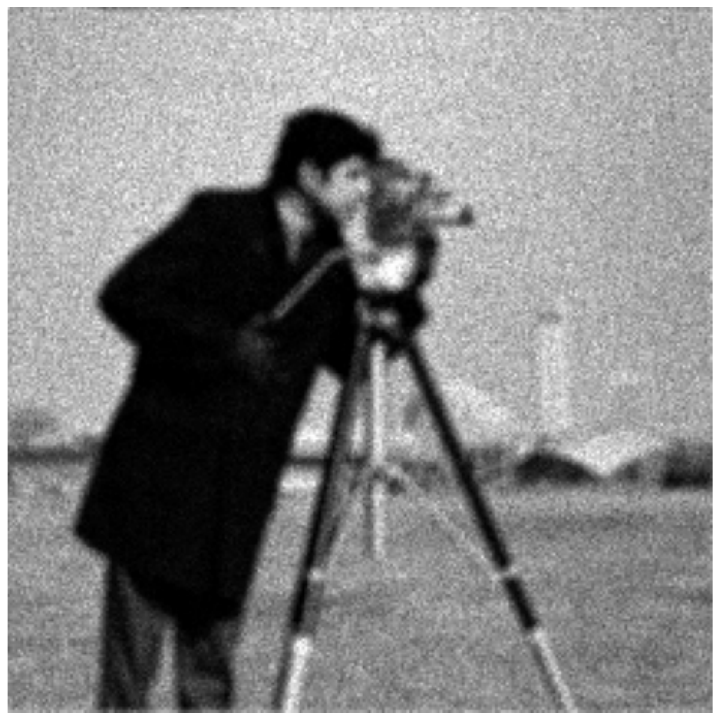}
}\hfill
\subfloat[PF-GRPDA]{
  \includegraphics[width=0.20\linewidth]{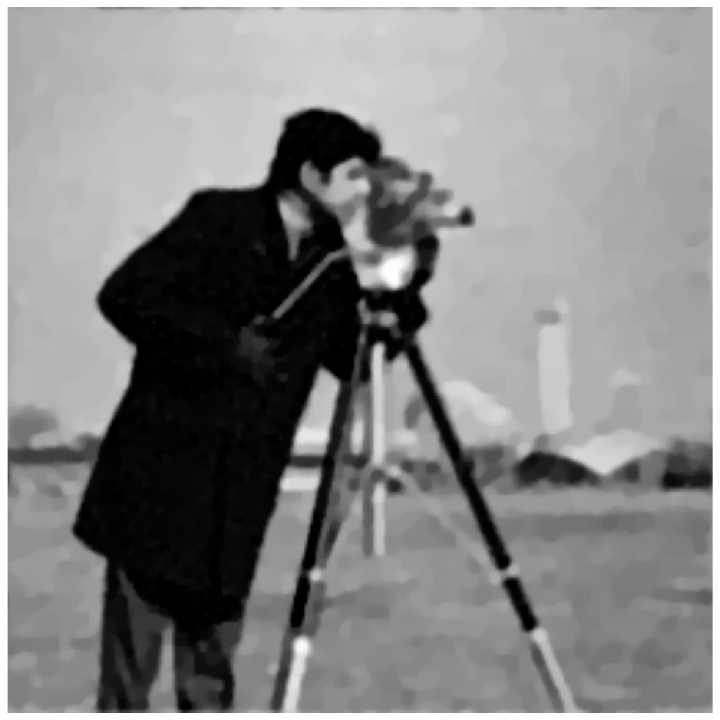}
}\hfill
\subfloat[PF-GRPDA-ad$\beta$]{
  \includegraphics[width=0.20\linewidth]{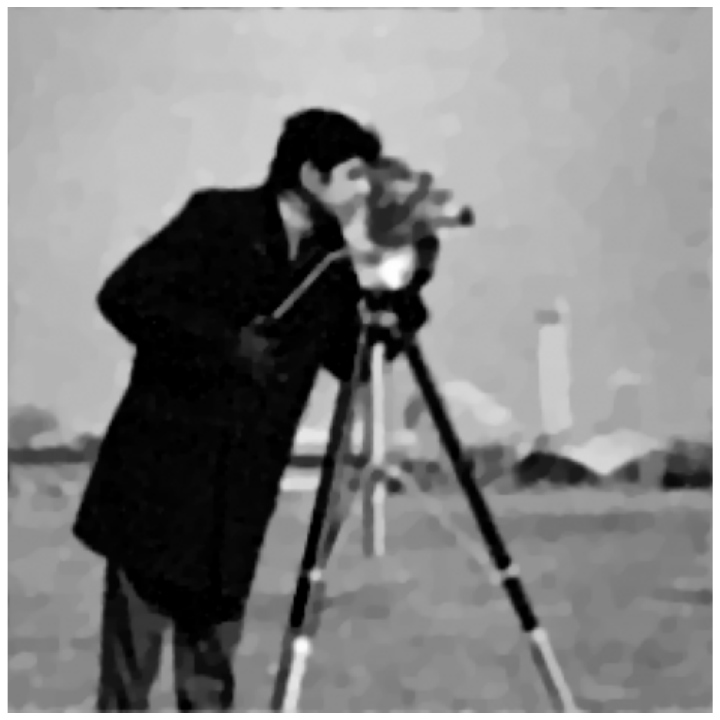}
}

\vspace{0.6em}
\subfloat[PF-AGRPDA]{
  \includegraphics[width=0.20\linewidth]{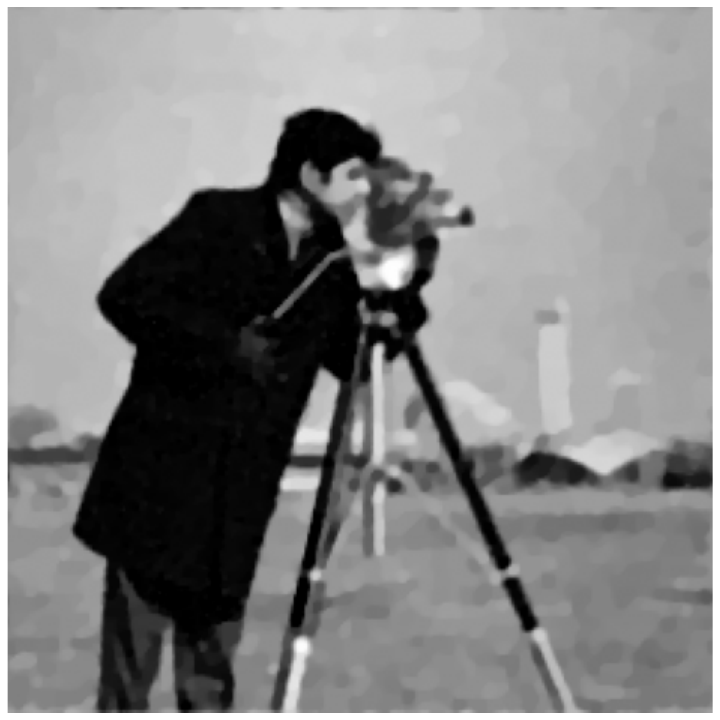}
}\hfill
\subfloat[adaPDM]{
  \includegraphics[width=0.20\linewidth]{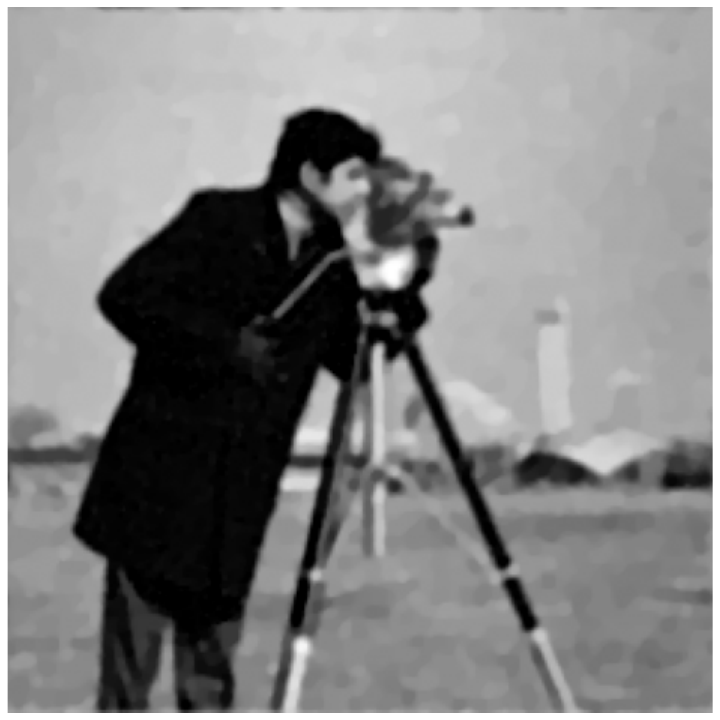}
}\hfill
\subfloat[aGRAAL]{
  \includegraphics[width=0.20\linewidth]{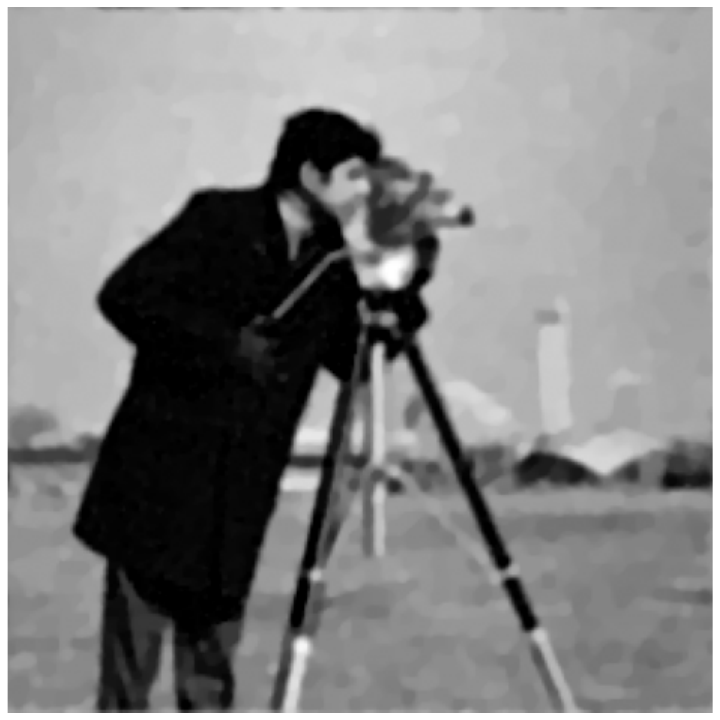}
}\hfill
\subfloat[aPDAc-L]{
  \includegraphics[width=0.20\linewidth]{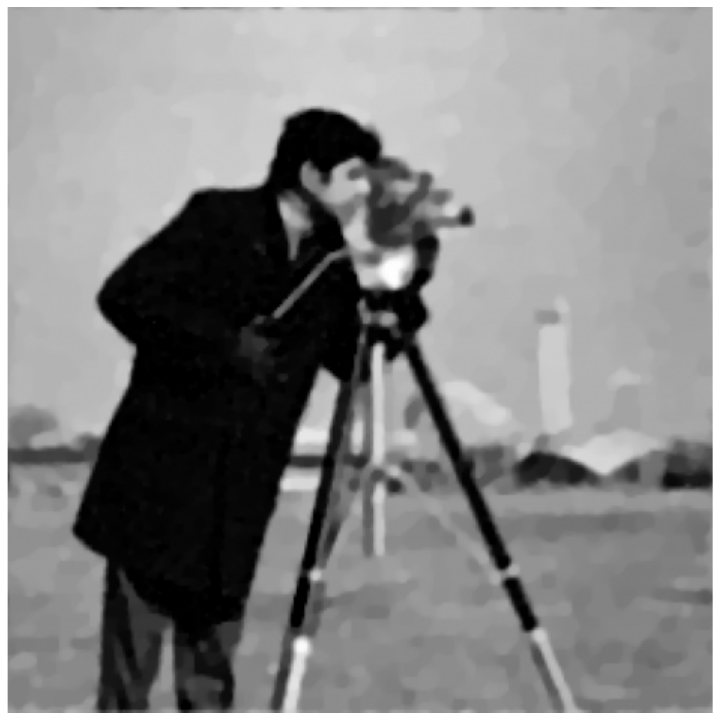}
}

\caption{Image reconstruction results for \textbf{Setting~2} using the \texttt{Cameraman} image degraded by defocus blur.}
\label{fig:recovery-setting2_1}
\end{figure}

\begin{figure}[htbp]
\centering
\subfloat[Primal objective gap]{
  \includegraphics[width=0.27\linewidth]{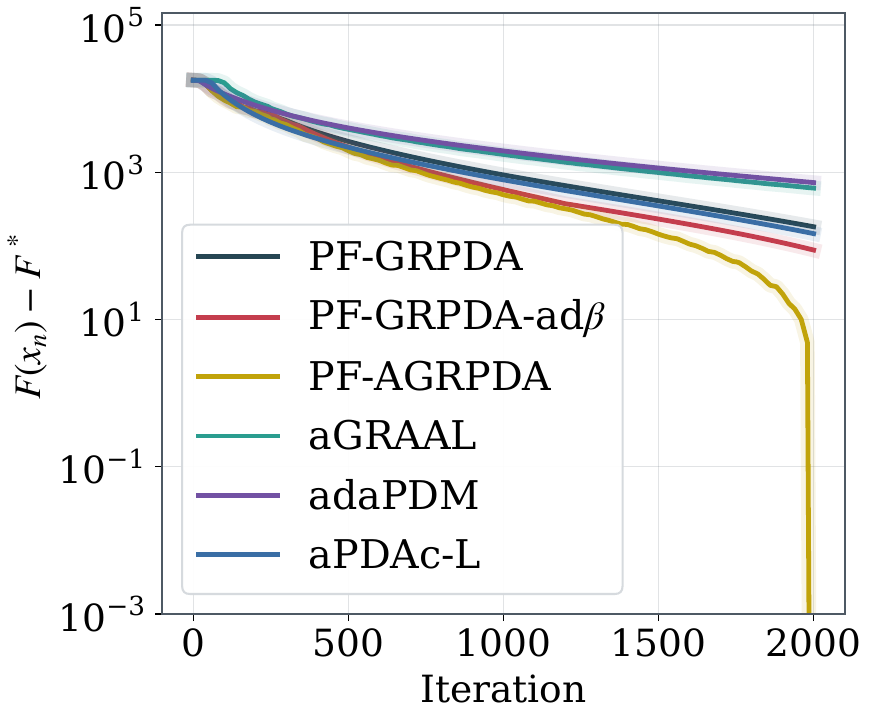}
}\hfill
\subfloat[Objective residual]{
  \includegraphics[width=0.27\linewidth]{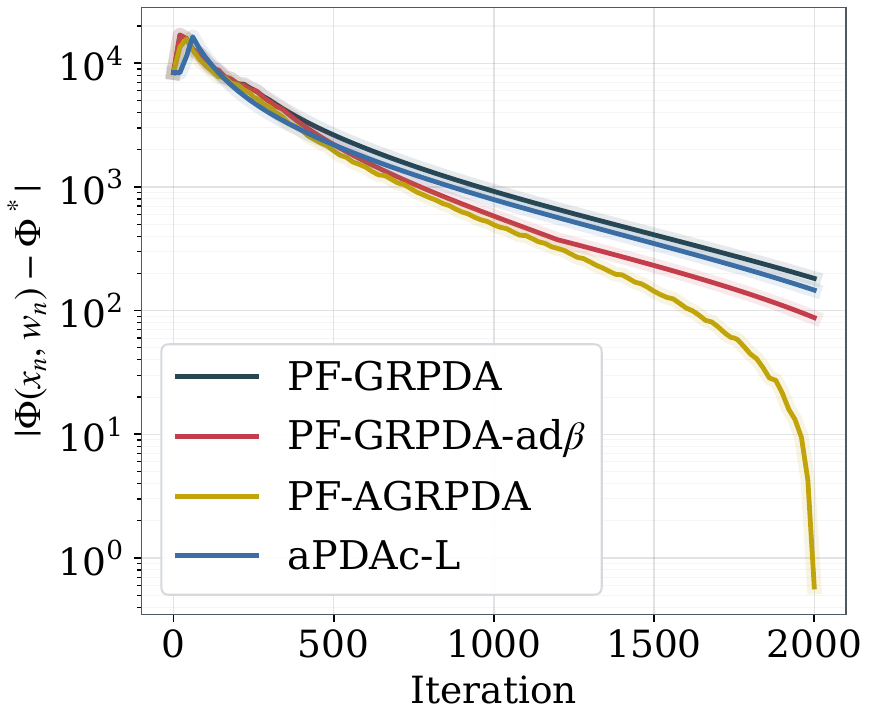}
}\hfill
\subfloat[Feasibility residual]{
  \includegraphics[width=0.27\linewidth]{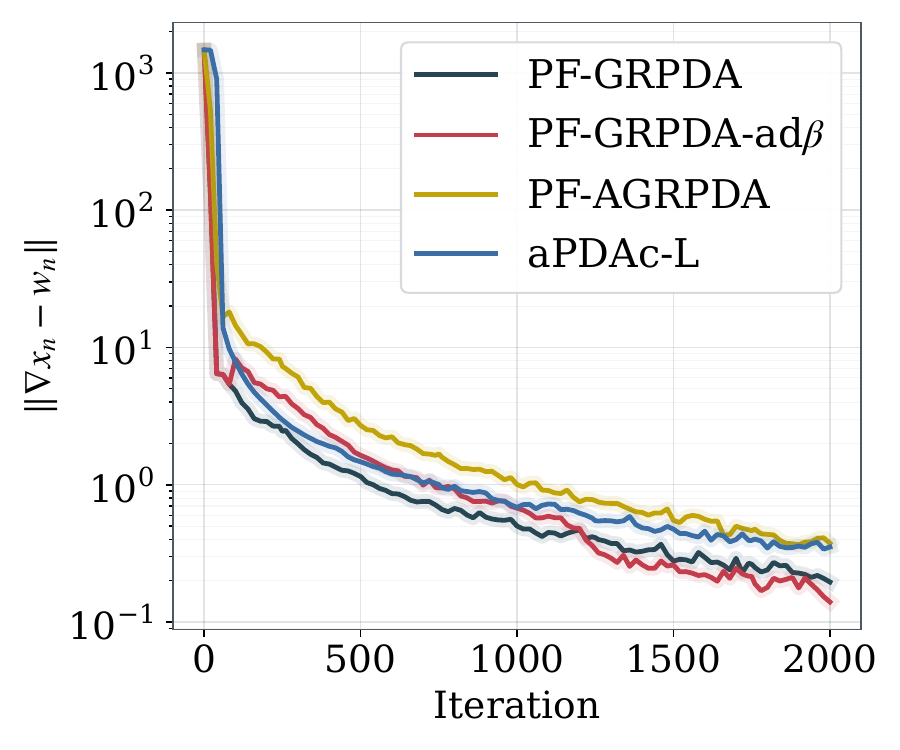}
}

\vspace{0.8em}
\subfloat[PSNR]{
  \includegraphics[width=0.27\linewidth]{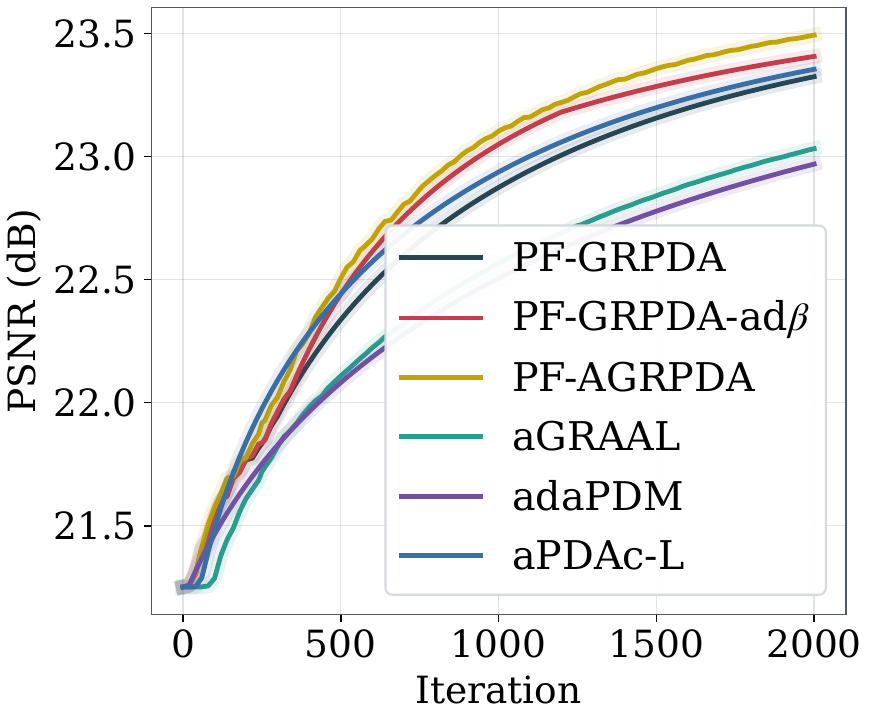}
}\hfill
\subfloat[Primal step-size $(\tau_n)$]{
  \includegraphics[width=0.27\linewidth]{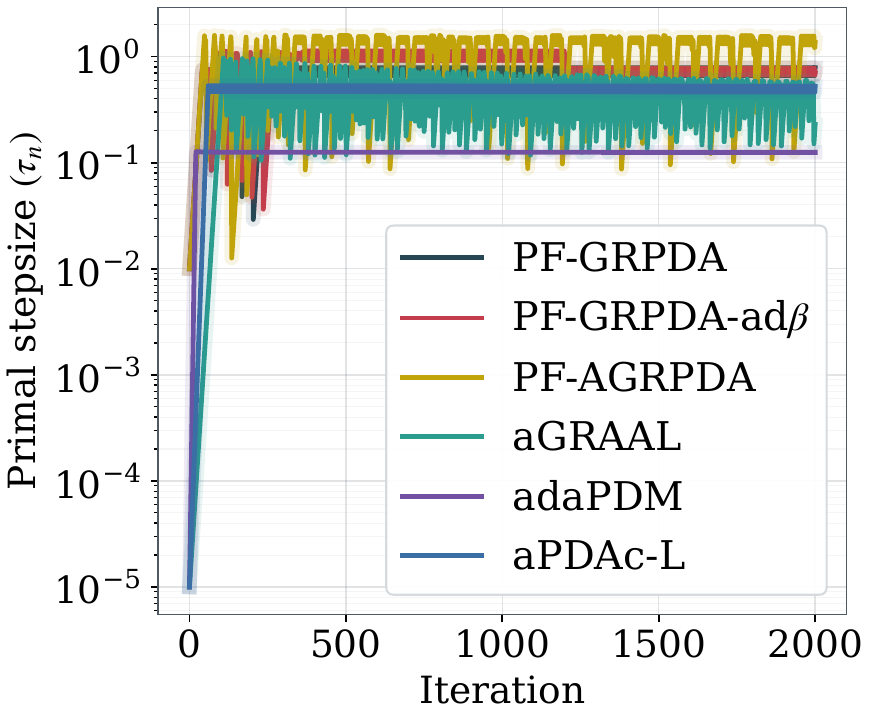}
}\hfill
\subfloat[PF-AGRPDA selected ($\tau_n$)]{
  \includegraphics[width=0.27\linewidth]{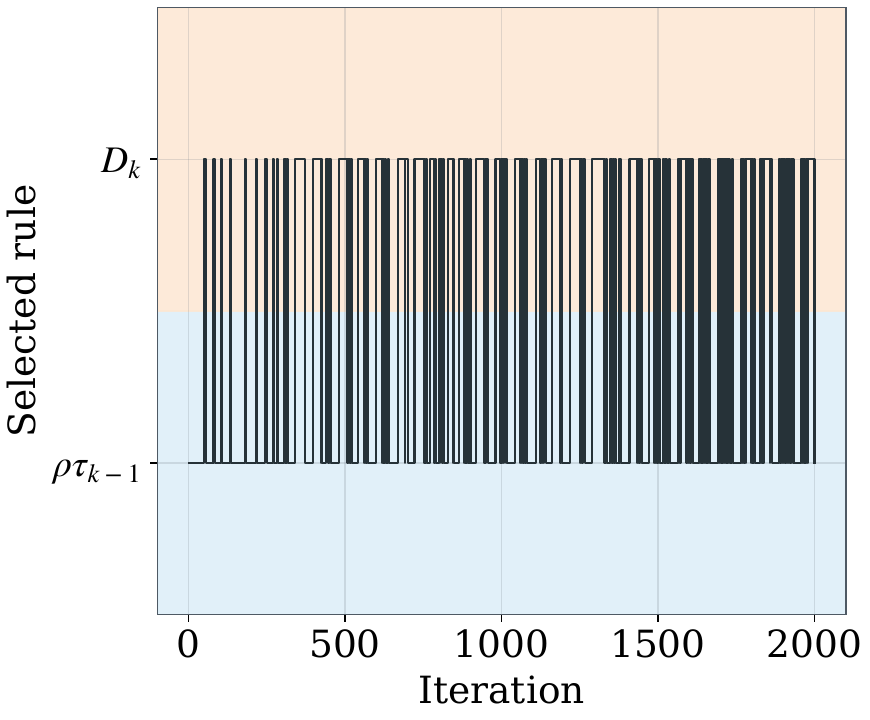}
}

\caption{Convergence results for \textbf{Setting~2} using the \texttt{Cameraman} image degraded by defocus blur.}
\label{fig:metrics-setting2}
\end{figure}
\subsubsection{Setting 3}

We now test the other accelerated algorithm (Algorithm~\ref{algorithm:hstrong_acc})
on the strongly convex Poisson--TV reconstruction problem \eqref{eq:poisson_strng_f}. Since Algorithm~\ref{algorithm:hstrong_acc}
exploits the strong convexity of the differentiable term, we place the quadratic
regularisation inside $h$.  More precisely, we set the component functions as follows.
\[
  h(x)=\mathrm{KL}(Ax,y)+\frac{\mu}{2}\|x\|^2, \quad  f(x)=\iota_{\{x\ge0\}}(x),
    \quad
    g(w)=\lambda\|w\|_{2,1},
    \quad
    K=\nabla.
\]
We recall that
\[
    \mathrm{KL}(s,y)
    =
    \sum_{i=1}^{m}
    \left[
        y_i\log\!\left(\frac{y_i}{s_i}\right)+s_i-y_i
    \right],
    \qquad s_i>0,
\]
with the usual convention that the term corresponding to $y_i=0$ is equal
to $s_i$.  On the interior of the KL domain, the gradient of $h$ is
\[
    \nabla h(x)
    =
    A^*\left(1-\frac{y}{Ax}\right)+\mu x .
\]
Moreover,
\[
    \nabla^2 h(x)
    =
    A^*\operatorname{Diag}\!\left(\frac{y_i}{(Ax)_i^2}\right)A
    +\mu I .
\]
Since the first term is positive semidefinite, we have
\[
    \nabla^2 h(x)\succeq \mu I .
\]
Thus $h$ is globally $\mu$-strongly convex on its effective domain. It is not hard to see that $h$ is not globally smooth. For this splitting, the $x$-update becomes
the projection onto the nonnegative orthant,
\[
    \prox_{\tau f}(v)=P_{\mathbb R^n_+}(v)=[v]_+,
\]
while the $w$-update is the pointwise isotropic shrinkage. We use the same blur operator, Poisson sampling rule, and initialisation
as in the previous Poisson experiments. For Algorithm \ref{algorithm:hstrong_acc}, we set $\psi=1.61$, $\theta_0=1$, and $\tau_0=0.30 U$. It is worth noting that $\bar\gamma$ computed by \eqref{eq:hstrong_gamma_bar} heavily depends on the choices of $\beta_0$ and $\mu_h$. In particular, a very small choice of these parameters can lead to selecting a small value of $\gamma$. As a result, this may leave no acceleration effect on the problem for this Algorithm, as it made $\beta_n$ much closer to the fixed $\beta_0$. Thus, after a few trial and error, we are satisfied with $\beta_0=5$, and after computing $\bar\gamma$ from \eqref{eq:hstrong_gamma_bar}, select $\gamma=0.99\,\bar\gamma$. The regularisation parameters $\lambda, \mu$ remain the same as in \textbf{Setting~2}.

\medskip
Figures~\ref{fig:recovery-setting3} and \ref{fig:convergence_results_setting3} report the numerical behaviour of Algorithm~\ref{algorithm:hstrong_acc} and compareres it with PF-GRPDA and its adaptive $\beta_n$ version on the
same strongly convex Poisson--TV model (i.e., \textbf{Setting 2} with the defocus blur).  The reconstructions are almost visually indistinguishable, and the PSNR curves also match after approximately $700$ iterations.  This is expected as \textbf{Setting~3} does not change the regularised reconstruction model in \eqref{eq:poisson_strng_f}, it only changes
the splitting so that the quadratic curvature is exploited through the differentiable term $h$. In the residual plots, the accelerated method (Algorithm \ref{primal_dual_gap}) yields a lower overall objective residual and a smaller feasibility throughout the iterations.

\begin{figure}[htbp]
\centering
\setlength{\tabcolsep}{2pt}
\renewcommand{\arraystretch}{1}

\begin{tabular}{@{}cccc@{}}
\scriptsize Noisy data
&
\scriptsize Algorithm~\ref{algorithm:hstrong_acc}
&
\scriptsize PF-GRPDA-ad$\beta$
&
\scriptsize PF-GRPDA
\\[0.3em]

\includegraphics[width=0.20\textwidth]
{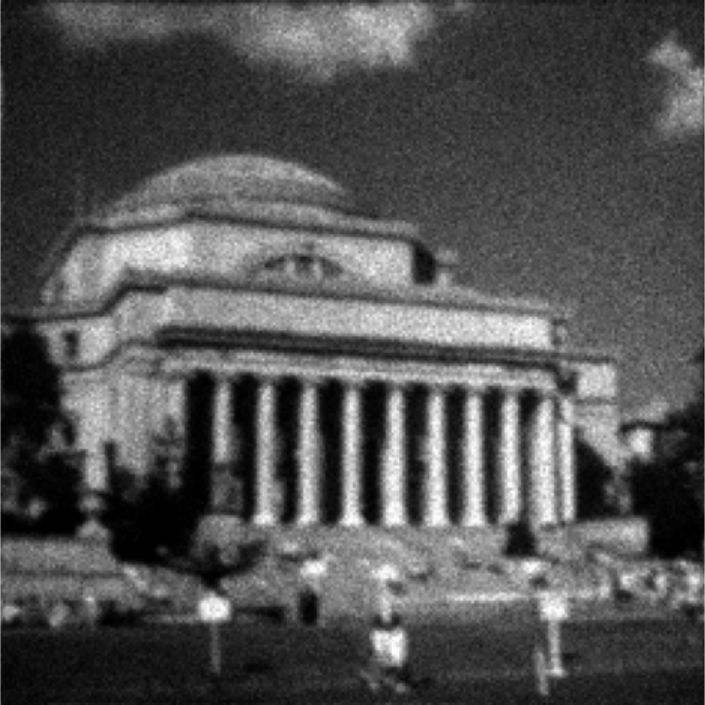}
&
\includegraphics[width=0.20\textwidth]
{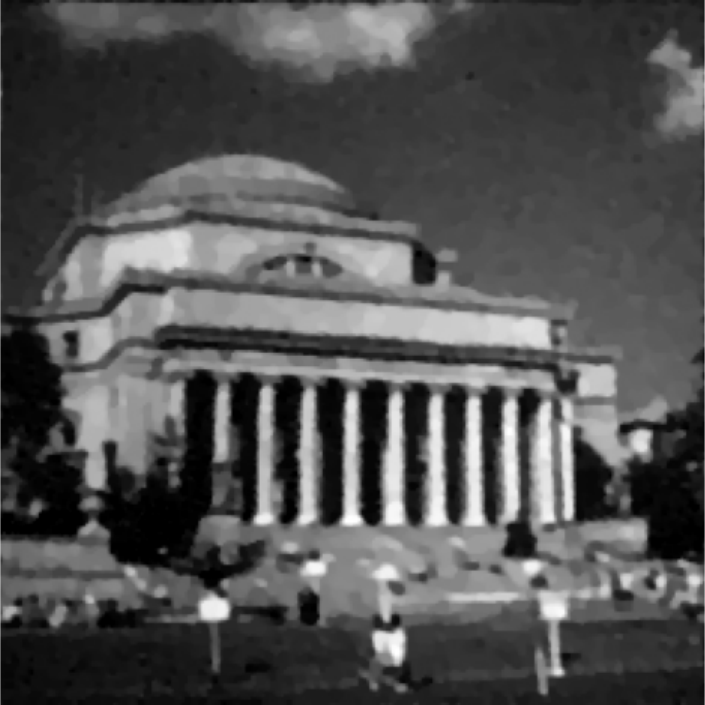}
&
\includegraphics[width=0.20\textwidth]
{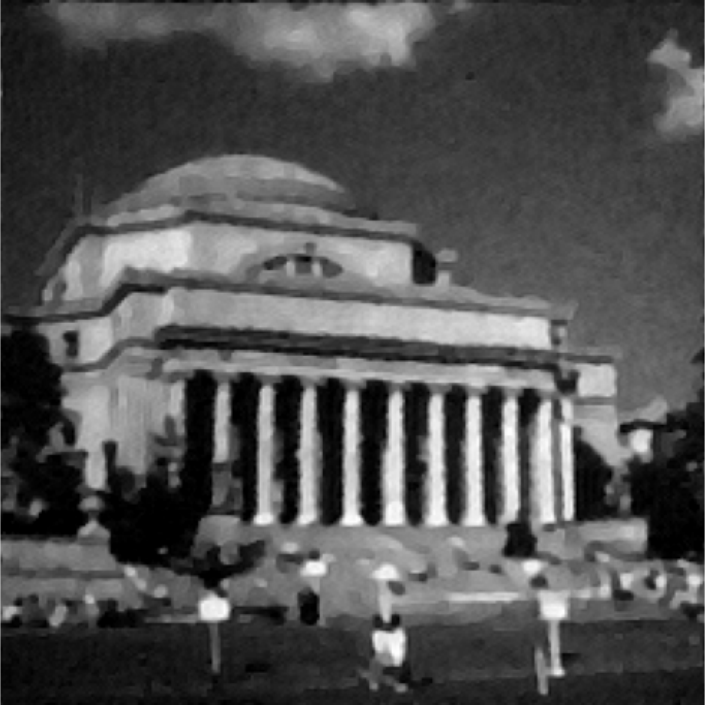}
&
\includegraphics[width=0.20\textwidth]
{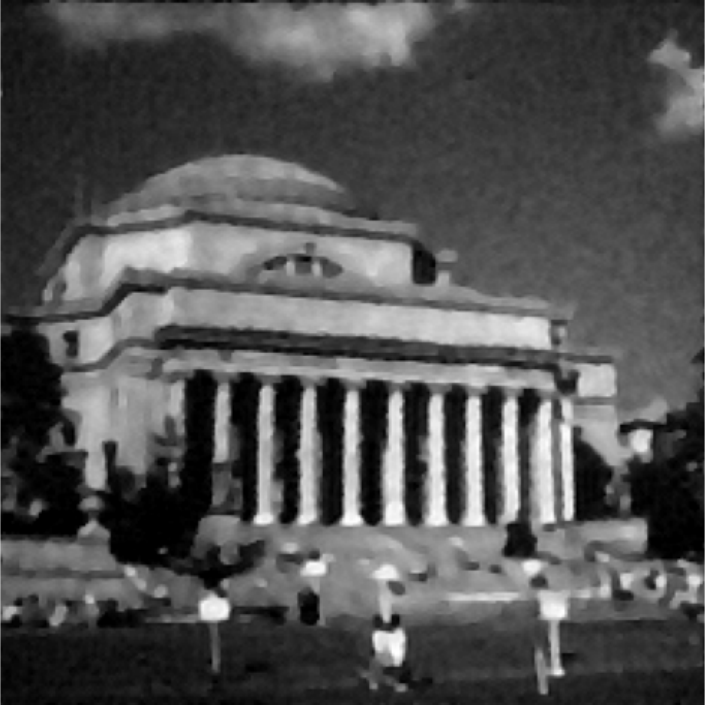}
\\[0.5em]

\includegraphics[width=0.20\textwidth]
{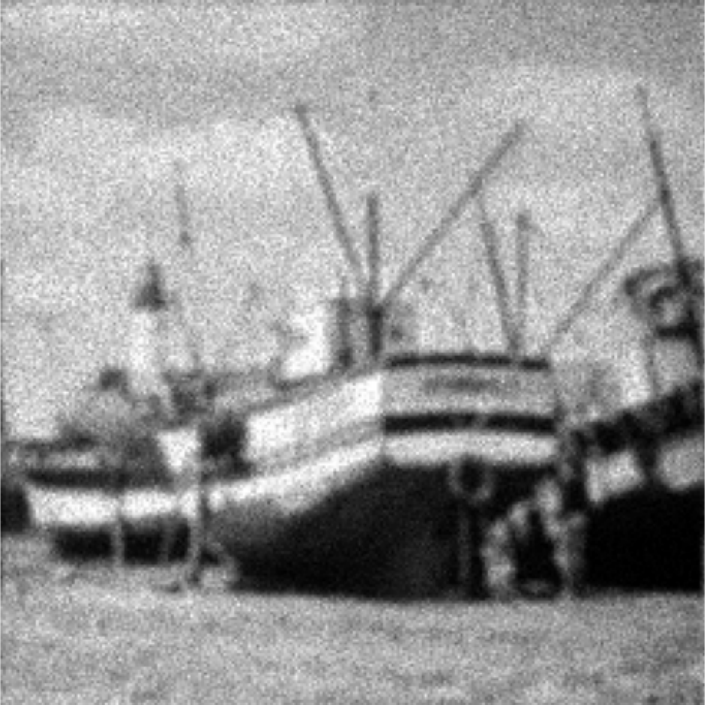}
&
\includegraphics[width=0.20\textwidth]
{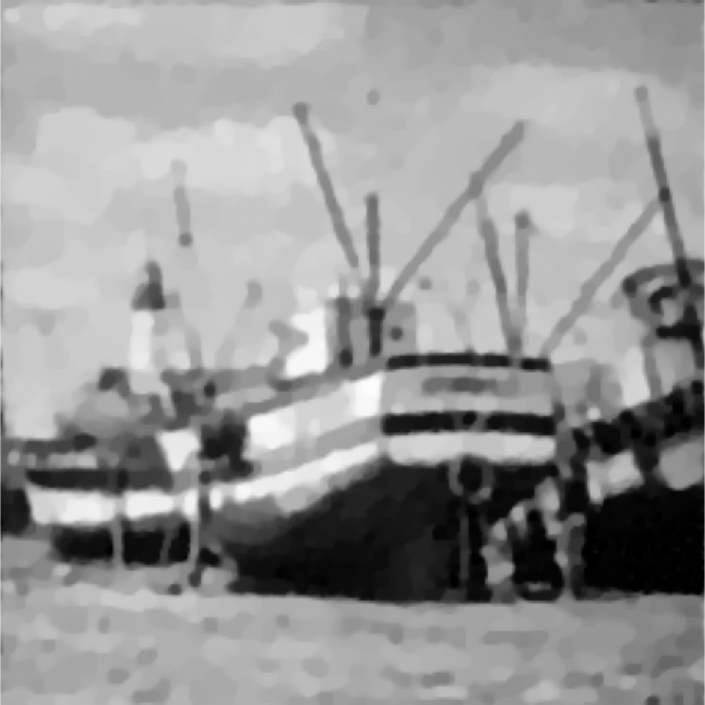}
&
\includegraphics[width=0.20\textwidth]
{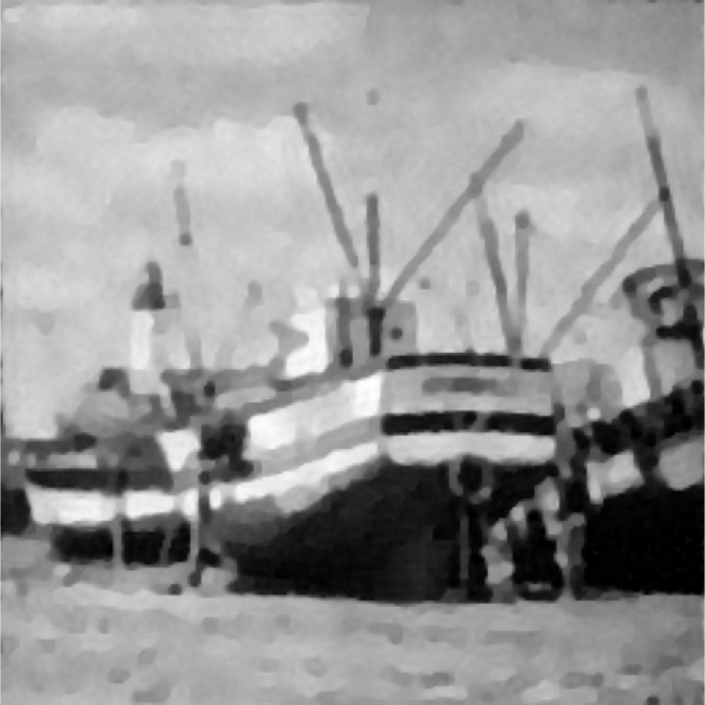}
&
\includegraphics[width=0.20\textwidth]
{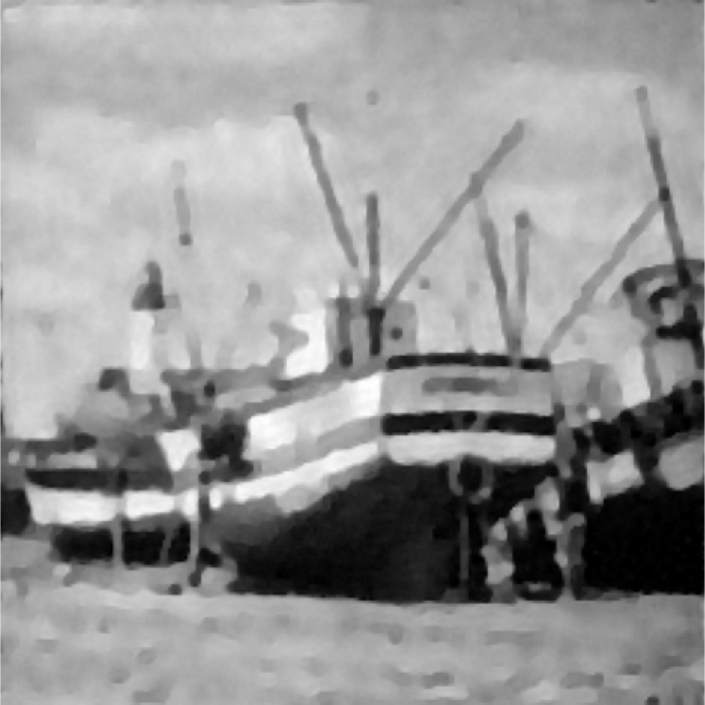}
\end{tabular}

\caption{Image reconstruction results for \textbf{Setting 3} using the \texttt{columbia} image
degraded by motion blur (top row) and the \texttt{boats} image degraded by defocus blur
(bottom row).}
\label{fig:recovery-setting3}
\end{figure}

\begin{figure}[htbp]
\centering

\subfloat[Objective residual]{
  \includegraphics[width=0.27\linewidth]
  {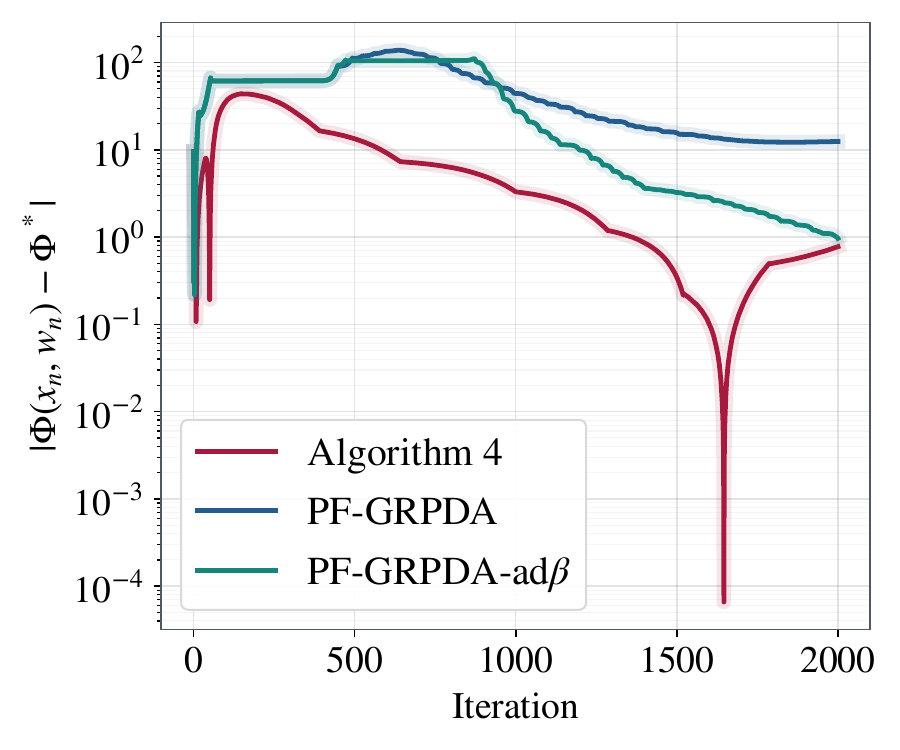}
}\hfill
\subfloat[Feasibility residual]{
  \includegraphics[width=0.27\linewidth]
  {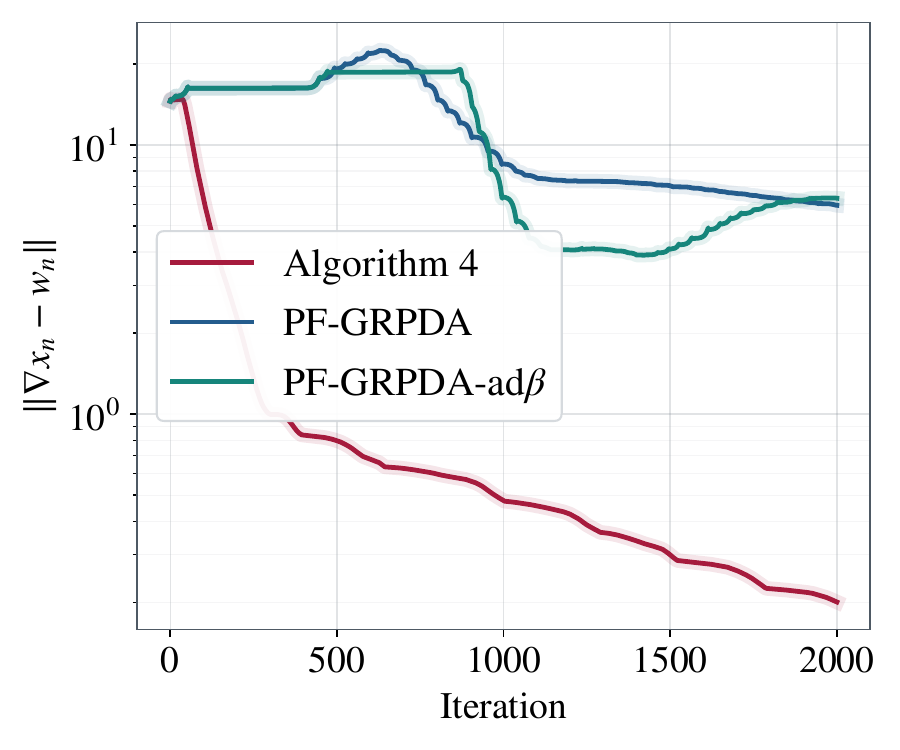}
}\hfill
\subfloat[PSNR]{
  \includegraphics[width=0.27\linewidth]
  {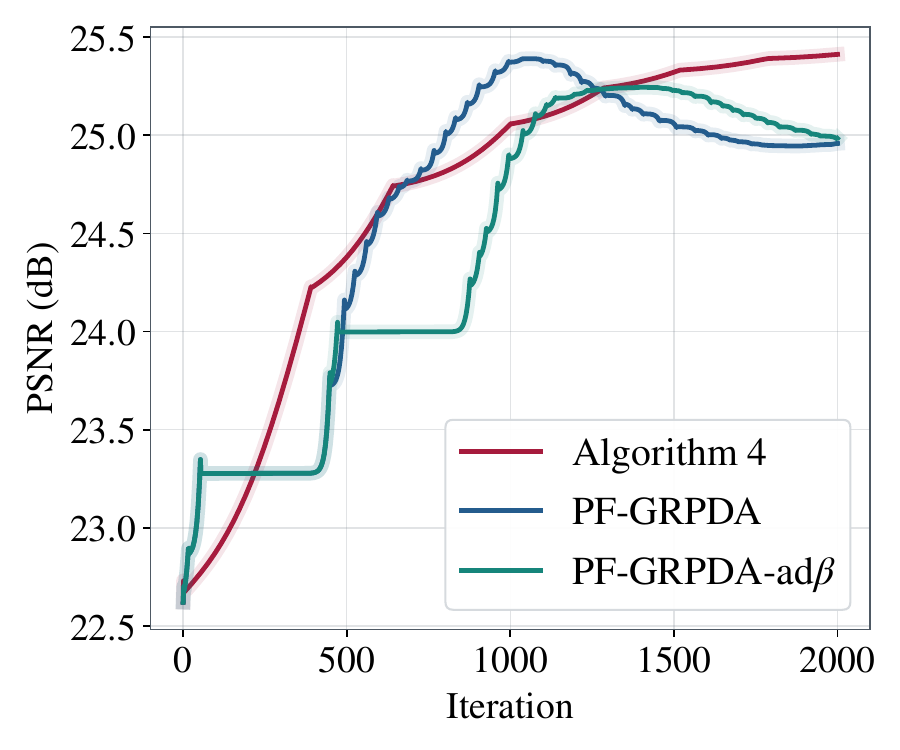}
}

\vspace{0.8em}

\subfloat[Objective residual]{
  \includegraphics[width=0.27\linewidth]
  {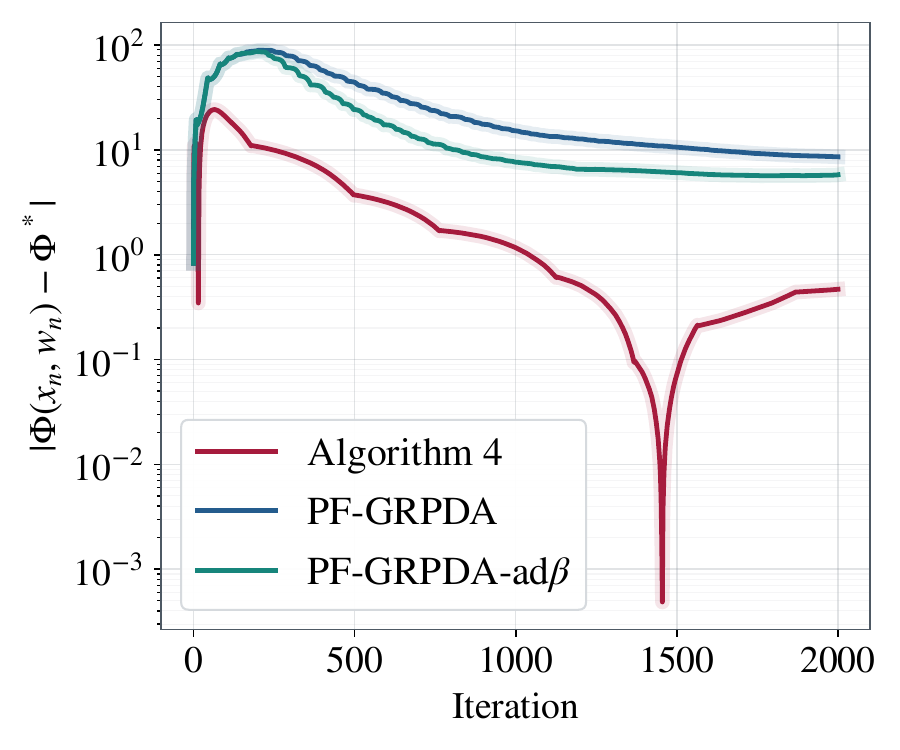}
}\hfill
\subfloat[Feasibility residual]{
  \includegraphics[width=0.27\linewidth]
  {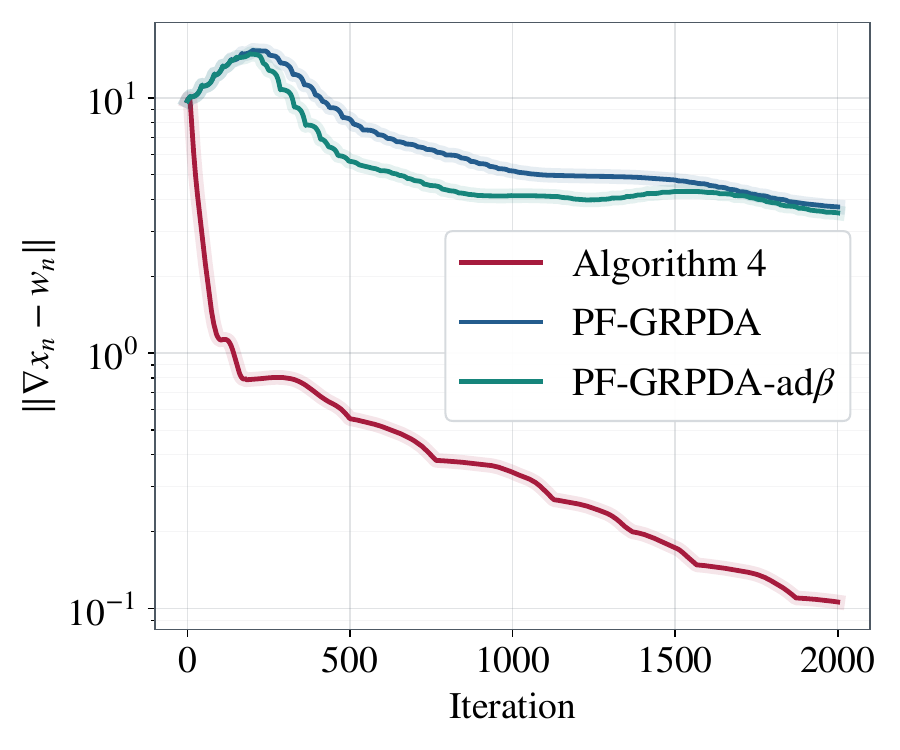}
}\hfill
\subfloat[PSNR]{
  \includegraphics[width=0.27\linewidth]
  {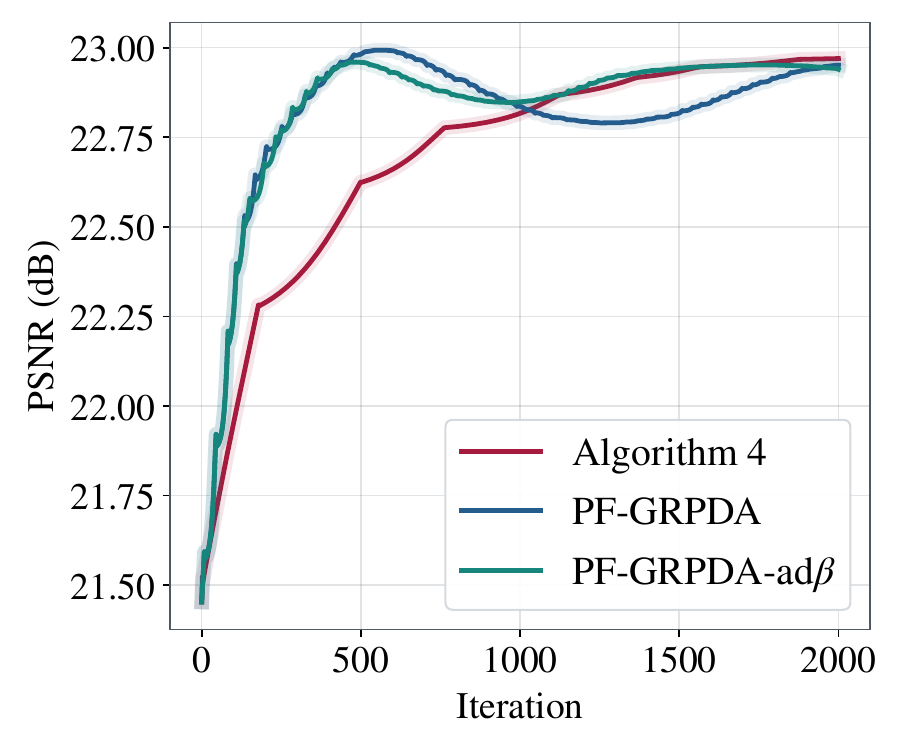}
}

\caption{Convergence results for \textbf{Setting~3} using the \texttt{columbia} image degraded
by motion blur (top row) and the \texttt{boats} image degraded by defocus blur
(bottom row).}
\label{fig:convergence_results_setting3}
\end{figure}
\section{Conclusion}\label{sec:conclusion}
In this paper, we revisited the extended adaptive golden-ratio primal--dual
algorithm (aEGRPDA) \cite{Soe2026} for the convex minimisation problem
\eqref{main_prob}, where the differentiable term $h$ is assumed to be only
locally smooth. We showed that the artificial upper bound imposed on the primal
step-size in the aEGRPDA is not needed, as the adaptive rule
itself provided the upper bound together with
$\|K\|$. As a consequence, we obtained ergodic
$\mathcal{O}(1/N)$ estimates for both the objective residual and the feasibility
violation without relying on an external step-size cap. Under additional curvature
assumptions, namely when either $f$ or $h$ is strongly convex, we also
proposed accelerated variants and established ergodic $\mathcal{O}(1/N^2)$
convergence rates. The numerical experiments on Poisson image reconstruction
illustrated the practical advantages of the proposed accelerated methods.

Several directions remain open and deserve further investigation.

\begin{itemize}
    \item \textbf{Bregman accelerated variants.}
    A natural direction is to develop Bregman versions of the proposed
    algorithms. Such an extension would allow the Euclidean proximal
geometry to be replaced by a geometry that is better adapted to the structure
of the problem. More precisely,
    for a suitable kernel $\omega$, and  $x,y\in\mathbb H_1$, one may use the Bregman distance
    \[
        D_{\omega}(x,y)
        :=
        \omega(x)-\omega(y)-\langle \nabla\omega(y),x-y\rangle .
    \]
    Such a distance can be computationally advantageous when an appropriate Bregman kernel $\omega$ makes the Bregman proximal operator cheaper
    than the Euclidean one. For instance, the projection onto the probability simplex requires $\mathcal{O}(n)$ times, whereas the Euclidean projection takes $\mathcal{O}(n\log n)$ time. One can follow the works of \cite{Soe2026,tam2023bregman}, which provide a useful starting point for developing such Bregman-accelerated methods.

\medskip
    \item \textbf{Adaptive local curvature estimates.}
    In Algorithm~\ref{algorithm:hstrong_acc}, the acceleration relies on the local
    smoothness and global strong convexity of $h$. In particular, we used 
    the following inequality in the analysis of Algorithm \ref{algorithm:hstrong_acc},
    \[
        \frac{\mu}{2}\|x-y\|^2
        \le
        D_h(x,y)
        \le
        \frac{L}{2}\|x-y\|^2,
        \qquad x,y\in \mathbb{H}_1,
    \]
  In the present setting of Algorithm \ref{algorithm:hstrong_acc}, the
    Lipschitz constant of $\nabla h$, is estimated along the iterates. A more
    delicate question is whether the strong convexity constant $\mu$ can also
    be estimated adaptively. Such an extension would be useful, but it would require care because the local estimate of $\mu$ may directly affect the acceleration parameter and the descent arguments.

\medskip

\item \textbf{Separable convex optimisation.}
Another interesting direction is to revisit the separable convex optimisation framework associated with
\eqref{eq:constrained_form} in the special case $h=0$. In \cite{soe2026golden}, two golden-ratio proximal ADMM-type algorithms have been proposed for locally estimating the operator norm in this setting. However, the resulting step-size rules are monotone in nature. Since non-monotone step-sizes can often exploit the local behaviour of the problem more effectively,
it would be worthwhile to investigate whether the ideas developed in \cite{Soe2026} and in the present
work can be adapted to this separable setting.

\medskip
    \item \textbf{Distributed optimisation.}
    Another promising direction is to extend these ideas to distributed composite optimisation problems. The work \cite{yin2024golden} provides a golden-ratio framework for distributed problems in which agents have access to local objective functions. It would be interesting to investigate whether the local-smoothness-based step-size rule and the acceleration mechanisms developed in this paper can be incorporated into such distributed algorithms, especially when some of the local functions are only locally smooth or only locally strongly convex.

\end{itemize}

\backmatter

\section*{Acknowledgements}

The authors gratefully acknowledge A/Prof. Matthew K. Tam for the comments on this paper. Santanu Soe expresses his gratitude to A/Prof. Matthew K. Tam for his encouragement, support, and guidance throughout his PhD.

\subsection*{Funding}

The research of Santanu Soe was supported by the Prime Minister's Research Fellowship program (Project number SB23242132MAPMRF005015), the Ministry of Education, Government of India, and the Melbourne Research Scholarship.

\subsection*{Data availability}

The test images used in the numerical experiments are publicly accessible
from \href{https://www.dip.ee.uct.ac.za/imageproc/stdimages/greyscale/}
{https://www.dip.ee.uct.ac.za/imageproc/stdimages/greyscale/} and \href{https://sipi.usc.edu/database/database.php?volume=misc}
{https://sipi.usc.edu/database/database.php?volume=misc}. The Python code used in this paper to reproduce the numerical results is available from the corresponding author upon reasonable request.

\section*{Ethics Declarations}

\subsection*{Competing interests}

The authors declare that there are no conflicts of interest in this paper.

\subsection*{Author contributions}

All authors contributed to the conception, analysis, and writing of the manuscript. All authors read and approved the final manuscript.

\subsection*{Ethical approval}

This article does not contain any studies involving human participants or animals performed by any of the authors.

\bibliography{my_bib}

\end{document}